\theoremstyle{plain}
\newtheorem{theorem}{Theorem}[section]
\newtheorem{lemma}[theorem]{Lemma}
\newtheorem{corollary}[theorem]{Corollary}
\newtheorem{proposition}[theorem]{Proposition}
\newtheorem{hypothesis}[theorem]{Hypothesis}
\theoremstyle{definition}
\newtheorem{definition}[theorem]{Definition}
\theoremstyle{remark}
\newtheorem{remark}[theorem]{Remark}
\numberwithin{equation}{section}
\def\softd{{\leavevmode\setbox1=\hbox{d}%
              \hbox to 1.05\wd1{d\kern-0.4ex{\char039}\hss}}}
\def\bold#1{\mbox{\boldmath $#1$}}
\newcommand{\uu}[1]{\bold{#1}}
\newcommand{\ldblbrace}{\{\mskip-5mu\{}
\newcommand{\rdblbrace}{\}\mskip-5mu\}}
\newcommand{\R}{\mathbb{R}}
\newcommand{\N}{\mathbb{N}}
\newcommand{\Cinf}{C_c^\infty}
\newcommand{\tdom}{(0,T)}
\newcommand{\odom}{\tdom\times\Omega}
\newcommand{\dom}{\lbrack 0,T\rbrack\times\Omega}
\newcommand{\bu}{{\uu{u}}}
\newcommand{\bv}{\uu{v}}
\newcommand{\bw}{\uu{w}}
\newcommand{\bm}{{\uu{m}}}
\newcommand{\bU}{{\uu{U}}}
\newcommand{\du}{\delta\bu}
\newcommand{\F}{\mathcal{F}} 
\newcommand{\D}{\mathcal{D}} 
\newcommand{\M}{\mathcal{M}} 
\newcommand{\K}{\mathcal{K}} 
\newcommand{\E}{\mathcal{E}}
\newcommand{\Pro}{\mathcal{P}} 
\newcommand{\veps}{\varepsilon}
\newcommand{\vrho}{\varrho}
\newcommand{\vth}{\vartheta}
\newcommand{\uphi}{\uu{\varphi}}
\newcommand{\epso}{{ 0,\varepsilon}}
\newcommand{\half}{\frac{1}{2}}
\newcommand{\dx}{\mathrm{d}x}
\newcommand{\dt}{\mathrm{d}t}
\newcommand{\psig}{\psi_\gamma}
\newcommand{\pig}{\Pi_\gamma}
\newcommand{\V}{\mathcal{V}_{t,x}}
\newcommand{\U}{\mathcal{U}_{t,x}}
\newcommand{\mcv}{\mathcal{V}}
\newcommand{\mcu}{\mathcal{U}}
\newcommand{\tvrho}{\tilde{\vrho}}
\newcommand{\tbm}{\widetilde{\bm}}
\newcommand{\tbv}{\tilde{\bv}}
\newcommand{\X}{\mathcal{X}}
\newcommand{\m}{{(m)}}
\newcommand{\peps}{\lbrack\Pro_\veps\rbrack}
\newcommand{\pepsh}{\lbrack\Pro_\veps^{h_\T}\rbrack}    
\newcommand{\pepso}{\lbrack\Pro_0\rbrack}
\newcommand{\pepsho}{\lbrack\Pro^{h_\T}_0\rbrack}
\newcommand{\T}{\mathcal{T}}
\newcommand{\nuk}{\uu{\nu}_{\sigma,K}}
\newcommand{\Lt}{\mathcal{L}_{\mathcal{T}}}
\newcommand{\He}{H_{\E}}
\newcommand{\Heo}{H_{\E,0}}
\newcommand{\flx}{F_{\sigma,K}}
\newcommand{\absk}{\abs{K}}
\newcommand{\abssig}{\abs{\sigma}}
\newcommand{\dsig}{D_{\sigma}}
\newcommand{\divup}{\mathrm{div}_{\T}^{up}}
\newcommand{\divt}{\mathrm{div}_{\T}}
\newcommand{\gradt}{\nabla_{\T}}
\newcommand{\gradd}{\nabla_{\E}}
\newcommand{\rk}{\vrho_K}
\newcommand{\uk}{\bu_K}
\newcommand{\sk}{\sigma,K}
\newcommand{\wsk}{w_{\sk}}
\newcommand{\sink}{\sigma\in\E(K)}
\newcommand{\delt}{\delta t}
\newcommand{\norm}[1]{\left\lVert#1\right\rVert}
\newcommand{\abs}[1]{\left\lvert#1\right\rvert}
\newcommand{\weakstar}{\overset{\ast}{\rightharpoonup}}
\newcommand{\Div}{\mathrm{div}_x}
\newcommand{\grad}{\nabla_x}
\newcommand{\Dt}{\partial_t}
\DeclareMathOperator*{\essup}{ess\,sup}
\title[AP Scheme for Compressible Euler Equations]{ASYMPTOTIC PRESERVING FINITE VOLUME METHOD FOR THE COMPRESSIBLE EULER EQUATIONS: ANALYSIS VIA DISSIPATIVE MEASURE-VALUED SOLUTIONS} 
\author[Arun]{K.R.\ Arun}
\address{School of Mathematics, Indian Institute of Science Education
  and Research Thiruvananthapuram, Thiruvananthapuram 695551, India} 
\email{arun@iisertvm.ac.in}
\thanks{K.\,R.\,A. acknowledges the support from Science and Engineering Research Board, Department of Science \& Technology, Government of India through grant CRG/2021/004078.}
\author[Krishnamurthy]{A.\ Krishnamurthy}
\address{School of Mathematics, Indian Institute of Science Education
  and Research Thiruvananthapuram, Thiruvananthapuram 695551, India} 
\email{amoghk0720@iisertvm.ac.in}
\author[Luk\'a\v{c}ov\'a-Medvi\softd ov\'a]{M.\ Luk\'a\v{c}ov\'a-Medvi\softd ov\'a}
\address{Institut f{\"u}r Mathematik, Johannes Gutenberg-Universit{\"a}t Mainz, Staudingerweg 9, D-55128 Mainz, Germany}
\email{lukacova@mathematik.uni-mainz.de}
\thanks{The work of M.L. was funded by the Deutsche Forschungsgemeinschaft (DFG, German Research
Foundation) - under SPP 2410 Hyperbolic Balance Laws: Complexity, Scales and Randomness.  M.L. is grateful to the Gutenberg Research College and Mainz Institute of Multiscale Modelling for
supporting her research.}
\date{\today}
\subjclass{35D99, 35L45, 35L65, 35Q31, 35R06, 65M08, 76M12}
\keywords{Compressible Euler system, Dissipative measure-valued solution, Low Mach number limit, Finite volume method, Asymptotic Preserving, Energy stability, Consistency}
\begin{document}

\begin{abstract}
    We propose and analyze a new asymptotic preserving (AP) finite volume scheme for the multidimensional compressible barotropic Euler equations to simulate low Mach number flows. The proposed scheme uses a stabilized upwind numerical flux, with the stabilization term being proportional to the stiff pressure gradient, and we prove its conditional energy stability and consistency. Utilizing the concept of dissipative measure-valued (DMV) solutions, we rigorously illustrate the AP properties of the scheme for well-prepared initial data. In particular, we prove that the numerical solutions will converge weakly to a DMV solution of the compressible Euler equations as the mesh parameter vanishes, while the Mach number is fixed. The DMV solutions then converge to a classical solution of the incompressible Euler system as the Mach number goes to zero. Conversely, we show that if the mesh parameter is kept fixed, we obtain an energy stable and consistent finite-volume scheme approximating the incompressible Euler equations as the Mach number goes to zero. The numerical solutions generated by this scheme then converge weakly to a DMV solution of the incompressible Euler system as the mesh parameter vanishes. Invoking the weak-strong uniqueness principle, we conclude that the DMV solution and classical solution of the incompressible Euler system coincide, proving the AP property of the scheme. We also present an extensive numerical case study in order to illustrate the theoretical convergences, wherein we utilize the techniques of $\mathcal{K}$-convergence.
\end{abstract}

\maketitle
\section{Introduction}
\label{sec:intro}

Several laws of continuum physics can be formulated in terms of the conservation of quantities such as mass, momentum or energy. Hyperbolic conservation laws are the mathematical formulation of these conservation principles. An iconic example of this class of partial differential equations, undoubtedly, is the system of compressible Euler equations of gas dynamics. Solutions to the compressible Euler system contain waves that propagate at different speeds, exhibiting the so-called `multiscale' phenomenon. In order to quantify this disparity between the speeds, one appropriately rescales the compressible Euler equations and introduces the Mach number $\veps$, which is defined as the ratio of a reference convection velocity to a reference sound velocity; see \cite{KM82}. It is well documented in literature that as $\veps\to 0$, the solutions to the compressible Euler equations converge to the solutions of the incompressible system and the limit is termed as the zero or low Mach number limit; see \cite{KM82, LM98, Maj84, Sco94} for a detailed exposition of the same. The zero Mach number limit is classified as a singular limit, as the hyperbolic compressible Euler system changes into the mixed hyperbolic-elliptic incompressible Euler system. In the aforementioned references, the zero Mach number limit is performed for smooth or strong solutions of the compressible system. On the other hand, it is well known that the solutions to the compressible Euler equations develop singularities in finite time even if the initial data are smooth, necessitating the notions of weak or distributional solutions. Using the techniques of convex integration, the non-uniqueness of entropy weak solutions to the multidimensional compressible Euler system was shown by Chiodaroli et al.\ \cite{CLK15}, DeLellis and Sz\'ekelyhidi \cite{LS10}. 

In the present work, we adopt the framework of dissipative measure-valued (DMV) solutions, which was first introduced by Feireisl et al.\ for the Navier-Stokes system in \cite{FGS+16} and for the Euler system in \cite{BF18a, FGJ19}. The concept of measure-valued solutions for general hyperbolic conservation laws was advocated by DiPerna in \cite{DiP85}. Here, the Young measures are used to describe the weak limits of osciallating and concentrating sequences. Contrary to weak solutions, the DMV solutions to the compressible Euler system exist globally in time, as long as the initial data has finite energy. Furthermore, they can be realized as suitable limits of numerical schemes as well as other approximation problems; see the monographs \cite{FLM+21a, MNR+96}. A key advantage of working in this framework is the validity of the so-called weak-strong uniqueness principle, which asserts that a strong/classical solution and a DMV solution coincide as long as they emanate from the same initial data; cf.\ \cite{BF18a, FGJ19, FGS+16, FLM+21a, GSW15, Wei18} for further details. Similar results have also been proven for the DMV solutions of the incompressible Euler system; cf.\ \cite{BLS11} and the survey papers \cite{GWW23, Wei18}. 

The study of singular limits using DMV solutions has been a subject of numerous works; see \cite{BF18b, BM21, Cha22, FKK+20, FKM19, FT19, NT20} and the references therein. Notably, Ferieisl et al.\ in \cite{FKM19} showed that DMV solutions to the compressible Euler system will converge to a classical solution of the incompressible Euler system as the Mach number $\veps\to 0$, provided the latter exists, the initial datum is well-prepared and the domain is bounded with periodic boundaries. The existence of a smooth solution to the incompressible system is guaranteed by the results of Kato \cite{Kat72, KL84}, provided the initial data has appropriate regularity.

As stated earlier, DMV solutions to the compressible Euler system can be identified as suitable limits of numerical solutions. As a matter of fact, any structure preserving, entropy stable and consistent numerical scheme approximating the compressible Euler system will generate a DMV solution; see \cite{FLM+21a}. In recent years, there has been a considerable amount of literature dedicated to proving the convergence of numerical schemes to DMV solutions of the target system, cf.\ \cite{AA24, FLM20b, FLM+19, FLM+21a, FLM+21b, FLM20a} and the references therein. The overall strategy across all the above mentioned references can be explained as follows. First step is to show that the numerical solutions generated by the scheme are uniformly bounded in suitable spaces, preserve the positivity of density and internal energy and satisfy a discrete variant of the entropy inequality. For the second step, one needs to prove that the numerical solution satisfies the weak formulation of the target system modulo some consistency errors that vanish as the mesh parameter vanishes. Refining the mesh and using the apriori bounds then leads to the weak convergence of the numerical solutions to a DMV solution of the target system. Since the convergence achieved is only weak , representation of the limiting solution is a subtle issue. This is were the novel concept of $\K$-convergence applies, first introduced by Balder in \cite{Bal00} and further expanded upon by Feireisl et al.\ in \cite{FLM+21a, FLM+21b, FLM20a}. Briefly, $\K$-convergence turns weakly convergent sequences of individual numerical solutions to strongly convergent sequences of empirical averages (C\'{e}saro averages). This can be seen as a generalization of Koml\'{o}s' theorem to Young measures; cf.\ \cite{Kom67}.

On the other hand, performing the zero Mach number limit at the discrete level is a challenging task due to the singular nature of the limit, in the sense that the system of equations changes its nature. Due to the stiff character of the problem, classical time-stepping methods are proven to be ineffective to compute the numerical solutions. The CFL conditions associated to explicit methods impose extremely stringent conditions on the time-step to ensure stability, which mainly arises due to the large disparity between the flow and sound velocities; cf.\ \cite{ADS21, AS20, GV99, Kle95}. Also, even though fully implicit schemes may offer unconditional stability, said stability is obtained at a significant increase to the computational costs which is rather undesirable. Hence, one prefers to work with the semi-implicit or the implicit-explicit (IMEX) methods and one of the early attempts in this direction is available in \cite{PM05}. However, the time-stepping alone is not sufficient and one needs a much more robust platform in order to accurately capture the flow at low Mach numbers. 

One such framework is the so-called `Asymptotic Preserving' (AP) methodology, which was first introduced in the context of kinetic models of diffusive transport \cite{Jin99}. The overall principle is as follows. Consider a singularly perturbed problem $\peps$, where $\veps>0$ denotes the perturbation parameter, e.g.\ the Mach number in the present case. Assume that the solutions of $\peps$ converge to the solutions of a well posed problem $\pepso$ as $\veps\to 0$, where the limit problem $\pepso$ is termed as the singular limit. Let $\pepsh$ denote a numerical scheme for $\peps$, where $h_\T>0$ denotes the discretization parameter, e.g.\ the mesh size. Then, the scheme $\pepsh$ is said to be AP if the following hold.
\begin{enumerate} 
    \item For every $h_\T>0$, the scheme $\pepsh$ converges to a numerical scheme $\pepsho$ as $\veps\to 0$, where $\pepsho$ is a consistent and stable approximation of the limit system $\pepso$. 
    \item The stability constraints on $h_\T$ are independent of $\veps$.
\end{enumerate}
The idea of an AP scheme can also be illustrated by the commutativity of the diagram in Figure \ref{fig:AP-prop}. 
\begin{figure}[htpb]
    \centering
        \begin{tikzcd}
            \pepsh \arrow[dd, "\varepsilon\to 0"] \arrow[rr, "h_\T\to 0"] &  & \peps \arrow[dd, "\varepsilon\to 0"] \\
            &  &                                                                      \\
            \pepsho \arrow[rr, "h_\T\to 0"] &  & \pepso                        
        \end{tikzcd}
        \caption{AP Property of a scheme.}
        \label{fig:AP-prop}
\end{figure}

Therefore, if a scheme is AP, then the low Mach number limit will be respected at the discrete level as well. Hence, the AP methodology is a natural choice for designing numerical schemes to capture the low Mach number flows. IMEX or semi-implicit time discretizations are most commonly used in order to design AP schemes; see \cite{AGK23, AKS22, AS20, BAL+14, BRS18, HJL12, HLS21, NBA+14} for results related to the Euler system.

The goal of the present work is to analyze the low Mach number limit of the cell-centered, semi-implicit finite volume scheme introduced in \cite{AA24} to approximate the compressible Euler equations, cf.\ \eqref{eqn:disc-mss-bal}-\eqref{eqn:disc-mom-bal}, via DMV solutions. In particular, we rigorously show that the proposed scheme is indeed AP and satisfies the commutative diagram given in Figure \ref{fig:AP-prop}. The energy stability of the scheme was achieved in \cite{AA24} by introducing a shifted velocity in the convective fluxes of the mass and momenta, provided a CFL-like condition holds. This shifted velocity was shown to be proportional to the stiff pressure gradient and further, a consistency analysis was carried out in order to show that the solutions generated by the scheme satisfy the weak formulation of the compressible system modulo the consistency error terms that vanish as the mesh size $h_\T$ goes to 0. It was also proven that as $h_\T\to 0$, the solutions generated by the scheme will converge to a DMV solution of the compressible Euler equations. In the present work, we extend these results and perform the low Mach number limit for the solutions generated by the scheme. Firstly, note that for a fixed $\veps>0$, the scheme will generate a DMV solution of the compressible system as $h_\T\to 0$. Then, provided that the initial data are well prepared, we can conclude that as $\veps\to 0$, the DMV solutions will converge to a classical solution of the incompressible system in accordance with \cite{FKM19}. Secondly, we prove that on a fixed mesh with size $h_\T>0$, as $\veps\to 0$, we obtain a scheme which is an approximation of the incompressible Euler equations, hereafter referred to as the limit scheme. Further, we rigorously show that the limit scheme is indeed energy stable and a consistent approximation of the incompressible system which generates a DMV solution as $h_\T\to 0$. Finally, using the weak-strong uniqueness principle, cf.\ \cite{GWW23, Wei18}, we conclude that the DMV solution coincides with the classical solution of the incompressible system, which in turn proves the AP property of the scheme. We also present an extensive numerical case study in order to illustrate the theoretical convergence results.

The rest of this manuscript is organized as follows. In Section \ref{sec:comp-euler}, we introduce the non-dimensional form of the compressible Euler system parameterized by the Mach number $\veps$ and recall a few energy identities satisfied by classical solutions of the same. Further, we also define the notions of a weak solution and a DMV solution of the compressible system and recall the weak-strong uniqueness principle. In Section \ref{sec:incomp-euler}, we present the incompressible Euler system and analogous to Section \ref{sec:comp-euler}, recall the definitions of a weak solution, a DMV solution and the statement of the weak-strong uniqueness principle. Next, we present a brief overview of the low Mach number limit for the compressible system in Section \ref{sec:incomp-cont-limit} after introducing the notion of `well-prepared' initial data. The semi-implicit scheme along with its stability and consistency analysis is detailed in Section \ref{sec:fv-scheme}. In Section \ref{sec:lim-analys-scheme}, we perform the analysis of the limits of the scheme while taking the the limits $h_\T\to 0$ and $\veps\to 0$ in an iterative manner. We consider both the possible cases, first $h_\T\to 0$ and successively $\veps\to 0$, and vice-versa, and thoroughly describe the evolution of the solutions generated by the scheme as we perform said limits. Finally, in Section \ref{sec:AP-prop-scheme}, we deduce the AP property of the scheme and present the main result of this paper. Convergence results using $\K$-convergence are recalled in Section \ref{sec:K-con}. In order to illustrate the claimed convergences, a detailed numerical case study is presented in Section \ref{sec:num-exp}. Lastly, the paper is concluded with a few remarks in Section \ref{sec:conc}.

\section{Compressible Euler System}
\label{sec:comp-euler}
We consider the following initial boundary value problem for the compressible barotropic Euler system, which is parameterized by the Mach number $\veps$, in $\odom$ where $\Omega\subset\R^d$ is bounded with $d=2$ or 3:
\begin{subequations}
\begin{align}
    &\Dt\vrho_\veps+\Div\bigl(\vrho_\veps\bu_\veps\bigr)\,=\,0, \label{eqn:mss-bal-baro} \\
    &\Dt\bigl(\vrho_\veps\bu_\veps\bigr)+\Div\bigl(\vrho_\veps\bu_\veps\otimes\bu_\veps\bigr)+\frac{1}{\veps^2}\grad p_\veps\,=\,0, \label{eqn:mom-bal-baro} \\
    &\vrho_\veps(0,\cdot)=\vrho_{0,\veps}\,,\,\bu_\veps(0,\cdot)=\bu_{0,\veps}\,,\, \bu_{\veps} = 0\text{ on }(0,T)\times\partial\Omega.\label{eqn:ini-data-baro}
\end{align}
\end{subequations}
The variables $\vrho_\veps$ and $\bu_\veps$ denote the density and the velocity of the fluid respectively. The pressure $p_\veps = p(\vrho_\veps)$ is assumed to follow the isentropic equation of state $p(\vrho) = \vrho^\gamma$ where $\gamma> 1$ is the ratio of the specific heats, also known as the adiabatic constant.

For the sake of simplicity, we impose space-periodic boundary conditions. Hence, we identify $\Omega$ with the $d$-dimensional torus $\mathbb{T}^d$ throughout this paper and the system \eqref{eqn:mss-bal-baro}-\eqref{eqn:ini-data-baro} will be denoted by $\peps$.

\subsection{Apriori Energy Estimates}
\label{subsec:apriori-est}

We begin by recalling a few apriori energy estimates satisfied by classical solutions of \eqref{eqn:mss-bal-baro}-\eqref{eqn:mom-bal-baro}. To this end, we start by defining the so-called pressure potential or internal energy per unit volume 
\begin{equation}
\label{eqn:pres-pot}
    \psig(\vrho) \coloneqq \frac{\vrho^\gamma}{\gamma - 1}, \gamma > 1.
\end{equation}

\begin{proposition}
\label{prop:apriori-enrg-est}

Classical solutions of \eqref{eqn:mss-bal-baro}-\eqref{eqn:mom-bal-baro} satisfy 

\begin{enumerate}
    \item a renormalization identity: 
    \begin{equation}
    \label{eqn:renorm-idt}
        \Dt\psig(\vrho_\veps) + \Div(\psig(\vrho_\veps)\bu_\veps) + p_\veps\Div(\bu_\veps) = 0. 
    \end{equation}

    \item a positive renormalization identity: 
    \begin{equation}
    \label{eqn:pos-renorm-idt}
        \Dt\pig(\vrho_\veps) + \Div((\psig(\vrho_\veps) - \psig^\prime(1)\vrho_\veps)\bu_\veps) + p_\veps\Div(\bu_\veps) = 0, 
    \end{equation}
    where $\pig(\vrho) \coloneqq \psig(\vrho) - \psig(1) - \psig^\prime(1)(\vrho - 1)$ is the relative internal energy, which is an affine approximation of $\psig$ with respect to the constant state $\vrho = 1$.

    \item a kinetic energy identity:
    \begin{equation}
    \label{eqn:ke-idt}
        \Dt\biggl(\half\vrho_\veps\abs{\bu_\veps}^2\biggr)+\Div\biggl(\half\vrho_\veps\abs{\bu_\veps}^2\bu_\veps\biggr)+\frac{1}{\veps^2}\grad p_\veps\cdot\bu_\veps = 0.
    \end{equation}

    \item the total energy identity:
    \begin{equation}
    \label{eqn:tot-enrg-idt}
        \Dt\biggl(\half\vrho_\veps\abs{\bu_\veps}^2 + \frac{1}{\veps^2}\pig(\vrho_\veps)\biggr) + \Div\biggl(\biggl(\half\vrho_\veps\abs{\bu_\veps}^2 
        +\frac{1}{\veps^2}\biggl(\psig(\vrho_\veps) - \psig^\prime(1)\vrho_\veps + p_\veps\biggr)\biggr)\bu_\veps\biggr) = 0.
    \end{equation}
\end{enumerate}
\end{proposition}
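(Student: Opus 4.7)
The plan is to derive the four identities sequentially, treating (i) as a direct renormalization of the mass balance, (ii) as a trivial modification of (i) by subtracting a multiple of the mass balance, (iii) as a standard computation combining the mass and momentum balances, and finally (iv) as an appropriate linear combination of (ii) and (iii). Throughout, I would work pointwise using the chain rule, which is justified because the solutions are assumed to be classical.

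For the renormalization identity \eqref{eqn:renorm-idt}, I would multiply the mass balance \eqref{eqn:mss-bal-baro} by $\psig'(\vrho_\veps)$ and apply the chain rule to obtain
\begin{equation*}
\Dt \psig(\vrho_\veps) + \bu_\veps \cdot \grad \psig(\vrho_\veps) + \vrho_\veps \psig'(\vrho_\veps)\, \Div \bu_\veps = 0,
\end{equation*}
then rewrite the transport term as $\Div(\psig(\vrho_\veps)\bu_\veps) - \psig(\vrho_\veps)\Div \bu_\veps$ and use the algebraic identity $\vrho \psig'(\vrho) - \psig(\vrho) = \vrho^\gamma = p(\vrho)$, which holds precisely because $\psig(\vrho) = \vrho^\gamma/(\gamma-1)$. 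For the positive renormalization identity \eqref{eqn:pos-renorm-idt}, I would subtract $\psig'(1)$ times the mass balance from \eqref{eqn:renorm-idt}; since $\psig(1)$ and $\psig'(1)$ are constants, the difference $\psig(\vrho_\veps) - \psig'(1)\vrho_\veps$ differs from $\pig(\vrho_\veps)$ only by an additive constant, which is annihilated by $\Dt$, so \eqref{eqn:pos-renorm-idt} falls out.

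The kinetic energy identity \eqref{eqn:ke-idt} is obtained by taking the scalar product of the momentum balance \eqref{eqn:mom-bal-baro} with $\bu_\veps$. Expanding $\Dt(\vrho_\veps \bu_\veps)\cdot \bu_\veps$ and $\Div(\vrho_\veps \bu_\veps \otimes \bu_\veps)\cdot \bu_\veps$ and combining with $\tfrac12 |\bu_\veps|^2$ times the mass balance produces the desired conservation form; the point here is just the book-keeping with the product rule. Finally, for the total energy identity \eqref{eqn:tot-enrg-idt}, I would multiply \eqref{eqn:pos-renorm-idt} by $1/\veps^2$ and add it to \eqref{eqn:ke-idt}, then absorb the two remaining singular terms via
\begin{equation*}
\frac{1}{\veps^2}\bigl( p_\veps \Div \bu_\veps + \grad p_\veps \cdot \bu_\veps \bigr) = \frac{1}{\veps^2} \Div(p_\veps \bu_\veps),
\end{equation*}
which collects everything into a single flux.

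No step is truly hard; the only place to be careful is the algebraic identification $\vrho \psig'(\vrho) - \psig(\vrho) = p(\vrho)$ that makes (i) close, and the correct handling of the constant $\psig(1) - \psig'(1)\cdot 1$ when passing from (i) to (ii). Once these are in place, (iii) and (iv) are essentially bookkeeping, and the proof reduces to verifying that the singular $1/\veps^2$ terms combine into the divergence of $(1/\veps^2)p_\veps \bu_\veps$ inside the total flux of \eqref{eqn:tot-enrg-idt}.
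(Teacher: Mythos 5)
Your proof is correct and is exactly the standard derivation the paper relies on: the paper states Proposition \ref{prop:apriori-enrg-est} without proof as a recollection of classical identities, and its Appendix carries out the same sequence of steps (multiply the mass balance by $\psig'$, use $\vrho\psig'(\vrho)-\psig(\vrho)=p(\vrho)$, dot the momentum balance with the velocity, then combine) for the discrete analogues. Nothing is missing; the key algebraic points you flag — the identity closing (i) and the constants dropping out in (ii) — are indeed the only places where care is needed.
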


\subsection{Admissible Weak Solutions}
\label{subsec:baro-wk-form}

Let $\bm_\veps = \vrho_\veps\bu_\veps$ denote the momentum. Then, a pair $(\vrho_\veps,\bm_\veps)$, $\vrho_\veps\in C_{weak}(\lbrack 0,T\rbrack; L^\gamma(\Omega))$ with $\vrho\geq 0$ a.e.\ in $\odom$ and $\bm_\veps\in C_{weak}(\lbrack 0,T\rbrack;L^{\frac{2\gamma}{\gamma+1}}(\Omega;\R^d))$, is an admissible weak solution of the system $\peps$ if the following identities hold:
\begin{equation}
\label{eqn:mss-wk-baro}
\biggl\lbrack\int_\Omega\vrho_\veps\varphi\,\dx\biggr\rbrack_{t=0}^{t=\tau}\,=\,\int_0^\tau\int_\Omega\Bigl\lbrack\vrho_\veps\Dt\varphi+\bm_\veps\cdot\grad\varphi\Bigr\rbrack\,\dx\,\dt,
\end{equation}
for any $\tau\in\lbrack 0,T\rbrack$ and $\varphi\in\Cinf\bigl(\lbrack 0,T)\times\Omega\bigr)$;
\begin{equation}
\label{eqn:mom-wk-baro}
\biggl\lbrack\int_\Omega\bm_\veps\cdot\uu{\varphi}\,\dx\biggr\rbrack_{t=0}^{t=\tau}\,=\,\int_0^\tau\int_\Omega\biggl\lbrack\bm_\veps\cdot\Dt\uu{\varphi}+\biggl(\frac{\bm_\veps\otimes\bm_\veps}{\vrho_\veps}\biggr)\colon\grad\uu{\varphi}+\frac{1}{\veps^2}p_\veps\,\Div\uu{\varphi}\biggr\rbrack\,\dx\,\dt,
\end{equation}
for any $\tau\in\lbrack 0,T\rbrack$ and $\uphi\in\Cinf\bigl(\lbrack 0,T)\times\Omega;\R^d\bigr)$;
\begin{equation}
\label{eqn:baro-entropy}
\int_\Omega\biggl\lbrack\half\frac{\abs{\bm_\veps}^2}{\vrho_\veps} + \frac{1}{\veps^2}\psig(\vrho_\veps)\biggr\rbrack(t,\cdot)\,\dx\leq \int_\Omega\biggl\lbrack\half\frac{\abs{\bm_\epso}^2}{\vrho_\epso} + \frac{1}{\veps^2}\psig(\vrho_\epso)\biggr\rbrack\,\dx,
\end{equation}
for a.e. $t\in (0,T)$.

\subsection{Dissipative Measure-Valued Solutions}
\label{subsec:dmv-soln-baro}

The uniqueness of entropy weak solutions fails to hold for the multidimensional barotropic Euler equations, seemingly due to the lack of `compactness' of entropy weak solutions, in the sense that a bounded sequence of solutions may develop oscillations and/or concentrations. Though measure-valued solutions were first introduced by DiPerna in \cite{DiP85} in 1985, the framework of DMV solutions is rather recent and was first established by Feireisl et al.\ in \cite{FGS+16} for the Navier-Stokes system and was further extended to the Euler system in \cite{BF18a, FGJ19}. A key reason why one might prefer working with DMV solutions rather than entropy weak solutions is the fact that DMV solutions are obtained as limits of weakly convergent approximations. We now recall the definition of a DMV solution to the compressible barotropic Euler system following \cite{FLM+21a}. We begin by defining the phase space of the system which reads
\begin{equation}
\label{eqn:phs-spc-baro}
    \F_{comp}=\bigl\{\lbrack \tvrho,\tbm\,\rbrack\colon\tvrho \geq 0,\tbm\in\R^d\bigr\} \subset{\R^{d+1}},
\end{equation}
and we let $\Pro(\F_{comp})$ denote the space of all proability measures on $\F_{comp}$.

\begin{definition}[Dissipative Measure-Valued Solution]
\label{def:dmv-baro}

A parameterized family of probability measures $\mcv^\veps=\lbrace\V^\veps\rbrace_{(t,x)\in\odom}\in L_{weak-*}^\infty\bigl(\odom,\Pro(\F_{comp})\bigr)$ is called a DMV solution of $\peps$ with initial data $\lbrack \vrho_{0,\veps},\bm_{0,\veps}\rbrack$ if the following hold:

\begin{itemize}
\item \textbf{Energy inequality} - the integral inequality 
\begin{equation}
\label{eqn:enrg-ineq-baro}
\int_\Omega\biggl\langle \V^\veps; \half\frac{\abs{\tbm}^2}{\tvrho}+\frac{1}{\veps^2}\psig(\tvrho)\biggr\rangle\,\dx\,+\,\int_\Omega\,\mathrm{d}\mathfrak{C}^\veps_{cd}(t)\leq\int_\Omega\biggl\lbrack\half\frac{\abs{\bm_{0,\veps}}^2}{\vrho_{0,\veps}}+\frac{1}{\veps^2}\psig(\vrho_{0,\veps})\biggr\rbrack\,\dx
\end{equation}
holds for a.e.\ $t\in\lbrack 0,T\rbrack$ with the so-called energy concentration defect measure
\begin{equation}
\label{eqn:enrg-con-def}
    \mathfrak{C}^\veps_{cd}\in L^\infty\bigl(0,T;\M^+(\overline{\Omega})\bigr);
\end{equation}

\item \textbf{Equation of continuity} - the integral identity 
\begin{equation}
\label{eqn:dmv-mss-bal-baro}
\biggl\lbrack\int_\Omega\bigl\langle\V^{\veps};\tvrho\bigr\rangle\varphi\,\dx\biggr\rbrack_{t=0}^{t=\tau}\,=\,\int_0^\tau\int_\Omega\biggl\lbrack\bigl\langle\V^\veps;\tvrho\bigr\rangle\Dt\varphi+\bigl\langle\V^\veps;\tbm\bigr\rangle\cdot\grad\varphi\biggr\rbrack\,\dx\,\dt
\end{equation}
holds for any $\tau\in\lbrack 0,T\rbrack$ and $\varphi\in\Cinf(\lbrack 0,T)\times\Omega)$;

\item \textbf{Momentum equation} - the integral identity
\begin{equation}
\label{eqn:dmv-mom-bal-bro}
\begin{split}
    &\biggl\lbrack\int_\Omega\langle\V^\veps;\tbm\rangle\cdot\uu{\varphi}\,\dx\biggr\rbrack_{t=0}^{t=\tau}\,\\
    &=\,\int_0^\tau\int_\Omega\biggl\lbrack\langle\V^\veps;\tbm\rangle\cdot\Dt\uu{\varphi}+\biggl\langle\V^\veps;\frac{\tbm\otimes\tbm}{\tvrho}\biggr\rangle\colon\grad\uu{\varphi}+\frac{1}{\veps^2}\langle\V;p(\tvrho)\rangle\Div\uu{\varphi}\biggr\rbrack\,\dx\,\dt \\
    &+\int_0^\tau\int_\Omega\grad\uu{\varphi}\colon\mathrm{d}\mathfrak{R}^\veps_{cd}(t)\,\dt
\end{split}
\end{equation}
holds for any $\tau\in\lbrack 0,T\rbrack$ and $ \uphi\in\Cinf(\lbrack 0,T)\times\Omega;\R^d)$ with the so-called Reynolds concentration defect measure
\begin{equation}
\label{eqn:reyn-conc-def}
    \mathfrak{R}^\veps_{cd}\in L^\infty\bigl(0,T;\M\bigl(\overline{\Omega};\R^{d\times d}\bigr)\bigr);
\end{equation}

\item \textbf{Defect compatibility condition} - there exists a constant $c>0$ such that 
\begin{equation}
\label{eqn:def-cond}
    \abs{\mathfrak{R}^\veps_{cd}(t)}(\overline{\Omega})\leq c\D^\veps(t)
\end{equation}
holds for a.e.\ $t\in (0,T)$, where $\D^\veps(t) = \int_\Omega\mathrm{d}\mathfrak{C}^\veps_{cd}(t)>0\in L^\infty(0,T)$ is called the dissipation defect. 
\end{itemize}
\end{definition}

\begin{remark}
\label{rem:def-cond}
    The formulation given in \cite{FLM+21a} has, in place of \eqref{eqn:def-cond}, a compatibility condition of the form 
    \[
    \underline{d}\mathfrak{C}^\veps_{cd}\leq\mathrm{tr}(\mathfrak{R}^\veps_{cd})\leq\overline{d}\mathfrak{C}^\veps_{cd},
    \]
    where $0<\underline{d}\leq\overline{d}$. However, we would like to note that as long as we are generating a DMV solution from a `suitable' sequence of solutions, say a sequence of admissible weak solutions or a sequence of numerical solutions generated using some structure-preserving, entropy stable scheme that are consistent with the Euler system, both compatibility conditions are simultaneously satisfied. Further, as a direct consequence of the energy inequality \eqref{eqn:enrg-ineq-baro}, we get 
    \[
    \V\lbrack\lbrace\tvrho>0\rbrace\cup\lbrace\tvrho =0, \tbm = 0\rbrace\rbrack = 1\text{ for a.e.\ }(t,x)\in\odom.
    \]
\end{remark}

\subsection{Weak-Strong Uniqueness Principle}
\label{subsec:wk-strng-uni}
One of the main reasons why the framework of DMV solutions is useful is the validity of the so-called weak-strong uniqueness principle, which asserts that any strong solution and DMV solution of the barotropic Euler system emanating from the same initial data coincide as long as the former exists. We refer the interested reader to \cite{BF18a, FGJ19, FGS+16, FLM+21a, GSW15, Wei18} for more details. The key tool in proving this principle is the relative energy functional, which is defined as
\begin{equation}
\label{eqn:rel-ent}
    E_{rel}(\vrho_\veps, \bm_\veps \,\vert\, r,\bU)(t) = \int_\Omega \biggl\lbrack \half \vrho_\veps\abs{\frac{\bm_\veps}{\vrho_\veps} - \bU}^2 +\frac{1}{\veps^2}\bigl(\psig(\vrho_\veps) - \psig(r) - \psig^{\prime}(r)(\vrho_\veps - r)\bigr)\biggr\rbrack(t,\cdot)\,\dx.
\end{equation}
where $r>0,\bU$ are test functions mimicking a strong solution of the barotropic Euler system.

\begin{remark}
\label{rem:rel-ent}
Though \eqref{eqn:rel-ent} is not a metric as it is not symmetric, the following still hold:
\[
    E_{rel}(\vrho_\veps, \bm_\veps \,\vert\, r,\bU)\geq 0\text{ and }E_{rel}(\vrho_\veps, \bm_\veps \,\vert\, r,\bU) = 0 \iff \vrho_\veps = r\,,\,\bm_\veps=r\bU.
\]
\end{remark}
Analogously, one also defines a measure-valued variant of the relative energy functional, which reads 
\begin{equation}
\label{eqn:rel-ent-mv}
    E_{rel}(\mcv^\veps\,\vert\,r, \bU)(t) = \int_\Omega \biggl\langle\V^\veps; \half\tvrho\abs{\frac{\tbm}{\tvrho} - \bU}^2 + \frac{1}{\veps^2}(\psig(\tvrho) - \psig(r) - \psig^\prime(r)(\tvrho - r)\biggr\rangle\,\dx,
\end{equation}

We now recall the statement of the weak-strong uniqueness principle from \cite{FLM+21a}.
\begin{theorem}[Weak-Strong Uniqueness Principle]
\label{thm:wk-str-uniq}
    Let a parameterized family of probability measures $\mathcal{V^\veps}=\bigl\{\V^\veps\bigr\}_{(t,x)\in\odom}$ be a DMV solution of the barotropic Euler system in the sense of Definition \ref{def:dmv-baro}, with initial data $\lbrack\vrho_{0,\veps},\bm_{0,\veps}\rbrack$ and associated concentration defect measures $\mathfrak{C}^\veps_{cd}$ and $\mathfrak{R}^\veps_{cd}$.

    Let $r_\veps,\bU_\veps$ be a strong solution of the barotropic Euler system belonging to the class 
    \begin{equation}
    \label{eqn:baro-wk-strg-fun-cls}
        \begin{split}
            &r_\veps\in W^{1,\infty}(\odom),\,\inf_{(t,x)\in\odom}r_\veps(t,x)>0, \\
            &\bU_\veps\in W^{1,\infty}(\odom;\R^d),
        \end{split}
    \end{equation}
    such that $r_\veps(0,\cdot)=\vrho_{0,\veps}$ and $r_\veps(0,\cdot)\bU_\veps(0,\cdot)=\bm_{0, \veps}$.

    Then
    \[
        \V^\veps=\delta_{\lbrack r_\veps(t,x),r_\veps\textbf{\textit{U}}_\veps(t,x)\rbrack}\text{ for a.e. }(t,x)\in\odom,
    \]
    and
    \[
        \mathfrak{C}^\veps_{cd}=\mathfrak{R}^\veps_{cd}=0.
    \]
\end{theorem}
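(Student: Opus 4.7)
The standard route to such a weak-strong uniqueness result is the \emph{relative energy} method, and I would follow that pattern here. The plan is to derive a Gronwall-type inequality for $E_{rel}(\mcv^\veps\,\vert\,r_\veps,\bU_\veps)(\tau)$ on $[0,T]$, starting from the relation
\begin{equation*}
E_{rel}(\mcv^\veps\,\vert\,r_\veps,\bU_\veps) = \int_\Omega\Bigl\langle\V^\veps;\tfrac{1}{2}\tfrac{\abs{\tbm}^2}{\tvrho} + \tfrac{1}{\veps^2}\psig(\tvrho)\Bigr\rangle\,\dx - \int_\Omega\langle\V^\veps;\tbm\rangle\cdot\bU_\veps\,\dx + \int_\Omega\tfrac{1}{2}\langle\V^\veps;\tvrho\rangle\abs{\bU_\veps}^2\,\dx - \tfrac{1}{\veps^2}\int_\Omega\bigl(\psig(r_\veps) + \psig^{\prime}(r_\veps)(\langle\V^\veps;\tvrho\rangle - r_\veps)\bigr)\dx.
\end{equation*}
The first bracket is controlled by the DMV energy inequality \eqref{eqn:enrg-ineq-baro}, picking up $\int_\Omega\mathrm{d}\mathfrak{C}^\veps_{cd}$ on the left. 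The remaining three pieces are reconstructed by testing the continuity identity \eqref{eqn:dmv-mss-bal-baro} against $\tfrac{1}{\veps^2}\psig^{\prime}(r_\veps) - \tfrac{1}{2}\abs{\bU_\veps}^2$ and the momentum identity \eqref{eqn:dmv-mom-bal-bro} against $\bU_\veps$; this is legitimate because $(r_\veps,\bU_\veps)$ belongs to the regularity class \eqref{eqn:baro-wk-strg-fun-cls}, so these are valid Lipschitz test fields.

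Next I would use that $(r_\veps,\bU_\veps)$ solves \eqref{eqn:mss-bal-baro}-\eqref{eqn:mom-bal-baro} pointwise to rewrite $\Dt r_\veps$ and $\Dt\bU_\veps$ in the algebraic cancellations. After grouping terms, the programme is to arrive at
\begin{equation*}
E_{rel}(\mcv^\veps\,\vert\,r_\veps,\bU_\veps)(\tau) + \int_0^\tau\!\!\int_\Omega\mathrm{d}\mathfrak{C}^\veps_{cd}(t)\,\dt \leq E_{rel}(\mcv^\veps\,\vert\,r_\veps,\bU_\veps)(0) + \int_0^\tau\!\!\int_\Omega\grad\bU_\veps\colon\mathrm{d}\mathfrak{R}^\veps_{cd}(t)\,\dt + \int_0^\tau\mathcal{H}(t)\,\dt,
\end{equation*}
where $\mathcal{H}(t)$ bundles the smooth remainder, all of whose moments can be estimated by $\norm{\grad r_\veps}_\infty + \norm{\grad\bU_\veps}_\infty$ times $E_{rel}(\mcv^\veps\,\vert\,r_\veps,\bU_\veps)(t)$ through convexity of $\psig$ and the defect compatibility. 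The assumption on initial data $r_\veps(0,\cdot)=\vrho_{0,\veps}$, $r_\veps(0,\cdot)\bU_\veps(0,\cdot)=\bm_{0,\veps}$, together with the energy inequality at $t=0$, yields $E_{rel}(\mcv^\veps\,\vert\,r_\veps,\bU_\veps)(0)\leq 0$, hence $=0$.

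The step I expect to be most delicate is handling the Reynolds concentration term $\int_0^\tau\!\int_\Omega\grad\bU_\veps\colon\mathrm{d}\mathfrak{R}^\veps_{cd}\,\dt$: it must be absorbed into the dissipation $\int_0^\tau\mathcal{D}^\veps(t)\,\dt$ on the left-hand side. This is exactly where the defect compatibility condition \eqref{eqn:def-cond} enters, giving
\begin{equation*}
\Bigl|\int_\Omega\grad\bU_\veps\colon\mathrm{d}\mathfrak{R}^\veps_{cd}(t)\Bigr| \leq \norm{\grad\bU_\veps}_{L^\infty}\,\abs{\mathfrak{R}^\veps_{cd}(t)}(\overline{\Omega}) \leq c\,\norm{\grad\bU_\veps}_{L^\infty}\mathcal{D}^\veps(t),
\end{equation*}
which can be transferred into the Gronwall term at the cost of enlarging the constant. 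Once this is done, the classical Gronwall lemma applied to the resulting inequality yields $E_{rel}(\mcv^\veps\,\vert\,r_\veps,\bU_\veps)(\tau) = 0$ for every $\tau\in[0,T]$ and $\mathfrak{C}^\veps_{cd}\equiv 0$, whence $\mathfrak{R}^\veps_{cd}\equiv 0$ by \eqref{eqn:def-cond}. Finally, by Remark \ref{rem:rel-ent} the vanishing of the relative energy forces $\langle\V^\veps;\tvrho\rangle = r_\veps$ and $\langle\V^\veps;\tbm\rangle = r_\veps\bU_\veps$ pointwise, together with vanishing variance of $\tvrho$ and of $\tbm/\sqrt{\tvrho}$; on $\{r_\veps>0\}$ this is enough to conclude $\V^\veps = \delta_{[r_\veps,r_\veps\bU_\veps]}$ for a.e.\ $(t,x)\in\odom$, proving the theorem.
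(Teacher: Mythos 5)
Your sketch is the standard relative energy argument, which is precisely how the cited reference \cite{FLM+21a} establishes this result; the paper itself states Theorem \ref{thm:wk-str-uniq} without proof, and the same machinery (the relative energy inequality \eqref{eqn:rel-ent-ineq}, absorption of the Reynolds defect via \eqref{eqn:def-cond}, and Gronwall) is what the paper invokes in Section \ref{subsec:asymp-lim-DMV} for the special case $r=1$, $\bU=\bv$. The decomposition of $E_{rel}$, the choice of test functions, the treatment of the initial data, and the passage from vanishing relative energy to the Dirac structure of $\V^\veps$ are all correct as outlined.
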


In the following lemma, we show that an admissible weak solution (as defined in Subsection \ref{subsec:baro-wk-form}) and a DMV solution (in the sense of Definition \ref{def:dmv-baro}) of the compressible Euler equations satisfy an additional energy estimate, given in terms of the relative internal energy $\pig$. 

\begin{lemma}[Energy Estimates for Weak and DMV Solutions]
\label{lem:eng-est-pig}
Consider the compressible Euler system \eqref{eqn:mss-bal-baro}-\eqref{eqn:mom-bal-baro} with the initial and boundary conditions given by \eqref{eqn:ini-data-baro}.
    \begin{enumerate}
        \item Let $(\vrho_\veps,\bu_\veps)$ be an admissible weak solution of the compressible Euler system as defined in Subsection \ref{subsec:baro-wk-form}. Then, $(\vrho_\veps,\bu_\veps)$ also satisfies the following energy inequality:
            \begin{equation}
            \label{eqn:baro-ent-pig}
                \int_\Omega\biggl\lbrack\half\frac{\abs{\bm_\veps}^2}{\vrho_\veps} + \frac{1}{\veps^2}\pig(\vrho_\veps)\biggr\rbrack(t,\cdot)\,\dx\leq \int_\Omega\biggl\lbrack\half\frac{\abs{\bm_\epso}^2}{\vrho_\epso} + \frac{1}{\veps^2}\pig(\vrho_\epso)\biggr\rbrack\,\dx.
            \end{equation}
        
        \item Let $\mcv^\veps = \lbrace\V^\veps\rbrace_{(t,x)\in\odom}$ be a DMV solution of the compressible Euler system, in the sense of Definition \ref{def:dmv-baro}, with concentration defects $\mathfrak{C}_{cd}$ and $\mathfrak{R}_{cd}$. Then, the following energy estimate holds:
            \begin{equation}
            \label{eqn:dmv-baro-ent-pig}
                \int_\Omega\biggl\langle \V^\veps; \half\frac{\abs{\tbm}^2}{\tvrho}+\frac{1}{\veps^2}\pig(\tvrho)\biggr\rangle\,\dx\,+\,\int_\Omega\,\mathrm{d}\mathfrak{C}^\veps_{cd}(t)\leq\int_\Omega\biggl\lbrack\half\frac{\abs{\bm_{0,\veps}}^2}{\vrho_{0,\veps}}+\frac{1}{\veps^2}\pig(\vrho_{0,\veps})\biggr\rbrack\,\dx.
            \end{equation}
    \end{enumerate}
\end{lemma}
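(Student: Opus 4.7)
The plan is to exploit conservation of mass, which follows from testing the continuity equation with spatial constants, in order to rewrite the difference $\int_\Omega \pig(\vrho_\veps)\,\dx - \int_\Omega \pig(\vrho_{0,\veps})\,\dx$ as the corresponding difference for $\psig$. The desired inequality then follows instantly from \eqref{eqn:baro-entropy} (respectively \eqref{eqn:enrg-ineq-baro}).

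First I would handle the weak solution case. Since $\Omega$ is identified with the torus $\mathbb{T}^d$, spatial constants are admissible test functions in \eqref{eqn:mss-wk-baro}. Choosing $\varphi(t,x)=\chi_\eta(t)$ where $\chi_\eta$ is a smooth cut-off approximating $\mathbf{1}_{[0,\tau]}$ and passing to the limit $\eta\to 0$ (using the $C_{weak}$ continuity in time of $\vrho_\veps$), I obtain the mass conservation identity
\begin{equation*}
\int_\Omega \vrho_\veps(\tau,\cdot)\,\dx = \int_\Omega \vrho_{0,\veps}\,\dx \quad\text{for every } \tau\in[0,T].
\end{equation*}
Recalling the definition $\pig(\vrho)=\psig(\vrho)-\psig(1)-\psig^\prime(1)(\vrho-1)$, this yields the key algebraic cancellation
\begin{equation*}
\int_\Omega \pig(\vrho_\veps(\tau,\cdot))\,\dx - \int_\Omega \pig(\vrho_{0,\veps})\,\dx = \int_\Omega \psig(\vrho_\veps(\tau,\cdot))\,\dx - \int_\Omega \psig(\vrho_{0,\veps})\,\dx,
\end{equation*}
because the affine corrections $-\psig(1)|\Omega|-\psig^\prime(1)\int_\Omega(\vrho-1)\,\dx$ at time $\tau$ and at time $0$ cancel exactly thanks to mass conservation. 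Substituting this into \eqref{eqn:baro-entropy} delivers \eqref{eqn:baro-ent-pig}.

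For the DMV case the strategy is identical. I would test the measure-valued continuity identity \eqref{eqn:dmv-mss-bal-baro} with $\varphi=\chi_\eta(t)$ to deduce
\begin{equation*}
\int_\Omega \bigl\langle \V^\veps;\tvrho\bigr\rangle(\tau,\cdot)\,\dx = \int_\Omega \vrho_{0,\veps}\,\dx,
\end{equation*}
and then apply the same affine cancellation, now inside the Young-measure bracket, to rewrite the $\pig$-contribution in \eqref{eqn:dmv-baro-ent-pig} in terms of the $\psig$-contribution appearing in \eqref{eqn:enrg-ineq-baro}. Since the concentration defect term $\int_\Omega\mathrm{d}\mathfrak{C}^\veps_{cd}(t)$ and the kinetic part $\bigl\langle \V^\veps;\tfrac{1}{2}|\tbm|^2/\tvrho\bigr\rangle$ are left unchanged by the substitution, \eqref{eqn:dmv-baro-ent-pig} follows directly.

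There is no serious analytic obstacle here; the whole argument is essentially an algebraic identity plus mass conservation. The only mild subtlety is justifying the use of constant-in-space test functions in \eqref{eqn:mss-wk-baro} and \eqref{eqn:dmv-mss-bal-baro}, which requires the mollification in time via $\chi_\eta$ together with the weak continuity in time of $\vrho_\veps$ (resp.\ of $\langle \V^\veps;\tvrho\rangle$), together with the fact that on the torus there is no boundary contribution to worry about.
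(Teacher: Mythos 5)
Your proof is correct, but it follows a different route from the paper's. The paper deduces both inequalities from the relative energy inequality $E_{rel}(\cdot\,\vert\,r,\bU)(t)\leq E_{rel}(\cdot\,\vert\,r,\bU)(0)$ (cited from \cite{FLM+21a}) applied to the trivial strong solution $(r_\veps,\bU_\veps)=(1,0)$, for which the relative energy functional \eqref{eqn:rel-ent} (resp.\ \eqref{eqn:rel-ent-mv}) collapses exactly to the $\pig$-based total energy. You instead observe that $\pig-\psig$ is affine in $\vrho$, so after integrating over $\Omega$ the discrepancy between the $\pig$- and $\psig$-energies is $-\psig(1)\abs{\Omega}-\psig^{\prime}(1)\int_\Omega(\vrho-1)\,\dx$, which is constant in time by mass conservation (obtained from \eqref{eqn:mss-wk-baro}, resp.\ \eqref{eqn:dmv-mss-bal-baro}, with a spatially constant test function); the claims then follow directly from \eqref{eqn:baro-entropy} and \eqref{eqn:enrg-ineq-baro}. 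The linearity of the Young-measure expectation makes the same cancellation work inside the bracket $\langle\V^\veps;\cdot\rangle$, and the defect term is untouched, so the DMV case goes through identically. Your argument is more elementary and self-contained: it uses only the continuity equation and the admissibility inequality, whereas the paper imports the full relative energy machinery (whose proof for the constant state $(1,0)$ in fact reduces to precisely your combination of mass conservation and the energy inequality). The paper's route has the advantage of requiring no separate justification of constant test functions and of fitting the relative-energy framework used throughout Sections \ref{sec:incomp-cont-limit} and \ref{sec:lim-analys-scheme}.
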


\begin{proof}
    For a strong solution $(r_\veps,\bU_\veps)$ of the compressible Euler equations belonging to the class \eqref{eqn:baro-wk-strg-fun-cls}, one can obtain the following relative energy inequalities, see \cite{FLM+21a} for a detailed exposition of the same. 
    \begin{enumerate}
        \item $E_{rel}(\vrho_\veps,\bu_\veps\,\vert\, r_\veps,\bU_\veps)(t)\leq E_{rel}(\vrho_\veps, \bu_\veps\,\vert\, r_\veps, \bU_\veps)(0)$ where $(\vrho_\veps,\bu_\veps)$ is an admissible weak solution of the compressible Euler system. 
        \item  $E_{rel}(\mcv^\veps\,\vert\,r_\veps, \bU_\veps)(t) + \D^\veps(t) \leq E_{rel}(\mcv^\veps\,\vert\,r_\veps, \bU_\veps)(0)$ where $\mcv^\veps = \lbrace\V^\veps\rbrace_{(t,x)\in\odom}$ is a DMV solution of the compressible Euler system.
    \end{enumerate}
    Then, noting that $(r_\veps, \bU_\veps) = (1,0)$ is a strong solution of the system \eqref{eqn:mss-bal-baro}-\eqref{eqn:ini-data-baro}, one immediately obtains \eqref{eqn:baro-ent-pig} and \eqref{eqn:dmv-baro-ent-pig} by setting $r_\veps = 1$ and $\bU_\veps = 0$ in the above relative energy inequalities respectively.
\end{proof}

\begin{remark}
\label{rem:pig-bound}
    The additional entropy estimate presented in the above lemma is quite important, as it allows us to deduce the necessary apriori bounds on the density via the bounds on the relative internal energy $\pig$, enabling us to prove the convergence of $\vrho_\veps\to 1$ as $\veps\to 0$. 
\end{remark}

\subsection{Velocity Stabilization}
\label{subsec:vel-stab}

In order to enforce energy stability at a numerical level, we adopt the formalism introduced in \cite{CDV17,DVB17,DVB20,GVV13,PV16} wherein a stabilization term in the form of a shifted velocity is introduced in the convective fluxes of the mass and momenta, to yield the following modified system:
\begin{subequations}
\begin{align}
    &\Dt\vrho_\veps+\Div(\vrho_\veps(\bu_\veps-\du_\veps))=0 \label{eqn:mod-mss-bal-baro},\\
    &\Dt(\vrho_\veps\bu_\veps)+\Div(\vrho_\veps\bu_\veps\otimes(\bu_\veps-\du_\veps))+\frac{1}{\veps^2}\grad p_\veps=0. \label{eqn:mod-mom-bal-baro}
\end{align}
\end{subequations}

Analogous to Proposition \ref{prop:apriori-enrg-est}, we can show that the solutions to the modified system satisfy the following a priori energy estimates. 
\begin{proposition}[A priori energy estimates for the modified system]
    \label{prop:mod-enrg-est}
    Classical solutions of \eqref{eqn:mod-mss-bal-baro}-\eqref{eqn:mod-mom-bal-baro} satisfy
    \begin{enumerate}
        \item a renormalization identity:
        \begin{equation}
            \label{eqn:mod-renorm-idt}
            \Dt\psig(\vrho_\veps)+\Div(\psig(\vrho_\veps)(\bu_\veps-\du_\veps))+p_\veps\,\Div(\bu_\veps-\du_\veps)\,=\,0;
        \end{equation}

        \item a positive renormalization identity:
        \begin{equation}
        \label{eqn:mod-pos-renom}
            \Dt\pig(\vrho_\veps) + \Div((\psig(\vrho_\veps) - \psig^\prime(1)\vrho_\veps)(\bu_\veps-\du_\veps)) + p_\veps\Div(\bu_\veps-\du_\veps) = 0
        \end{equation}

        \item a kinetic energy identity: 
        \begin{equation}
        \label{eqn:mod-ke-idt}
            \Dt\biggl(\half\vrho_\veps\abs{\bu_\veps}^2\biggr)+\Div\biggl(\half\vrho_\veps\abs{\bu_\veps}^2(\bu_\veps-\du_\veps)\biggr)+\frac{1}{\veps^2}\grad p_\veps\cdot(\bu_\veps - \du_\veps) = -\frac{1}{\veps^2}\grad p_\veps\cdot\du_\veps .    
        \end{equation}
        
        \item the energy balance:
        \begin{equation}
        \label{eqn:mod-ent-idt}
        \begin{split}
            \Dt\biggl(\half\vrho_\veps\abs{\bu_\veps}^2 + \frac{1}{\veps^2}\pig(\vrho_\veps)\biggr) + \Div\biggl(\biggl(\half\vrho_\veps\abs{\bu_\veps}^2 +&\frac{1}{\veps^2}\biggl(\psig(\vrho_\veps) - \psig^\prime(1)\vrho_\veps + p_\veps\biggr)\biggr)(\bu_\veps-\du_\veps)\biggr) \\
            &= -\frac{1}{\veps^2}\grad p_\veps\cdot\du_\veps.
        \end{split}
        \end{equation}
    \end{enumerate}
\end{proposition}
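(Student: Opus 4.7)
The plan is to mirror the derivation of Proposition~\ref{prop:apriori-enrg-est}, carefully tracking the extra terms produced by the shifted advection velocity. Writing $\bw:=\bu_\veps-\du_\veps$ for brevity, the modified system reads $\Dt\vrho_\veps+\Div(\vrho_\veps\bw)=0$ together with $\Dt(\vrho_\veps\bu_\veps)+\Div(\vrho_\veps\bu_\veps\otimes\bw)+\veps^{-2}\grad p_\veps=0$. All four identities will follow from (i) renormalizing the mass balance with $\psig'$ and with $\pig'$, (ii) recasting the momentum equation in non-conservative form and testing against $\bu_\veps$, and (iii) assembling the resulting balances, with the splitting $\bu_\veps=\bw+\du_\veps$ used to isolate the source term on the right-hand side.

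For identities \eqref{eqn:mod-renorm-idt} and \eqref{eqn:mod-pos-renom}, I would multiply the mass balance by $\psig'(\vrho_\veps)$ and by $\pig'(\vrho_\veps)=\psig'(\vrho_\veps)-\psig'(1)$, respectively, and then use the product-rule identity $\psig'(\vrho_\veps)\Div(\vrho_\veps\bw)=\Div(\psig(\vrho_\veps)\bw)+[\vrho_\veps\psig'(\vrho_\veps)-\psig(\vrho_\veps)]\Div\bw$, together with the elementary algebraic relation $\vrho\psig'(\vrho)-\psig(\vrho)=p(\vrho)$ that follows directly from the definition \eqref{eqn:pres-pot}. This yields \eqref{eqn:mod-renorm-idt} at once, and for \eqref{eqn:mod-pos-renom} the extra contribution $-\psig'(1)\Div(\vrho_\veps\bw)$ recombines with $\Div(\psig(\vrho_\veps)\bw)$ to produce the advertised flux $\Div((\psig(\vrho_\veps)-\psig'(1)\vrho_\veps)\bw)$, while the source $p_\veps\,\Div\bw$ is untouched because the linear term $-\psig'(1)\vrho$ in $\pig$ contributes only to the flux.

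For the kinetic energy identity \eqref{eqn:mod-ke-idt}, I would first pass the momentum equation to non-conservative form by expanding $\Dt(\vrho_\veps\bu_\veps)$ and $\Div(\vrho_\veps\bu_\veps\otimes\bw)$ and eliminating the common factor $\bu_\veps[\Dt\vrho_\veps+\Div(\vrho_\veps\bw)]$ via mass conservation, arriving at $\vrho_\veps\Dt\bu_\veps+\vrho_\veps(\bw\cdot\grad)\bu_\veps+\veps^{-2}\grad p_\veps=0$. Dotting with $\bu_\veps$, recombining the kinetic terms into flux form using the mass balance once more, and applying the splitting $\bu_\veps\cdot\grad p_\veps=\bw\cdot\grad p_\veps+\du_\veps\cdot\grad p_\veps$ in the pressure term produces \eqref{eqn:mod-ke-idt} with the residual $-\veps^{-2}\grad p_\veps\cdot\du_\veps$ on the right. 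Finally, the total energy balance \eqref{eqn:mod-ent-idt} is obtained by adding \eqref{eqn:mod-pos-renom} scaled by $1/\veps^2$ to \eqref{eqn:mod-ke-idt} and collapsing $p_\veps\,\Div\bw+\grad p_\veps\cdot\bw=\Div(p_\veps\bw)$ into the common flux.

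The main bookkeeping obstacle is to ensure that after all cancellations the shift $\du_\veps$ appears solely as the source $-\veps^{-2}\grad p_\veps\cdot\du_\veps$, without any spurious term involving $\Div\du_\veps$ surviving in either the flux or the residual. This works out cleanly because $\du_\veps$ never enters the renormalization arguments in (i)--the mass balance sees only the combined field $\bw$--and it is introduced only at the very end of step (ii) through the splitting of $\bu_\veps$ in the pressure--velocity pairing, so no derivative of $\du_\veps$ is ever generated along the way.
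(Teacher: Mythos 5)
Your proposal is correct and follows exactly the standard derivation the paper has in mind (the paper states this proposition without proof, simply as "analogous to Proposition \ref{prop:apriori-enrg-est}"): renormalizing the mass balance with $\psig'$ and $\pig'$ using $\vrho\psig'(\vrho)-\psig(\vrho)=p(\vrho)$, testing the non-conservative momentum equation against $\bu_\veps$, splitting $\bu_\veps=\bw+\du_\veps$ only in the pressure--velocity pairing, and summing with the collapse $p_\veps\Div\bw+\grad p_\veps\cdot\bw=\Div(p_\veps\bw)$. All four identities check out, including the key bookkeeping point that $\du_\veps$ survives only as the source $-\veps^{-2}\grad p_\veps\cdot\du_\veps$.
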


Therefore, at least at the continuous level, formally setting $\du_\veps\,=\,\eta\grad p_\veps\,,\,\eta>0$, gives us
\begin{equation}
    \label{eqn:mod-ent-ineq}
    \begin{split}
    \Dt\biggl(\half\vrho_\veps\abs{\bu_\veps}^2 + \frac{1}{\veps^2}\pig(\vrho_\veps)\biggr) + \Div\biggl(\biggl(\half\vrho_\veps\abs{\bu_\veps}^2 +&\frac{1}{\veps^2}\biggl(\psig(\vrho_\veps) - \psig^\prime(1)\vrho_\veps + p_\veps\biggr)\biggr)(\bu_\veps-\du_\veps)\biggr)\\ 
    &= -\frac{\eta}{\veps^2}\abs{\grad p_\veps}^2\leq 0.
    \end{split}
\end{equation}
We thus observe that introducing a shift in the velocity indeed has a stabilizing effect, allowing us to obtain the desired energy inequality, cf.\ \eqref{eqn:baro-entropy}, which we replicate at the discrete level.

\section{Incompressible Euler System}
\label{sec:incomp-euler}

The incompressible Euler equations in $\odom$ can be formally viewed as the asymptotic limit of \eqref{eqn:mss-bal-baro}-\eqref{eqn:mom-bal-baro} as $\veps\to 0$; see \cite{KM82, Maj84} for more details. The system is given by 
\begin{subequations}
    \begin{align}
        \Div\bv = 0, \label{eqn:div-free-incomp}\\
        \Dt\bv +\Div(\bv\otimes\bv) + \grad\pi  = 0, \label{eqn:mom-bal-incomp} \\
        \bv(0,\cdot) = \bv_0,\quad \Div\bv_0 = 0. \label{eqn:incomp-ini-data}
    \end{align}
\end{subequations}
Here, $\pi$ denotes the incompressible pressure and it is the formal limit of $(p(\vrho_\veps)-1)/\veps^2$. Analogously, we denote system \eqref{eqn:div-free-incomp}-\eqref{eqn:mom-bal-incomp} by $\pepso$ and with some abuse of notation, $\displaystyle\lim_{\veps\to 0}\peps = \pepso$

We now recall from Kato \cite{Kat72, KL84}, see also the monograph by Majda \cite{Maj84}, the fundamental result which asserts the existence and uniqueness of a classical solution to $\pepso$. 

\begin{theorem}
\label{thm:incomp-soln-exis}
    Let $\bv_0\in H^k(\Omega;\R^d)$, $k>d/2+1$, with $\Div\bv_0 = 0$. Then, there exists a $T_{max}>0$ such that \eqref{eqn:div-free-incomp}-\eqref{eqn:mom-bal-incomp} admits a classical solution $\bv\in C^1(\lbrack 0, T_{max}\rbrack\times\Omega)$, which is unique in the class 
    \[
    \bv\in C(\lbrack 0, T_{max}\rbrack; H^k(\Omega;\R^d))\cap C^1(\lbrack 0, T_{max}\rbrack; H^{k-1}(\Omega;\R^d)).
    \]
    The incompressible pressure $\pi$ can be recovered from the elliptic problem 
    \[
    -\Delta_x\pi = \Div(\Div(\bv\otimes\bv)).
    \]
    Furthermore, if $d=2$, then the existence is global, i.e.\ $T_{max} = \infty$.
\end{theorem}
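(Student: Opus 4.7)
The statement is Kato's classical local existence theorem \cite{Kat72}, so my plan is to follow the standard Kato strategy adapted to the periodic domain $\mathbb{T}^d$. The first step is to remove the pressure by applying the Leray--Helmholtz projector $\mathbb{P}$ onto divergence-free vector fields, which recasts \eqref{eqn:div-free-incomp}--\eqref{eqn:mom-bal-incomp} as the abstract evolution equation
\[
\Dt \bv + \mathbb{P}\,\Div(\bv\otimes\bv) \,=\, 0, \qquad \bv(0)=\bv_0,
\]
in the divergence-free subspace of $H^k(\Omega;\R^d)$. Since $k>d/2+1$, the Sobolev embedding $H^k\hookrightarrow C^1$ ensures that any solution obtained in this class is automatically a classical solution in the sense of the theorem. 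To produce such a solution on a short time interval, I would use either a Galerkin truncation (on $\mathbb{T}^d$, simply a low-frequency Fourier cutoff) or a Picard iteration on the mild form of the equation, and pass to the limit using the uniform estimate below.

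The main analytic content is the $H^k$ energy estimate. Applying a multi-index derivative $\partial^\alpha$ with $|\alpha|\leq k$ to the equation and pairing with $\partial^\alpha\bv$ in $L^2$, the key cancellation
\[
\int_{\Omega}(\bv\cdot\grad\partial^\alpha\bv)\cdot\partial^\alpha\bv\,\dx=0,
\]
valid because $\Div\bv=0$, removes the top-order transport term. The remaining commutator is controlled by the Kato--Ponce estimate
\[
\norm{\partial^\alpha(\bv\cdot\grad\bv)-\bv\cdot\grad\partial^\alpha\bv}_{L^2}\leq C\bigl(\norm{\grad\bv}_{L^\infty}\norm{\bv}_{H^k}+\norm{\bv}_{L^\infty}\norm{\grad\bv}_{H^k}\bigr),
\]
together with the boundedness of $\mathbb{P}$ on $H^k$ and the embedding $H^k\hookrightarrow W^{1,\infty}$. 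This yields the differential inequality $\frac{d}{dt}\norm{\bv}_{H^k}^2\leq C\norm{\bv}_{H^k}^3$, and Gr\"onwall gives a local-in-time uniform bound, hence a lifespan $T_{max}>0$. Uniqueness in the stated class follows from a plain $L^2$-estimate on the difference $\bw=\bv_1-\bv_2$ of two solutions, using that $\norm{\grad\bv_i}_{L^\infty}$ is bounded on $[0,T_{max}]$ and Gr\"onwall again. Taking the divergence of \eqref{eqn:mom-bal-incomp} and using \eqref{eqn:div-free-incomp} delivers the Poisson equation $-\Delta_x\pi=\Div\Div(\bv\otimes\bv)$, uniquely solvable modulo constants on $\mathbb{T}^d$ by Fourier methods, which recovers $\pi$.

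For the global existence in $d=2$, I would switch to the vorticity formulation. Taking the curl of the momentum equation and using $\Div\bv=0$, the scalar vorticity $\omega=\mathrm{curl}\,\bv$ satisfies the pure transport equation $\Dt\omega+\bv\cdot\grad\omega=0$, so $\norm{\omega(t,\cdot)}_{L^p}=\norm{\omega_0}_{L^p}$ for every $1\leq p\leq\infty$ along the flow. The Beale--Kato--Majda continuation criterion then shows that the solution can be extended as long as $\int_0^t\norm{\omega(s,\cdot)}_{L^\infty}\,\mathrm{d}s<\infty$, which in two dimensions is automatic, yielding $T_{max}=\infty$. The main obstacle throughout is the apparent loss of one derivative in the nonlinearity $\bv\cdot\grad\bv$: both summands in the Kato--Ponce bound must ultimately be absorbed into $\norm{\bv}_{H^k}$, and it is precisely the cancellation provided by incompressibility that makes the estimate close. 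The non-locality of $\mathbb{P}$, although harmless here since $\mathbb{P}$ is of order zero and commutes with constant-coefficient differentiations on the torus, requires additional care when passing from the Galerkin or Picard approximants to the true solution, particularly to secure strong convergence of the nonlinear term.
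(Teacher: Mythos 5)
The paper does not prove this theorem itself — it is recalled verbatim from Kato \cite{Kat72, KL84} and Majda \cite{Maj84} — and your sketch is precisely the standard Kato argument those references use: Leray projection, $H^k$ energy estimates closed via the Kato--Ponce commutator bound and the divergence-free cancellation, $L^2$ uniqueness, pressure recovery from the Poisson equation, and global existence in $d=2$ from vorticity transport plus the Beale--Kato--Majda criterion. The proposal is correct and takes essentially the same route as the cited sources the paper relies on.
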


\subsection{Energy Admissible Weak Solutions}
\label{subsec:incomp-wk-soln}

A function $\bv\in L^2_{loc}(\odom;\R^d)$ with $\bv(0,\cdot) = \bv_0$ is a weak solution of $\pepso$ if 
\begin{equation}
\label{eqn:div-free-wk}
    \int_\Omega \bv(t,\cdot)\cdot\grad\varphi\,\dx = 0,
\end{equation}
for a.e.\ $t\in (0,T)$ and $\varphi\in \Cinf(\Omega)$;
\begin{equation}
\label{eqn:mom-bal-incomp-wk}
\int_0^T\int_\Omega\lbrack \bv\cdot\Dt\uphi + (\bv\otimes\bv)\colon\grad\uphi\rbrack\,\dx\,\dt, + \int_\Omega \bv_0\cdot\uphi(0,\cdot)\,\dx = 0,
\end{equation}
for any $\uphi\in\Cinf(\lbrack 0,T)\times\Omega;\R^d)$ with $\Div\uphi = 0$.

Further, the weak solution $\bv$ is termed energy admissible if 
\begin{equation}
\label{eqn:ent-cond-incomp}
   \half\int_\Omega\abs{\bv(t,\cdot)}^2\,\dx \leq \half\int_\Omega\abs{\bv_0}^2\,\dx,
\end{equation}
for a.e.\ $t\in (0,T)$.

\subsection{Dissipative Measure-Valued Solutions}
\label{subsec:dmv-incomp}

In the case of the incompressible system, the phase space $\F_{incomp} = \R^d$ and analogous to \eqref{eqn:phs-spc-baro}, we denote by $\tbv$ the generic identity element of $\F_{incomp}$. We follow \cite{GWW23} in defining the notion of a DMV solution to the incompressible system.

\begin{definition}[Dissipative measure-valued solution]
\label{def:dmv-incomp}
    A family of probability measures $\mcu = \lbrace\U\rbrace_{(t,x)\in\odom}\in L^\infty_{weak-*}(\odom;\Pro(\F_{incomp}))$ is a DMV solution of $\pepso$ with initial data $\bv_0$ if the following hold:
    \begin{itemize}
        \item \textbf{Energy inequality} - the integral inequality 
        \begin{equation}
        \label{eqn:enrg-ineq-incomp}
            \half\int_{\Omega}\langle\U; \abs{\tbv}^2\rangle\,\dx + \int_{\Omega}\mathrm{d}\mathfrak{D}_{cd}(t) \leq \half\int_{\Omega}\abs{\bv_0}^2\,\dx 
        \end{equation}
        holds for a.e.\ $t\in (0,T)$ with $\mathfrak{D}_{cd}\in L^\infty(0,T;\M^+(\overline{\Omega}))$;

        \item \textbf{Divergence-free condition} - the integral identity 
        \begin{equation}
        \label{eqn:div-free-dmv}
            \int_\Omega\langle\U;\tbv\rangle\cdot\grad\varphi\,\dx = 0
        \end{equation}
        holds for a.e.\ $t\in (0,T)$ and $\varphi\in\Cinf(\Omega)$;

        \item \textbf{Momentum equation} - the integral identity
        \begin{equation}
        \label{eqn:incomp-mom-dmv}
            \int_0^T\int_\Omega\langle\lbrack\U; \tbv\rangle\cdot\Dt\uphi + \langle\U; \tbv\otimes\tbv\rangle\colon\grad\uphi\rbrack\,\dx\,\dt + \int_0^T\int_\Omega\grad\uphi\colon\mathrm{d}\mathfrak{M}_{cd}(t)\,\dt + \int_\Omega\bv_0\cdot\uphi(0,\cdot)\,\dx = 0
        \end{equation}
        holds for all $\uphi\in\Cinf(\lbrack 0,T)\times\Omega;\R^d)$ with $\Div\uphi = 0$, with $\mathfrak{M}_{cd}\in L^\infty\bigl(0,T;\M\bigl(\overline{\Omega};\R^{d\times d}\bigr)\bigr)$;

        \item \textbf{Defect compatibility condition} - there exists $c>0$ such that 
        \begin{equation}
        \label{eqn:incomp-dmv-defect}
            \abs{\mathfrak{M}_{cd}(t)}(\overline{\Omega})\leq c\D(t)
        \end{equation}
        holds for a.e.\ $t\in(0,T)$, where $\D(t) = \int_\Omega \mathrm{d}\mathfrak{D}_{cd}(t)>0\in L^\infty(0,T)$ is the dissipation defect.
    \end{itemize}
\end{definition}

\begin{remark}
\label{rem:incomp-cd-form}
    There is a slight difference between the formulation presented in Definition \ref{def:dmv-incomp} and the one available in \cite{GWW23}. However, both are consistent with each other due to the fact that any DMV solution to the incompressible Euler system is generated by a sequence of energy admissible weak solutions $\lbrace \bv_n\rbrace_{n\in\N}$ of the incompressible system; see \cite[Theorem 1]{SW12}. We can then write
        \begin{align*}
            &\mathfrak{D}_{cd}(t) = \overline{\half\abs{\bv}^2}(t,\cdot) - \half\langle\U;\abs{\tbv}^2\rangle, \\
            &\mathfrak{M}_{cd}(t) = \overline{\bv\otimes\bv}(t,\cdot) - \langle\U; \tbv\otimes\tbv\rangle.
        \end{align*}
    where $\overline{\half\abs{\bv}^2}$ is the weak-star limit of $\bigl\lbrace\half\abs{\bv_n}^2\bigr\rbrace_{n\in\N}$ in $L^\infty(0,T;\M^{+}(\overline{\Omega}))$ and $\overline{\bv\otimes\bv}$ is the weak-star limit of $\lbrace\bv_n\otimes\bv_n\rbrace_{n\in\N}$ in $L^\infty\bigl(0,T;\M\bigl(\overline{\Omega};\R^{d\times d}\bigr)\bigr)$. Further, we can also observe that 
    \[
    \overline{\half\abs{\bv}^2}(t,\cdot) = \half\mathrm{tr}\bigl(\overline{\bv\otimes\bv}(t,\cdot)\bigr);\, \langle\U;\abs{\tbv}^2\rangle = \mathrm{tr}\bigl(\langle\U;\tbv\otimes\tbv\rangle\bigr)
    \]
    and as such, we obtain $\mathfrak{D}_{cd}(t) = \half\mathrm{tr}\bigl(\mathfrak{M}_{cd}(t)\bigr)$. One can also represent these defect measures in terms of the recession functions associated to $\abs{\bv}^2$ and $\bv\otimes\bv$, see \cite{AB97, SW12} for further details. 
\end{remark}

\begin{remark}
\label{rem:comp-cd-form}
    In the case of the compressible Euler system, there are DMV solutions which are not generated by a sequence of admissible weak solutions; see \cite{CFK+17, GW21}. However, any sequence of energy stable and consistent approximations to the compressible Euler system will always generate a compressible DMV solution. Examples of structure-preserving, energy stable schemes that are consistent with the compressible Euler system are presented in \cite{FLM+21a}. In this case, one obtains a similar representation for the defect measures as presented in Remark \ref{rem:incomp-cd-form}.
\end{remark}

\subsection{Weak-Strong Uniqueness}
\label{subsec:wk-strong-incomp}

Analogous to Theorem \ref{thm:wk-str-uniq}, one has a similar result for the incompressible system, which is proven using the tool of relative energy. In case of the incompressible system, the relative energy written in measure-valued form reads
\[
E_{rel}(\mcu\,\vert\,\bU)(t) = \half\int_\Omega\langle\U;\abs{\tbv - \bU}^2\rangle\,\dx.
\]
Here, $\bU$ is a test function that will be later replaced by a classical solution of the incompressible system.

We now state the weak-strong uniqueness principle for the incompressible system; see \cite{GWW23, Wei18} for details. 

\begin{theorem}[Weak-Strong Uniqueness Principle]
\label{thm:wk-str-incomp}
    Let $\mcu = \lbrace\U\rbrace_{(t,x)\in\odom}$ be a DMV solution of the incompressible Euler system with concentration defect measures $\mathfrak{D}_{cd}$ and $\mathfrak{M}_{cd}$ and initial data $\bv_0$ in the sense of Definition \ref{def:dmv-incomp}. Let $\bU\in C^1(\dom;\R^d)$ be a classical solution of \eqref{eqn:div-free-incomp}-\eqref{eqn:mom-bal-incomp} such that $\bU(0,\cdot) = \bv_0$. 
    
    Then 
    \[
    \U = \delta_{\textbf{U}(t,x)}
    \]
    for a.e. $(t,x)\in\odom$ and 
    \[
    \mathfrak{D}_{cd} = \mathfrak{M}_{cd} = 0.
    \]
\end{theorem}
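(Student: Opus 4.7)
My plan is to derive a relative energy inequality for the DMV solution $\mcu$ with respect to the classical pair $\bU$, and then close a Gronwall loop to force the relative energy to vanish. The guiding algebraic decomposition
\begin{equation*}
E_{rel}(\mcu\,\vert\,\bU)(t) \,=\, \half\int_\Omega\langle\U;\abs{\tbv}^2\rangle\,\dx \,-\, \int_\Omega\langle\U;\tbv\rangle\cdot\bU(t,\cdot)\,\dx \,+\, \half\int_\Omega\abs{\bU(t,\cdot)}^2\,\dx
\end{equation*}
tells me to control each of the three terms separately. The first is already dominated by the DMV energy inequality \eqref{eqn:enrg-ineq-incomp}. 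The second I would produce by feeding $\bU$ into the DMV momentum identity \eqref{eqn:incomp-mom-dmv}, which is admissible because $\Div\bU = 0$, after a standard in-time cut-off/mollification that reduces an arbitrary $\tau\in[0,T]$ to a genuine $\Cinf$ test field. The third is purely classical: testing \eqref{eqn:mom-bal-incomp} against $\bU$ itself and exploiting $\Div\bU = 0$ kills the pressure term and yields the conservation identity $\int_\Omega\abs{\bU(t,\cdot)}^2\,\dx = \int_\Omega\abs{\bv_0}^2\,\dx$.

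Combining the three pieces, and then regrouping the convective contributions via the pointwise algebraic identity
\begin{equation*}
\langle\U;\tbv\otimes\tbv\rangle \,=\, \langle\U;(\tbv-\bU)\otimes(\tbv-\bU)\rangle \,+\, \langle\U;\tbv\rangle\otimes\bU \,+\, \bU\otimes\langle\U;\tbv\rangle \,-\, \bU\otimes\bU,
\end{equation*}
and using \eqref{eqn:div-free-dmv} together with $\Div\bU = 0$ to kill the cross and pure-$\bU$ pieces, I expect to arrive at
\begin{equation*}
E_{rel}(\mcu\,\vert\,\bU)(t) + \D(t) \,\leq\, E_{rel}(\mcu\,\vert\,\bU)(0) \,-\, \int_0^t\!\int_\Omega\grad\bU\colon\langle\U;(\tbv-\bU)\otimes(\tbv-\bU)\rangle\,\dx\,\mathrm{d}s \,-\, \int_0^t\!\int_\Omega\grad\bU\colon\mathrm{d}\mathfrak{M}_{cd}(s)\,\mathrm{d}s.
\end{equation*}
The first remainder is dominated by $\norm{\grad\bU}_{L^\infty}\int_0^t E_{rel}(\mcu\,\vert\,\bU)(s)\,\mathrm{d}s$, since $(\tbv-\bU)\otimes(\tbv-\bU)$ is positive semidefinite and its trace equals $\abs{\tbv-\bU}^2$, while the defect remainder is bounded by $c\norm{\grad\bU}_{L^\infty}\int_0^t\D(s)\,\mathrm{d}s$ via the compatibility condition \eqref{eqn:incomp-dmv-defect}; both quantities are finite because $\bU\in C^1(\dom;\R^d)$.

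For the initial datum, applying Jensen's inequality to the first-moment identity $\langle\mathcal{U}_{0,x};\tbv\rangle = \bv_0(x)$ (read off from \eqref{eqn:incomp-mom-dmv} against the initial energy inequality) forces $\langle\mathcal{U}_{0,x};\abs{\tbv}^2\rangle = \abs{\bv_0(x)}^2$ a.e., so $\mathcal{U}_{0,x} = \delta_{\bv_0(x)}$, $\mathfrak{D}_{cd}(0) = 0$, and hence $E_{rel}(\mcu\,\vert\,\bU)(0) = 0$ since $\bU(0,\cdot) = \bv_0$. A standard Gronwall argument on $t\mapsto E_{rel}(\mcu\,\vert\,\bU)(t) + \D(t)$ then yields $E_{rel}(\mcu\,\vert\,\bU)\equiv 0$ and $\D\equiv 0$. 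The former collapses the support of $\U$ onto $\bU(t,x)$, giving $\U = \delta_{\bU(t,x)}$, and the latter combined with \eqref{eqn:incomp-dmv-defect} forces $\mathfrak{M}_{cd} = 0$, while $\mathfrak{D}_{cd} = 0$ follows directly from $\D \equiv 0$. The main technical obstacle I foresee is the rigorous justification of $\bU$ as a test function in \eqref{eqn:incomp-mom-dmv}, specifically the time-localization converting the global identity into one evaluated at an arbitrary $\tau$, together with the delicate handling of $\int\grad\bU\colon\mathrm{d}\mathfrak{M}_{cd}$, which requires sufficient measurability and weak continuity of $\mathfrak{M}_{cd}$ and $\D$ in time to legitimize the Gronwall step.
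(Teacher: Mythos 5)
The paper does not prove this theorem itself — it recalls the statement from \cite{GWW23, Wei18} and merely introduces the relative energy functional $E_{rel}(\mcu\,\vert\,\bU)$ as the intended tool. Your proposal reconstructs exactly that standard relative-energy/Gronwall argument (expansion of $E_{rel}$, testing the DMV momentum identity with $\bU$, cancellation of the cross terms via the divergence-free conditions, control of the quadratic remainder by $\norm{\grad\bU}_{L^\infty}E_{rel}$ and of the defect term via \eqref{eqn:incomp-dmv-defect}), and it is essentially correct; the only delicate points — time-localization of \eqref{eqn:incomp-mom-dmv} to an arbitrary $\tau$, approximating the $C^1$ field $\bU$ by smooth divergence-free test functions, and the concentration of the initial Young measure forcing $E_{rel}(0^+)=0$ — are precisely the ones you flag, and they are handled by standard arguments in the cited references.
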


\section{Incompressible Limit in the Continuous Case}
\label{sec:incomp-cont-limit}
In this section, we give an overview of the limit $\veps\to 0$ of $\peps$. We begin by defining the notion of `well-prepared' initial data.
\begin{definition}[Well-prepared Initial Data] 
\label{def:well-prep}
    The family of initial data $\lbrace(\vrho_{\epso}, \bu_{\epso})\rbrace_{\veps>0}$ of the compressible Euler equations is said to be well prepared if $\vrho_\epso > 0$ a.e.\ in $\Omega$ for every $\veps>0$ and if the following hold:
    \begin{equation}
    \label{eqn:well-prep}
        \begin{split}
            &\frac{1}{\veps^2}\norm{\vrho_\epso - 1}_{L^\infty(\Omega)}\leq C, \\
            &\bu_{\epso}\to \bv_0\text{ in }L^2(\Omega;\R^d)\text{ as }\veps\to 0\,,\,\Div\bv_0 = 0,
        \end{split}
    \end{equation}
    where $C>0$ is a constant independent of $\veps$.
\end{definition}

\subsection{Asymptotic Limit of Weak Solutions}
\label{subsec:asymp-lim-wk}

In what follows, we assume the existence of a family of admissible weak solutions $\lbrace(\vrho_\veps, \bu_\veps)\rbrace_{\veps > 0}$ to the compressible Euler equations, as defined in Section \ref{subsec:baro-wk-form}, emanating from the family of well-prepared initial data $\lbrace(\vrho_\epso,\bu_\epso)\rbrace_{\veps>0}$. As we assume that the weak solutions are admissible, they satisfy
\[
\int_\Omega\biggl\lbrack\half\vrho_\veps\lvert\bu_\veps\rvert^2 + \frac{1}{\veps^2}\pig(\vrho_\veps)\biggr\rbrack(t,\cdot)\,\dx \leq \int_\Omega\biggl\lbrack\half\vrho_\epso\lvert\bu_\epso\rvert^2 + \frac{1}{\veps^2}\pig(\vrho_\epso)\biggr\rbrack\,\dx,
\]
cf.\ Lemma \ref{lem:eng-est-pig}. The initial data being well-prepared, along with some estimates on $\pig$ (cf.\ \cite{Lio96}), will give a uniform bound with respect to $\veps$ for the term on the right hand side of the above inequality. As a consequence of the above energy estimate, one can show that $\vrho_\veps\to 1$ in $L^\infty(0,T;L^r(\Omega))$ for any $r\in\lbrack 1, \min\lbrace 2,\gamma\rbrace\rbrack$. Further, one also obtains the existence of $\bv\in L^\infty(0,T;L^2(\Omega;\R^d))$ such that $\bu_\veps\weakstar \bv$ in $L^\infty(0,T;L^2(\Omega;\R^d))$ and $\bv$ is a weak solution of $\pepso$ with $\bv(0,\cdot) = \bv_0$; see \cite{HLS21, KM82, LM98, Sco94} and the references therein for further details.

\subsection{Asymptotic Limit of DMV solutions}
\label{subsec:asymp-lim-DMV}

In what follows, we assume the existence of a family of DMV solutions $\lbrace\mcv^\veps\rbrace_{\veps>0}$ to the compressible Euler equations, in the sense of Definition \ref{def:dmv-baro}, emanating from the family of well-prepared initial data $\lbrace(\vrho_\epso, \bu_\epso)\rbrace_{\veps>0}$. As a consequence, the DMV solutions will also satisfy the estimate \eqref{eqn:dmv-baro-ent-pig}, cf.\ Lemma \ref{lem:eng-est-pig}.

Feireisl et al.\ in \cite{FKM19} proved that as $\veps\to 0$, the DMV solutions $\mcv^\veps$ will converge to a classical solution $\bv$ of the incompressible Euler system, provided the hypothesis of Theorem \ref{thm:incomp-soln-exis} holds. We briefly recall the arguments here for the sake of completion.

Due to Theorem \ref{thm:incomp-soln-exis}, we have the existence of a classical solution $\bv$ to the incompressible Euler equations and we note that $r = 1$ and $\bU=\bv$ satisfy the regularity requirements of the test functions in \eqref{eqn:rel-ent-mv}. As a consequence, one obtains the following relative energy inequality 
\begin{equation}
\label{eqn:rel-ent-ineq}
    \begin{split}
        \Bigl\lbrack E_{rel}(\mcv^\veps\vert 1,\bv)\Bigr\rbrack_{t=0}^{t=\tau} + \D^\veps(\tau) &\leq \int_0^\tau\int_\Omega\biggl\lbrack\langle\V^\veps; \tvrho\bv - \tbm\rangle\cdot\Dt\bv + \biggl\langle\V^\veps; (\tvrho\bv - \tbm)\otimes\frac{\tbm}{\tvrho}\biggr\rangle\colon\grad\bv\biggr\rbrack\,\dx\,\dt \\
        & - \int_0^\tau\int_\Omega\grad\bv\colon\mathrm{d}\mathfrak{R}^\veps_{cd}(t),
    \end{split}
\end{equation}
see \cite{BF18a, FKM19, FLM+21a} for more details. 

Using some energy estimates arising because of $\mcv^\veps$ being a DMV solution of the compressible system, while also noting that $\bv$ is a classical solution of the incompressible system, yields
\[
E_{rel}(\mcv^\veps\,\vert\,1,\bv)(\tau) + \D^\veps(\tau)\leq C\int_0^\tau\Bigl\lbrack E_{rel}(\mcv^\veps\,\vert\,1,\bv)(t) + \D^\veps(t)\Bigr\rbrack\,\dt.
\]
where $C>0$. Application of Gronwall's Lemma leads to the following result, the statement of which we recall from \cite{FKM19}.

\begin{theorem}[Asymptotic Limit of DMV Solutions]
\label{thm:asymp-lim-dmv}
    Let $\lbrace\mcv^\veps\rbrace_{\veps>0}$ be a family of DMV solutions to the compressible Euler system \eqref{eqn:mss-bal-baro}-\eqref{eqn:mom-bal-baro} in the sense of Definition \ref{def:dmv-baro}, emanating from the family of well-prepared initial data $\lbrace(\vrho_\epso, \bu_\epso)\rbrace_{\veps>0}$. Let $\bv_0\in H^k(\Omega;\R^d)$ and $\Div\bv_0 = 0$, $k>d/2+1$ and suppose $T<T_{max}$, where $T_{max}$ denotes the life span of the classical solution $\bv$ to the incompressible Euler system \eqref{eqn:div-free-incomp}-\eqref{eqn:mom-bal-incomp} with initial data $\bv_0$.

    Then 
    \[
    \D^\veps\to 0\text{ in }L^\infty(0,T)\text{ as }\veps\to 0
    \]
    and
    \[
    \essup_{t\in (0,T)}E_{rel}(\mcv^\veps\,\vert\,1,\bv)(t)\to 0\text{ as }\veps\to 0.
    \]
\end{theorem}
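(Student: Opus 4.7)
The strategy is to compare the DMV family $\mcv^\veps$ against the target classical solution $\bv$ of the incompressible Euler system via the measure-valued relative energy functional \eqref{eqn:rel-ent-mv}, following the Dafermos-type argument adapted to the compressible/incompressible regime by Feireisl, Klingenberg and Markfelder. The key observation is that the pair $(r,\bU)=(1,\bv)$ is admissible as a test pair in \eqref{eqn:rel-ent-mv}: indeed $r\equiv 1$ is trivially in $W^{1,\infty}$ and bounded away from zero, while $\bv\in C^1(\dom;\R^d)\cap C([0,T];H^k(\Omega;\R^d))$ with $k>d/2+1$ provided by Theorem \ref{thm:incomp-soln-exis} supplies the needed Lipschitz regularity in space-time. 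Thus the relative energy inequality \eqref{eqn:rel-ent-ineq} applies with this specific choice.

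The next step is to identify and cancel the leading-order terms on the right-hand side of \eqref{eqn:rel-ent-ineq} using the fact that $\bv$ solves the incompressible Euler system. Expanding the integrand with $\Dt\bv = -\Div(\bv\otimes\bv) - \grad\pi$ and $\Div\bv=0$, the convective and time-derivative contributions can be rewritten so that the principal quadratic term in $\tbm/\tvrho - \bv$ is reproduced, while the pressure contribution $-\langle\V^\veps;\tvrho\bv-\tbm\rangle\cdot\grad\pi$ becomes, after adding and subtracting $\grad\pi$ paired with $\bv$ and using $\Div\bv=0$, a term of the form $\int_\Omega\langle\V^\veps;\tvrho-1\rangle\bv\cdot\grad\pi\,\dx$. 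This last term is controlled by the relative internal energy part of $E_{rel}(\mcv^\veps\,\vert\,1,\bv)$ via the coercivity estimate $\pig(\tvrho)\gtrsim(\tvrho-1)^2\wedge(\tvrho-1)^\gamma$ (see Lions \cite{Lio96}) together with the uniform $L^\infty$ bound on $\grad\pi$ and $\bv$.

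The Reynolds defect contribution $-\int_0^\tau\int_\Omega\grad\bv\colon\mathrm{d}\mathfrak{R}^\veps_{cd}$ is absorbed using the defect compatibility condition \eqref{eqn:def-cond}: since $\grad\bv\in L^\infty$ uniformly in time on $[0,T]$, one has
\begin{equation*}
\Bigl|\int_0^\tau\int_\Omega\grad\bv\colon\mathrm{d}\mathfrak{R}^\veps_{cd}(t)\,\dt\Bigr|\leq \norm{\grad\bv}_{L^\infty}\int_0^\tau |\mathfrak{R}^\veps_{cd}(t)|(\overline{\Omega})\,\dt\leq C\int_0^\tau \D^\veps(t)\,\dt.
\end{equation*}
Combining all the estimates yields a Gronwall-ready inequality
\begin{equation*}
E_{rel}(\mcv^\veps\,\vert\,1,\bv)(\tau)+\D^\veps(\tau)\leq E_{rel}(\mcv^\veps\,\vert\,1,\bv)(0)+C\int_0^\tau\bigl[E_{rel}(\mcv^\veps\,\vert\,1,\bv)(t)+\D^\veps(t)\bigr]\,\dt,
\end{equation*}
where $C$ depends on $\bv,\pi$ through their Lipschitz norms but is independent of $\veps$. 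Gronwall's lemma then gives
\begin{equation*}
\essup_{t\in(0,T)}E_{rel}(\mcv^\veps\,\vert\,1,\bv)(t)+\norm{\D^\veps}_{L^\infty(0,T)}\leq e^{CT}E_{rel}(\mcv^\veps\,\vert\,1,\bv)(0).
\end{equation*}

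Finally, the well-preparedness of the initial data (Definition \ref{def:well-prep}) is used to show $E_{rel}(\mcv^\veps\,\vert\,1,\bv)(0)\to 0$ as $\veps\to 0$. The kinetic part $\tfrac12\int_\Omega\vrho_\epso|\bu_\epso-\bv_0|^2\,\dx$ vanishes because $\vrho_\epso\to 1$ in $L^\infty$ and $\bu_\epso\to\bv_0$ in $L^2$, while the internal-energy part $\veps^{-2}\int_\Omega\pig(\vrho_\epso)\,\dx$ is $O(1)\cdot \veps^{-2}\norm{\vrho_\epso-1}_{L^\infty}^2\cdot|\Omega|=O(\veps^2)\to 0$ by \eqref{eqn:well-prep} and the quadratic behavior of $\pig$ near $\tvrho=1$. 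The principal obstacle in the argument is the careful bookkeeping in the second step: the pressure-gradient pairing $\grad\pi\cdot(\tbm-\tvrho\bv)$ does not have an obvious sign and must be rewritten, using $\Div\bv=0$ and integration by parts, so that only terms that are quadratic in the deviations (hence controllable by $E_{rel}$) or of the cross form $(\tvrho-1)\cdot(\text{smooth})$ remain. This rearrangement is precisely what makes the Gronwall closure possible.
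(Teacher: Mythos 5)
Your proposal follows essentially the same route as the paper, which itself only sketches the argument of Feireisl et al.\ \cite{FKM19}: take $(r,\bU)=(1,\bv)$ in the measure-valued relative energy inequality \eqref{eqn:rel-ent-ineq}, absorb the Reynolds defect via the compatibility condition \eqref{eqn:def-cond}, close with Gronwall, and use well-preparedness to kill the initial relative energy. Your version is in fact more careful than the paper's sketch (which drops the initial-data term from the Gronwall inequality altogether); the only cosmetic omission is that the cross terms of the form $(\tvrho-1)\cdot(\text{smooth})$ leave an additive $O(\veps^2)$ remainder in the final Gronwall inequality rather than being fully absorbed into $C\int_0^\tau E_{rel}\,\dt$, which does not affect the conclusion.
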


\section{Finite Volume Scheme}
\label{sec:fv-scheme}
In this section, we present a collocated finite volume scheme in order to approximate the compressible Euler system and give a summary of the results related to the energy stability and consistency of the scheme; cf.\ \cite{AGK23, AA24}. 

\subsection{Mesh and Unknowns}
\label{subsec:mesh-and-unkn}

We introduce a tessellation $\T$ of $\Omega\subset\R^d$, known as the primal mesh, consisting of possibly non-uniform closed rectangles ($d=2$) or closed cuboids ($d=3$) such that $\overline{\Omega}=\cup_{K\in\T}K$, where $K$ is called a control volume. The $d$-dimensional Lebesgue measure of $K$ will be denoted by $\absk$ and by $x_K$, we denote the cell-centre of $K$. The set of all edges ($d=2$) or faces ($d=3$) (hereafter commonly referred to as edges) of all control volumes $K\in\T$ is denoted by $\E$ and by $\abssig$, we denote the ($d-1$)- dimensional Lebesgue measure of $\sigma\in\E$. We assume that the mesh is a structured mesh, i.e.\ given any $\sigma\in\E$, $\sigma$ will be orthogonal to one of the standard basis vectors $\mathbf{e}^{(k)}$ of $\R^d$, $k = 1,\dots,d$. By $\E_{int}, \, \E_{ext}$ and $\E(K)$, we denote the subsets of all internal edges, external edges, i.e.\ edges lying on $\partial\Omega$, and the edges of a control volume $K\in\T$, respectively. Any two cells $K,L\in\T$ either have no common edges, or share exactly one common edge denoted by $\sigma=K\vert L$. For $\sigma\in\E(K)$, we denote by $x_\sigma$ the point lying on the edge $\sigma$ such that the line joining $x_K$ and $x_\sigma$ is orthogonal to the edge $\sigma$. For $\sigma= K\vert L$, we set $d_\sigma = \abs{x_K - x_L}$ and if $\sigma \in\E_{ext}\cap \E(K)$, we set $d_\sigma = \abs{x_K - x_\sigma}$. As we consider a structured mesh, it follows that $x_L = x_K \pm d_\sigma \mathbf{e}^{(k)}$ for some $1\leq k\leq d$ and $\sigma = K\vert L$. For $\sigma\in\E(K)$, $\nuk$ denotes the unit normal to $\sigma$ pointing outwards from $K$. By $a\lesssim b$ we mean $a\leq cb$ for a constant $c>0$, independent of the mesh parameters.

The mesh size $h_\T$ is defined by 
\begin{equation}
\label{eqn:mesh-size}
    h_\T=\sup\lbrace h_K\colon K\in\T\rbrace,
\end{equation}
where $h_K = \text{diam}(K)$.

Though the scheme to be introduced is collocated or cell-centered, it is convenient to introduce the dual mesh $\D_\T$ corresponding to the primal mesh $\T$ for the sake of analysis. For each $\sigma\in\E_{int}$, $\sigma=K\vert L$, we associate a dual cell $D_\sigma=D_{K,\sigma}\cup D_{L,\sigma}$ where $D_{K,\sigma}$ (resp.\ $D_{L,\sigma})$ is built by half of $K$ (resp.\ $L$). If $\sigma\in\E_{ext}\cap\E(K)$, we define $D_\sigma=D_{K,\sigma}$ and we set $\D_\T=\lbrace D_\sigma\rbrace_{\sigma\in\E}$ as the collection of all dual cells. It follows that $\abs{\dsig} = d_\sigma\abs{\sigma}$ for any $\sigma\in\E$. See Figure \ref{fig:dual-grid} for a representation of the dual grid.

\begin{figure}[htpb]
    \centering
    \begin{tikzpicture}
        \fill [green!20!white] (2.5,0) rectangle (5,4);
        \fill [red!20!white] (5,0) rectangle (7.5,4);
        \draw[black, thick] (0,0) rectangle (10,4);
        \draw[green!70!black, thick] (2.5, 0) -- (5,0);
        \draw[green!70!black, thick] (2.5, 4) -- (5,4);
        \draw[green!70!black, thick] (2.5,0) -- (2.5,4);
        \draw[red, thick] (5,0) -- (7.5,0);
        \draw[red, thick] (5,4) -- (7.5,4);
        \draw[red, thick] (7.5,0) -- (7.5,4);
        \draw[blue, thick] (5,0) -- (5,4);
        \draw[->, RedViolet, thick] (5,2) -- (6,2);
        \draw[<->, black] (2.5, 4.4) -- (7.5,4.4);
        \node[fill=white] at (5,4.4) {$D_{\sigma}$};
        \draw (5.5,2) node[anchor = south]{\color{RedViolet}{\footnotesize $\nuk$}};
        \filldraw [black] (2.5,2) circle (2pt) node[anchor = north east]{$x_K$};
        \filldraw [black] (5,2) circle (2pt) node[anchor = north east]{$x_\sigma$};
        \filldraw [black] (7.5,2) circle (2pt) node[anchor = north west]{$x_L$};
        \draw (1.25,2) node[anchor = south]{$K$};
        \draw (8.75,2) node[anchor = south]{$L$};
        \draw (5,0) node[anchor = north]{{\color{blue}$\sigma = K\vert L$}};
        \draw (3.75,1) node[anchor = north]{$D_{K,\sigma}$};
        \draw (6.25,1) node[anchor = north]{$D_{L,\sigma}$};
    \end{tikzpicture}
    \caption{Dual grid.}
    \label{fig:dual-grid}
\end{figure}
We impose the following regularity conditions on the mesh: 
\begin{enumerate}
    \item There exist constants $0<\underline{\vth}\leq\overline{\vth}$ such that for any $\sigma\in\E(K)\subset\E$, 
    \begin{equation}
    \label{eqn:mesh-regul}
        \underline{\vth}\absk\leq\abs{D_{\sigma}}\leq\overline{\vth}\absk
    \end{equation}
    where $\abs{D_{\sigma}}$ denotes the $d$-dimensional Lebesgue measure of $D_\sigma$.

    \item There exists $\alpha\in (0,1)$ such that 
    \begin{equation}
    \label{eqn:non-flat-cond}
        \alpha h_\T\leq\inf\lbrace{d_\sigma\colon \sigma\in\E}\rbrace
    \end{equation}
\end{enumerate}

\subsection{Discrete Function Spaces}
\label{subsec:func-space}

By $\Lt(\Omega)\subset L^\infty(\Omega)$, we denote the space of all scalar-valued functions constant on each cell $K\in\T$ with the associated projection map $\Pi_\T\colon L^1(\Omega)\to\Lt(\Omega)$ defined as
\begin{equation}
\label{eqn:proj-op-prim}
    \begin{split}
    &\Pi_\T q=\sum_{K\in\T}\bigl(\Pi_T q\bigr)_K\X_K, \\
    &\bigl(\Pi_\T q\bigr)_K=\frac{1}{\absk}\int_K q\,\dx,
    \end{split}
\end{equation}
where $\X_K$ denotes the indicator function of $K$. Analogously, we define $\Lt(\Omega;\R^d)$, the space of all vector-valued piecewise constant functions with the projection operator being defined componentwise. Further, the following stability estimate holds : For any $1\leq p\leq \infty$ and $q\in L^p(\Omega)$,
\begin{equation}
\label{eqn:stab-proj-op}
    \norm{\Pi_\T q}_{L^p(\Omega)}\leq \norm{q}_{L^p(\Omega)},
\end{equation}
see \cite{GMN19} for details.

We define $\He(\Omega)\subset L^\infty(\Omega)$ (resp.\ $\He(\Omega;\R^d)$) as the space of all scalar-valued (resp.\ vector-valued) functions constant on each dual cell $D_\sigma\in\D_\T$. Further, we also define $\Heo(\Omega)\subset\He(\Omega)$ (resp.\ $\Heo(\Omega, \R^d)\subset\He(\Omega;\R^d)$) as the space of piecewise constant functions vanishing on all dual cells $D_\sigma$ corresponding to an external edge $\sigma\in\E_{ext}$.

For $q\in\Lt(\Omega)$, $q_K$ denotes the constant value of $q$ on the cell $K\in\T$ and for $z\in\He(\Omega)$, $z_\sigma$ denotes the constant value of $z$ on the dual cell $D_\sigma\in\D_\T$. Further, if we have $\sigma = K\vert L\in\E(K)$, we define the average value of $q$ across $\sigma$ as 
\begin{equation}
\label{eqn:avg-val-op}
    \ldblbrace q\rdblbrace_{\sigma}=\frac{q_K+q_L}{2},
\end{equation}
and it is defined componentwise if $\uu{q} = (q_1,\cdots,q_d)\in\Lt(\Omega;\R^d)$.
\subsection{Discrete Operators and Discrete Mass Flux}
\label{subsec:disc-op-flx}

We introduce the various discrete differential operators on the primal mesh that are used to discretize the compressible system and we also define their counterparts on the dual mesh which will be helpful in the further analysis. To this end, we begin by defining the upwind mass flux. 

\begin{definition}[Discrete Upwind Mass Flux]
\label{def:mss-flx}
    For each $K\in\T$ and $\sigma = K\vert L\in\E(K)$, the mass flux $\flx\colon\Lt(\Omega)\times\Lt(\Omega;\R^d)\to\R$ is defined as 
    \begin{equation}
    \label{eqn:mss-flx}
        \flx(q,\bw)=\abssig\bigl\lbrace q_K(\wsk)^+ + q_L(\wsk)^-\bigr\rbrace=\abssig\bigl\lbrace \flx^+ +\flx^-\bigr\rbrace,
    \end{equation}
    where we define 
    \[
    \wsk = 
    \begin{dcases}
        \ldblbrace\bw\rdblbrace_\sigma\cdot\nuk, &\text{if }\sigma = K\vert L\in\E(K), \\
        0, &\text{if }\sigma\in\E(K)\cap\E_{ext}.
    \end{dcases}
    \]
    The positive and negative parts of $\wsk$ will be specified later once we give a description of the numerical scheme.
\end{definition}

\begin{definition}[Discrete Divergence and Gradient (Primal mesh)]
\label{def:disc-grad-div}
    In the case of a collocated grid, we follow \cite{HLC20} in defining the discrete divergence and gradient operators so that the classical grad-div duality still holds at the discrete level.

    The discrete divergence operator $\divt\colon\Lt(\Omega;\R^d)\to\Lt(\Omega)$ is defined as 
    \begin{equation}
    \label{eqn:disc-div}
        (\divt \bw)_K=\frac{1}{\absk}\sum_{\sink}\abssig\wsk,
    \end{equation}

    while the discrete gradient operator $\gradt\colon\Lt(\Omega)\to\Lt(\Omega;\R^d)$ reads
    \begin{equation}
    \label{eqn:disc-grad}
        (\gradt q)_K=\frac{1}{\absk}\Bigl\lbrack\,\sum_{\substack{\sink \\ \sigma=K\vert L}}\abssig\ldblbrace q\rdblbrace_{\sigma}\,\nuk+\sum_{\sigma\in\E_{ext}\cap\E(K)}\abssig q_K\,\nuk\Bigr\rbrack.
    \end{equation}
\end{definition}

\begin{definition}[Discrete Gradient (Dual Mesh)]
\label{def:disc-grad-div-dual-mesh}
    We also define a discrete gradient operator on the dual mesh as $\gradd\colon\Lt(\Omega)\to\Heo(\Omega;\R^d)$ with the constant value on each dual cell $D_\sigma\in\D_\T$ being defined as
    \begin{align}
        &(\gradd q)_{\sigma} = \frac{\abssig}{\abs{\dsig}}(q_L - q_K)\nuk \label{eqn:dual-grad}.
    \end{align}
    Note that due to periodic boundary conditions, the dual gradient vanishes on the external edges.
\end{definition}

We now report the following lemma from \cite{HLC20}. 
\begin{lemma}[Discrete Grad-Div Duality (Primal Mesh)]
\label{lem:disc-grad-div-primal}
    The discrete gradient operator defined by \eqref{eqn:disc-grad} satisfies
    \begin{equation}
    \label{eqn:disc-grad2}
        (\gradt q)_K = \frac{1}{\absk}\sum_{\substack{\sink \\ \sigma = K\vert L}}\abssig\frac{q_L - q_K}{2}\nuk.
    \end{equation}
    Further, the discrete analogue of the classical grad-div duality holds, i.e.\ for any $q\in\Lt(\Omega)$ and $\bw\in\Lt(\Omega;\R^d)$ 
    \begin{equation}
    \label{eqn:disc-grad-div-primal}
        \sum_{K\in\T}\absk q_K(\divt\bw)_K + \sum_{K\in\T}\absk(\gradt q)_K\cdot\bw_K=0.
    \end{equation}
\end{lemma}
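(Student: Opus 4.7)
The plan is to verify the two identities by careful re-indexing of sums over cells into sums over edges, leveraging three elementary facts about the mesh and the discrete operators: (i) the arithmetic identity $\ldblbrace q\rdblbrace_\sigma = q_K + (q_L-q_K)/2$ for $\sigma = K\vert L$; (ii) the discrete divergence theorem for a closed cell, $\sum_{\sigma\in\E(K)}\abssig\nuk = 0$, which is the projection onto $K$ of $\int_K \nabla 1\,\dx = 0$; and (iii) the antisymmetry of the normal, $\nuk = -\uu{\nu}_{\sigma,L}$ for any interior edge $\sigma = K\vert L$. Throughout the argument the space-periodic boundary condition is crucial: it effectively removes $\E_{ext}$ from the discussion, since periodic identification turns every edge into an interior edge of the form $K\vert L$.

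First I would establish \eqref{eqn:disc-grad2}. Starting from \eqref{eqn:disc-grad} with $\E_{ext}=\emptyset$, I substitute (i) to split
\begin{equation*}
(\gradt q)_K = \frac{1}{\absk}\sum_{\substack{\sink\\\sigma=K\vert L}}\abssig\,\frac{q_L-q_K}{2}\,\nuk \,+\, \frac{q_K}{\absk}\sum_{\sink}\abssig\,\nuk.
\end{equation*}
The second term vanishes by (ii), leaving exactly \eqref{eqn:disc-grad2}.

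For the grad-div duality \eqref{eqn:disc-grad-div-primal}, the plan is to rewrite each of the two sums as a single sum over edges and observe that they are negatives of one another. In the first sum, substituting Definition~\ref{def:disc-grad-div} gives a double sum which, after swapping the order and grouping the two cells adjacent to each edge, reads
\begin{equation*}
\sum_{K\in\T}\absk\,q_K(\divt\bw)_K = \sum_{\sigma=K\vert L}\abssig\bigl(q_K\wsk + q_L\,w_{\sigma,L}\bigr) = \sum_{\sigma=K\vert L}\abssig(q_K-q_L)\wsk,
\end{equation*}
where I used (iii) together with the symmetry of $\ldblbrace\bw\rdblbrace_\sigma$ to conclude $w_{\sigma,L} = -\wsk$. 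For the second sum, expanding $(\gradt q)_K$ via \eqref{eqn:disc-grad2} and again collecting the contributions of each edge $\sigma = K\vert L$ from both adjacent cells yields
\begin{equation*}
\sum_{K\in\T}\absk(\gradt q)_K\cdot\bw_K = \sum_{\sigma=K\vert L}\abssig\,\frac{q_L-q_K}{2}(\bw_K+\bw_L)\cdot\nuk = \sum_{\sigma=K\vert L}\abssig(q_L-q_K)\wsk,
\end{equation*}
where the definition \eqref{eqn:avg-val-op} of the average was used. Adding the two expressions gives zero, establishing \eqref{eqn:disc-grad-div-primal}.

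There is no serious obstacle here; the proof is entirely bookkeeping, and the only point requiring care is the orientation of the normals under the swap from a cell-centered to an edge-centered indexing. The same argument extends painlessly to the non-periodic case provided that one augments the mass-flux convention $\wsk = 0$ on $\sigma\in\E_{ext}$ so that boundary terms do not contribute to either sum.
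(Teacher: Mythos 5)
Your proof is correct and is the standard bookkeeping argument (split the average, use $\sum_{\sigma\in\E(K)}\abssig\nuk=0$, then reindex cell sums as edge sums using $\uu{\nu}_{\sigma,L}=-\nuk$); the paper itself gives no proof, simply citing \cite{HLC20}, where essentially this same computation appears. Your handling of the orientation of normals and of the periodic identification of boundary edges is the only delicate point, and you get it right.
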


\begin{definition}[Upwind Divergence Operator]
\label{def:up-div-op}
    We define an upwind divergence operator $\divup\colon\Lt(\Omega)\times\Lt(\Omega;\R^d)\to\Lt(\Omega)$ in order to discretize the convective terms in the mass and momentum balances which reads
    \begin{equation}
    \label{eqn:up-div-op}
            (\divup(q,\bw))_K=\frac{1}{\absk}\sum_{\substack{\sink \\ \sigma=K\vert L}}\flx(q,\bw).
    \end{equation}
    with $\flx(q,\bw)$ as defined in Definition \ref{def:mss-flx}.
\end{definition}

Another operator which is required to perform the consistency analysis of the scheme is the reconstruction operator, which transforms a piecewise constant function on the primal mesh into a piecewise constant function on the dual mesh. We now precisely define the operator and also recall a few key results related to the same.

\begin{definition}[Reconstruction Operator]
    \label{def:recon-op}
    Given a primal mesh $\T$ of $\Omega$ and a corresponding dual mesh $\D_\T$, a reconstruction operator is a map $\mathcal{R}_\T\colon\Lt(\Omega)\to\He(\Omega)$ defined as 
    \begin{equation}
        \label{eqn:recon-op}
        \mathcal{R}_\T q =\sum_{\sigma\in\E} \hat{q}_\sigma\X_{\dsig},
    \end{equation}
    where $\hat{q}_\sigma = \mu_\sigma q_K+(1-\mu_\sigma)q_L$ with $0\leq\mu_\sigma\leq 1$ for $\sigma=K\vert L\in\E_{int}$ and $\hat{q}_\sigma = q_K$ if $\sigma\in\E_{ext}\cap\E(K)$. 
\end{definition}

\begin{lemma}[Stability of the Reconstruction Operator]
    \label{lem:stab-recon-op}
    For any $1\leq p<\infty$, there exists $c\geq 0$ depending only on $p$ such that for any $q\in\Lt(\Omega)$,
    \begin{equation}
        \label{eqn:stab-recon-op}
        \norm{\mathcal{R}_\T q}_{L^p(\Omega)}\leq c\norm{q}_{L^p(\Omega)}.
    \end{equation}
\end{lemma}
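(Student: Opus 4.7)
The plan is to reduce the bound to a cell-wise estimate by unfolding the definition of the $L^p$ norm on the dual mesh, applying convexity to the interpolation weights, and then reorganizing the resulting sum over dual cells into a sum over primal cells that can be controlled using the mesh regularity condition \eqref{eqn:mesh-regul}.

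First I would write
\[
\norm{\mathcal{R}_\T q}_{L^p(\Omega)}^p \,=\, \sum_{\sigma\in\E}\abs{D_\sigma}\,\abs{\hat{q}_\sigma}^p,
\]
and then, since $|x|^p$ is convex on $\R$ and $\mu_\sigma\in[0,1]$, Jensen's inequality applied to the definition of $\hat{q}_\sigma=\mu_\sigma q_K+(1-\mu_\sigma)q_L$ yields
\[
\abs{\hat{q}_\sigma}^p \,\leq\, \mu_\sigma\abs{q_K}^p+(1-\mu_\sigma)\abs{q_L}^p
\]
for each internal edge $\sigma=K\vert L$, with the analogous (trivial) bound $|\hat{q}_\sigma|^p=|q_K|^p$ on external edges.

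Next I would swap the order of summation, grouping contributions cell-by-cell: each $K\in\T$ appears in the sum once for every edge $\sigma\in\E(K)$, contributing a coefficient of either $|D_\sigma|\mu_\sigma$ or $|D_\sigma|(1-\mu_\sigma)$ in front of $|q_K|^p$. Using that $\mu_\sigma,(1-\mu_\sigma)\in[0,1]$ and invoking the mesh regularity \eqref{eqn:mesh-regul}, each such coefficient is bounded by $\overline{\vth}\absk$. Since the cells are rectangles/cuboids, the cardinality $\#\E(K)\leq 2d$ is uniformly bounded. This gives
\[
\norm{\mathcal{R}_\T q}_{L^p(\Omega)}^p \,\leq\, 2d\,\overline{\vth}\sum_{K\in\T}\absk\abs{q_K}^p \,=\, 2d\,\overline{\vth}\,\norm{q}_{L^p(\Omega)}^p,
\]
so the claim holds with $c=(2d\,\overline{\vth})^{1/p}$.

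I do not expect any serious obstacle here: the only subtlety is to be careful when reindexing the sum so that each primal cell $K$ is accounted for by all its incident edges (both as the ``$K$ side'' of $\sigma=K\vert L$ and as the ``$L$ side'' of some $\sigma'=K'\vert K$), and to use the convexity in $\mu_\sigma$ rather than a crude triangle-inequality bound, which would cost an extra factor of $2^{p-1}$ but would still yield the stated inequality. The external-edge contribution is immediate from $\hat{q}_\sigma=q_K$ together with the mesh regularity bound on $|D_\sigma|$.
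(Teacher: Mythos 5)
Your proof is correct. The paper itself does not prove this lemma but simply defers to the reference \cite{HLC20}; your argument (unfold the dual-mesh $L^p$ norm, use convexity of $|x|^p$ to bound $|\hat q_\sigma|^p$ by a convex combination of $|q_K|^p$ and $|q_L|^p$, then reindex the edge sum as a cell sum and invoke the regularity bound $|D_\sigma|\leq\overline{\vth}\absk$ together with $\#\E(K)=2d$) is exactly the standard proof given there, so there is nothing substantive to compare. Two cosmetic remarks: the constant you obtain, $c=(2d\,\overline{\vth})^{1/p}$, depends on the dimension and on the mesh-regularity constant $\overline{\vth}$ rather than ``only on $p$'' as the statement literally says, but this is consistent with the paper's convention of treating the regularity constants in \eqref{eqn:mesh-regul}--\eqref{eqn:non-flat-cond} as fixed for the whole family of meshes; and the identity $\norm{\mathcal{R}_\T q}_{L^p(\Omega)}^p=\sum_{\sigma\in\E}|D_\sigma||\hat q_\sigma|^p$ should be read as the definition of the discrete $L^p$ norm on $\He(\Omega)$ (the dual half-cells of a rectangular cell overlap across the coordinate directions), which is how the paper and \cite{HLC20} use it.
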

\begin{proof}
    See \cite{HLC20}.
\end{proof}

\begin{lemma}[Weak Convergence of Dual Mesh Reconstructions]
    \label{lem:wk-cong-mesh-recon}
    Let $(\T^\m)_{m\in\N}$ be a sequence of space discretizations such that the mesh size $h_{\T^\m}\to 0$ as $m\to\infty$.
    Let $1\leq p<\infty$ and let $(q^\m)_{m\in\N}$ be a uniformly bounded sequence in $L^p(\Omega)$ such that $q^\m\in\mathcal{L}_{\T^\m}(\Omega)$ for each $m\in\N$.
    Let $\mathcal{R}_{\T^\m}$ denote the reconstruction operator on the mesh $\T^\m$. The following hold:
    \begin{enumerate}
        \item $\lVert\mathcal{R}_{\T^\m}q^\m - q^\m\rVert_{L^p(\Omega)}\to 0$ as $m\to\infty$.
        \item If there exists $q\in L^p(\Omega)$ such that $q^\m\rightharpoonup q$ in $L^p(\Omega)$, then $\mathcal{R}_{\T^\m}q^\m\rightharpoonup q$ in $L^p(\Omega)$.
        \item If there exists $\mu\in\M(\overline{\Omega})$ such that $q^\m\weakstar \mu$ in $\M(\overline{\Omega})$, then $\mathcal{R}_{\T^\m}q^\m\weakstar \mu$ in $\M(\overline{\Omega})$.
    \end{enumerate}
\end{lemma}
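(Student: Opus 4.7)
The plan is to handle all three items by combining an explicit formula for the pointwise reconstruction error with the $L^p$-stability of $\mathcal{R}_{\T^\m}$ from Lemma \ref{lem:stab-recon-op}. Computing cell by cell, for $\sigma = K\vert L\in\E_{int}$ one finds
\[
    \bigl(\mathcal{R}_{\T^\m} q^\m - q^\m\bigr)\big\vert_{D_{K,\sigma}} = (1-\mu_\sigma)(q_L - q_K),\qquad \bigl(\mathcal{R}_{\T^\m} q^\m - q^\m\bigr)\big\vert_{D_{L,\sigma}} = -\mu_\sigma(q_L - q_K),
\]
which identifies the reconstruction error as an edge-localised jump object and supplies the crude bound $\norm{\mathcal{R}_{\T^\m}q^\m - q^\m}^p_{L^p(\Omega)}\lesssim\sum_{\sigma\in\E_{int}}\abs{D_\sigma}\abs{q_L - q_K}^p$. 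Throughout I would work under the natural normalisation $\mu_\sigma = \abs{D_{K,\sigma}}/\abs{D_\sigma}$, which makes $\mathcal{R}_{\T^\m}$ exact on constants and is implicit in all forthcoming consistency arguments.

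For item (1), the strategy is to reduce to smooth data by density. For any $\phi\in C^\infty(\overline{\Omega})$, the choice $q = \Pi_{\T^\m}\phi$ together with $\abs{q_L - q_K}\lesssim h_{\T^\m}\norm{\nabla\phi}_{L^\infty}$ feeds into the bound above to yield $\norm{\mathcal{R}_{\T^\m}\Pi_{\T^\m}\phi - \Pi_{\T^\m}\phi}_{L^p(\Omega)}\lesssim h_{\T^\m}\to 0$. To lift this to the general sequence, I would split $q^\m = \Pi_{\T^\m}\phi_\epsilon + (q^\m - \Pi_{\T^\m}\phi_\epsilon)$ for a smooth $\phi_\epsilon$ close to a weak limit of $q^\m$, treat the first summand with the smooth estimate, and control the remainder by the linearity and $L^p$-stability of both $\Pi_{\T^\m}$ and $\mathcal{R}_{\T^\m}$. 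I expect this approximation step to be the main obstacle: pure $L^p$-boundedness of $(q^\m)$ does not automatically provide smallness of the tail $\mathcal{R}_{\T^\m}(q^\m - \Pi_{\T^\m}\phi_\epsilon) - (q^\m - \Pi_{\T^\m}\phi_\epsilon)$ uniformly in $m$, so the argument would first extract a weakly convergent subsequence via Banach--Alaoglu and then exploit the resulting compactness to close the estimate.

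For items (2) and (3), the plan is duality. For a test function $\varphi\in\Cinf(\Omega)$ (respectively $\varphi\in C(\overline{\Omega})$ in the measure case), the cell-wise formula gives
\[
    \int_\Omega\bigl(\mathcal{R}_{\T^\m}q^\m - q^\m\bigr)\varphi\,\dx = \sum_{\sigma\in\E_{int}}(q_L - q_K)\Bigl\lbrack(1-\mu_\sigma)\int_{D_{K,\sigma}}\varphi\,\dx - \mu_\sigma\int_{D_{L,\sigma}}\varphi\,\dx\Bigr\rbrack.
\]
With the normalisation of $\mu_\sigma$ chosen above, the bracketed quantity vanishes whenever $\varphi$ is constant and, by uniform continuity of $\varphi$ on $\overline{\Omega}$, is bounded by $\omega_\varphi(h_{\T^\m})\abs{D_\sigma}$, where $\omega_\varphi$ is a modulus of continuity for $\varphi$. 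A H\"older application, together with the uniform $L^p$-bound on $q^\m$, then forces this integral to vanish as $m\to\infty$, giving $\mathcal{R}_{\T^\m}q^\m - q^\m \rightharpoonup 0$ in $L^p$ (respectively, weakly-$\ast$ in $\M(\overline{\Omega})$). Combined with the assumed convergence of $q^\m$ and the stability of $\mathcal{R}_{\T^\m}$ (which ensures uniform boundedness, hence tightness in the measure case), this delivers items (2) and (3).
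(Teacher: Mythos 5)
Your treatment of items (2) and (3) is a genuinely different route from the paper's. The paper does not reprove (1) and (2) at all --- it cites \cite[Lemma 1.16]{AA24} for them --- and obtains (3) from (1) by the splitting $\int_\Omega\mathcal{R}_{\T^\m}q^\m\psi\,\dx=\int_\Omega(\mathcal{R}_{\T^\m}q^\m-q^\m)\psi\,\dx+\int_\Omega q^\m\psi\,\dx$ together with the $L^1$-smallness of the first term. Your duality argument for (2) and (3) bypasses (1) entirely: it only needs the uniform $L^1$-bound (available on the bounded $\Omega$) and the modulus of continuity of the test function, which is more self-contained and arguably more robust. The price is the normalisation $\mu_\sigma=\abs{D_{K,\sigma}}/\abs{\dsig}$, which Definition \ref{def:recon-op} does not impose and which is genuinely needed for your bracketed term to vanish on constants; for arbitrary $\mu_\sigma\in[0,1]$ the bracket is only $O(\abs{\dsig})$ rather than $o(\abs{\dsig})$, and the sum over edges need not vanish. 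So you are proving (2)--(3) for a restricted (if natural) subclass of reconstruction operators.

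For item (1) your own diagnosis is correct and the proposed repair does not close the gap. Weak compactness via Banach--Alaoglu gives no strong $L^p$ control on the oscillatory remainder $q^\m-\Pi_{\T^\m}\phi_\epsilon$: stability of $\mathcal{R}_{\T^\m}$ only bounds the tail by $(c+1)\norm{q^\m-\Pi_{\T^\m}\phi_\epsilon}_{L^p(\Omega)}$, which weak convergence does not make small. Indeed, uniform $L^p$-boundedness alone cannot yield (1): for a checkerboard sequence $q^\m$ alternating $\pm1$ on adjacent cells of a uniform mesh with $\mu_\sigma=\tfrac12$, one has $\mathcal{R}_{\T^\m}q^\m\equiv 0$ on interior dual cells, so $\norm{\mathcal{R}_{\T^\m}q^\m-q^\m}_{L^p(\Omega)}$ stays bounded away from zero while $q^\m\rightharpoonup 0$. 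Statement (1) therefore requires some additional control on the jumps $q_L-q_K$ (of the kind available in the setting of \cite[Lemma 1.16]{AA24}, to which the paper defers); your smooth-data estimate $\norm{\mathcal{R}_{\T^\m}\Pi_{\T^\m}\phi-\Pi_{\T^\m}\phi}_{L^p(\Omega)}\lesssim h_{\T^\m}$ is the correct core computation, but it cannot be transferred to a merely $L^p$-bounded sequence by density.
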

\begin{proof}
    The proofs of statements (1) and (2) is given in \cite[Lemma 1.16]{AA24}. To prove statement (3), let $\psi\in C(\overline{\Omega})$. Then,
    \begin{align*}
        \int_\Omega\mathcal{R}_{\T^\m}q^\m\psi\,\dx = \int_\Omega (\mathcal{R}_{\T^\m}q^\m - q^\m)\psi\,\dx + \int_\Omega q^\m\psi\,\dx = I_1 + I_2.
    \end{align*}
    Note that, $I_2 \to \int_\Omega\psi\mathrm{d}\mu$ as $m\to\infty$ whereas 
    \[
    \abs{I_1}\leq\lVert\mathcal{R}_{\T^\m}q^\m - q^\m\rVert_{L^1}\norm{\psi}_\infty\to 0\text{ as }m\to\infty
    \]
    as a consequence of (1) and this proves the claim.
\end{proof}

We also require the interpolates and discrete time derivatives of smooth functions in the forthcoming analysis which we now define. Let $\varphi\in\Cinf(\lbrack 0,T)\times\Omega)$. By $\varphi_{\T,\delt}$, we denote its interpolate for the space-time discretization ($\T,\delt$) defined by
\begin{equation}
    \label{eqn:spc-time-ipol}
    \varphi_{\T,\delt}(t,x)=\sum_{n=0}^{N-1}\varphi^n_K\X_{K}(x)\X_{[t_n,t_{n+1})}(t),
\end{equation}
where $\varphi^n_K=(\Pi_\T\varphi(t_n,x))_K=\frac{1}{\absk}\int_K\varphi(t_n,x)\,\dx$. If $\uu{\varphi}\in\Cinf(\lbrack 0,T)\times\Omega;\R^d)$, we define the interpolate componentwise.

We now define a discrete time derivative operator $\eth_t$
\begin{equation}
    \label{eqn:disc-time-der}
    \eth_t\varphi_{\T,\delt}(t,x)=\sum_{n=0}^{N-1}\sum_{K\in\T}\frac{\varphi^{n+1}_K-\varphi^n_K}{\delt}\X_K(x)\X_{[t_n,t_{n+1})}(t).
\end{equation}
The regularity of $\varphi$ ensures that the quantities $\varphi_{\T,\delt}, \eth_t\varphi_{\T,\delt}$ converge uniformly to $\varphi,\Dt\varphi$ respectively as $h_\T\to 0$.

We also recall a few basic interpolation inequalities that are used in the numerical analysis and we refer the interested reader to \cite{BLM+23, EGH00, FLM+19, FLM+21a, GMN19} for more details. Assuming that $\varphi\in\Cinf(\Omega)$, $\uu{\varphi}\in\Cinf(\Omega;\R^d)$ and $1\leq p\leq\infty$, the following hold:
\begin{equation}
    \label{eqn:op-est}
    \begin{split}
    &\norm{\Pi_\T\varphi}_{L^p(\Omega)}\lesssim\norm{\varphi}_{\infty} , \ \norm{\varphi-\Pi_\T\varphi}_{L^p(\Omega)}\lesssim h_\T\norm{\varphi}_{\infty}, \\
    & \norm{\gradd\Pi_\T\varphi}_{L^p(\Omega;\R^d)}\lesssim\norm{\varphi}_{\infty} , \ \norm{\grad\varphi-\gradd\Pi_\T\varphi}_{L^p(\Omega;\R^d)}\lesssim h_\T\norm{\varphi}_\infty ,\\
    &\norm{\grad\varphi - \gradt\Pi_\T\varphi}_{L^p(\Omega;\R^d)}\lesssim h_\T\norm{\varphi}_\infty, \ \norm{\Div\uu{\varphi}-\divt\Pi_\T\uu{\varphi}}_{L^p(\Omega)}\lesssim h_\T\norm{\varphi}_\infty.
    \end{split}
\end{equation}

\subsection{The Scheme}
\label{subsec:scheme}

Let us consider a discretization $0=t^0<t^1<\cdots<t^N=T$ of the time interval $\lbrack 0,T\rbrack$ and let $\delt=t^{n+1}-t^n$ for $n=0,1,\dots,N-1$ be the constant timestep. We suppose that the initial data is well prepared in the sense of Definition \ref{def:well-prep} and we also further assume that $\bv_0\in H^k(\Omega;\R^d)$, with $k>d/2+1$ and $T<T_{max}$; cf.\ Theorem \ref{thm:incomp-soln-exis}.

In what follows, we omit the dependence on $\veps$ for the discrete unknowns unless specified otherwise. The scheme is initialized by considering the initial approximations to be the projections of the initial data $\vrho_\epso$ and $\bu_\epso$ respectively, i.e.\
\begin{equation}
    \label{eqn:disc-ini-dat}
    \begin{split}
        &\rk^0=(\Pi_\T\vrho_\epso)_K=\frac{1}{\absk}\int_K\vrho_\epso\,\dx, \\
        &\uk^0=(\Pi_\T\bu_\epso)_K=\frac{1}{\absk}\int_K\bu_\epso\,\dx.
    \end{split}
\end{equation}
Further, we also suppose that the initial total energy is bounded uniformly with respect to $\veps$. That is, there exists $C>0$, independent of $\veps$, such that

\begin{equation}
\label{eqn:ini-enrg-bound}
    \begin{split}
        &\sum_{K\in\T}\absk\biggl(\half\rk^0\lvert\uk^0\rvert^2+\frac{1}{\veps^2}\psig(\rk^0)\biggr)\leq C, \\
        &\sum_{K\in\T}\absk\biggl(\half\rk^0\lvert\uk^0\rvert^2+\frac{1}{\veps^2}\pig(\rk^0)\biggr)\leq C.
    \end{split}
\end{equation}

We now consider the following fully discrete scheme approximating $\peps$, which we denote by $\pepsh$. For $0\leq n\leq N-1$:
\begin{subequations}
    \begin{align}
        &\frac{1}{\delt}(\rk^{n+1}-\rk^n)+(\divup(\vrho^{n+1},\bw^n))_K=0, \ \forall\,K\in\T, \label{eqn:disc-mss-bal}\\
        &\frac{1}{\delt}(\rk^{n+1}\uk^{n+1}-\rk^n\uk^n)+(\divup(\vrho^{n+1}\bu^n,\bw^n))_K+\frac{1}{\veps^2}(\gradt p^{n+1})_K= 0, \ \forall\,K\in\T, \label{eqn:disc-mom-bal}
    \end{align}
\end{subequations}
where $\bw^n = \bu^n - \delta\bu^{n+1}$ denotes the stabilized velocity, with the stabilization term $\delta\bu^{n+1}_K = \frac{\eta\delt}{\veps^2}(\gradt p^{n+1})_K$, $\eta>0$ to be defined later, being introduced in order to recover a discrete variant of the admissibility criteria \eqref{eqn:baro-entropy}.

The positive and negative parts of the term $\wsk$ appearing in the mass flux $\flx$ are defined so as to bring about an upwind-bias and maintain the signs split in $\flx$.
\begin{equation}
    \label{eqn:pos-neg-stab-velo}
    \begin{split}
    &(w_{\sk}^n)^{+}=u_{\sk}^{n,+}-\delta u^{n+1,-}_{\sk}\geq 0, \\
    &(w_{\sk}^n)^{-}=u_{\sk}^{n,-}-\delta u^{n+1,+}_{\sk}\leq 0,
    \end{split}
\end{equation}
wherein $a^{\pm}=\dfrac{a\pm\abs{a}}{2}$ denotes the standard positive and negative halves.

As a consequence of the initial data being well-prepared, we obtain the following estimate on the initial density $\vrho^0_K$.
\begin{proposition}
\label{prop:ini-den-bound}
    Suppose that the initial data $\lbrace(\vrho_{\epso}, \bu_{\epso})\rbrace_{\veps>0}$ are well-prepared in the sense of Definition \ref{def:well-prep}. Then, there exists a constant $C>0$ independent of $\veps$ such that
    \begin{equation}
    \label{eqn:ini-den-bound}
        \frac{1}{\veps^2}\max_{K\in\T}\abs{\rk^0 - 1}\leq C.
    \end{equation}
\end{proposition}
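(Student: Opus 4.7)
The plan is to exploit the $L^\infty$-stability of the primal projection operator $\Pi_\T$ together with the first well-preparedness estimate in Definition \ref{def:well-prep}. The key observation is that the constant function $1$ is a fixed point of $\Pi_\T$, so $\rk^0-1$ can be written as the cell average of $\vrho_{0,\veps}-1$, which is precisely the quantity the well-preparedness assumption controls.

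More concretely, I would first note that
\[
\rk^0 - 1 \,=\, \frac{1}{\absk}\int_K \vrho_{0,\veps}\,\dx \,-\, \frac{1}{\absk}\int_K 1\,\dx \,=\, \frac{1}{\absk}\int_K\bigl(\vrho_{0,\veps}-1\bigr)\,\dx,
\]
using the definition \eqref{eqn:disc-ini-dat} of the initial approximation. Then the triangle (or Jensen) inequality yields
\[
\abs{\rk^0 - 1} \,\leq\, \frac{1}{\absk}\int_K \abs{\vrho_{0,\veps}-1}\,\dx \,\leq\, \norm{\vrho_{0,\veps}-1}_{L^\infty(\Omega)},
\]
which is exactly the $L^\infty$-stability estimate \eqref{eqn:stab-proj-op} applied to $\vrho_{0,\veps}-1$ with $p=\infty$. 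Dividing by $\veps^2$ and taking the maximum over $K\in\T$,
\[
\frac{1}{\veps^2}\max_{K\in\T}\abs{\rk^0 - 1} \,\leq\, \frac{1}{\veps^2}\norm{\vrho_{0,\veps}-1}_{L^\infty(\Omega)} \,\leq\, C,
\]
where the last inequality is precisely the first bound in the well-preparedness Definition \ref{def:well-prep}, and the constant $C$ is independent of $\veps$ by hypothesis.

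There is no real obstacle here; the statement is essentially a direct consequence of the $L^\infty$-contractivity of the averaging projection combined with the assumption that $\vrho_{0,\veps}-1$ is $O(\veps^2)$ in $L^\infty$. The only minor point worth noting is that the independence of $C$ from $\veps$ on the right-hand side is inherited verbatim from the well-preparedness hypothesis, so no refined estimate (e.g.\ on the mesh geometry or on $h_\T$) is needed.
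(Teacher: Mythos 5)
Your proposal is correct and follows exactly the same route as the paper's proof: writing $\rk^0-1$ as the cell average of $\vrho_{\epso}-1$, bounding it by $\norm{\vrho_{\epso}-1}_{L^\infty(\Omega)}\leq C\veps^2$ via the well-preparedness hypothesis, and then dividing by $\veps^2$. No gaps.
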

\begin{proof}
    For each $K\in\T$, \eqref{eqn:disc-ini-dat} along with Definition \ref{def:well-prep} yields the existence of a $C>0$ such that
    \[
    \abs{\rk^0 - 1} \leq \frac{1}{\absk}\int_K\abs{\vrho_{\epso} - 1}\,\dx\leq\norm{\vrho_{\epso} - 1}_{L^\infty(\Omega)}\leq C\veps^2,
    \]
    which in turn proves \eqref{eqn:ini-den-bound}.
\end{proof}

The proposed scheme is nonlinear in nature, mainly due to the presence of the stabilization term present in the mass balance \eqref{eqn:disc-mss-bal}. However, once we obtain the value of $\vrho^{n+1}$, the value of the velocity $\bu^{n+1}$ can be computed explicitly from \eqref{eqn:disc-mom-bal}. As such, we first prove that a solution indeed exists to the scheme. The existence is established using a standard degree argument, using the classical tools of topological degree theory in finite dimensions; see \cite{AGK23,Dei85,GMN19,Nir01,OCC06} for further details. We just recall the result for completeness.

\begin{theorem}[Existence of a Discrete Solution]
    \label{thm:exist-num-sol}
    Let $(\vrho^n,\bu^n)\in\Lt(\Omega)\times\Lt(\Omega;\R^{d})$ be such that $\vrho^n>0$ on $\Omega$. Then, there exists a solution $(\vrho^{n+1},\bu^{n+1})\in\Lt(\Omega)\times\Lt(\Omega;\R^{d})$ of $\pepsh$ satisfying $\vrho^{n+1}>0$ on $\Omega$.
\end{theorem}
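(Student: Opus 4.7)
The plan is to apply Brouwer's topological degree theory in finite dimensions, in the spirit of the arguments in \cite{AGK23, GMN19, OCC06}. The key structural observation is that once $\vrho^{n+1}$ is known and strictly positive, the momentum balance \eqref{eqn:disc-mom-bal} yields $\uk^{n+1}$ explicitly, cell by cell. All the nonlinearity is therefore concentrated in the mass balance \eqref{eqn:disc-mss-bal}, which, after substituting $\bw^n = \bu^n - \frac{\eta\delt}{\veps^2}\gradt p^{n+1}$, becomes a nonlinear map $F\colon \R^N \to \R^N$ for which one seeks a zero, where $N$ is the number of cells in $\T$.

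I would construct a continuous homotopy $H\colon [0,1]\times\R^N \to \R^N$ interpolating between $F$ at $\lambda = 1$ and a map at $\lambda = 0$ for which existence of a strictly positive solution is immediate; a natural choice is to switch off both the convective fluxes and the pressure stabilization at $\lambda = 0$, leaving a diagonal system with unique positive solution $\vrho^{n+1} = \vrho^n$. The core analytical task is then to confine any solution of $H(\lambda, \cdot) = 0$, uniformly in $\lambda \in [0,1]$, to an open set of the form
\[
\mathcal{O} = \bigl\{ \vrho \in \R^N \colon 0 < \underline{c} < \vrho_K < \overline{c},\ \forall K \in \T \bigr\}.
\]
Strict positivity $\vrho_K > 0$ is obtained from the M-matrix structure of the implicit mass step under the CFL-type restriction used for energy stability in \cite{AA24}: using $(\wsk^n)^+ \geq 0$ and $(\wsk^n)^- \leq 0$ from \eqref{eqn:pos-neg-stab-velo}, the matrix implementing the update of $\vrho^{n+1}$ has strictly positive diagonal, non-positive off-diagonal entries, and is weakly diagonally dominant, so $\vrho^n > 0$ forces $\vrho^{n+1} > 0$. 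The uniform $L^\infty$ bounds $\underline{c} < \vrho_K < \overline{c}$ follow from summing \eqref{eqn:disc-mss-bal} over $K\in\T$ (total-mass conservation) together with the discrete counterpart of the modified-system energy estimates recalled in Proposition \ref{prop:mod-enrg-est}, established in \cite{AA24}, which provides a uniform bound on $\sum_K |K|\, \psig(\rk^{n+1})$ and hence on $\|\vrho^{n+1}\|_{L^\gamma}$.

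Once all zeros of $H(\lambda, \cdot)$ are confined to $\mathcal{O}$, homotopy invariance of the Brouwer degree together with an elementary computation of the degree of $H(0,\cdot)$ on $\mathcal{O}$ (which is $\pm 1$) produces a zero of $F$ in $\mathcal{O}$; the corresponding $\uk^{n+1}$ is then recovered from \eqref{eqn:disc-mom-bal}. The main obstacle is the M-matrix step, since the upwind split in the mass flux $\flx$ is carried out with respect to the \emph{shifted} velocity $\wsk^n = u_{\sk}^n - \delta u_{\sk}^{n+1}$ whose definition \eqref{eqn:pos-neg-stab-velo} is itself nonlinear in $\vrho^{n+1}$; preserving the sign-structure of this split uniformly along $\lambda \in [0,1]$, and ensuring that the diagonal contribution dominates, is precisely what forces the CFL-type restriction of \cite{AA24}.
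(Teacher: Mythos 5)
The paper does not prove this theorem in-house: its ``proof'' consists of the citation to \cite[Theorem 4.2]{AGK23}, with the surrounding text describing the argument as a standard finite-dimensional topological degree argument --- which is exactly the strategy you outline (homotopy to a trivial diagonal problem, a priori confinement of the zeros, invariance of the Brouwer degree, explicit recovery of $\bu^{n+1}$ from \eqref{eqn:disc-mom-bal}). Your proposal therefore follows essentially the same route, only in more detail; the one point to treat carefully is that the degree argument requires a \emph{uniform} positive lower bound on $\vrho^{n+1}$ along the whole homotopy (so that no zero touches $\partial\mathcal{O}$), not merely the strict positivity that the M-matrix structure gives pointwise, and this quantitative bound is what the cited reference extracts from the diagonal dominance estimate you mention.
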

\begin{proof}
    See \cite[Theorem 4.2]{AGK23}
\end{proof}

Since the existence of a discrete solution to the scheme is known, we define the `numerical solution' of the scheme as follows:

\begin{definition}[Numerical Solution of the Scheme]
    \label{def:scheme-soln}
    Let $\lbrace(\vrho^n,\bu^n)\in\Lt(\Omega)\times\Lt(\Omega;\R^d)\colon n=0,\dots,N-1\,;\,\vrho^n>0\rbrace$ be a family of discrete solutions to the scheme $\pepsh$. Then a numerical solution of the scheme corresponding to the space-time discretization $(\T,\delt)$ is defined to be the pair of piecewise constant functions 
    \begin{equation}
        \label{eqn:scheme-soln}
            \begin{split}
                &\vrho_{\T,\delt,\veps}(t,x)=\sum_{n=0}^{N-1}\sum_{K\in\T}\rk^n\X_K(x)\X_{[t_n,t_{n+1})}(t), \\
                &\bu_{\T,\delt,\veps}(t,x)=\sum_{n=0}^{N-1}\sum_{K\in\T}\uk^n\X_K(x)\X_{[t_n,t_{n+1})}(t).
            \end{split}
    \end{equation}
\end{definition}

Theorem \ref{thm:exist-num-sol} guarantees that the discrete density $\vrho^n>0$. However, we cannot claim nor prove the existence of suitable bounds on the density that ensure it being uniformly bounded below away from zero and being uniformly bounded above. The density staying in the non-degenerate region plays a key role in our analysis, and as such we make the following as our principal working hypothesis.

\begin{hypothesis}
\label{hyp:den-bound}
    There exist constants $0<\underline{\vrho}\leq\overline{\vrho}$ (independent of the mesh parameters and $\veps$) such that 
    \[
        0<\underline{\vrho}\leq\vrho_{\T,\delt}\leq\overline{\vrho}
    \]
\end{hypothesis}

Numerical solutions to $\pepsh$ satisfy a local in-time energy identity, which is a discrete variant of the admissibility criteria \eqref{eqn:baro-entropy}. We recall the result from \cite{AA24}. 

\begin{theorem}[Local In-Time Energy Inequality]
\label{thm:disc-loc-ent-ineq}
    Suppose that the following conditions hold:
    \begin{enumerate}
        \item $\eta>\dfrac{1}{\rk^{n+1}}$;
        
        \item $\delt\leq\dfrac{\absk\rk^{n+1}}{2\displaystyle\sum_{\substack{\sink \\ \sigma=K\vert L}}\abssig\vrho^{n+1}_L (-(w^n_{\sk})^{-})}$.
    \end{enumerate}
    Then, any numerical solution to $\pepsh$ satisfies the following local in-time energy inequality,
    \begin{equation}
    \label{eqn:disc-loc-ent-ineq-1}
        \sum_{K\in\T}\absk\biggl(\half\rk^{n+1}\abs{\uk^{n+1}}^2+\frac{1}{\veps^2}\psig(\rk^{n+1})\biggr)\leq \sum_{K\in\T}\absk\biggl(\half\rk^n\abs{\uk^n}^2+\frac{1}{\veps^2}\psig(\rk^n)\biggr)
    \end{equation}
    for each $0\leq n\leq N-1$.
\end{theorem}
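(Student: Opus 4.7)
The plan is to mimic, at the discrete level, the derivation of the continuous energy balance \eqref{eqn:mod-ent-idt}--\eqref{eqn:mod-ent-ineq} carried out for the stabilized system, namely: derive a discrete kinetic energy identity, a discrete renormalized mass identity, add them, and use the discrete grad-div duality together with the stabilization ansatz $\du^{n+1}_K = (\eta\delt/\veps^2)(\gradt p^{n+1})_K$ to produce a nonpositive dissipation. The two hypotheses on $\eta$ and $\delt$ will appear precisely when controlling the error terms arising from the discrete chain rule and the upwind convective fluxes.

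First I would derive the discrete kinetic energy balance by dotting the momentum update \eqref{eqn:disc-mom-bal} with $\bu_K^{n+1}$, summing over $K\in\T$, and using the algebraic identity
\[
(\vrho^{n+1}\bu^{n+1}-\vrho^n\bu^n)\cdot\bu^{n+1}
=\tfrac12\bigl(\vrho^{n+1}|\bu^{n+1}|^2-\vrho^n|\bu^n|^2\bigr)
+\tfrac12(\vrho^{n+1}-\vrho^n)|\bu^{n+1}|^2
+\tfrac12\vrho^n|\bu^{n+1}-\bu^n|^2,
\]
substituting $\vrho^{n+1}-\vrho^n=-\delt\,(\divup(\vrho^{n+1},\bw^n))_K$ from \eqref{eqn:disc-mss-bal}. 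The convective terms then combine, in edge-by-edge form, into a quadratic involving $|\bu^{n+1}_K-\bu^{n+1}_L|^2$ and a ``backward'' term involving $|\bu_K^n-\bu_K^{n+1}|$ weighted by the upwind mass fluxes. Second, I would apply the convexity inequality $\psig(\rk^{n+1})-\psig(\rk^n)\leq\psig'(\rk^{n+1})(\rk^{n+1}-\rk^n)$, multiply the mass balance by $\psig'(\rk^{n+1})$, and resum by edges using the identity $\rho\psi_\gamma'(\rho)-\psi_\gamma(\rho)=p(\rho)$ to produce a discrete renormalized balance featuring a nonpositive upwind dissipation plus the quantity $\sum_K |K|\,p_K^{n+1}(\divt\bw^n)_K$.

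Third, I would add the kinetic energy identity and $1/\veps^2$ times the renormalized identity. Writing the pressure term in the kinetic energy balance as
\[
\tfrac{1}{\veps^2}(\gradt p^{n+1})_K\cdot\bu_K^{n+1}
=\tfrac{1}{\veps^2}(\gradt p^{n+1})_K\cdot\bw_K^n
+\tfrac{1}{\veps^2}(\gradt p^{n+1})_K\cdot(\bu_K^{n+1}-\bu_K^n)
+\tfrac{1}{\veps^2}(\gradt p^{n+1})_K\cdot\du_K^{n+1},
\]
the first contribution cancels with $\tfrac{1}{\veps^2}\sum_K|K|\,p_K^{n+1}(\divt\bw^n)_K$ via the discrete grad-div duality of Lemma~\ref{lem:disc-grad-div-primal}, while the last contribution equals $\tfrac{\eta\delt}{\veps^4}|(\gradt p^{n+1})_K|^2\geq 0$ and is the source of the desired numerical dissipation.

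The main obstacle is then to absorb the two remaining positive residuals — namely the ``time residual'' $\tfrac{1}{2\delt}\vrho^n_K|\bu_K^{n+1}-\bu_K^n|^2$ (together with the ``backward'' upwind cross term produced in Step 1) and the cross term $\tfrac{1}{\veps^2}(\gradt p^{n+1})_K\cdot(\bu^{n+1}_K-\bu^n_K)$ — into the available nonpositive quantities: the numerical dissipation of the upwind convective fluxes and the newly created dissipation $\tfrac{\eta\delt}{\veps^4}|(\gradt p^{n+1})_K|^2$. Writing $\bu^{n+1}_K-\bu^n_K$ from the momentum equation and applying Young's inequality to separate the pressure-gradient contribution from the convective contribution, the condition $\eta>1/\rk^{n+1}$ is exactly what allows the pressure-gradient residual to be absorbed into the $\eta$-dissipation, while the CFL-type bound
\[
\delt\leq\frac{|K|\rk^{n+1}}{2\sum_{\sigma\in\E(K),\,\sigma=K|L}|\sigma|\vrho_L^{n+1}(-(w^n_{\sk})^-)}
\]
is precisely the threshold ensuring that the convective residual is dominated by the upwind numerical dissipation at each cell. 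Carrying out this bookkeeping carefully — with positivity of $\rk^{n+1}$ from Theorem~\ref{thm:exist-num-sol} and the sign decomposition \eqref{eqn:pos-neg-stab-velo} used to preserve the upwind signs — yields \eqref{eqn:disc-loc-ent-ineq-1}.
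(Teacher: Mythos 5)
Your overall architecture coincides with the paper's: multiply the mass update by $\psig'(\rk^{n+1})$ to get a discrete renormalization identity with a nonnegative (convexity) remainder, derive a discrete kinetic energy identity, cancel $p^{n+1}_K(\divt\bw^n)_K$ against $(\gradt p^{n+1})_K\cdot\bw^n_K$ via Lemma \ref{lem:disc-grad-div-primal}, extract the dissipation $\frac{\eta\delt}{\veps^4}\abs{(\gradt p^{n+1})_K}^2$ from the stabilization term, and absorb the remaining residual by expressing $\uk^{n+1}-\uk^n$ through the velocity update and applying Cauchy--Schwarz/Young, which is where both hypotheses enter. The renormalization step and the final absorption step are essentially those of the paper.

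The gap is in your kinetic energy step. The momentum convection in \eqref{eqn:disc-mom-bal} transports the \emph{explicit} velocity $\bu^n$ (the flux is $\flx(\vrho^{n+1}\bu^n,\bw^n)$), so the natural test velocity is $\uk^n$, not $\uk^{n+1}$; the paper rewrites \eqref{eqn:disc-mom-bal} as the velocity update \eqref{eqn:disc-vel-upd} and dots it with $\uk^n$, using $(a-b)\cdot b=\tfrac12(\abs{a}^2-\abs{b}^2-\abs{a-b}^2)$. Testing with $\uk^{n+1}$ as you propose cannot produce a quadratic in $\abs{\bu^{n+1}_K-\bu^{n+1}_L}$ from the convective term, since $\bu^{n+1}$ never appears in the flux: edge by edge one instead gets unsigned cross terms of the form $(\bu^n_L-\bu^n_K)\cdot\uk^{n+1}$, equivalently squares $\abs{\uk^{n+1}-\uk^n}^2$ weighted by $(w^n_{\sk})^{+}$ with the unfavourable sign. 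You also mislabel the sign of your time residual: with the identity you quote, $+\tfrac{\vrho^n_K}{2\delt}\abs{\uk^{n+1}-\uk^n}^2$ sits on the dissipative side and needs no absorbing; the anti-dissipation in your setup would instead come from the convective mismatch and from the extra unsigned cross term $\tfrac{1}{\veps^2}(\gradt p^{n+1})_K\cdot(\uk^{n+1}-\uk^n)$, which the paper's choice of test function avoids entirely. Consequently the constants that would emerge from your bookkeeping involve $\vrho^{n}$ and $(w^n_{\sk})^{+}$ rather than the stated $\vrho^{n+1}_L$ and $(w^n_{\sk})^{-}$, so the claim that hypotheses (1)--(2) are ``exactly'' what closes the estimate is not justified by the computation you set up. In the paper the sole anti-dissipative residual is $-\tfrac{\rk^{n+1}}{2\delt}\abs{\uk^{n+1}-\uk^n}^2$, and it is bounded via \eqref{eqn:disc-vel-upd} by $\tfrac{1}{\rk^{n+1}}$ times the square of the convective term (dominated by the upwind dissipation under the CFL condition) plus $\tfrac{\delt}{\veps^4\rk^{n+1}}\abs{(\gradt p^{n+1})_K}^2$ (dominated by the $\eta$-dissipation precisely when $\eta>1/\rk^{n+1}$).
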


\begin{proof}
    See Appendix \ref{subsec:app-proof-loc-ent}.
\end{proof}

One also obtains the following global energy estimate, see also \cite[Theorem 1.15]{AA24}

\begin{theorem}[Global Energy Estimate]
\label{thm:disc-glob-ent-est}
    There exists a constant $C>0$, independent of $\veps$, such that the discrete solutions $(\vrho^n,\bu^n)_{0\leq n\leq N}$ satisfy the following global energy inequality:
    \begin{equation}
    \label{eqn:disc-glob-ent-est-1}
        \sum_{K\in\T}\absk\biggl(\half\rk^n\abs{\uk^n}^2+\frac{1}{\veps^2}\psig(\rk^n)\biggr) + \frac{1}{\veps^4}\sum_{r=0}^{n-1}\delt^2\sum_{K\in\T}\abs{K}\biggl(\eta - \frac{1}{\vrho^{r+1}_K}\biggr)\abs{(\gradt p^{r+1})_K}^2\leq C,
    \end{equation}
\end{theorem}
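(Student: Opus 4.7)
The strategy is to telescope a refined form of the local in-time energy inequality in Theorem \ref{thm:disc-loc-ent-ineq}, retaining the pressure-gradient dissipation term that arises from the velocity stabilization. The stated local inequality \eqref{eqn:disc-loc-ent-ineq-1} is obtained by discarding a non-negative dissipation contribution; I would instead extract this contribution and carry it through the summation in $n$.

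More precisely, I would argue that the proof of Theorem \ref{thm:disc-loc-ent-ineq} (Appendix \ref{subsec:app-proof-loc-ent}) actually produces the sharper identity/inequality
\begin{equation*}
    \sum_{K\in\T}\absk\biggl(\half\rk^{n+1}\abs{\uk^{n+1}}^2+\frac{1}{\veps^2}\psig(\rk^{n+1})\biggr) + \frac{\delt^2}{\veps^4}\sum_{K\in\T}\absk\biggl(\eta - \frac{1}{\rk^{n+1}}\biggr)\abs{(\gradt p^{n+1})_K}^2 \leq \sum_{K\in\T}\absk\biggl(\half\rk^n\abs{\uk^n}^2+\frac{1}{\veps^2}\psig(\rk^n)\biggr).
\end{equation*}
This mirrors the continuous identity \eqref{eqn:mod-ent-ineq}: the shift $\du^{n+1}_K = (\eta\delt/\veps^2)(\gradt p^{n+1})_K$ produces, via the kinetic energy identity combined with the renormalized mass balance and the discrete grad-div duality of Lemma \ref{lem:disc-grad-div-primal}, a dissipation of order $-\eta\delt^2/\veps^4\,|\gradt p^{n+1}|^2$, while the implicit nature of the momentum update contributes a correction $+\delt^2/(\veps^4\rk^{n+1})\,|\gradt p^{n+1}|^2$ (coming from the Taylor-type term $\half\rk^{n+1}|\du^{n+1}|^2$ on the kinetic side). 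Under condition (1) of Theorem \ref{thm:disc-loc-ent-ineq}, namely $\eta>1/\rk^{n+1}$, the combined coefficient $\eta-1/\rk^{n+1}$ is positive, so this dissipation can be kept on the left-hand side rather than discarded.

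Having secured the refined inequality, I would sum over $r=0,\dots,n-1$; the kinetic plus pressure-potential terms telescope, leaving
\begin{equation*}
    \sum_{K\in\T}\absk\biggl(\half\rk^n\abs{\uk^n}^2+\frac{1}{\veps^2}\psig(\rk^n)\biggr) + \frac{1}{\veps^4}\sum_{r=0}^{n-1}\delt^2\sum_{K\in\T}\absk\biggl(\eta-\frac{1}{\vrho^{r+1}_K}\biggr)\abs{(\gradt p^{r+1})_K}^2 \leq \sum_{K\in\T}\absk\biggl(\half\rk^0\abs{\uk^0}^2+\frac{1}{\veps^2}\psig(\rk^0)\biggr).
\end{equation*}
The right-hand side is controlled, uniformly in $\veps$, by the hypothesis \eqref{eqn:ini-enrg-bound} on the initial data, which in turn follows from the well-prepared assumption (Definition \ref{def:well-prep}) together with Proposition \ref{prop:ini-den-bound} and the $L^2$-convergence of $\bu_\epso$. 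This yields the desired uniform constant $C>0$.

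The main obstacle I anticipate is bookkeeping: faithfully reproducing the refined local inequality so that the dissipation coefficient is exactly $\eta - 1/\rk^{n+1}$ and no spurious $\veps$-dependence creeps in. This requires treating the stabilized upwind flux \eqref{eqn:pos-neg-stab-velo} carefully in the kinetic-energy identity (to produce the $-\eta\delt^2/\veps^4$ contribution with the correct sign through the grad-div duality) and handling the implicit momentum-update cross terms via the elementary identity $a(a-b)=\half(a^2-b^2)+\half(a-b)^2$. The positivity of $\eta-1/\rk^{n+1}$ under hypothesis (1) of Theorem \ref{thm:disc-loc-ent-ineq} is what allows the dissipation to be retained rather than absorbed as a non-signed error; without it the summation argument would fail to control the pressure-gradient term.
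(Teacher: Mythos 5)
Your proposal is correct and follows essentially the same route as the paper's proof in Appendix A: the appendix derives exactly the refined local inequality you describe (equation \eqref{eqn:a3-3}, where the dissipation coefficient $\eta-1/\rk^{r+1}$ arises from combining the $-\eta\delt/\veps^4$ term of the discrete kinetic energy identity with the $+\delt/(\veps^4\rk^{r+1})$ correction from estimating the remainder $S^{r+1}_{K,\delt}$ via the velocity update), then multiplies by $\delt$, telescopes over $r$, and invokes the initial energy bound \eqref{eqn:ini-enrg-bound}. The only minor imprecision is that \eqref{eqn:ini-enrg-bound} is taken in the paper as a standing assumption rather than derived from well-preparedness alone.
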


\begin{proof}
    See Appendix \ref{subsec:app-proof-glob-ent}.
\end{proof}

\begin{corollary}
    Under the conditions (1) and (2) given in Theorem \ref{thm:disc-loc-ent-ineq}, one obtains
    \begin{enumerate}
        \item a local in-time entropy inequality 
        \begin{equation}
        \label{eqn:disc-loc-ent-ineq}
            \sum_{K\in\T}\absk\biggl(\half\rk^{n+1}\abs{\uk^{n+1}}^2+\frac{1}{\veps^2}\pig(\rk^{n+1})\biggr)\leq \sum_{K\in\T}\absk\biggl(\half\rk^n\abs{\uk^n}^2+\frac{1}{\veps^2}\pig(\rk^n)\biggr),
        \end{equation}

        \item a global entropy estimate 
        \begin{equation}
        \label{eqn:disc-glob-ent-est}
            \sum_{K\in\T}\absk\biggl(\half\rk^n\abs{\uk^n}^2+\frac{1}{\veps^2}\pig(\rk^n)\biggr) + \frac{1}{\veps^4}\sum_{r=0}^{n-1}\delt^2\sum_{K\in\T}\abs{K}\biggl(\eta - \frac{1}{\vrho^{r+1}_K}\biggr)\abs{(\gradt p^{r+1})_K}^2\leq C.
        \end{equation}
    \end{enumerate}
\end{corollary}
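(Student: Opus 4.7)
The proposal is to deduce both statements of the corollary directly from Theorem \ref{thm:disc-loc-ent-ineq} and Theorem \ref{thm:disc-glob-ent-est} by exploiting the affine relation between $\psig$ and $\pig$ together with discrete conservation of total mass. Recall from Proposition \ref{prop:apriori-enrg-est} that
\[
\pig(\vrho) \;=\; \psig(\vrho) \;-\; \psig(1) \;-\; \psig^\prime(1)(\vrho-1).
\]
Hence, passing from the $\psig$-inequalities already established to the desired $\pig$-inequalities amounts to subtracting the $\veps^{-2}$-scaled affine functional
\[
A^n \;\coloneqq\; \frac{1}{\veps^2}\sum_{K\in\T}|K|\bigl(\psig(1)+\psig^\prime(1)(\rk^n-1)\bigr) \;=\; \frac{\psig(1)|\Omega|}{\veps^2} \;+\; \frac{\psig^\prime(1)}{\veps^2}\Bigl(\sum_{K\in\T}|K|\rk^n - |\Omega|\Bigr)
\]
from both sides and showing that this correction is time-independent, i.e.\ $A^{n+1}=A^n$.

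The key intermediate step I would establish first is discrete conservation of total mass,
\[
\sum_{K\in\T}|K|\rk^{n+1} \;=\; \sum_{K\in\T}|K|\rk^n, \qquad 0 \leq n \leq N-1.
\]
Summing the discrete continuity equation \eqref{eqn:disc-mss-bal} over all $K\in\T$, the time difference is immediate, and the convective contribution is $\sum_{K\in\T}\sum_{\sigma\in\E(K)}\flx(\vrho^{n+1},\bw^n)$. Under the periodic assumption $\E_{ext}=\emptyset$, each internal edge $\sigma=K\vert L$ is visited twice in this sum and the two contributions cancel: indeed, the outward normal satisfies $\boldsymbol{\nu}_{\sigma,L}=-\nuk$, whence $w_{\sigma,L}=-\wsk$, and since $(-a)^\pm = -a^\mp$ the upwind convention \eqref{eqn:mss-flx} gives $F_{\sigma,L}(\vrho^{n+1},\bw^n)=-\flx(\vrho^{n+1},\bw^n)$. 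The entire flux sum therefore vanishes, proving the mass identity.

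With mass conservation in hand, $A^{n+1}=A^n$, so subtracting $A^{n+1}$ from the left and $A^n$ from the right of \eqref{eqn:disc-loc-ent-ineq-1} preserves the inequality and produces \eqref{eqn:disc-loc-ent-ineq}. Iterating (or equivalently, subtracting $A^n$ from both sides of \eqref{eqn:disc-glob-ent-est-1}) yields the global estimate \eqref{eqn:disc-glob-ent-est}; the dissipation term $\veps^{-4}\sum_r\delt^2\sum_K|K|(\eta-1/\vrho^{r+1}_K)|(\gradt p^{r+1})_K|^2$ is untouched by the subtraction, and the right-hand side remains uniformly bounded in $\veps$ thanks to the second bound of \eqref{eqn:ini-enrg-bound} on the initial data, which is stated precisely in terms of $\pig$.

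There is essentially no obstacle beyond verifying the flux antisymmetry outlined above. The only subtlety worth spelling out carefully in the full write-up is that the subtraction is performed simultaneously at both time levels $n$ and $n+1$, so no sign tracking in the dissipation term is required; the rest is algebraic substitution.
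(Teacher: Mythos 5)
Your proof is correct, but it takes a genuinely different route from the paper. The paper proves the corollary by first deriving a \emph{cell-wise} discrete positive renormalization identity for $\pig$ (equation \eqref{eqn:disc-pos-renorm-idt} in the appendix, obtained by combining the definition of $\pig$ with the discrete mass balance) and then rerunning the entire local energy argument of Theorem \ref{thm:disc-loc-ent-ineq} with $\pig$ in place of $\psig$ --- multiplying by $\absk/\veps^2$, adding the kinetic energy identity, summing over cells, and estimating the same remainder terms. You instead work globally: since $\pig(\vrho)=\psig(\vrho)-\psig(1)-\psig'(1)(\vrho-1)$, the two total energies differ by $\veps^{-2}$ times an affine functional of the total mass, which is conserved by the scheme because the upwind flux \eqref{eqn:mss-flx} is antisymmetric across internal edges (your verification via $(-a)^{\pm}=-a^{\mp}$, applied separately to the $u$ and $\delta u$ contributions in \eqref{eqn:pos-neg-stab-velo}, is the right computation, and the paper itself invokes this conservativity elsewhere). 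Subtracting this time-independent correction from both sides of \eqref{eqn:disc-loc-ent-ineq-1} and of the telescoped estimate \eqref{eqn:a4-2} gives both claims; your appeal to the second bound in \eqref{eqn:ini-enrg-bound} (or, equivalently, to $\vrho^0_K-1=O(\veps^2)$ from Proposition \ref{prop:ini-den-bound}, which keeps the correction bounded below uniformly in $\veps$) correctly handles the uniformity of the constant in \eqref{eqn:disc-glob-ent-est}. Your argument is shorter and makes the mechanism transparent; what it does not deliver is the local-in-space identity \eqref{eqn:disc-pos-renorm-idt} itself, which is the discrete counterpart of \eqref{eqn:mod-pos-renom} and is of independent structural interest, but that identity is not needed for the corollary as stated.
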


\begin{proof}
    See Appendix \ref{subsec:app-proof-loc-ent} and \ref{subsec:app-proof-glob-ent}.
\end{proof}

\begin{remark}
\label{rem:vel-pres-grad-est}
    The inequality \eqref{eqn:disc-glob-ent-est} along with Hypothesis \ref{hyp:den-bound} yields $\bu_{\T,\delt,\veps}\in L^\infty(0,T;L^2(\Omega;\R^d))$ and $\dfrac{\sqrt{\delt}}{\veps^2}\gradt p_{\T,\delt,\veps}\in L^2(\odom;\R^d))$, where $p_{\T,\delt,\veps}(t,x) = \sum_{n = 0}^{N-1} \sum_{K\in\T}p^n_K\X_K(x)\X_{\lbrack t_n,t_{n+1})}(t)$.
\end{remark}

We now state and give a proof of the weak consistency of the scheme $\pepsh$.

\begin{theorem}[Consistency of the Numerical Scheme]
\label{thm:cons-scheme}
Let $(\T,\delt)$ be a given space-time discretization of the domain $\dom$. Let $(\vrho_{\T,\delt,\veps},\bu_{\T,\delt,\veps})$ denote a numerical solution to $\pepsh$ in the sense of Definition \ref{def:scheme-soln} with the initial data given by \eqref{eqn:disc-ini-dat} and let $\veps>0$ be fixed. 
Then, the scheme is consistent with the weak formulation of $\peps$, i.e.\
\begin{equation}
    \label{eqn:mss-cons}
    \begin{split}
        -\int_{\Omega}\vrho_{\T,\delt,\veps}(0,x)\varphi(0,x)\,\dx\,=\,\int_0^T\int_\Omega\bigl\lbrack &\vrho_{\T,\delt,\veps}\Dt\varphi
       +\vrho_{\D,\delt,\veps}\bu_{\D,\delt,\veps}\cdot\grad\varphi\bigr\rbrack\,\dx\,\dt + \mathcal{S}^{mass}_{\T,\delt}
    \end{split}
\end{equation}   
for any $\varphi\in\Cinf(\lbrack 0,T)\times \Omega)$;
\begin{equation}
    \label{eqn:mom-cons}
    \begin{split}
    -\int_\Omega\vrho_{\T,\delt,\veps}(0,x)\bu_{\T,\delt,\veps}(0,x)\cdot\uu{\varphi}(0,x)\,\dx\,&=\,\int_0^T\int_\Omega\bigl\lbrack\vrho_{\T,\delt,\veps}\bu_{\T,\delt,\veps}\cdot\grad\uu{\varphi} 
    +\vrho_{\D,\delt,\veps}(\bu_{\D,\delt,\veps}\otimes\bu_{\D,\delt,\veps})\colon\grad\uu{\varphi}\\
    &+\frac{1}{\veps^2}p_{\T,\delt,\veps}(t+\delt, x)\Div\uu{\varphi}(t,x)\bigr)\bigr\rbrack\,\dx\,\dt + \mathcal{S}^{mom}_{\T,\delt}
    \end{split}
\end{equation}
for any $\uphi\in\Cinf(\lbrack 0, T)\times\Omega;\R^d)$.

Here, $\vrho_{\D,\delt,\veps}$ and $\bu_{\D,\delt,\veps}$ denote the reconstructions of $\vrho_{\T,\delt,\veps}$ and $\bu_{\T,\delt,\veps}$ on the dual mesh and the consistency errors $\mathcal{S}^{mass}_{\T,\delt}$ and $\mathcal{S}^{mom}_{\T,\delt}$ are such that
\begin{equation}
    \label{eqn:cons-errors}
    \begin{split}
        &\abs{\mathcal{S}^{mass}_{\T,\delt}} = o(h_\T,\sqrt{\delt}) \to 0\text{ as }h_\T\to 0,\\
        &\abs{\mathcal{S}^{mom}_{\T,\delt}} = o(h_\T,\sqrt{\delt}) \to 0\text{ as }h_\T\to 0.
    \end{split}
\end{equation}

\begin{proof}
    The proof of the consistency result is the same as in \cite[Theorem 1.17]{AA24}. However, an additional assumption of the discrete velocity being uniformly bounded is used there which we do not impose here. As such, we only give the explicit forms of the error terms $\mathcal{S}^{mass}_{\T,\delt}, \mathcal{S}^{mom}_{\T,\delt}$ and give the proof of estimating the same. The mass consistency error is given by 
    \begin{equation}
    \label{eqn:mss-cons-err}
        \begin{split}
            &\mathcal{S}^{mass}_{\T,\delt} = T_{mass}+\,\int_0^T\int_\Omega \vrho_{\T,\delt,\veps}(\eth_t\varphi_{\T,\delt}-\Dt\varphi)\,\dx\,\dt 
            +\int_{0}^T\int_\Omega\vrho_{\D,\delt,\veps}\bu_{\D,\delt,\veps}\cdot(\gradd\varphi_{\T,\delt}-\grad\varphi)\,\dx\,\dt \\
            &+\int_{\Omega}\vrho_{\T,\delt,\veps}(0,x)(\varphi_{\T,\delt}(0,x)-\varphi(0,x))\,\dx\  \\
            & = T_{mass}+R_1+R_2+R_3.
        \end{split}
    \end{equation}
    where 
    \[
    T_{mass} = -\sum_{n=0}^{N-1}\delt\sum_{K\in\T}\sum_{\substack{\sink \\ \sigma=K\vert L}}\abssig\Bigl(\rk^{n+1} \delta u^{n+1,-}_{\sk}+\vrho^{n+1}_L\delta u^{n+1,+}_{\sk}\Bigr)\varphi^{n}_K.
    \]

    The terms $R_1$ and $R_3$ are estimated readily due to the convergence of the interpolates to their continuous counterparts. To estimate $R_2$, we use Lemma \ref{lem:stab-recon-op} along with Remark \ref{rem:vel-pres-grad-est} and the estimates provided by \eqref{eqn:op-est} to get
    \[
    \abs{R_2}\lesssim\overline{\vrho}\norm{\bu_{\T,\delt,\veps}}_{L^2}\norm{\gradd\varphi_{\T,\delt} - \grad\varphi}_{L^2}\lesssim h_\T.
    \]
    $T_{mass}$, after rewriting, can be estimated using Remark \ref{rem:vel-pres-grad-est} and \eqref{eqn:op-est} as 
    \[
    \abs{T_{mass}}\lesssim \overline{\vrho}\eta\sqrt{\delt}\norm{\varphi}_{\infty}\norm{\frac{\sqrt{\delt}}{\veps^2}\gradt p_{\T,\delt,\veps}}_{L^2} \lesssim \sqrt{\delt}.
    \]
    As a consequence, we get $\mathcal{S}^{mass}_{\T,\delt} = o(h_\T,\sqrt{\delt})$.

    The momentum consistency error is given by 
    \begin{equation}
    \label{mom-cons-err}
        \begin{split}
            &\mathcal{S}^{mom}_{\T,\delt} = T_{mom}+\,\int_0^T\int_\Omega \vrho_{\T,\delt,\veps}\bu_{\T,\delt,\veps}\cdot(\eth_t\uphi_{\T,\delt}-\Dt\uphi)\,\dx\,\dt \\
            &+ \int_{0}^T\int_\Omega\vrho_{\D,\delt,\veps}(\bu_{\D,\delt,\veps}\otimes\bu_{\D,\delt,\veps})\colon(\gradd\uphi_{\T,\delt}-\grad\uphi)\,\dx\,\dt \\
            &+\frac{1}{\veps^2}\int_0^T\int_\Omega p_{\T,\delt,\veps}(t+\delt,x)(\divt\uphi_{\T,\delt}(t,x)-\Div\uphi(t,x))\,\dx\,\dt \\
            &+ \int_{\Omega}\vrho_{\T,\delt,\veps}(0,x)\bu_{\T,\delt,\veps}(0,x)\cdot(\uphi_{\T,\delt}(0,x)-\uphi(0,x))\,\dx
            = T_{mom}+S_1+S_2+S_3+S_4.
        \end{split}
    \end{equation}
    where 
    \[
    T_{mom} = -\sum_{n=0}^{N-1}\delt\sum_{K\in\T}\sum_{\substack{\sink \\ \sigma=K\vert L}}\abssig\Bigl(\rk^{n+1}\uk^n \delta u^{n+1,-}_{\sk}+\vrho^{n+1}_L\bu^n_L\delta u^{n+1,+}_{\sk}\Bigr)\cdot\uphi^{n}_K.
    \]
    The terms $S_1$ and $S_4$ again are estimated due to the convergence of the interpolates to their continuous counterparts. $S_2$ and $S_3$ can be estimated in the same manner as $R_2$. Finally, $T_{mom}$ is estimated as 
    \[
    \abs{T_{mom}}\lesssim \overline{\vrho}\eta\sqrt{\delt}\norm{\uphi}_{\infty}\norm{\bu_{\T,\delt,\veps}}_{L^2}\norm{\frac{\sqrt{\delt}}{\veps^2}\gradt p_{\T,\delt,\veps}}_{L^2}\lesssim\sqrt{\delt},
    \]
    which gives us $\mathcal{S}^{mom}_{\T,\delt} = o(h_\T,\sqrt{\delt})$.
\end{proof}

\end{theorem}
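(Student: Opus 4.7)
The plan is to derive the two discrete weak identities by testing the mass balance \eqref{eqn:disc-mss-bal} and the momentum balance \eqref{eqn:disc-mom-bal} against the cell interpolates $\varphi^n_K$ and $\uphi^n_K$ of smooth test functions, summing over all $(K,n)$ with weight $\delt$, and reorganizing the result via summation by parts in time together with the discrete grad-div duality of Lemma \ref{lem:disc-grad-div-primal} (and its dual-mesh counterpart for the upwind convective flux). After this reorganization, the expected weak-form integrals against $\Dt\varphi$, $\grad\varphi$, $\divt\uphi$ appear, but with the interpolates $\eth_t\varphi_{\T,\delt}$, $\gradd\varphi_{\T,\delt}$, $\divt\uphi_{\T,\delt}$ and with the dual-mesh reconstructions $\vrho_{\D,\delt,\veps}, \bu_{\D,\delt,\veps}$ in place of their continuous counterparts. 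Replacing each interpolated object by its target then produces the remainders $R_1,R_2,R_3$ (resp.\ $S_1,S_2,S_3,S_4$) in the mass (resp.\ momentum) consistency error, plus the intrinsic stabilization pieces $T_{mass}, T_{mom}$ coming from the shift $\du^{n+1}$ in the upwind flux.

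The remainders involving only time interpolation and initial data, namely $R_1,R_3$ and $S_1,S_4$, vanish as $h_\T\to 0$ thanks to the uniform convergences $\eth_t\varphi_{\T,\delt}\to\Dt\varphi$ and $\varphi_{\T,\delt}(0,\cdot)\to\varphi(0,\cdot)$, combined with Hypothesis \ref{hyp:den-bound} and the $L^\infty_t L^2_x$ bound on the momentum recorded in Remark \ref{rem:vel-pres-grad-est}. For the spatial-gradient remainders $R_2$ and $S_2$ I would combine the $L^p$-stability of the reconstruction operator (Lemma \ref{lem:stab-recon-op}), the $O(h_\T)$ interpolation estimates \eqref{eqn:op-est}, and the $L^2$-bound on $\bu_{\T,\delt,\veps}$ together with the pointwise bound on $\vrho_{\T,\delt,\veps}$. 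The pressure residual $S_3$ reduces to the same kind of $O(h_\T)$ estimate once one invokes Hypothesis \ref{hyp:den-bound} and Proposition \ref{prop:ini-den-bound} to control $p_{\T,\delt,\veps}$ in $L^\infty$ for fixed $\veps>0$ (the regime of this theorem).

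The principal difficulty is the stabilization residuals $T_{mass}$ and $T_{mom}$, because the shift carries the singular factor $\eta\delt/\veps^2$ in $\du^{n+1}_K=(\eta\delt/\veps^2)(\gradt p^{n+1})_K$. The key trick is to split $\delt=\sqrt{\delt}\cdot\sqrt{\delt}$ and regroup so that the factor $\sqrt{\delt}/\veps^2$ binds with $\gradt p^{n+1}_K$; a Cauchy--Schwarz inequality over $(t,x)$ then yields
\[
|T_{mass}|\,\lesssim\,\sqrt{\delt}\,\overline{\vrho}\,\eta\,\norm{\varphi}_\infty\,\Bigl\lVert\tfrac{\sqrt{\delt}}{\veps^2}\gradt p_{\T,\delt,\veps}\Bigr\rVert_{L^2(\odom)},
\]
and the last factor is uniformly bounded by the dissipation term in the global energy estimate \eqref{eqn:disc-glob-ent-est-1}, precisely as recorded in Remark \ref{rem:vel-pres-grad-est}. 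The momentum residual $T_{mom}$ is handled identically, with one additional Cauchy--Schwarz factor $\norm{\bu_{\T,\delt,\veps}}_{L^2(\odom)}$ absorbing the $\uk^n$ and $\bu^n_L$ contributions. Assembling all bounds delivers $|\mathcal{S}^{mass}_{\T,\delt}|+|\mathcal{S}^{mom}_{\T,\delt}|=o(h_\T,\sqrt{\delt})$, as claimed.
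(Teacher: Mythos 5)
Your proposal is correct and follows essentially the same route as the paper: the same decomposition into interpolation remainders plus the stabilization residuals $T_{mass},T_{mom}$, the same use of Lemma \ref{lem:stab-recon-op}, \eqref{eqn:op-est}, Hypothesis \ref{hyp:den-bound} and Remark \ref{rem:vel-pres-grad-est}, and the same $\sqrt{\delt}\cdot\sqrt{\delt}$ splitting with Cauchy--Schwarz to absorb the $\sqrt{\delt}\,\veps^{-2}\gradt p_{\T,\delt,\veps}$ factor into the global energy dissipation bound. The only cosmetic difference is your invocation of Proposition \ref{prop:ini-den-bound} for the pressure residual $S_3$, which is unnecessary since Hypothesis \ref{hyp:den-bound} alone bounds $p_{\T,\delt,\veps}=\vrho_{\T,\delt,\veps}^\gamma$ in $L^\infty$ for fixed $\veps$.
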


\section{Analysis of the Iterative Limits}
\label{sec:lim-analys-scheme}

The goal of this section is twofold:\ to analyze the two limits $\mathcal{I} = \lim_{\veps\to 0}\Bigl(\lim_{h_\T\to 0}\pepsh\Bigr)$ and $\mathcal{J} = \lim_{h_\T\to 0}\Bigl(\lim_{\veps\to 0}\pepsh\Bigr)$. We note that there is a slight abuse of notation, in the sense that $\lim_{h_\T\to 0}\pepsh$ means taking the limit $h_\T\to 0$ of the numerical solutions generated by the scheme for a fixed $\veps>0$. On the other hand, $\lim_{\veps\to 0}\pepsh$ means taking the limit $\veps\to 0$ of the numerical solutions generated by the scheme on a fixed mesh with size $h_\T$. For the sake of convenience and the simplicity of exposition, however, we stick to the same notations as above. Throughout this section, we assume that Hypothesis \ref{hyp:den-bound} and the conditions imposed by Theorem \ref{thm:disc-loc-ent-ineq} hold.

\subsection{Analysis of $\mathcal{I}$}
\label{subsec:first_limit}
We first consider $\lim_{h_\T\to 0}\pepsh$.  
Consider a family of space time discretizations $(\T^\m,\delt^\m)_{m\in\N}$ such that the step size $h_{\T^\m} = h^\m\to 0$ as $m\to\infty$. Let $\vrho_{\T^\m,\delt^\m,\veps} = \vrho^\m_\veps$, $\bu_{\T^\m,\delt^\m,\veps} = \bu^\m_{\veps}$ and set $\bm^\m_\veps = \vrho^\m_\veps\bu^\m_\veps$. Then, Hypothesis \ref{hyp:den-bound} along with Theorem \ref{thm:disc-glob-ent-est} ensures that $(\vrho^\m_\veps)_{m\in\N}\subset L^\infty(\odom)$ and $(\bm^\m_\veps)_{m\in\N}\subset L^\infty((0,T);L^2(\Omega;\R^d))$. The so-called `Fundamental Theorem of Young Measures', cf.\ \cite{JBal89, MNR+96, Ped97}, asserts that there exists a parameterized family of probability measures $\mcv^\veps = \lbrace \V^\veps\rbrace_{(t,x)\in\odom}\in L^\infty_{weak-*}(\odom;\Pro(\F_{comp}))$ such that 
\begin{align*}
    &\vrho^\m_\veps\weakstar\langle\V^\veps; \tvrho\rangle\text{ in }L^\infty(\odom), \\
    &\bm^\m_\veps\weakstar\langle\V^\veps; \tbm\rangle\text{ in }L^\infty((0,T);L^2(\Omega;\R^d)),
\end{align*}
possibly for a subsequence (not relabelled).

The consistency formulation, cf.\ Theorem \ref{thm:cons-scheme}, will now be satisfied by every pair $(\vrho^\m_\veps,\bm^\m_\veps)$ for each $m\in\N$. As a consequence, one can pass to the limit $m\to\infty$ in the formulation with the help of the above convergence and one obtains the following result. As before, we refer the reader to \cite[Theorem 1.18]{AA24} for the complete proof.

\begin{theorem}[Weak Convergence]
\label{thm:scheme-wk-con}
    Let $(\T^\m,\delt^\m)_{m\in\N}$ be a sequence of space-time discretizations such that $h^\m=h_{\T^\m}\to 0$ as $m\to\infty$ and let $\veps>0$ be fixed. Let $\lbrace(\vrho^\m_\veps,\bu^\m_\veps)\rbrace_{m\in\N}$, $\vrho^\m_\veps = \vrho_{\T^\m,\delt^\m, \veps}$ and $\bu^\m_\veps = \bu_{\T^\m,\delt^\m, \veps}$, be the sequence of numerical solutions generated by the scheme $\lbrack\Pro^{h^\m}_\veps\rbrack$. Let $\bm^\m_\veps = \vrho^\m_\veps\bu^\m_\veps$. Then, there exists a subsequence (not relabelled) such that
    \begin{align*}
        &\vrho^\m_\veps\weakstar\langle\V^\veps; \tvrho\rangle\text{ in }L^\infty(\odom), \\
        &\bm^\m_\veps\weakstar\langle\V^\veps;
        \tbm\rangle\text{ in }L^\infty(0,T; L^2(\Omega; \R^{d}))
    \end{align*}
    as $m\to\infty$, where $\mcv^\veps=\lbrace\V^\veps\rbrace_{(t,x)\in\odom}$ is a DMV solution of $\peps$ (in the sense of Definition \ref{def:dmv-baro}) with 
    \begin{equation}
    \label{eqn:conc-def}
        \begin{split}
            &\mathfrak{C}^\veps_{cd}(t) = \overline{\half\frac{\abs{\bm^2}}{\vrho}}(t,\cdot) - \biggl\langle\V^\veps; \half\frac{\abs{\tbm}^2}{\tvrho}\biggr\rangle, \\
            &\mathfrak{R}^\veps_{cd}(t) = \overline{\frac{\bm\otimes\bm}{\vrho}}(t,\cdot) - \biggl\langle\V^\veps; \frac{\tbm\otimes\tbm}{\tvrho}\biggr\rangle,  
        \end{split}
    \end{equation}
    where $\displaystyle\overline{\half\frac{\abs{\bm^2}}{\vrho}}$ is the weak-star limit of 
        $\displaystyle\biggl(\half\frac{\lvert{\bm^\m_\veps}\rvert^2}{\vrho^\m_\veps}\biggr)_{m\in\N}$ in $L^\infty(0,T;\M^{+}(\overline{\Omega}))$ and $\displaystyle\overline{\frac{\bm\otimes\bm}{\vrho}}$ is the weak-star limit of 
        $\displaystyle\biggl(\frac{\bm^\m_\veps\otimes\bm^\m_\veps}{\vrho^\m_\veps}\biggr)_{m\in\N}$ in $L^\infty(0,T;\M(\overline{\Omega};\R^{d\times d}))$ as $m\to\infty$.
\end{theorem}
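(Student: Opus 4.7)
The plan is to first assemble the uniform (in $m$, with $\veps > 0$ fixed) a priori bounds provided by Hypothesis \ref{hyp:den-bound} together with the global energy estimate \eqref{eqn:disc-glob-ent-est-1}, then extract subsequential weak-$*$ limits via the Fundamental Theorem of Young Measures, and finally pass to the limit $m \to \infty$ in the consistency identities from Theorem \ref{thm:cons-scheme} in order to verify each bullet of Definition \ref{def:dmv-baro}.

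I would first collect the bounds. Hypothesis \ref{hyp:den-bound} yields $\underline{\vrho} \leq \vrho^\m_\veps \leq \overline{\vrho}$ uniformly in $m$, hence both $(\vrho^\m_\veps)_m$ and $(p(\vrho^\m_\veps))_m$ are bounded in $L^\infty(\odom)$. Combined with the kinetic-energy term in \eqref{eqn:disc-glob-ent-est-1}, this implies that $(\bu^\m_\veps)_m$ and $(\bm^\m_\veps)_m$ are uniformly bounded in $L^\infty(0,T;L^2(\Omega;\R^d))$, and that $|\bm^\m_\veps|^2/\vrho^\m_\veps$ together with the entries of $\bm^\m_\veps \otimes \bm^\m_\veps / \vrho^\m_\veps$ are uniformly bounded in $L^\infty(0,T;L^1(\Omega))$. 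The Fundamental Theorem of Young Measures then delivers a non-relabelled subsequence and a parameterized family $\mcv^\veps = \lbrace \V^\veps \rbrace_{(t,x) \in \odom} \in L^\infty_{weak-*}(\odom; \Pro(\F_{comp}))$ such that $\vrho^\m_\veps \weakstar \langle \V^\veps; \tvrho \rangle$ in $L^\infty(\odom)$ and $\bm^\m_\veps \weakstar \langle \V^\veps; \tbm \rangle$ in $L^\infty(0,T;L^2(\Omega;\R^d))$. Since the nonlinear quantities are only uniformly in $L^1$, I pass to a further weak-$*$ limit in $L^\infty(0,T;\M(\overline{\Omega}))$, respectively $L^\infty(0,T;\M(\overline{\Omega};\R^{d\times d}))$, to obtain $\overline{\half|\bm|^2/\vrho}$ and $\overline{\bm\otimes\bm/\vrho}$, and I define $\mathfrak{C}^\veps_{cd}$ and $\mathfrak{R}^\veps_{cd}$ exactly as in \eqref{eqn:conc-def}.

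Next I pass to the limit in the identities \eqref{eqn:mss-cons} and \eqref{eqn:mom-cons}. The consistency errors $\mathcal{S}^{mass}_{\T^\m,\delt^\m}$ and $\mathcal{S}^{mom}_{\T^\m,\delt^\m}$ are $o(h^\m,\sqrt{\delt^\m})$ by Theorem \ref{thm:cons-scheme} and vanish. The time-derivative terms $\vrho^\m_\veps \Dt \varphi$ and $\bm^\m_\veps \cdot \Dt \uphi$ converge directly by the weak-$*$ convergences above, as do the initial-data contributions by strong convergence of the $\Pi_\T$-projections. Lemma \ref{lem:wk-cong-mesh-recon} guarantees that the dual-mesh reconstructions $\vrho^\m_{\D,\delt,\veps}$ and $\bu^\m_{\D,\delt,\veps}$ inherit the weak-$*$ limits of their primal-mesh counterparts, which handles the linear momentum flux. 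For the quadratic convective flux $\bm^\m_\veps \otimes \bm^\m_\veps / \vrho^\m_\veps$, the measure-valued weak-$*$ limit splits by construction into the Young-measure average $\langle \V^\veps; \tbm \otimes \tbm / \tvrho \rangle$ plus the Reynolds defect $\mathfrak{R}^\veps_{cd}$, and the pressure $p(\vrho^\m_\veps)$, being uniformly $L^\infty$-bounded, converges weakly-$*$ to $\langle \V^\veps; p(\tvrho) \rangle$ with no concentration by the standard continuous-functional representation of Young measures.

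The energy inequality \eqref{eqn:enrg-ineq-baro} then follows from \eqref{eqn:disc-loc-ent-ineq-1}: on the right-hand side the projected initial data converge strongly to $(\vrho_{0,\veps},\bm_{0,\veps})$, while on the left-hand side weak lower semicontinuity combined with the very definition of $\mathfrak{C}^\veps_{cd}$ in \eqref{eqn:conc-def} reproduces the stated bound, concentration term included. By convexity of $(\tvrho,\tbm)\mapsto |\tbm|^2/\tvrho$ on $[\underline{\vrho},\overline{\vrho}]\times\R^d$ and Jensen's inequality, $\mathfrak{C}^\veps_{cd}$ is a non-negative measure, and the algebraic identity $|\tbm|^2/\tvrho = \mathrm{tr}(\tbm \otimes \tbm/\tvrho)$ transfers to the weak-$*$ limits, yielding $\mathrm{tr}(\mathfrak{R}^\veps_{cd}) = 2\mathfrak{C}^\veps_{cd}$ and hence the compatibility condition \eqref{eqn:def-cond} via Remark \ref{rem:def-cond}. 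I expect the principal obstacle to lie in the analysis of the stabilization-generated cross terms $T_{mass}$ and $T_{mom}$ appearing in the consistency errors: they are controlled only through the estimate $\sqrt{\delt^\m}/\veps^2 \, \gradt p^\m_\veps \in L^2(\odom;\R^d)$ from Remark \ref{rem:vel-pres-grad-est}, so one must carefully verify that the $O(\sqrt{\delt^\m})$ gain supplied by this bound is genuinely enough to annihilate those contributions in the limit $m\to\infty$ at fixed $\veps$, even though it degenerates as $\veps \to 0$.
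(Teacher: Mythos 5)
Your proposal is correct and takes essentially the same route as the paper: uniform bounds from Hypothesis \ref{hyp:den-bound} and the global energy estimate of Theorem \ref{thm:disc-glob-ent-est}, subsequence extraction via the Fundamental Theorem of Young Measures, passage to the limit in the consistency formulation of Theorem \ref{thm:cons-scheme}, and identification of the defect measures exactly as in \eqref{eqn:conc-def}. The additional details you supply (lower semicontinuity for the energy inequality, the trace identity linking $\mathfrak{R}^\veps_{cd}$ and $\mathfrak{C}^\veps_{cd}$, and the observation that the $O(\sqrt{\delt^\m})$ consistency errors are harmless at fixed $\veps$) match what the paper delegates to the cited reference and to its incompressible analogue.
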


\begin{remark}
    In \cite{AA24}, an additional assumption of the discrete velocity being uniformly bounded was used in order to prove the consistency of the scheme. The consequence of this assumption was that the concentration defects were $0$ when passing to the limit as given in \cite[Theorem 1.18]{AA24}. However, as the proof of Theorem \ref{thm:cons-scheme} shows, this assumption seems to be surplus and as such, we do not impose the same in the current work. This, however, gives us non-zero concentration defects as given in \eqref{eqn:conc-def}.
\end{remark}

Therefore, one has for a fixed $\veps>0$, $\lim_{h_\T\to 0}\pepsh = \mcv^\veps$, which is a DMV solution of $\peps$ with initial data $(\vrho_{\epso}, \bm_{\epso})$, where $\bm_\epso = \vrho_\epso\bu_\epso$.

Now, taking into account that the initial data are well-prepared (in the sense of Definition \ref{def:well-prep}) and as we also suppose that $\bv_0\in H^k(\Omega;\R^d)$ with $\Div\bv_0 = 0$, $k>d/2+1$, one immediately ascertains the following result using Lemma \ref{lem:eng-est-pig} and Theorem \ref{thm:asymp-lim-dmv}. 

\begin{theorem}
\label{thm:eps-h-lim}
    Let $(\T^\m,\delt^\m)_{m\in\N}$ be a sequence of space-time discretizations such that $h^\m=h_{\T^\m}\to 0$ as $m\to\infty$. Let $(\veps_k)_{k\in\N}$ be a decreasing sequence of positive numbers such that $\veps_k\to 0$ as $k\to\infty$. Let $\lbrace(\vrho^{m,k},\bu^{m,k})\rbrace_{m,k\in\N}$, $\vrho^{m,k} = \vrho_{\T^\m,\delt^\m, \veps_k}$ and $\bu^{m,k} = \bu_{\T^\m,\delt^\m, \veps_k}$, be the sequence of numerical solutions generated by the scheme $\lbrack\Pro^{h^\m}_{\veps_k}\rbrack$. Assume that the initial data $\lbrace(\vrho_{0, \veps_k},\bu_{0,\veps_k})\rbrace_{k\in\N}$ is well-prepared (in the sense of Definition \ref{def:well-prep}) and also assume that $\bv_0\in H^k(\Omega;\R^d)$ with $\Div\bv_0 = 0$, $k>d/2+1$. Finally, let $\bv\in C^1\bigl(\dom;\R^d\bigr)$ be the classical solution of the incompressible Euler system $\pepso$ emanating from $\bv_0$. Then $(\vrho^{m,k},\bu^{m,k})$ converges in the weak-* sense to a DMV solution $\mcv^k = \mcv^{\veps_k}$ of the compressible Euler system as $m\to\infty$ for each fixed $k$. Furthermore, the family of DMV solutions $\lbrace\mcv^k\rbrace_{k\in\N}$ converge to $\bv$ as $k\to\infty$, i.e.
    \[
    \essup_{t\in(0,T)}E_{rel}(\mcv^k\vert 1,\bv)(t)\to 0\text{ as }k\to\infty.
    \]In other words, we have
    \[
    \lim_{k\to\infty}\biggl(\lim_{m\to\infty}(\vrho^{m,k}, \bu^{m,k})\biggr) = (1,\bv).
    \]
\end{theorem}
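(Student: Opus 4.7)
The statement admits a direct proof by composing two already established results, so my plan is to handle the two nested limits separately and in the order written. First, I would fix $k$ and let the mesh parameter $m\to\infty$ with $\veps=\veps_k$ held constant. For this step I intend to invoke Theorem \ref{thm:scheme-wk-con} verbatim: Hypothesis \ref{hyp:den-bound} and the CFL-type conditions of Theorem \ref{thm:disc-loc-ent-ineq} are in force throughout this section, so the theorem delivers a subsequence of $(\vrho^{m,k},\vrho^{m,k}\bu^{m,k})$ converging weak-$*$ to $(\langle\V^{\veps_k};\tvrho\rangle,\langle\V^{\veps_k};\tbm\rangle)$, where $\mcv^k=\mcv^{\veps_k}$ is a DMV solution of $\peps$ at Mach number $\veps_k$, carrying the concentration defects described in \eqref{eqn:conc-def}. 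Because the scheme is initialized via cell averages in \eqref{eqn:disc-ini-dat}, the initial data carried by $\mcv^k$ are $(\vrho_{0,\veps_k},\bm_{0,\veps_k})$ with $\bm_{0,\veps_k}=\vrho_{0,\veps_k}\bu_{0,\veps_k}$, as required for the second limit.

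Next I would pass $k\to\infty$. The family $\lbrace\mcv^k\rbrace_{k\in\N}$ is then a family of DMV solutions of the compressible system emanating from well-prepared initial data in the sense of Definition \ref{def:well-prep}. By Lemma \ref{lem:eng-est-pig}, each $\mcv^k$ satisfies the energy estimate \eqref{eqn:dmv-baro-ent-pig} written in terms of the relative internal energy $\pig$, and the uniform-in-$k$ bound on its right-hand side follows from the second inequality of \eqref{eqn:ini-enrg-bound}. Since $\bv_0\in H^k(\Omega;\R^d)$ with $\Div\bv_0=0$ and $T<T_{max}$, Theorem \ref{thm:incomp-soln-exis} furnishes the classical incompressible solution $\bv\in C^1(\dom;\R^d)$ with $\bv(0,\cdot)=\bv_0$. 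I would then invoke Theorem \ref{thm:asymp-lim-dmv} to conclude that $\D^{\veps_k}\to 0$ in $L^\infty(0,T)$ and $\essup_{t\in(0,T)}E_{rel}(\mcv^k\,\vert\,1,\bv)(t)\to 0$ as $k\to\infty$, which is precisely the compact statement $\lim_{k\to\infty}\bigl(\lim_{m\to\infty}(\vrho^{m,k},\bu^{m,k})\bigr)=(1,\bv)$ appearing at the end of the theorem.

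The only non-automatic step, and hence the likely main obstacle, is verifying that the DMV solution $\mcv^k$ produced in the first step really satisfies the hypotheses of Theorem \ref{thm:asymp-lim-dmv}; specifically, that the discrete energy inequalities of Theorem \ref{thm:disc-loc-ent-ineq} and Theorem \ref{thm:disc-glob-ent-est} pass to the limit $m\to\infty$ to yield the integrated energy inequality \eqref{eqn:enrg-ineq-baro} with a right-hand side bounded uniformly in $\veps_k$. I expect this to reduce to a routine application of the weak-$*$ lower semicontinuity of the convex energy $(\tvrho,\tbm)\mapsto \tfrac12\abs{\tbm}^2/\tvrho + \veps^{-2}\psig(\tvrho)$ on $\F_{comp}$, combined with the weak-$*$ convergences of the numerical densities and momenta supplied by Theorem \ref{thm:scheme-wk-con} and the uniform initial-energy bound \eqref{eqn:ini-enrg-bound}. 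Once this is in place, the proof is a direct concatenation of Theorems \ref{thm:scheme-wk-con} and \ref{thm:asymp-lim-dmv}, and no further ingredients are required.
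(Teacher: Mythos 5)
Your proposal is correct and follows essentially the same route as the paper, which obtains this theorem as a direct concatenation of Theorem \ref{thm:scheme-wk-con} (for the $m\to\infty$ limit at fixed $\veps_k$) with Lemma \ref{lem:eng-est-pig} and Theorem \ref{thm:asymp-lim-dmv} (for the $k\to\infty$ limit). The "main obstacle" you flag is already absorbed into the cited results: Theorem \ref{thm:scheme-wk-con} asserts that the limit object is a DMV solution in the sense of Definition \ref{def:dmv-baro} (hence satisfies \eqref{eqn:enrg-ineq-baro}), and Lemma \ref{lem:eng-est-pig} together with the uniform initial bound \eqref{eqn:ini-enrg-bound} supplies the $\veps$-uniform $\pig$-energy estimate needed for Theorem \ref{thm:asymp-lim-dmv}.
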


\subsection{Analysis of $\mathcal{J}$}
\label{subsec:second-lim}

We begin by analyzing the numerical solutions of the scheme $\pepsh$ as we first perform the zero Mach number limit ($\veps\to 0$) at the discrete level. One of the key ingredients needed to pass to the limit is a uniform bound on the second order pressure $\pi^n_K = (p^n_K - m(p^n))/\veps^2$, where $m(p^n) = \abs{\Omega}^{-1}\sum_{K\in\T}\absk p^n_K$ is the average value of pressure. Performing a Hilbert expansion of the pressure term followed by a formal asymptotic analysis at the continuous level yields a decomposition $p = p_{(0)} + \veps^2\pi$ in the low Mach regime. Here , $p_{(0)}$ is the thermodynamic pressure. The second order pressure, also known as the hydrodynamic pressure, $\pi$ survives in the limit $\veps\to 0$; cf.\ \eqref{eqn:div-free-incomp}-\eqref{eqn:mom-bal-incomp}.  This needs to be respected also at the discrete level and hence, a control on the discrete second-order pressure term is needed in order to pass to the limit. We begin by recalling a discrete variant of the Sobolev-Poincar\'{e} inequality from \cite{FLM+21a}.

\begin{lemma}[Discrete Sobolev-Poincar\'{e} inequality]
\label{lem:sob-poinc-ineq}
    For any $q\in\Lt(\Omega)$, the following inequality holds:
    \begin{equation}
    \label{eqn:sob-poinc-ineq}
        \norm{q - m(q)}_{L^2(\Omega)}\lesssim\norm{\gradd q}_{L^2(\Omega)}.
    \end{equation}
    where $m(q) = \abs{\Omega}^{-1}\sum_{K\in\T}\absk q_K.$
\end{lemma}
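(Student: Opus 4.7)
The plan is to establish the inequality by a standard $L^2$-duality argument, exploiting elliptic regularity on the torus. Without loss of generality, I first assume $m(q)=0$, since replacing $q$ by $q - m(q)$ leaves both sides of the inequality unchanged, and then write
\[
\norm{q}_{L^2(\Omega)} = \sup\biggl\{\int_\Omega q\varphi\,\dx \colon \varphi \in L^2(\Omega),\ m(\varphi) = 0,\ \norm{\varphi}_{L^2(\Omega)} \leq 1\biggr\}.
\]
For each admissible test function $\varphi$, the next step is to introduce the potential $\psi$ solving the periodic Poisson problem $-\Delta\psi = \varphi$ in $\Omega$ with $\int_\Omega\psi\,\dx = 0$; standard elliptic regularity on the torus delivers $\psi \in H^2(\Omega)$ with $\norm{\psi}_{H^2(\Omega)} \lesssim \norm{\varphi}_{L^2(\Omega)}$.

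The core of the argument is a discrete summation by parts followed by Cauchy--Schwarz. Integrating cell by cell and using $q|_K = q_K$ gives
\[
\int_\Omega q\varphi\,\dx = -\sum_{K\in\T} q_K\int_K \Delta\psi\,\dx = -\sum_{K\in\T} q_K \sum_{\sink} \int_\sigma \nabla\psi\cdot\nuk\,\mathrm{d}S.
\]
Grouping contributions by edges (using $\nu_{\sigma,L} = -\nuk$ for each $\sigma = K\vert L$, all edges being internal in the periodic setting) yields
\[
\int_\Omega q\varphi\,\dx = \sum_{\sigma\in\E} (q_L - q_K)\int_\sigma \nabla\psi\cdot\nuk\,\mathrm{d}S.
\]
Applying Cauchy--Schwarz in the $\sigma$-sum with the weight $\abssig/d_\sigma$ and recalling the definition \eqref{eqn:dual-grad} bounds this by $\norm{\gradd q}_{L^2(\Omega)}$ times $\bigl(\sum_\sigma (d_\sigma/\abssig)(\int_\sigma \nabla\psi\cdot\nuk\,\mathrm{d}S)^2\bigr)^{1/2}$. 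Estimating the inner integral by Cauchy--Schwarz on $\sigma$ and invoking the scaled trace inequality on each dual cell $D_\sigma$ controls $\sum_\sigma d_\sigma \int_\sigma \abs{\nabla\psi}^2\,\mathrm{d}S$ by $\norm{\nabla\psi}_{L^2(\Omega)}^2 + h_\T^2 \norm{\nabla^2\psi}_{L^2(\Omega)}^2 \lesssim \norm{\psi}_{H^2(\Omega)}^2$. Chaining with the elliptic estimate then gives $\abs{\int_\Omega q\varphi\,\dx} \lesssim \norm{\gradd q}_{L^2(\Omega)}\norm{\varphi}_{L^2(\Omega)}$, and taking the supremum over $\varphi$ delivers the claim.

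The main technical obstacle I anticipate is ensuring that the trace constants on each dual cell $D_\sigma$ are independent of the mesh. This is precisely where the regularity assumptions \eqref{eqn:mesh-regul}--\eqref{eqn:non-flat-cond} enter: the uniform comparability between $\abssig$, $d_\sigma$, and $\abs{\dsig}$ guarantees that the dual cells are affine images of finitely many reference cells, with uniformly bounded trace constants. Once this is in place, the rest is routine bookkeeping of the discrete integration by parts.
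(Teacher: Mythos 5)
Your argument is correct, but note that the paper itself does not prove this lemma at all: it is recalled verbatim from \cite{FLM+21a}, so there is no in-paper proof to compare against. Your duality route --- reduce to $m(q)=0$, test against mean-zero $\varphi\in L^2$, solve the periodic Poisson problem $-\Delta\psi=\varphi$, integrate by parts cell-by-cell, regroup over faces, and close with Cauchy--Schwarz weighted by $\abssig/d_\sigma$ plus a scaled trace inequality --- is sound. The bookkeeping checks out: the face-sum weight reproduces exactly $\norm{\gradd q}_{L^2}^2=\sum_\sigma(\abssig/d_\sigma)(q_L-q_K)^2$ since $\abs{\dsig}=d_\sigma\abssig$, the restriction to mean-zero $\varphi$ is precisely what makes the periodic Poisson problem solvable with $\norm{\psi}_{H^2}\lesssim\norm{\varphi}_{L^2}$ (and costs nothing in the duality since $q$ is itself mean-zero), and on axis-aligned boxes the scaled trace inequality holds with an absolute constant by a one-dimensional argument in the normal direction, so the mesh regularity \eqref{eqn:mesh-regul}--\eqref{eqn:non-flat-cond} is needed only to compare $d_\sigma$ with the normal extents of $D_{K,\sigma}$, $D_{L,\sigma}$ and to bound the finite overlap of the dual cells in the final summation. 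This differs from the classical finite-volume proof of such discrete Poincar\'e-type inequalities (as in Eymard--Gallou\"et--Herbin and the treatment underlying \cite{FLM+21a}), which proceeds directly by telescoping the jumps of $q$ along axis-parallel chains of cells and counting crossings; that route is more elementary in that it avoids elliptic regularity, whereas yours trades the combinatorics for the $H^2$ estimate on the torus and generalizes more readily to other exponents via $L^p$ duality. Both are legitimate; yours is a complete and correct alternative.
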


Next, we prove a `negative' estimate on the discrete dual gradient \eqref{eqn:disc-grad2}.

\begin{lemma}
\label{lem:neg-grad-est}
    Let $q\in\Lt(\Omega)$ and $\mu\in(0,1)$ be a fixed constant. Then, there exists a constant $C_\T>0$ which depends on the inverse of the mesh size $h_\T$, the mesh regularity constants $\overline{\vartheta},\alpha$, the constant $\mu$ and the dimension $d$ such that
    \begin{equation}
    \label{eqn:neg-grad-est}
        \norm{\gradd q}_{L^p(\Omega)}\leq C_\T \norm{q}_{L^p(\Omega)},
    \end{equation}
    for any $1\leq p<\infty$.
\end{lemma}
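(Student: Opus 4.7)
My plan is to prove this as a standard inverse-type estimate: since $\gradd q$ is piecewise constant on the dual mesh and $q$ is piecewise constant on the primal mesh, the only way to pass from the $L^p$ norm of $q$ to the $L^p$ norm of $\gradd q$ is to pay a factor proportional to $h_\T^{-1}$, which comes from the $1/d_\sigma$ factor in the definition \eqref{eqn:dual-grad} combined with the non-flatness condition \eqref{eqn:non-flat-cond}. No analytic tool beyond an elementary triangle inequality and the mesh regularity assumptions should be required.

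Concretely, I would first expand
\[
\norm{\gradd q}_{L^p(\Omega)}^p \,=\, \sum_{\sigma\in\E_{int}} \abs{\dsig}\,\bigl|(\gradd q)_\sigma\bigr|^p,
\]
where the boundary edges are absent due to the periodic setup. Using \eqref{eqn:dual-grad} together with $\abs{\dsig}=d_\sigma\abssig$, for each interior edge $\sigma=K\vert L$ we have $|(\gradd q)_\sigma|=|q_L-q_K|/d_\sigma$. The non-flatness condition \eqref{eqn:non-flat-cond} then gives $1/d_\sigma\le 1/(\alpha h_\T)$, and the elementary inequality $|a-b|^p\le 2^{p-1}(|a|^p+|b|^p)$ turns the pointwise bound into
\[
\bigl|(\gradd q)_\sigma\bigr|^p \,\le\, \frac{2^{p-1}}{(\alpha h_\T)^p}\bigl(|q_K|^p+|q_L|^p\bigr).
\]
Plugging this back in, I reindex the resulting sum by primal cells: each $K\in\T$ has exactly $2d$ faces (the mesh is rectangular/cuboidal), so each $|q_K|^p$ appears at most $2d$ times, and by the mesh regularity \eqref{eqn:mesh-regul} we control $\abs{\dsig}\le\overline{\vartheta}\absk$ for every $\sigma\in\E(K)$. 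Assembling these ingredients yields
\[
\norm{\gradd q}_{L^p(\Omega)}^p \,\le\, \frac{2^p d\,\overline{\vartheta}}{(\alpha h_\T)^p}\sum_{K\in\T}\absk |q_K|^p \,=\, \frac{2^p d\,\overline{\vartheta}}{(\alpha h_\T)^p}\norm{q}_{L^p(\Omega)}^p,
\]
and taking $p$-th roots gives the estimate with a constant $C_\T\lesssim h_\T^{-1}$ depending only on $\overline{\vartheta},\alpha,d,p$.

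There is no genuine obstacle here; the proof is essentially bookkeeping on the mesh. The only point that is not immediately transparent from the statement is the role of the parameter $\mu\in(0,1)$: in the direct argument above it does not appear, and the natural constant is purely of the form $C(\overline{\vartheta},\alpha,d,p)/h_\T$. I expect $\mu$ either to absorb a harmless Young-type splitting that the authors anticipate using when this lemma is invoked downstream, or simply to be a parameter recorded here for convenience of later reference; either way, the essential inverse estimate is exactly the one above, and the constant $C_\T$ inherits its dependence on $\mu$ only through this external combination.
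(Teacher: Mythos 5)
Your proof is correct, and it is in fact a cleaner route than the one the paper takes. The paper splits the jump $\abs{q_L-q_K}$ via the elementary inequality $\abs{b-a}^p\leq \frac{2^{p-1}}{\mu^p}\bigl(\abs{\mu b+(1-\mu)a}^p+\abs{a}^p\bigr)$, which produces the reconstruction $q_\mu=\sum_\sigma(\mu q_L+(1-\mu)q_K)\X_{\dsig}$ as an intermediate object; the first resulting sum is then controlled by $\norm{q_\mu}_{L^p}\leq c\norm{q}_{L^p}$ via the stability of the reconstruction operator (Lemma \ref{lem:stab-recon-op}), and the second by the same cell-reindexing you use. This is why $\mu$ appears in the statement and in the constant: it is an artifact of that particular splitting (downstream, in Lemma \ref{lem:sec-ord-press-bnd}, the lemma is simply invoked with $\mu=\half$). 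Your direct splitting $\abs{q_L-q_K}^p\leq 2^{p-1}(\abs{q_K}^p+\abs{q_L}^p)$, followed by reindexing over primal cells with $\abs{\dsig}\leq\overline{\vartheta}\absk$ and the count of $2d$ faces per cell, bypasses the reconstruction operator entirely and yields a constant $C_\T\lesssim h_\T^{-1}$ depending only on $\overline{\vartheta},\alpha,d,p$ — so your observation that the $\mu$-dependence is inessential is accurate. Both arguments are pure mesh bookkeeping and give the same inverse-estimate scaling; yours buys a slightly sharper, $\mu$-free constant and one fewer lemma invoked, while the paper's version keeps the reconstruction operator in play, consistent with its use elsewhere in the consistency analysis.
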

\begin{proof}
    Let $\mu\in(0,1)$. Let $q_\mu = \sum_{\sigma\in\E} \hat{q}_\sigma\X_{\dsig}$ denote a reconstruction of $q$ on the dual mesh where $\hat{q}_{\sigma} = \mu q_L + (1-\mu)q_K$ if $\sigma = K\vert L\in\E_{int}$, else $\hat{q}_\sigma = q_K$ if $\sigma\in\E_{ext}\cap\E(K)$. 
    
    As a consequence of the mesh regularity assumption \eqref{eqn:non-flat-cond}, we have that for any $\sigma\in\E$, $\frac{\abssig}{\abs{\dsig}}\leq\alpha^{-1}h_\T^{-1}$. Utilizing the inequality $\abs{b-a}^p \leq \frac{2^{p-1}}{\mu^p}\bigl(\abs{\mu b + (1-\mu)a}^p + \abs{a}^p\bigr)$ along with the previous estimate yields
    \begin{equation}
    \label{eqn:neg-est-1}
        \begin{split}
            \norm{\gradd q}_{L^p(\Omega)}^p &= \sum_{\substack{\sigma\in\E_{int} \\ \sigma = K\vert L }}\abs{\dsig}\biggl(\frac{\abssig}{\abs{\dsig}}\abs{q_L - q_K}\biggr)^p \\
            &\leq \frac{2^{p-1}}{\mu^p\alpha^p h_\T^p}\biggl(\sum_{\sigma\in\E_{int}}\abs{\dsig}\abs{\mu q_L + (1-\mu)q_K}^p + \sum_{\sigma\in\E_{int}}\abs{\dsig}\abs{q_K}^p\biggr).
        \end{split}
    \end{equation}
    Now, note that 
    \begin{align}
        &\sum_{\sigma\in\E_{int}}\abs{\dsig}\abs{\mu q_L + (1-\mu)q_K}^p\leq \norm{q_\mu}^p_{L^p(\Omega)}, \label{eqn:neg-est-2}\\
        &\sum_{\sigma\in\E_{int}}\abs{\dsig}\abs{q_K}^p \leq\sum_{K\in\T}\biggl(\sum_{\sigma\in\E(K)}\abs{\dsig}\biggr)\abs{q_K}^p \leq 2d\overline{\vartheta}\norm{q}^p_{L^p(\Omega)},\label{eqn:neg-est-3}
    \end{align}
    wherein we have used the mesh regularity assumption \eqref{eqn:mesh-regul} and the fact that the number of edges for a control volume $K\in\T$ is $2d$ since we consider a structured mesh. Utilizing \eqref{eqn:neg-est-2}-\eqref{eqn:neg-est-3} in \eqref{eqn:neg-est-1} yields
    \[
    \norm{\gradd q}^p_{L^p(\Omega)}\leq \frac{2^{p-1}}{\mu^p\alpha^p h_\T^p}(\norm{q_\mu}^p_{L^p(\Omega)} + 2d\overline{\vartheta}\norm{q}^p_{L^p(\Omega)}).
    \]
    Finally, Lemma \ref{lem:stab-recon-op} tells us that $\norm{q_\mu}_{L^p(\Omega)}\leq c\norm{q}_{L^p(\Omega)}$ for some constant $c>0$. Combining this with the above estimate yields the existence of a constant $C_\T>0$ which is dependent on the inverse of the mesh size $h_\T$, the mesh regularity constants $\overline{\vartheta},\alpha$, the constant $\mu$ and the dimension $d$ such that 
    \[
    \norm{\gradd q}_{L^p(\Omega)}\leq C_\T\norm{q}_{L^p(\Omega)},
    \]
    which completes the proof.
\end{proof}

The following lemma gives us the desired control on the norm of the second order pressure.

\begin{lemma}[Control of the Second Order Pressure]
\label{lem:sec-ord-press-bnd}
    Let $\veps>0$ and suppose that the initial data ($\vrho_\epso,\bu_\epso$) are well-prepared in the sense of Definition \ref{def:well-prep}. Let $\pi^n = \sum_{K\in\T}\pi^n_K\X_K$, where $\pi^n_K = (p^n_K-m(p^n))/\veps^2$. Then, for $1\leq n\leq N$, one has: 
    \[
    \norm{\pi^n}\leq C_\T,
    \]
    where $C_\T$ is a constant independent of $\veps$, but not the mesh, and $\norm{\cdot}$ is any discrete norm on $\Lt(\Omega)$.
\end{lemma}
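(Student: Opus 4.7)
The plan is to combine the global energy inequality of the scheme with the mean-free structure of $\pi^n$ to bound $\|\pi^n\|_{L^2(\Omega)}$ uniformly in $\veps$, and then invoke the equivalence of norms on the finite-dimensional space $\Lt(\Omega)$ to transfer the bound to an arbitrary discrete norm with a mesh-dependent constant.

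The first step is to extract a $\veps$-independent bound on the primal gradient of $\pi^{n+1}$. By Theorem \ref{thm:disc-glob-ent-est} and Remark \ref{rem:vel-pres-grad-est}, the combination $\tfrac{\sqrt{\delt}}{\veps^2}\gradt p_{\T,\delt,\veps}$ is bounded in $L^2(\odom;\R^d)$ uniformly in $\veps$, using $\eta - 1/\rk^{n+1} > 0$ from Theorem \ref{thm:disc-loc-ent-ineq} together with Hypothesis \ref{hyp:den-bound}. Since $m(p^{n+1})$ is spatially constant, $\gradt p^{n+1}/\veps^2 = \gradt \pi^{n+1}$, and unpacking the space--time $L^2$-norm as a sum over time levels gives, for each $0\le n \le N-1$,
\[
\|\gradt \pi^{n+1}\|_{L^2(\Omega)} \le \frac{C}{\delt},
\]
with $C$ independent of $\veps$.

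The second step converts this gradient bound into a bound on $\pi^{n+1}$ itself. By construction $m(\pi^n) = \veps^{-2}(m(p^n)-m(p^n)) = 0$, so the discrete Sobolev--Poincar\'e inequality (Lemma \ref{lem:sob-poinc-ineq}) reduces the task to bounding $\|\gradd \pi^n\|_{L^2(\Omega)}$. This is the principal obstacle: on a colocated grid the primal and dual gradients are not equivalent, because $\gradt$ annihilates certain checkerboard-type oscillations that are visible to $\gradd$. To bridge this gap, I would exploit the pressure stabilization $\du^{n+1} = \eta\delt\,\gradt\pi^{n+1}$ built into the scheme: substituting the momentum balance \eqref{eqn:disc-mom-bal} into the upwind mass balance \eqref{eqn:disc-mss-bal} produces a discrete Helmholtz-type relation for $\pi^{n+1}$, in which the stabilization acts as a discrete pressure-diffusion. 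The resulting operator is coercive on the mean-free subspace of $\Lt(\Omega)$ with a mesh-dependent constant, and its right-hand side (built from $\rk^{n+1}-\rk^n$, $\divup(\vrho^{n+1},\bu^n)$, and $\rk^{n+1}\uk^{n+1}-\rk^n\uk^n$) is bounded uniformly in $\veps$ on the fixed mesh by combining Hypothesis \ref{hyp:den-bound}, the $L^2$-bound on $\bu^n$ from Remark \ref{rem:vel-pres-grad-est}, and the inverse-type estimate of Lemma \ref{lem:neg-grad-est} together with norm equivalence in finite dimensions.

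Once $\|\pi^n\|_{L^2(\Omega)} \le C_\T$ is established, with $C_\T$ depending on the mesh but not on $\veps$, the conclusion for any other discrete norm on $\Lt(\Omega)$ follows by finite-dimensional norm equivalence. The delicate part of the argument is clearly the second step: the primal gradient bound does not by itself control $\pi^n$ because of the checkerboard kernel of $\gradt$, and so the specific algebraic structure of the scheme (especially the stabilization term entering the mass balance through $\bw^n$) must be exploited to produce a coercive discrete elliptic problem whose inversion yields the $\veps$-uniform bound.
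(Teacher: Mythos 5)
Your overall skeleton (reduce everything to an $\veps$-uniform $L^2$ bound on the mean-free part $\pi^n$, then invoke finite-dimensional norm equivalence to pass to an arbitrary discrete norm) agrees with the paper, and your observation that the discrete Sobolev--Poincar\'e inequality \eqref{eqn:sob-poinc-ineq} involves the dual gradient $\gradd$, which is \emph{not} controlled by the primal gradient $\gradt$ because of checkerboard modes, is correct and important. However, the way you propose to bridge that gap does not work. Substituting $\du^{n+1}=\eta\delt\,\gradt\pi^{n+1}$ into the mass balance \eqref{eqn:disc-mss-bal} produces a discrete operator of the form $q\mapsto-\divt(\vrho\,\gradt q)$ (up to the sign-splitting in \eqref{eqn:pos-neg-stab-velo}), i.e.\ a composition of the primal divergence with the primal gradient. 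On the collocated Cartesian grid used here $\gradt$ is the centered difference \eqref{eqn:disc-grad2}, so every checkerboard mode lies in its kernel; such modes are mean-free on a periodic grid, and the stabilization term in the mass flux vanishes identically on them. Hence the discrete Helmholtz operator you want to invert is singular on the mean-free subspace of $\Lt(\Omega)$, and the coercivity you assert --- the one step your whole argument rests on, and the only step you do not actually carry out --- is false. This is precisely the odd--even decoupling obstruction you yourself identified for $\gradt$; it carries over unchanged to $\divt\gradt$.

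The paper closes the estimate by a different and far more elementary route that never uses the primal gradient bound from the energy inequality at all. From the global entropy estimate \eqref{eqn:disc-glob-ent-est} it extracts an $L^1$ bound of order $\veps^2$ on the pressure; the inverse-type estimate of Lemma \ref{lem:neg-grad-est} --- which you mention only peripherally, as a tool for bounding a right-hand side --- then gives $\norm{\gradd p^n}_{L^1(\Omega)}\leq C_\T\,\veps^2$ at the price of a factor $h_\T^{-1}$; finite-dimensional norm equivalence upgrades this to $L^2$, and the discrete Sobolev--Poincar\'e inequality yields $\norm{p^n-m(p^n)}_{L^2(\Omega)}\leq C_\T\,\veps^2$, after which one divides by $\veps^2$. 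Since the statement of the lemma explicitly allows a mesh-dependent constant, the $h_\T^{-1}$ loss in Lemma \ref{lem:neg-grad-est} is harmless, and no discrete elliptic problem has to be inverted. To repair your proposal, replace the coercivity step by this combination of the entropy bound with Lemma \ref{lem:neg-grad-est} and the Sobolev--Poincar\'e inequality.
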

\begin{proof}
    For each $1\leq n\leq N$, the global entropy estimate \eqref{eqn:disc-glob-ent-est} yields
    \begin{equation}
    \label{eqn:pres-ctrl-1}
        \frac{1}{\veps^2}\sum_{K\in\T}\absk\psig(\vrho^n_K)\leq C
    \end{equation}
    where $C >0$ is a constant independent of $\veps$ and $h_\T$. The above equation \eqref{eqn:pres-ctrl-1} in turn implies that $\norm{p^n}_{L^1(\Omega)}\leq C\veps^2$, where $p^n = \sum_{K\in\T}p^n_K\X_K$, since $\psig(z) = \frac{1}{\gamma - 1}p(z)$, cf.\ \eqref{eqn:pres-pot}. Applying Lemma \eqref{lem:neg-grad-est} with $q = p^n$, $\mu = \half$ gives us 
    \begin{equation}
    \label{eqn:pres-ctrl-2}
        \norm{\gradd p^n}_{L^1(\Omega)}\leq C_\T \veps^2.    
    \end{equation}
    Now, since the mesh is fixed, a finite dimensional norm equivalence argument allows us to conclude that the estimate \eqref{eqn:pres-ctrl-2} holds for the $L^2$-norm as well. This, combined with the Sobolev-Poincar\'{e} inequality \eqref{eqn:sob-poinc-ineq} gives us
    \[
        \norm{p^n - m(p^n)}_{L^2(\Omega)}\leq C_\T\veps^2.
    \]
    Dividing both sides by $\veps^2$ gives us an $L^2$-control over the norm of the second order pressure, and hence, a control in any discrete norm by a finite dimensional norm equivalence argument, which completes the proof.
\end{proof}

The next ingredient that is needed is the convergence of $\vrho_{\T,\delt,\veps}\to 1$ as $\veps\to 0$. This, however, is a straightforward consequence of the initial data being well-prepared, the entropy estimate \eqref{eqn:disc-glob-ent-est} and the bounds on $\pig$ \cite[Lemma 2.3]{HLS21}. We recall the result here for the sake of convenience and refer to \cite{AGK23, HLS21} for its proof.

\begin{lemma}
\label{lem:den-conv}
    Let $(\T,\delt)$ be a fixed space time discretization of $\dom$. Suppose that the initial data $\lbrace(\vrho_\epso,\bu_\epso)\rbrace_{\veps>0}$ are well-prepared (in the sense of Definition \ref{def:well-prep}) and let $(\vrho_{\T,\delt,\veps},\bu_{\T,\delt,\veps})$ be the numerical solution generated by the scheme $\pepsh$ for each $\veps>0$. Then 
    \begin{enumerate}
        \item For $\gamma\geq 2$ and $\veps$ sufficiently small, we have
        \[
        \frac{1}{\veps}\norm{\vrho_{\T,\delt,\veps} - 1}_{L^\infty(0,T;L^2(\Omega))}\leq C_\gamma,
        \]
        where $C_\gamma>0$ is a constant independent of $\veps$.
        \item For $1<\gamma<2$, given $\veps$ is sufficiently small and for any $R>2$, we have
        \[
        \frac{1}{\veps}\bigl\lVert(\vrho_{\T,\delt,\veps}-1)\X_{\lbrace\vrho_{\T,\delt,\veps}<R\rbrace}\bigr\rVert_{L^\infty(0,T;L^2(\Omega))} + \veps^{-\frac{2}{\gamma}}\bigl\lVert(\vrho_{\T,\delt,\veps}-1)\X_{\lbrace\vrho_{\T,\delt,\veps}>R\rbrace}\bigr\rVert_{L^\infty(0,T;L^\gamma(\Omega))} \leq C_{\gamma,R},
        \]
        where $C_{\gamma,R}>0$ is a constant independent of $\veps$.

        \item $\vrho_{\T,\delt,\veps}\to 1$ as $\veps\to 0$ in $L^\infty(0,T;L^r(\Omega))$ for any $r\in \lbrack 1,\min\lbrace 2,\gamma\rbrace\rbrack$. Furthermore, for any $\gamma>1$, we have $\vrho_{\T,\delt,\veps}\to 1$ as $\veps\to 0$ in $L^\infty(0,T;L^\gamma(\Omega))$.

        \item For $\veps$ sufficiently small, 
        \[
        \norm{\bu_{\T,\delt,\veps}}_{L^\infty(0,T;L^2(\Omega;\R^d))}\leq C,
        \]
        where $C>0$ is a constant independent of $\veps$.
    \end{enumerate}
\end{lemma}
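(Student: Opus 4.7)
The proof rests on the global entropy estimate in part (2) of the Corollary to Theorem \ref{thm:disc-glob-ent-est}, namely
\[
\sum_{K\in\T}\absk\biggl(\half\rk^n\abs{\uk^n}^2+\frac{1}{\veps^2}\pig(\rk^n)\biggr) \leq C,
\]
where the right-hand side is independent of $\veps$ thanks to the initial bound \eqref{eqn:ini-enrg-bound}, which in turn follows from well-preparedness (the $O(\veps^2)$ pointwise closeness $\vrho_\epso - 1 = O(\veps^2)$ combined with $\pig(\vrho)\sim(\vrho-1)^2$ near $\vrho=1$ gives $\veps^{-2}\pig(\vrho_\epso)=O(\veps^2)$, and $\bu_\epso\to\bv_0$ in $L^2$ handles the kinetic term). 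My plan is to extract the density estimates by inserting the classical pointwise lower bounds on the relative internal energy, and to extract the velocity bound via Hypothesis \ref{hyp:den-bound}.

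For the density estimates, I would use the standard two-sided pointwise lower bounds (see e.g.\ \cite{FLM+21a}):
\begin{equation*}
\pig(\vrho)\geq c_1(\vrho - 1)^2 \text{ for all } \vrho \in [0, R_0],\qquad \pig(\vrho)\geq c_2 \vrho^\gamma \text{ for all } \vrho \geq R_0,
\end{equation*}
for any fixed $R_0 > 1$, with constants $c_1, c_2 > 0$ depending on $\gamma$ and $R_0$. When $\gamma\geq 2$, one has $\vrho^\gamma\geq (\vrho-1)^2$ for $\vrho$ large, so these bounds combine to give $\pig(\vrho)\geq c(\vrho-1)^2$ globally. Dividing by $\veps^2$ inside the entropy inequality immediately yields part (1). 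For $1<\gamma<2$, such a global quadratic bound fails, and one must split the sum according to $\{\rk^n < R\}$ and $\{\rk^n \geq R\}$ for any fixed $R>2$; on the first set the quadratic bound applies yielding the $L^2$ part, while on the second set $\pig(\rk^n)\geq c_2(\rk^n)^\gamma\gtrsim (\rk^n-1)^\gamma$, so that $\veps^{-2}\|(\vrho_{\T,\delt,\veps}-1)\X_{\{\vrho_{\T,\delt,\veps}>R\}}\|_{L^\gamma}^\gamma\leq C$, which after taking the $\gamma$-th root produces the claimed $\veps^{-2/\gamma}$ scaling in part (2).

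For the convergence statement (3), parts (1) and (2) both imply that $\vrho_{\T,\delt,\veps}-1\to 0$ in $L^\infty(0,T;L^2(\Omega))$ when $\gamma\geq 2$, and in $L^\infty(0,T;L^r(\Omega))$ for every $r\in [1,\min\{2,\gamma\}]$ when $1<\gamma<2$, the latter by combining the two pieces of the splitting and interpolating (the $L^\gamma$ piece is already controlled and the $L^2$ piece dominates for $r\leq 2$). To upgrade to $L^\infty(0,T;L^\gamma(\Omega))$ for arbitrary $\gamma>1$, I would use the entropy bound $\veps^{-2}\int_\Omega\pig(\vrho_{\T,\delt,\veps})\,\dx\leq C$ together with the coercivity $\pig(\vrho)\gtrsim \vrho^\gamma - c$ at infinity to conclude $\|\vrho_{\T,\delt,\veps}\|_{L^\infty(0,T;L^\gamma(\Omega))}\leq C$, and then combine with the already established convergence on the sub-level set $\{\vrho < R\}$ letting $R\to\infty$.

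Finally, part (4) is the most direct: the kinetic part of the entropy estimate yields $\sum_K \absk \rk^n|\uk^n|^2\leq C$ uniformly in $n$ and $\veps$, and Hypothesis \ref{hyp:den-bound} gives $\rk^n \geq \underline{\vrho} > 0$, whence
\[
\underline{\vrho}\sum_{K\in\T}\absk|\uk^n|^2 \leq \sum_{K\in\T}\absk\rk^n|\uk^n|^2 \leq C,
\]
which is precisely $\|\bu_{\T,\delt,\veps}\|_{L^\infty(0,T;L^2(\Omega;\R^d))}\leq C$. The only non-routine point is the splitting argument in part (2), where the correct scaling $\veps^{-2/\gamma}$ on the super-level set is slightly subtle; the remaining estimates are entirely consequences of well-preparedness, the uniform entropy inequality, and Hypothesis \ref{hyp:den-bound}.
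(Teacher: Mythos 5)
Your argument is correct and is exactly the route the paper takes (the paper defers the proof to \cite{AGK23, HLS21}, describing it as a consequence of well-preparedness, the global entropy estimate \eqref{eqn:disc-glob-ent-est}, and the coercivity bounds on $\pig$, which is precisely your combination of the quadratic lower bound near $\vrho=1$, the $\vrho^\gamma$-growth bound with the $R$-splitting for $1<\gamma<2$, and Hypothesis \ref{hyp:den-bound} for the velocity). No gaps; the scalings $\veps^{-1}$ and $\veps^{-2/\gamma}$ come out exactly as you describe.
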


We now have all the necessary ingredients required to pass to the limit $\veps\to 0$ in the scheme \eqref{eqn:disc-mss-bal}-\eqref{eqn:disc-mom-bal}. We define 
\begin{equation}
\label{eqn:pi-t-delt}
\pi_{\T,\delt,\veps}(t,x) = \sum_{n=0}^{N-1}\sum_{K\in\T}\pi^n_K\X_K(x)\X_{\lbrack t_n,t_{n+1})}(t), 
\end{equation}
with $\pi^n_K$ as defined in Lemma \ref{lem:sec-ord-press-bnd}.

\begin{theorem}[Asymptotic Limit of the Scheme]
\label{thm:asymp-lim-scheme}
    Let $(\T,\delt)$ be a fixed space-time discretization of $\dom$ with mesh size $h_\T$ and let $(\veps_k)_{k\in\N}$ be a sequence of positive numbers such that $\veps_k\to 0$ as $k\to\infty$. Suppose that the initial data $\lbrace(\vrho_{0,\veps_k},\bu_{0,\veps_k})\rbrace_{k\in\N}$ are well-prepared in the sense of Definition \ref{def:well-prep} and let $\lbrace(\vrho^{(k)}_{\T,\delt},\bu^{(k)}_{\T,\delt})\rbrace_{k\in\N}$, $\vrho^{(k)}_{\T,\delt} = \vrho_{\T,\delt,\veps_k}\,,\,\bu^{(k)}_{\T,\delt} = \bu_{\T,\delt,\veps_k}$, be the family of numerical solutions generated by the scheme $\lbrack\Pro^{h_\T}_{\veps_k}\rbrack$. Also, let $\pi^{(k)}_{\T,\delt} = \pi_{\T,\delt,\veps_k}$, cf.\ \eqref{eqn:pi-t-delt}. Then, $\vrho^{(k)}_{\T,\delt}\to 1$  in $L^\infty(0,T;L^\gamma(\Omega))$ as $k\to\infty$ and $(\bu^{(k)}_{\T,\delt}, \pi^{(k)}_{\T,\delt})$ converges to $(\bv_{\T,\delt}, \pi_{\T,\delt})\in L^\infty(0,T;\Lt(\Omega;\R^d)\times\Lt(\Omega))$
    in any discrete norm as $k\to\infty$, where the sequence $\lbrace(\bv^n,\pi^n)\rbrace_{0\leq n\leq N}$ is defined as follows. 

    Given $(\bv^n,\pi^n)\in\Lt(\Omega;\R^d)\times\Lt(\Omega)$, at time $t^n$, $(\bv^{n+1},\pi^{n+1})\in\Lt(\Omega;\R^d)\times\Lt(\Omega)$ is obtained as the solution of the following semi-implicit scheme:
    \begin{subequations}
        \begin{align}
            &(\divt(\bv^n-\delta\bv^{n+1}))_K = 0, \label{eqn:disc-incomp-div-free} \\
            &\frac{1}{\delt}(\bv^{n+1}_K - \bv^n_K) + (\divup(\bv^{n}, \bv^{n} - \delta\bv^{n+1}))_K + (\gradt\pi^{n+1})_K = 0, \label{eqn:disc-incomp-mom}
        \end{align}
    \end{subequations}
    with the correction term $\delta\bv^{n+1}_K$ being defined as 
    \begin{equation}
    \label{eqn:correc-term-incomp}
        \delta\bv^{n+1}_K = \eta\delt(\gradt\pi^{n+1})_K \text{ for each }K\in\T.
    \end{equation}
\end{theorem}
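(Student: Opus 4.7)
The plan is to establish uniform-in-$k$ bounds on the triple $(\vrho^{(k)}_{\T,\delt},\bu^{(k)}_{\T,\delt},\pi^{(k)}_{\T,\delt})$ on the fixed space-time grid, extract convergent subsequences by finite-dimensional compactness, and then pass to the limit $\veps_k\to 0$ termwise in the scheme $\lbrack\Pro^{h_\T}_{\veps_k}\rbrack$. The crucial identity underpinning the whole argument is
\[
\frac{1}{\veps_k^2}(\gradt p^{(k),n+1})_K \,=\, (\gradt\pi^{(k),n+1})_K,
\]
which holds because the spatially constant mean $m(p^{(k),n+1})$ has vanishing discrete gradient by \eqref{eqn:disc-grad}; this is what allows the singular factor $1/\veps_k^2$ to be absorbed into the surviving second-order pressure.

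First, Lemma~\ref{lem:den-conv} gives $\vrho^{(k)}_{\T,\delt}\to 1$ in $L^\infty(0,T;L^\gamma(\Omega))$ together with a uniform $L^\infty(0,T;L^2)$-bound on $\bu^{(k)}_{\T,\delt}$, while Lemma~\ref{lem:sec-ord-press-bnd} supplies a uniform-in-$k$ bound on $\pi^{(k)}_{\T,\delt}$ in any discrete norm. Since the mesh $(\T,\delt)$ is fixed, the numerical solutions live in the finite-dimensional space $\Lt(\Omega;\R^d)\times\Lt(\Omega)$ on which all norms are equivalent, so these estimates translate to uniform cell-by-cell bounds. Applying Bolzano--Weierstrass at each cell $K\in\T$ and each discrete time $n\in\lbrace 0,\dots,N\rbrace$ (the set being finite, a diagonal extraction suffices), I obtain a subsequence, not relabelled, such that $\vrho^{(k),n}_K\to 1$, $\bu^{(k),n}_K\to \bv^n_K$, and $\pi^{(k),n}_K\to\pi^n_K$ for every $(K,n)$, with $\bv^0_K = (\Pi_\T\bv_0)_K$ following from well-preparedness of the initial data together with Proposition~\ref{prop:ini-den-bound}.

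Next, I pass to the limit in \eqref{eqn:disc-mss-bal}-\eqref{eqn:disc-mom-bal}. By the identity above, the stabilization $\delta\bu^{(k),n+1}_K = \eta\delt(\gradt\pi^{(k),n+1})_K$ converges to $\eta\delt(\gradt\pi^{n+1})_K = \delta\bv^{n+1}_K$ in the sense of \eqref{eqn:correc-term-incomp}, whence $\bw^{(k),n}\to \bv^n-\delta\bv^{n+1}$ cell by cell. In the mass balance, the time-difference term vanishes in the limit because both $\vrho^{(k),n+1}$ and $\vrho^{(k),n}$ tend to $1$; the upwind flux collapses to the central one since $\flx(1,\bw)=\abssig(\wsk^++\wsk^-)=\abssig\wsk$, so $(\divup(\vrho^{(k),n+1},\bw^{(k),n}))_K\to (\divt(\bv^n-\delta\bv^{n+1}))_K$, yielding \eqref{eqn:disc-incomp-div-free}. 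For the momentum balance, the density factors in the time derivative and in the convective flux both tend to $1$, the pressure gradient limit produces $(\gradt\pi^{n+1})_K$ through the key identity, and continuity of all the (polynomial) discrete operators on the fixed mesh delivers \eqref{eqn:disc-incomp-mom}.

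The main subtlety is upgrading this subsequence convergence to convergence of the full sequence: the scheme $\lbrack\Pro^{h_\T}_{\veps_k}\rbrack$ is nonlinear and Theorem~\ref{thm:exist-num-sol} only guarantees existence, not uniqueness, of discrete solutions, and the same caveat a priori applies to the limit system. The standard subsubsequence argument closes this gap once one knows that the limit semi-implicit scheme \eqref{eqn:disc-incomp-div-free}-\eqref{eqn:disc-incomp-mom} has a unique solution on the fixed mesh for the initial datum $\bv^0 = \Pi_\T\bv_0$. Substituting \eqref{eqn:correc-term-incomp} into \eqref{eqn:disc-incomp-div-free} reduces this to invertibility of a discrete Laplace-type problem $\eta\delt(\divt\gradt\pi^{n+1})_K = (\divt\bv^n)_K$ on the mean-zero subspace, which is the only genuinely non-routine technical point to verify in order to complete the argument.
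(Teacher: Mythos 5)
Your proposal is correct and follows essentially the same route as the paper: uniform-in-$\veps$ bounds from Lemmas \ref{lem:den-conv} and \ref{lem:sec-ord-press-bnd}, finite-dimensional compactness on the fixed mesh, the identity $\veps_k^{-2}\gradt p^{(k),n+1} = \gradt\pi^{(k),n+1}$ (valid because the mean $m(p^{(k),n+1})$ has vanishing discrete gradient), and cell-by-cell passage to the limit in \eqref{eqn:disc-mss-bal}--\eqref{eqn:disc-mom-bal}. You are in fact slightly more careful than the paper on one point: the paper's proof only extracts convergent subsequences and does not address convergence of the full sequence, whereas you correctly flag that this upgrade would require uniqueness of the solution of the limit scheme \eqref{eqn:disc-incomp-div-free}--\eqref{eqn:disc-incomp-mom}, which remains unverified in both treatments.
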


\begin{proof}
    In Lemma \ref{lem:den-conv}, the convergence of $\vrho^{(k)}_{\T,\delt}\to 1$ in $L^\infty(0,T;L^\gamma(\Omega))$ for $k\to\infty$ ($\veps_k\to 0)$ was presented. Further, given that both $\lbrace\bu^{(k)}_{\T,\delt}\rbrace_{k\in\N}$ and $\lbrace\pi^{(k)}_{\T,\delt}\rbrace_{k\in\N}$ are uniformly bounded sequences respectively by Lemmas \ref{lem:den-conv} and \ref{lem:sec-ord-press-bnd}, we can find subsequences that converge in any discrete norm to some $(\bv_{\T,\delt}, \pi_{\T,\delt})\in\Lt(\Omega;\R^d)\times\Lt(\Omega)$. Also, we have $\frac{1}{\veps_k^2}\gradt (p^{(k)})^{n+1} = \gradt (\pi^{(k)})^{n+1}$ for each $n = 0,\dots, N-1$ and further noting that as the mesh is fixed, \eqref{eqn:disc-glob-ent-est} gives us an $L^\infty$-estimate on $\gradt(\pi^{(k)})^{n+1}$ using a finite dimensional norm equivalence argument. Since $\pi^{(k)}_{\T,\delt}$ has zero mean, we can conclude that there exists a subsequence such that $\gradt(\pi^{(k)})^{n+1}\to\gradt\pi^{n+1}$ in any discrete norm, cf.\ \cite{Cia13}, and as a consequence, $(\delta u^{(k)})^{n+1}_K = \frac{\eta\delt}{\veps_k^2}(\gradt (p^{(k)})^{n+1})_K \to \eta\delt(\gradt\pi^{n+1})_K$. Passing to the limit cell-by-cell in the scheme \eqref{eqn:disc-mss-bal}-\eqref{eqn:disc-mom-bal}, we obtain that $(\bv_{\T,\delt},\pi_{\T,\delt})$ is a solution of \eqref{eqn:disc-incomp-div-free}-\eqref{eqn:disc-incomp-mom}, with the initial data now being given by $\bv^0_K = (\Pi_\T\bv_0)_K$ as we have $\bu_{0,\veps_k}\to\bv_0$ due to the assumption on initial data.
\end{proof}

We label by $\pepsho$, the scheme \eqref{eqn:disc-incomp-div-free}-\eqref{eqn:disc-incomp-mom}. We can infer from Theorem \ref{thm:asymp-lim-scheme} that $\lim_{\veps\to 0}\pepsh = \pepsho$. The scheme $\pepsho$ can be viewed as a finite volume approximation of the velocity stabilized incompressible Euler equations:
\begin{subequations}
    \begin{align}
        &\Div(\bv-\delta\bv) = 0, \label{eqn:vel-stab-div-free} \\
        &\Dt\bv + \Div(\bv\otimes(\bv-\delta\bv)) + \grad\pi = 0, \label{eqn:vel-stab-incomp-mom}
    \end{align}
\end{subequations}
where $\delta\bv = \eta\grad\pi$, $\eta>0$ a constant.

\begin{remark}
    Note that as $\eta>\frac{1}{\rk^{n+1}}$, Hypothesis \ref{hyp:den-bound} tells us that $\eta$ is bounded below by $\frac{1}{\overline{\vrho}}$. Furthermore, we also note that $\frac{1}{\eta} - 1 <\rk^{n+1} - 1$ and the right-hand side goes to $0$ as $\veps\to 0$, cf.\ Lemma \ref{lem:den-conv}. As a consequence, one obtains $\eta > 1$ for the limit scheme \eqref{eqn:disc-incomp-div-free}-\eqref{eqn:disc-incomp-mom}, which is also reflected in the stability conditions imposed to obtain the energy stability of the limit scheme in Theorem \ref{thm:disc-loc-ent-incomp}.
\end{remark}

\begin{remark}
The existence of a discrete solution to the scheme \eqref{eqn:disc-incomp-div-free}-\eqref{eqn:disc-incomp-mom} follows from the fact that the scheme is obtained after passing to the limit cell-by-cell in the scheme \eqref{eqn:disc-mss-bal}-\eqref{eqn:disc-mom-bal}, for which the existence of a solution is given by Theorem \ref{thm:exist-num-sol}.
\end{remark}

We now want to rigorously show that the scheme $\pepsho$ is indeed energy stable and consistent with the weak formulation of $\pepso$. This will enable us to pass to the limit $h_\T\to 0$, as we proceed to show that the numerical solutions generated by $\pepsho$ will converge to a DMV solution of the incompressible Euler system $\pepso$ as $h_\T\to 0$. To this end, we begin by proving a local in time energy balance. As before, we stick to the notation $\bw^n = \bv^n - \delta\bv^{n+1}$ and $w^n_{\sk} = v^n_{\sk}-\delta v^{n+1}_{\sk}$ with the positive and negative halves of $w^n_{\sk}$ being defined analogously as in \eqref{eqn:pos-neg-stab-velo}.

\begin{theorem}[Local In-Time Energy Balance]
\label{thm:disc-loc-ent-incomp}
    Suppose that the following conditions hold:
    \begin{enumerate}
        \item $\eta>1$;
        \item $\delt\leq\dfrac{\absk}{2\displaystyle\sum_{\substack{\sigma\in\E(K) \\ \sigma = K\vert L}}\abssig(-(w^n_{\sk})^{-}))}$.
    \end{enumerate}
    Then, any numerical solution to $\pepsho$ satisfies the following local in-time energy inequality
    \begin{equation}
    \label{eqn:disc-loc-ent-incomp}
        \half\sum_{K\in\T}\absk\lvert\bv^{n+1}_K\rvert^2\leq \half\sum_{K\in\T}\absk\lvert\bv^n_K\rvert^2
    \end{equation}
    for each $0\leq n\leq N-1$.
\end{theorem}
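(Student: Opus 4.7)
The plan is to mirror the proof of Theorem \ref{thm:disc-loc-ent-ineq} under the formal substitution $\vrho\equiv 1$, noting that in this limit the discrete mass balance collapses to the divergence-free constraint \eqref{eqn:disc-incomp-div-free}, while the thresholds $\eta>1/\vrho_K^{n+1}$ and the corresponding CFL bound collapse exactly to the conditions (1) and (2) of the theorem. The overall scheme is: take the dot product of \eqref{eqn:disc-incomp-mom} with $\bv^{n+1}_K$, multiply by $\absk$, sum over $K\in\T$, and then dissect the three resulting contributions (time derivative, upwind convection, pressure) separately.

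For the time derivative, the elementary identity $(a-b)\cdot a=\tfrac12(\abs{a}^2-\abs{b}^2)+\tfrac12\abs{a-b}^2$ yields
\[
\sum_{K\in\T}\absk\frac{1}{\delt}(\bv^{n+1}_K-\bv^n_K)\cdot\bv^{n+1}_K = \frac{1}{2\delt}\sum_{K\in\T}\absk\bigl(\abs{\bv^{n+1}_K}^2-\abs{\bv^n_K}^2\bigr) + \frac{1}{2\delt}\sum_{K\in\T}\absk\abs{\bv^{n+1}_K-\bv^n_K}^2,
\]
where the last term is the positive reserve that must absorb the upwind remainder. For the pressure term, the discrete grad-div duality of Lemma \ref{lem:disc-grad-div-primal} together with $\divt\bw^n=0$ allows us to rewrite
\[
\sum_{K\in\T}\absk(\gradt\pi^{n+1})_K\cdot\bv^{n+1}_K = -\sum_{K\in\T}\absk\pi^{n+1}_K\bigl(\divt(\bv^{n+1}-\bv^n)\bigr)_K - \sum_{K\in\T}\absk\pi^{n+1}_K(\divt\delta\bv^{n+1})_K,
\]
and the second sum, after using $\delta\bv^{n+1}_K=\eta\delt(\gradt\pi^{n+1})_K$ and another grad-div duality, produces the stabilizing contribution $\eta\delt\sum_K\absk\abs{(\gradt\pi^{n+1})_K}^2$, which is the discrete analog of the $-\eta\abs{\grad\pi}^2$ dissipation seen in \eqref{eqn:mod-ent-ineq}. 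The first sum couples to the time-derivative reserve through the momentum equation and contributes a cross term that can be estimated by Young's inequality against $\half\delt\sum_K\absk\abs{(\gradt\pi^{n+1})_K}^2$, leaving a net positive coefficient $(\eta-1)\delt\sum_K\absk\abs{(\gradt\pi^{n+1})_K}^2\geq 0$ precisely when $\eta>1$, which is condition (1).

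The convection term is the delicate part, and is where condition (2) will enter. Reorganizing the sum $\sum_K\absk(\divup(\bv^n,\bw^n))_K\cdot\bv^{n+1}_K$ edge by edge, using the sign-split convention $(w^n_{\sk})^{\pm}$ and the identity $a\cdot b=\tfrac12\abs{a}^2+\tfrac12\abs{b}^2-\tfrac12\abs{a-b}^2$, the contribution from an internal edge $\sigma=K\vert L$ reassembles into
\[
\abssig (w^n_{\sk})^{+}\bigl(\tfrac12\abs{\bv^{n+1}_K}^2-\tfrac12\abs{\bv^{n+1}_L}^2\bigr) + \abssig (w^n_{\sk})^{-}\bigl(\tfrac12\abs{\bv^{n+1}_K}^2-\tfrac12\abs{\bv^{n+1}_L}^2\bigr)\,+\,\text{(remainders)},
\]
where the principal part is of conservative form and vanishes under summation in view of $\divt\bw^n=0$, while the remainders are quadratic forms in $\bv^{n+1}_K-\bv^{n+1}_L$ and $\bv^{n+1}_K-\bv^n_K$ weighted by $\absk^{-1}\abssig(-(w^n_{\sk})^{-})\geq 0$. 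The CFL condition (2) is exactly what is needed to guarantee these remainders are dominated by the reserve $\tfrac{1}{2\delt}\sum_K\absk\abs{\bv^{n+1}_K-\bv^n_K}^2$.

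Putting the three pieces together and discarding the nonnegative leftovers $\tfrac{1}{2\delt}\sum_K\absk\abs{\bv^{n+1}_K-\bv^n_K}^2$, $(\eta-1)\delt\sum_K\absk\abs{(\gradt\pi^{n+1})_K}^2$, and the dissipative edge remainders yields precisely \eqref{eqn:disc-loc-ent-incomp}. The main obstacle I expect is the careful edge-by-edge bookkeeping of the upwind convection splitting, in particular tracking which bilinear terms telescope via $\divt\bw^n=0$ and which survive as controllable quadratic defects; this is exactly the step where, in the compressible proof (Theorem \ref{thm:disc-loc-ent-ineq}), the discrete mass balance is invoked, and here it is replaced by \eqref{eqn:disc-incomp-div-free}.
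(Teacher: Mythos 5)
Your proposal diverges from the paper's proof at the very first step: the paper tests the momentum balance with the \emph{old} velocity $\absk\bv^n_K$, uses $(a-b)\cdot b=\tfrac12(\abs{a}^2-\abs{b}^2-\abs{a-b}^2)$, and thus produces the anti-dissipative remainder $-\tfrac{1}{2\delt}\abs{\bv^{n+1}_K-\bv^n_K}^2$ on the wrong side; this term is then controlled by substituting the momentum equation for $(\bv^{n+1}_K-\bv^n_K)/\delt$ and applying Cauchy--Schwarz, which is precisely where both $\eta>1$ and the CFL bound enter with the stated constants. Your choice to test with $\bv^{n+1}_K$ is a genuinely different route, and as sketched it has two concrete gaps. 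First, the convection term $\sum_K\absk(\divup(\bv^n,\bw^n))_K\cdot\bv^{n+1}_K$ transports $\bv^n$ but is tested against $\bv^{n+1}$, so the edge contributions involve the mixed products $\bv^n_K\cdot\bv^{n+1}_K$ and $\bv^n_L\cdot\bv^{n+1}_K$; they cannot reassemble into a principal part written purely in $\abs{\bv^{n+1}}^2$ as you claim, and the expression $\abssig w^n_{\sk}\bigl(\tfrac12\abs{\bv^{n+1}_K}^2-\tfrac12\abs{\bv^{n+1}_L}^2\bigr)$ does not vanish under summation by conservativity anyway (the contributions from $K$ and $L$ at a shared edge are equal, not opposite). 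The clean identity
\begin{equation*}
(\divup(\bv^n,\bw^n))_K\cdot\bv^n_K=(\divup(\abs{\bv^n}^2/2,\bw^n))_K-\frac{1}{\absk}\sum_{\substack{\sink \\ \sigma=K\vert L}}\abssig\frac{\abs{\bv^n_L-\bv^n_K}^2}{2}(w^n_{\sk})^{-}
\end{equation*}
is only available when the test function matches the transported quantity, which is why the paper works at level $n$.

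Second, your budget for the reserve $\tfrac{1}{2\delt}\sum_K\absk\abs{\bv^{n+1}_K-\bv^n_K}^2$ does not close. The pressure cross term $\sum_K\absk(\gradt\pi^{n+1})_K\cdot(\bv^{n+1}_K-\bv^n_K)$, estimated by Young's inequality so as to leave the coefficient $(\eta-1)\delt$ on $\sum_K\absk\abs{(\gradt\pi^{n+1})_K}^2$, consumes at least half of the reserve (and with the splitting you actually state, all of it), yet the same reserve is also supposed to absorb the upwind convection remainders under condition (2). You would need to track explicitly how the reserve is partitioned and verify that the CFL constant in condition (2) survives unchanged; as written, either the convection remainders are left uncontrolled or the threshold on $\eta$ and the CFL constant come out different from the theorem's. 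The approach could likely be repaired, but the bookkeeping that constitutes the substance of the proof is not carried out correctly here, whereas the paper's choice of test function makes all three pieces close with exactly the hypotheses stated.
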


\begin{proof}
    We take the dot product of the momentum balance \eqref{eqn:disc-incomp-mom} with $\absk\bv^n_K$ and sum over all $K\in\T$ to get $\sum_{K\in\T}\absk(A^n_K+B^n_K+C^n_K) = 0$, where 
    \begin{align*}
        &A^n_K = \frac{1}{\delt}(\bv^{n+1}_K - \bv^n_K)\cdot\bv^n_K, \\
        &B^n_K = (\divup(\bv^n,\bw^n))_K\cdot\bv^n_K, \\
        &C^n_K = (\gradt\pi^{n+1})_K\cdot\bv^n_K.
    \end{align*}
    Recalling the identity $(a-b)\cdot b = (\abs{a}^2 - \abs{b}^2-\abs{a-b}^2)/2$, we can express $A^n_K$ as
    \begin{equation}
    \label{eqn:A}
        A^n_K = \frac{1}{\delt}\biggl(\half\lvert\bv^{n+1}_K\rvert^2 - \half\lvert\bv^n_K\rvert^2\biggr) - \frac{\lvert\bv^{n+1}_K-\bv^n_K\rvert^2}{2\delt}.
    \end{equation}
    Also, as a consequence of \eqref{eqn:disc-incomp-div-free}, one has 
    \[
    (\divup(\bv^n,\bw^n))_K = \frac{1}{\absk}\sum_{\substack{\sink \\ \sigma = K\vert L}}\abssig(\bv^n_L - \bv^n_K)(w^n_{\sk})^{-}.
    \]
    In the light of the above, one immediately obtains
    \begin{equation}
    \label{eqn:B}
        B^n_K = (\divup(\lvert \bv^n\rvert^2/2,\bw^n))_K - \frac{1}{\absk}\sum_{\substack{\sink \\ \sigma = K\vert L}}\abssig\frac{\lvert \bv^n_L - \bv^n_K\rvert^2}{2}(w^n_{\sk})^{-}.
    \end{equation}
    Now, writing $\bv^n_K = \bw^n_K + \delta\bv^{n+1}_K$ and using the grad-div duality \eqref{eqn:disc-grad-div-primal} along with \eqref{eqn:disc-incomp-div-free} and \eqref{eqn:correc-term-incomp}, we get 
    \begin{equation}
    \label{eqn:C}
        \sum_{K\in\T}\absk C^n_K = \sum_{K\in\T} \absk(\gradt\pi^{n+1})_K\cdot\delta\bv^{n+1}_K = \eta\delt\sum_{K\in\T}\absk \abs{(\gradt \pi^{n+1})_K}^2.
    \end{equation}
    Using the conservativity of the mass flux, we note that $\sum_{K\in\T}\absk(\divup(\lvert \bv^n\rvert^2/2,\bw^n))_K = 0$. Thus, using \eqref{eqn:A}-\eqref{eqn:C}, we get
    \begin{equation}
    \label{eqn:1-loc-ent}
        \sum_{K\in\T}\frac{\absk}{\delt}\biggl(\half\lvert\bv^{n+1}_K\rvert^2 - \half\lvert\bv^n_K\rvert^2\biggr)+ \eta\delt\sum_{K\in\T}\absk \abs{(\gradt \pi^{n+1})_K}^2 = \sum_{K\in\T}\absk R^n_{K,\delt},
    \end{equation}
    where 
    \[
    R^n_{K,\delt} = \frac{\lvert\bv^{n+1}_K-\bv^n_K\rvert^2}{2\delt} +  \frac{1}{\absk}\sum_{\substack{\sink \\ \sigma = K\vert L}}\abssig\frac{\lvert \bv^n_L - \bv^n_K\rvert^2}{2}(w^n_{\sk})^{-}.
    \]
    In order to estimate the above remainder term, we use the momentum balance \eqref{eqn:disc-incomp-mom} and the identity $\abs{a+b}^2\leq 2(\abs{a}^2+\abs{b}^2)$ to get,  
    \begin{equation}
    \label{eqn:2-loc-ent}
        \begin{split}
            \frac{1}{2}\abs{\frac{\bv^{n+1}_K - \bv^n_K}{\delt}}^2 &\leq \biggl(\frac{1}{\absk}\sum_{\substack{\sink \\ \sigma = K\vert L}}\abssig(\bv^n_L - \bv^n_K)(w^n_{\sk})^{-}\biggr)^2 + \abs{(\gradt\pi^{n+1})_K}^2 \\
            &\leq \biggl(\sum_{\substack{\sink \\ \sigma = K\vert L}}\frac{\abssig}{\absk}\abs{\bv^n_L - \bv^n_K}^2(w^n_{\sk})^{-}\biggr)\biggl(\sum_{\substack{\sink \\ \sigma = K\vert L}}\frac{\abssig}{\absk}(w^n_{\sk})^{-}\biggr) + \abs{(\gradt\pi^{n+1})_K}^2,
        \end{split}
    \end{equation}
    where we have used the Cauchy-Schwarz inequality after writing $\abssig(w^n_{\sk})^{-}$ as
    \[
    \abssig(w^n_{\sk})^{-} = \sqrt{-\abssig(w^n_{\sk})^{-}}\,\sqrt{-\abssig(w^n_{\sk})^{-}}.
    \]
    Finally, we can estimate $R^n_{K,\delt}$ 
    \begin{equation}
    \label{eqn:3-loc-ent}
        R^n_{K,\delt}\leq\biggl(\sum_{\substack{\sink \\ \sigma = K\vert L}}\frac{\abssig}{\absk}\abs{\bv^n_L - \bv^n_K}^2(w^n_{\sk})^{-}\biggr)\biggr(\half + \delt\sum_{\substack{\sink \\ \sigma = K\vert L}}\frac{\abssig}{\absk}(w^n_{\sk})^{-}\biggl) + \delt\abs{(\gradt\pi^{n+1})_K}^2.
    \end{equation}
    Combining \eqref{eqn:1-loc-ent} and \eqref{eqn:3-loc-ent} yields 
    \begin{equation}
    \label{eqn:4-loc-ent}
        \begin{split}
            \sum_{K\in\T}\frac{\absk}{\delt}\biggl(\half\lvert\bv^{n+1}_K\rvert^2 - &\half\lvert\bv^n_K\rvert^2\biggr) + (\eta - 1)\delt\sum_{K\in\T}\absk\abs{(\gradt\pi^{n+1})_K}^2 \\
            &\leq\sum_{K\in\T}\absk\biggl(\sum_{\substack{\sink \\ \sigma = K\vert L}}\frac{\abssig}{\absk}\abs{\bv^n_L - \bv^n_K}^2(w^n_{\sk})^{-}\biggr)\biggl(\half + \delt\sum_{\substack{\sink \\ \sigma = K\vert L}}\frac{\abssig}{\absk}(w^n_{\sk})^{-}\biggr).
        \end{split}
    \end{equation}
    Under the assumed conditions (1) and (2), we observe that the second term on the left-hand side remains non-negative while the right-hand side is non-positive. This gives us the desired inequality \eqref{eqn:disc-loc-ent-incomp}.
\end{proof}

The global energy inequality now follows due to \eqref{eqn:4-loc-ent}. 

\begin{theorem}[Global Energy Estimate]
\label{thm:disc-glob-ent-incomp}
    Assume that the hypotheses of Theorem \ref{thm:disc-loc-ent-incomp} are satisfied. Then, there exists $C>0$ such that for any $1\leq n\leq N-1$, 
    \begin{equation}
    \label{eqn:disc-glob-ent-incomp}
        \sum_{K\in\T}\absk\half\lvert\bv^n_K\rvert^2+(\eta-1)\sum_{r=0}^{n-1}\delt^2\sum_{K\in\T}\absk\abs{(\gradt\pi^{r+1})_K}^2\leq C.
    \end{equation}
\end{theorem}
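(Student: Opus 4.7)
The plan is to observe that the global estimate is essentially a telescoping consequence of the intermediate inequality \eqref{eqn:4-loc-ent} already derived in the proof of Theorem \ref{thm:disc-loc-ent-incomp}. First I would note that under hypotheses (1) $\eta > 1$ and (2) the CFL-type restriction on $\delt$, the right-hand side of \eqref{eqn:4-loc-ent} is non-positive for every $n$: the factors $\abs{\bv^n_L - \bv^n_K}^2 (w^n_{\sk})^-$ are $\leq 0$ because $(w^n_{\sk})^- \leq 0$, while the bracket $\bigl(\half + \delt\sum_{\sink,\,\sigma=K\vert L}\tfrac{\abssig}{\absk}(w^n_{\sk})^-\bigr)$ is $\geq 0$ thanks to condition (2). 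Hence \eqref{eqn:4-loc-ent} reduces to
\begin{equation*}
\sum_{K\in\T}\frac{\absk}{\delt}\biggl(\half\abs{\bv^{n+1}_K}^2 - \half\abs{\bv^n_K}^2\biggr) + (\eta - 1)\delt\sum_{K\in\T}\absk\,\abs{(\gradt\pi^{n+1})_K}^2 \leq 0.
\end{equation*}

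Next I would multiply through by $\delt$ and sum this inequality with $n$ replaced by $r$ for $r = 0, 1, \dots, n-1$. The first term telescopes, leaving
\begin{equation*}
\sum_{K\in\T}\absk\,\half\abs{\bv^n_K}^2 + (\eta - 1)\sum_{r=0}^{n-1}\delt^2\sum_{K\in\T}\absk\,\abs{(\gradt\pi^{r+1})_K}^2 \leq \sum_{K\in\T}\absk\,\half\abs{\bv^0_K}^2.
\end{equation*}

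To finish, I would bound the right-hand side uniformly. The initial velocity for the limit scheme is $\bv^0_K = (\Pi_\T \bv_0)_K$, which follows from Theorem \ref{thm:asymp-lim-scheme} together with the well-preparedness assumption $\bu_{0,\veps_k}\to \bv_0$ in $L^2(\Omega;\R^d)$. Since $\bv_0 \in H^k(\Omega;\R^d)\subset L^2(\Omega;\R^d)$, the $L^p$ stability of the projection operator \eqref{eqn:stab-proj-op} gives
\begin{equation*}
\sum_{K\in\T}\absk\,\half\abs{\bv^0_K}^2 = \half\,\norm{\Pi_\T \bv_0}_{L^2(\Omega;\R^d)}^2 \leq \half\,\norm{\bv_0}_{L^2(\Omega;\R^d)}^2 =: C,
\end{equation*}
which is independent of both the mesh parameters and $\veps$, yielding \eqref{eqn:disc-glob-ent-incomp}.

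There is no genuine obstacle here once \eqref{eqn:4-loc-ent} is in hand: the argument is the standard discrete analogue of integrating the local energy balance in time. The only two things requiring care are the sign analysis for the remainder term (already sorted out in Theorem \ref{thm:disc-loc-ent-incomp}) and the uniform bound on the discrete initial kinetic energy, both of which are routine.
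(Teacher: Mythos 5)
Your proposal is correct and follows essentially the same route as the paper's proof: it drops the non-positive right-hand side of \eqref{eqn:4-loc-ent}, multiplies by $\delt$, telescopes the kinetic energy term over $r=0,\dots,n-1$, and bounds the initial term via the stability \eqref{eqn:stab-proj-op} of the projection applied to $\bv_0\in H^k(\Omega;\R^d)\subset L^2(\Omega;\R^d)$. The only difference is that you spell out the sign analysis of the remainder in \eqref{eqn:4-loc-ent}, which the paper delegates to Theorem \ref{thm:disc-loc-ent-incomp}.
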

\begin{proof}
    \eqref{eqn:4-loc-ent} guarantees that 
    \[
    \sum_{K\in\T}\frac{\absk}{\delt}\biggl(\half\lvert\bv^{r+1}_K\rvert^2 - \half\lvert\bv^r_K\rvert^2\biggr) + (\eta - 1)\delt\sum_{K\in\T}\absk\abs{(\gradt\pi^{r+1})_K}^2\leq 0.
    \]
    Multiplying by $\delt$ and summing over $r$ from $0$ to $n-1$, and noting that as $\bv_0\in H^k(\Omega;\R^d)$, $\norm{\Pi_\T\bv_0}_{L^2(\Omega;\R^d)}$ is bounded independent of the mesh parameters as a consequence of \eqref{eqn:stab-proj-op}, one immediately gets \eqref{eqn:disc-glob-ent-incomp}.
\end{proof}

\begin{remark}
\label{rem:incomp-vel-est}
    Analogous to the compressible case, as a consequence of \eqref{eqn:disc-glob-ent-incomp} we obtain $\bv_{\T,\delt}\in L^\infty(0,T;L^2(\Omega;\R^d)$ and $\sqrt{\delt}\gradt\pi_{\T,\delt}\in L^2(\odom;\R^d)$ where 
    \begin{align}
        &\bv_{\T,\delt}(t,x) = \sum_{n=0}^{N-1}\sum_{K\in\T}\bv^n_K\X_K(x)\X_{[t_n,t_{n+1})}(t) \label{eqn:v-t-delt}\\
        &\pi_{\T,\delt}(t,x) = \sum_{n=0}^{N-1}\sum_{K\in\T}\pi^n_K\X_K(x)\X_{[t_n,t_{n+1})}(t) \label{eqn:pi-t-delt-incomp}
    \end{align}
\end{remark}

\begin{remark}
    One can observe that the stability conditions imposed by Theorem \ref{thm:disc-loc-ent-ineq} and Theorem \ref{thm:disc-loc-ent-incomp} also respect the zero Mach number limit $\veps\to 0$. Indeed, if one simply sets the terms involving density to be equal to 1 in the conditions imposed by Theorem \ref{thm:disc-loc-ent-ineq}, one obtains the stability conditions that we impose in Theorem \ref{thm:disc-loc-ent-incomp}.
\end{remark}

Next, we  turn our attention towards proving the consistency of the scheme $\pepsho$.

\begin{theorem}[Consistency of the Incompressible Limit Scheme]
\label{thm:cons-incomp-scheme}
    Let $(\T,\delt)$ be a given space-time discretization of $\dom$. Let $\bv_{\T,\delt}\in\Lt(\Omega;\R^d)$ defined as in \eqref{eqn:v-t-delt}
    be the numerical solution generated by the scheme $\pepsho$ with initial data $\bv^0_K = (\Pi_\T\bv_0)_K$. Suppose that the hypotheses of Theorem \ref{thm:disc-loc-ent-incomp} hold. Further, suppose that there exists a constant $\overline{\pi}>0$ such that
    \[
    \abs{\pi_{\T,\delt}}\leq\overline{\pi}.
    \]
    Then, the scheme $\pepsho$ is consistent with the weak formulation of $\pepso$, i.e. 
    \begin{equation}
    \label{eqn:div-free-cons}
        \int_{\Omega}\bv_{\T,\delt}(t,\cdot)\cdot\grad\varphi\,\dx + \mathcal{Q}^1_{\T,\delt} = 0,
    \end{equation}
    for any $\varphi\in\Cinf(\Omega)$ and a.e. $t\in (0,T)$;
    \begin{equation}
    \label{eqn:incomp-mom-cons}
        -\int_\Omega\bv_{\T,\delt}(0,\cdot)\cdot\uphi(0,\cdot)\,\dx = \int_0^T\int_\Omega\lbrack\bv_{\T,\delt}\cdot\Dt\uphi+(\bv_{\D,\delt}\otimes\bv_{\D,\delt})\colon\grad\uphi\rbrack\,\dx\,\dt + \mathcal{Q}^2_{\T,\delt},
    \end{equation}
    for any $\uphi\in\Cinf(\lbrack 0,T)\times\Omega;\R^d)$ with $\Div\uphi = 0$.

    Here $\bv_{\D,\delt}$ denotes a reconstruction of $\bv_{\T,\delt}$ on the dual mesh and the consistency errors $\mathcal{Q}^1_{\T,\delt}$ and $\mathcal{Q}^2_{\T,\delt}$ are such that 
    \begin{align*}
        &\abs{\mathcal{Q}^1_{\T,\delt}} = o(h_\T,\sqrt{\delt})\to 0\text{ as }h_\T\to 0, \\
        &\abs{\mathcal{Q}^2_{\T,\delt}} = o(h_\T,\sqrt{\delt})\to 0\text{ as }h_\T\to 0.
    \end{align*}
\end{theorem}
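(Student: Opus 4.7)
The proof strategy closely parallels that of Theorem~\ref{thm:cons-scheme} for the compressible scheme $\pepsh$. The plan is to multiply the discrete equations \eqref{eqn:disc-incomp-div-free}--\eqref{eqn:disc-incomp-mom} by appropriate interpolates of the test functions, perform discrete summation by parts via the grad-div duality \eqref{eqn:disc-grad-div-primal} (together with summation by parts in time for the momentum equation), reconstruct fluxes on the dual mesh via Definition~\ref{def:recon-op} and Lemma~\ref{lem:stab-recon-op}, and then identify the principal parts as the continuous weak forms of the incompressible Euler system. The remainder terms will be shown to be $o(h_\T,\sqrt{\delt})$ by combining the uniform bounds $\bv_{\T,\delt}\in L^\infty(0,T;L^2(\Omega;\R^d))$ and $\sqrt{\delt}\,\gradt\pi_{\T,\delt}\in L^2(\odom;\R^d)$ from Remark~\ref{rem:incomp-vel-est}, the hypothesis $|\pi_{\T,\delt}|\leq\overline\pi$, and the interpolation estimates~\eqref{eqn:op-est}.

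For \eqref{eqn:div-free-cons}, I fix $\varphi\in\Cinf(\Omega)$ and $t\in [t^n,t^{n+1})$, multiply \eqref{eqn:disc-incomp-div-free} by $\absk\varphi^n_K$ with $\varphi^n_K=(\Pi_\T\varphi)_K$, and sum over $K\in\T$. Grad-div duality \eqref{eqn:disc-grad-div-primal} combined with $\delta\bv^{n+1}_K=\eta\delt(\gradt\pi^{n+1})_K$ yields
\begin{equation*}
\sum_{K\in\T}\absk\bv^n_K\cdot(\gradt\Pi_\T\varphi)_K\,=\,\eta\delt\sum_{K\in\T}\absk(\gradt\pi^{n+1})_K\cdot(\gradt\Pi_\T\varphi)_K.
\end{equation*}
The difference between $\int_\Omega\bv_{\T,\delt}(t,\cdot)\cdot\grad\varphi\,\dx$ and the left-hand side is bounded by $\norm{\bv_{\T,\delt}}_{L^2}\,\norm{\grad\varphi-\gradt\Pi_\T\varphi}_{L^2}=O(h_\T)$ thanks to~\eqref{eqn:op-est}, while the right-hand side is dominated by $\eta\sqrt{\delt}\,\norm{\sqrt{\delt}\,\gradt\pi_{\T,\delt}}_{L^2}\,\norm{\gradt\Pi_\T\varphi}_{L^2}=O(\sqrt{\delt})$. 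This gives $\mathcal{Q}^1_{\T,\delt}=o(h_\T,\sqrt{\delt})$.

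For \eqref{eqn:incomp-mom-cons}, I take the dot product of \eqref{eqn:disc-incomp-mom} with $\delt\,\absk\,\uphi^n_K$ and sum over $n=0,\dots,N-1$ and $K\in\T$. Summation by parts in time produces the initial-data term and a time integral of $\bv_{\T,\delt}\cdot\eth_t\uphi_{\T,\delt}$, which, thanks to the uniform convergence $\eth_t\uphi_{\T,\delt}\to\Dt\uphi$, recovers $\int_0^T\int_\Omega\bv_{\T,\delt}\cdot\Dt\uphi\,\dx\,\dt$ modulo a vanishing error. The convective contribution is processed as in the compressible proof of Theorem~\ref{thm:cons-scheme} (cf.~\eqref{mom-cons-err}): conservative rearrangement of $\divup(\bv^n,\bw^n)$ together with the reconstruction operator brings its principal part to $(\bv_{\D,\delt}\otimes\bv_{\D,\delt})\colon\grad\uphi$ plus a residual analogous to $T_{mom}$ (involving $\delta\bv^{n+1}$), and a further $O(h_\T)$-error from $\grad\uphi-\gradd\Pi_\T\uphi$ by Lemma~\ref{lem:stab-recon-op} and~\eqref{eqn:op-est}. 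For the pressure term, grad-div duality together with $\Div\uphi=0$ gives
\begin{equation*}
\sum_{n=0}^{N-1}\delt\sum_{K\in\T}\absk(\gradt\pi^{n+1})_K\cdot\uphi^n_K\,=\,-\sum_{n=0}^{N-1}\delt\sum_{K\in\T}\absk\,\pi^{n+1}_K\bigl((\divt\Pi_\T\uphi^n)_K-(\Pi_\T\Div\uphi^n)_K\bigr),
\end{equation*}
and $|\pi_{\T,\delt}|\leq\overline\pi$ together with $\norm{\divt\Pi_\T\uphi-\Div\uphi}_{L^p}\lesssim h_\T$ renders this contribution $O(h_\T)$.

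The principal obstacle is to estimate the stabilization-induced remainders, namely the contributions of $\delta\bv^{n+1}=\eta\delt\,\gradt\pi^{n+1}$ hidden inside the shifted velocity $\bw^n$ in the upwind fluxes and inside the dual-mesh reconstructions. Mimicking the estimation of $T_{mass}$ and $T_{mom}$ in the compressible proof, I would factor each such remainder as a product of $\uphi_{\T,\delt}$ (or $\varphi_{\T,\delt}$), $\bv_{\T,\delt}$, and $\sqrt{\delt}\,\gradt\pi_{\T,\delt}$, and apply Cauchy--Schwarz together with Remark~\ref{rem:incomp-vel-est} and the uniform norms of the interpolates to obtain a bound of order $\sqrt{\delt}$. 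Collecting all of these contributions yields $\mathcal{Q}^2_{\T,\delt}=o(h_\T,\sqrt{\delt})$, completing the proof.
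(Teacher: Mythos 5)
Your proposal is correct and follows essentially the same route as the paper: the divergence constraint is handled by testing \eqref{eqn:disc-incomp-div-free} against $\absk\varphi_K$ and applying the grad-div duality \eqref{eqn:disc-grad-div-primal} to produce exactly the two error terms $Q_1,Q_2$ the paper estimates via \eqref{eqn:op-est} and Remark \ref{rem:incomp-vel-est}, while the momentum consistency is reduced to the compressible argument of Theorem \ref{thm:cons-scheme} with the pressure contribution controlled by $\Div\uphi=0$, the bound $\abs{\pi_{\T,\delt}}\leq\overline{\pi}$, and the $O(h_\T)$ consistency of $\divt\Pi_\T\uphi$. Your explicit grad-div computation for the pressure term merely spells out what the paper states in one line, so there is no substantive difference.
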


\begin{proof}
    The proof of the consistency of the momentum balance \eqref{eqn:disc-incomp-mom} is exactly the same as the one for its compressible counterpart given in Theorem \ref{thm:cons-scheme}. The only difference here is that gradient term simply vanishes as we have a divergence free test function $\uphi\in\Cinf(\lbrack 0,T)\times\Omega;\R^d)$ and we assume that $\pi_{\T,\delt}$ remains bounded.
    
    Thus, we only prove the consistency of the discrete divergence free condition \eqref{eqn:disc-incomp-div-free}. Let $\varphi\in\Cinf(\Omega)$. Multiplying \eqref{eqn:disc-incomp-div-free} by $\absk\varphi_K$, $\varphi_K = (\Pi_\T\varphi)_K$, and summing over all $K\in\T$, we get
    \[
    \sum_{K\in\T}\absk\varphi_K(\divt\bv^n)_K - \sum_{K\in\T}\absk\varphi_K(\divt\delta\bv^{n+1})_K = 0.
    \]
    We can use the discrete grad-div duality \eqref{eqn:disc-grad-div-primal} to rewrite the first term as 
    \begin{equation}
    \label{eqn:cons-1}
        \sum_{K\in\T}\absk\varphi_K(\divt\bv^n)_K = -\sum_{K\in\T}\absk(\gradt\Pi_\T\varphi)_K\cdot\bv^n_K.
    \end{equation}
    Similarly, we can simplify the second term using the grad-div duality along with \eqref{eqn:correc-term-incomp} to obtain
    \begin{equation}
    \label{eqn:cons-2}
        \sum_{K\in\T}\absk\varphi_K(\divt\delta\bv^{n+1})_K = -\eta\delt\sum_{K\in\T}\absk(\gradt\Pi_\T\varphi)_K\cdot(\gradt\pi^{n+1})_K.
    \end{equation}
    Equations \eqref{eqn:cons-1}-\eqref{eqn:cons-2} together yield 
    \[
    \int_\Omega\bv_{\T,\delt}(t,\cdot)\cdot\grad\varphi\,\dx + \mathcal{Q}^1_{\T,\delt} = 0,
    \]
    where 
    \begin{equation}
    \label{eqn:q1}
        \mathcal{Q}^1_{\T,\delt} = \int_\Omega\bv_{\T,\delt}(t,\cdot)\cdot(\gradt\Pi_\T\varphi - \grad\varphi)\,\dx - \eta\delt\int_\Omega\gradt\Pi_\T\varphi\cdot\gradt\pi_{\T,\delt}(t+\delt,\cdot)\,\dx = Q_1 + Q_2
    \end{equation}
    $Q_1$ and $Q_2$ can be approximated using the estimates provided by \eqref{eqn:op-est} and Remark \ref{rem:incomp-vel-est} as follows:
    \begin{align*}
        \abs{Q_1}&\leq \norm{\bv_{\T,\delt}(t,\cdot)}_{L^2}\norm{\gradt\Pi_\T\varphi - \grad\varphi}_{L^2}\lesssim h_\T, \\
        \abs{Q_2}&\leq \eta\sqrt{\delt}\norm{\gradt\Pi_\T\varphi - \grad\varphi}_{L^2}\norm{\sqrt{\delt}\gradt\pi_{\T,\delt}(t+\delt,\cdot)}_{L^2} + \eta\sqrt{\delt}\norm{\grad\varphi}_{L^2}\norm{\sqrt{\delt}\gradt\pi_{\T,\delt}(t+\delt,\cdot)}_{L^2} \\
        &\lesssim h_\T\sqrt{\delt}+\sqrt{\delt} = o(h_\T,\sqrt{\delt}).
    \end{align*}
    The above yields $\mathcal{Q}^1_{\T,\delt} = o(h_\T,\sqrt{\delt})$ and this completes the proof.
\end{proof}

We can now prove that the sequence of numerical solutions generated by $\pepsho$ indeed converge to a DMV solution of $\pepso$. 

\begin{theorem}[Weak Convergence (Incompressible Case)]
\label{thm:scheme-wk-con-incomp}
    Let $\lbrace(\T^\m,\delt^\m)\rbrace_{m\in\N}$ be a sequence of space time discretizations such that $h^\m = h_{\T^\m}\to 0$ as $m\to\infty$. Let $\bv^\m = \bv_{\T^\m,\delt^\m}$, be the numerical solution generated by the scheme $\lbrack\Pro^{h^\m}_0\rbrack$ for each $m\in\N$. Assume that the hypotheses of Theorem \ref{thm:disc-loc-ent-incomp} hold. Then, 
    \[
    \bv^\m\weakstar\langle\U;\tbv\rangle\text{ in }L^\infty(0,T;L^2(\Omega;\R^d))\text{ as }m\to\infty,
    \]
    where $\mcu = \lbrace\U\rbrace_{(t,x)\in\odom}$ is a DMV solution of $\pepso$ emanating from $\bv_0$ with concentration defects 
    \begin{align*}
        &\mathfrak{D}_{cd}(t) = \overline{\half\abs{\bv}^2}(t) - \half\langle\U,\abs{\tbv}^2\rangle, \\
        &\mathfrak{M}_{cd}(t) = \overline{\bv\otimes\bv}(t) - \langle\U, \tbv\otimes\tbv\rangle,
    \end{align*}
    where 
    \begin{align*}
        &\half\lvert\bv^\m\rvert^2\weakstar \overline{\half\abs{\bv}^2}\text{ in }L^\infty(0,T;\M^{+}(\overline{\Omega})), \\
        &\bv^\m\otimes\bv^\m\weakstar \overline{\bv\otimes\bv}\text{ in }L^\infty\bigl(0,T;\M\bigl(\overline{\Omega};\R^{d\times d}\bigr)\bigr)
    \end{align*}
    as $m\to\infty$.
\end{theorem}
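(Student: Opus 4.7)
The plan is to follow the standard DMV-generation procedure, now applied to the incompressible limit scheme $\pepsho$: combine the uniform bounds from Theorem \ref{thm:disc-glob-ent-incomp} with the consistency identities of Theorem \ref{thm:cons-incomp-scheme}, and invoke the fundamental theorem of Young measures to pass to the limit $m\to\infty$.

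First I will extract compactness. The global energy estimate \eqref{eqn:disc-glob-ent-incomp} yields uniform bounds on $(\bv^m)$ in $L^\infty(0,T;L^2(\Omega;\R^d))$, and consequently on $(\tfrac12\abs{\bv^m}^2)$ in $L^\infty(0,T;L^1(\Omega))$ and on $(\bv^m\otimes\bv^m)$ in $L^\infty(0,T;L^1(\Omega;\R^{d\times d}))$. The fundamental theorem of Young measures then produces, up to a subsequence, a parameterized probability measure $\mcu=\{\U\}_{(t,x)\in\odom}$ with $\bv^m\weakstar\langle\U;\tbv\rangle$ in $L^\infty(0,T;L^2(\Omega;\R^d))$, together with weak-$*$ limits in the appropriate spaces of measures defining $\overline{\tfrac12\abs{\bv}^2}$ and $\overline{\bv\otimes\bv}$; the concentration defects $\mathfrak{D}_{cd},\mathfrak{M}_{cd}$ are then given precisely by the differences in the statement.

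Next I verify the four items of Definition \ref{def:dmv-incomp}. The energy inequality \eqref{eqn:enrg-ineq-incomp} follows by summing the local bound \eqref{eqn:disc-loc-ent-incomp}, taking $m\to\infty$ and using $\Pi_{\T^m}\bv_0\to\bv_0$ strongly in $L^2$ together with the weak-$*$ definition of $\mathfrak{D}_{cd}$. The divergence-free identity \eqref{eqn:div-free-dmv} comes directly from \eqref{eqn:div-free-cons}, since $\mathcal{Q}^1_{\T^m,\delt^m}=o(h^m,\sqrt{\delt^m})\to 0$. For the momentum identity \eqref{eqn:incomp-mom-dmv} I pass to the limit in \eqref{eqn:incomp-mom-cons}: the linear terms follow from the weak-$*$ convergence of $\bv^m$, while for the convective term I use Lemma \ref{lem:wk-cong-mesh-recon}(3) to identify the weak-$*$ limit of $\bv^m_{\D,\delt^m}\otimes\bv^m_{\D,\delt^m}$ with that of $\bv^m\otimes\bv^m$, obtaining the Reynolds defect through the decomposition $\overline{\bv\otimes\bv}=\langle\U;\tbv\otimes\tbv\rangle+\mathfrak{M}_{cd}$. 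For the compatibility \eqref{eqn:incomp-dmv-defect} I exploit that $\bv^m\otimes\bv^m$ is pointwise rank-one PSD; passing to the weak-$*$ limit preserves positivity, and Jensen applied fibrewise to the Young measure shows $\mathfrak{M}_{cd}$ is a PSD matrix-valued measure, whence $\abs{\mathfrak{M}_{cd}(t)}(\overline{\Omega})\leq\mathrm{tr}(\mathfrak{M}_{cd}(t))(\overline{\Omega})=2\D(t)$.

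The main obstacle is the correct handling of the dual-mesh reconstruction in the convective term: one must show that replacing $\bv^m$ by $\bv^m_{\D,\delt^m}$ generates no additional concentration defect. This rests on part (1) of Lemma \ref{lem:wk-cong-mesh-recon} (strong $L^p$-closeness of the reconstruction to the original), combined with the $L^2$-boundedness of $(\bv^m)$, which together control the cross-terms $\bv^m\otimes(\bv^m_{\D,\delt^m}-\bv^m)$ in $L^1$. A secondary technical point is that Theorem \ref{thm:cons-incomp-scheme} postulates an $L^\infty$-bound on $\pi_{\T,\delt}$; for divergence-free test functions, however, the pressure term in \eqref{eqn:incomp-mom-cons} disappears via the grad-div duality, so only the $L^2$-bound $\sqrt{\delt}\,\gradt\pi_{\T,\delt}\in L^2$ furnished by Remark \ref{rem:incomp-vel-est} is actually needed to control the stabilization contribution to the consistency error as $m\to\infty$.
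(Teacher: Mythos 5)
Your proposal is correct and follows essentially the same route as the paper: uniform bounds from the global energy estimate, the fundamental theorem of Young measures, passage to the limit in the consistency identities \eqref{eqn:div-free-cons}--\eqref{eqn:incomp-mom-cons} with Lemma \ref{lem:wk-cong-mesh-recon} handling the dual-mesh reconstructions, and the local energy bound yielding the DMV energy inequality. The only quibble is in the defect compatibility step: the positive semi-definiteness of $\mathfrak{M}_{cd}=\overline{\bv\otimes\bv}-\langle\U;\tbv\otimes\tbv\rangle$ is not a consequence of Jensen's inequality (which would compare $\langle\U;\tbv\otimes\tbv\rangle$ with $\langle\U;\tbv\rangle\otimes\langle\U;\tbv\rangle$) but of the standard lower-semicontinuity/concentration-defect lemma for convex integrands under Young-measure convergence, which is exactly what the paper's citations to \cite{BF18a, FGS+16} supply; with that correction your trace bound $\abs{\mathfrak{M}_{cd}(t)}(\overline{\Omega})\leq\mathrm{tr}\bigl(\mathfrak{M}_{cd}(t)\bigr)(\overline{\Omega})=2\D(t)$ goes through as stated.
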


\begin{proof}
    Because of the energy estimate \eqref{eqn:disc-glob-ent-incomp}, we get uniform bounds for the following quantities in the corresponding spaces.
    \begin{equation}
    \label{eqn:fam-est}
        \begin{split}
            &(\bv^\m)_{m\in\N}\subset L^\infty(0,T;L^2(\Omega;\R^d)), \biggl(\half\lvert\bv^\m\rvert^2\biggr)_{m\in\N}\subset L^\infty(0,T;L^1(\Omega)), \\
            &(\bv^\m\otimes\bv^\m)_{m\in\N}\subset L^\infty(0,T;L^1(\Omega;\R^{d\times d})).
        \end{split}
    \end{equation}
    The Fundamental Theorem of Young measures, cf.\ \cite{JBal89, Ped97}, asserts the existence of a parameterized family of probability measures $\mcu = \lbrace\U\rbrace_{(t,x)\in\odom}\in L^\infty(\odom;\Pro(\F_{incomp}))$ such that 
    \[
    \bv^\m \weakstar \langle\U; \tbv\rangle\text{ in }L^\infty(0,T;L^2(\Omega;\R^d))\text{ as }m\to\infty.
    \]
    As a consequence of the uniform in time bounds given in \eqref{eqn:fam-est}, we infer the existence of $\overline{\half\abs{\bv}^2}\in L^\infty(0,T;\M^{+}(\overline{\Omega}))$ and $\overline{\bv\otimes\bv}\in L^\infty\bigl(0,T;\M\bigl(\overline{\Omega};\R^{d\times d}\bigr)\bigr)$ such that 
    \begin{align*}
        &\half\lvert\bv^\m\rvert^2\weakstar \overline{\half\abs{\bv}^2}\text{ in }L^\infty(0,T;\M^{+}(\overline{\Omega})), \\
        &\bv^\m\otimes\bv^\m\weakstar \overline{\bv\otimes\bv}\text{ in }L^\infty\bigl(0,T;\M\bigl(\overline{\Omega};\R^{d\times d}\bigr)\bigr).
    \end{align*}
    Consequently, the concentration defects take the desired form. We can then pass to the limit $h^\m\to 0$ in the consistency formulation \eqref{eqn:div-free-cons}-\eqref{eqn:incomp-mom-cons}, after noting that the reconstructions will also converge to the same limit (cf.\ Lemma \ref{lem:wk-cong-mesh-recon}), to get
    \begin{equation}
        \label{eqn:div-free-dmv-2}
            \int_\Omega\langle\U;\tbv\rangle\cdot\grad\varphi\,\dx = 0
        \end{equation}
        holds for a.e.\ $t\in (0,T)$ and $\varphi\in\Cinf(\Omega)$;
        \begin{equation}
        \label{eqn:incomp-mom-dmv-2}
            \int_0^T\int_\Omega\langle\lbrack\U; \tbv\rangle\cdot\Dt\uphi + \langle\U; \tbv\otimes\tbv\rangle\colon\grad\uphi\rbrack\,\dx\,\dt + \int_0^T\int_\Omega\grad\uphi\colon\mathrm{d}\mathfrak{M}_{cd}(t)\,\dt + \int_\Omega\bv_0\cdot\uphi(0,\cdot)\,\dx = 0
        \end{equation}
        holds for all $\uphi\in\Cinf(\lbrack 0,T)\times\Omega;\R^d)$ with $\Div\uphi = 0$.

        Finally, the local in-time energy estimate \eqref{eqn:disc-loc-ent-incomp} tells us that for a.e. $t\in (0,T)$,
        \[
        \int_\Omega\half\lvert\bv^\m(t,\cdot)\rvert^2\,\dx\leq \int_\Omega\half\lvert\bv^\m(0,\cdot)\rvert^2\,\dx.
        \]
        Passing to the limit $m\to\infty$, we obtain 
        \begin{equation}
        \label{eqn:enrg-ineq-incomp-2}
            \half\int_{\Omega}\langle\U; \abs{\tbv}^2\rangle\,\dx + \int_{\Omega}\mathrm{d}\mathfrak{D}_{cd}(t) \leq \half\int_{\Omega}\abs{\bv_0}^2\,\dx. 
        \end{equation}
        
        The defect compatibility condition is a direct consequence of \cite[Lemma 2.3]{BF18a}, \cite[Lemma 2.1]{FGS+16}. This proves that $\mcu$ is indeed a DMV solution of the incompressible Euler system $\pepso$ which completes the proof.        
\end{proof}

Thus, the final result obtained is $\lim_{h_\T\to 0}\biggl(\lim_{\veps\to 0}\pepsh\biggr) =$ DMV solution of the incompressible Euler system.

\section{AP property of the scheme}
\label{sec:AP-prop-scheme}

We can now finally prove the main result of this paper, that is the AP property of the present scheme \eqref{eqn:disc-mss-bal}-\eqref{eqn:disc-mom-bal}. We deduce this from the weak-strong uniqueness principle for the incompressible Euler system, cf.\ Theorem \ref{thm:wk-str-incomp}.

\begin{theorem}[AP property of the scheme]
\label{thm:comm-lim}
    Consider the finite volume scheme $\pepsh$, approximating the parameterized barotropic Euler equations $\peps$. Assume that the initial data $\lbrace(\vrho_\epso, \bu_\epso)\rbrace_{\veps>0}$ are well-prepared in the sense of Definition \ref{def:well-prep}, where $\bv_0\in H^k(\Omega;\R^d)$, $k>d/2+1$ with $\Div\bv_0 = 0$ and $T<T_{max}$. In addition, suppose that the hypotheses of Theorem \ref{thm:disc-loc-ent-ineq} and Theorem \ref{thm:disc-loc-ent-incomp} hold. Then, the limits $h_\T\to 0$ and $\veps\to 0$ commute, in the sense that the numerical solutions generated by the scheme $\pepsh$ will converge to a classical solution $\bv$ of the incompressible Euler system $\pepso$, irrespective of the order in which the limits are performed.
\end{theorem}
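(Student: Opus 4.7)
The plan is to assemble the iterative limit analyses developed in Section \ref{sec:lim-analys-scheme} along both paths of the diagram in Figure \ref{fig:AP-prop}, and then invoke the weak-strong uniqueness principle (Theorem \ref{thm:wk-str-incomp}) to identify the resulting DMV limit on one path with the classical solution already obtained on the other. Under the well-preparedness of the initial data and the regularity assumption $\bv_0\in H^k(\Omega;\R^d)$ with $\Div\bv_0=0$, $k>d/2+1$, Theorem \ref{thm:incomp-soln-exis} furnishes, on the time horizon $T<T_{max}$, a unique classical solution $\bv\in C^1(\dom;\R^d)$ of the incompressible system $\pepso$ with $\bv(0,\cdot)=\bv_0$. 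This $\bv$ will serve as the common target of both iterative limits.

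For the path $\mathcal{I}=\lim_{\veps\to 0}(\lim_{h_\T\to 0}\pepsh)$, no further work is required: Theorem \ref{thm:eps-h-lim}, which is a direct consequence of Theorem \ref{thm:scheme-wk-con} (the $h_\T\to 0$ limit to a compressible DMV solution) combined with Theorem \ref{thm:asymp-lim-dmv} (the low Mach limit of compressible DMV solutions, applicable thanks to the well-preparedness of the data and the apriori bound given by Lemma \ref{lem:eng-est-pig}), yields $(\vrho^{m,k},\bu^{m,k})\to(1,\bv)$ in the appropriate sense as first $m\to\infty$ and then $k\to\infty$.

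For the path $\mathcal{J}=\lim_{h_\T\to 0}(\lim_{\veps\to 0}\pepsh)$, I would chain Theorem \ref{thm:asymp-lim-scheme} and Theorem \ref{thm:scheme-wk-con-incomp}. The former identifies the inner limit as the limit scheme $\pepsho$, an energy-stable (Theorem \ref{thm:disc-loc-ent-incomp}, Theorem \ref{thm:disc-glob-ent-incomp}) and consistent (Theorem \ref{thm:cons-incomp-scheme}) finite volume approximation of $\pepso$, with discrete initial data obtained as the projection of $\bv_0$. The latter then delivers, as $h_\T\to 0$, a DMV solution $\mcu=\{\U\}_{(t,x)\in\odom}$ of $\pepso$ emanating from $\bv_0$, with concentration defects $\mathfrak{D}_{cd}$ and $\mathfrak{M}_{cd}$. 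At this point the weak-strong uniqueness principle (Theorem \ref{thm:wk-str-incomp}) applies with the classical solution $\bv$ from Theorem \ref{thm:incomp-soln-exis}, since both share the initial datum $\bv_0$. This forces $\U=\delta_{\bv(t,x)}$ for a.e.\ $(t,x)\in\odom$ and $\mathfrak{D}_{cd}=\mathfrak{M}_{cd}=0$, so the weak-* limits collapse to the classical one and $\bv^{(k),m}\to\bv$ as first $k\to\infty$ (i.e.\ $\veps_k\to 0$) and then $m\to\infty$ (i.e.\ $h^\m\to 0$).

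The conceptually subtle step, and the one where I would focus the write-up, is ensuring that the discrete initial datum used along path $\mathcal{J}$ (namely $\Pi_\T\bv_0$, which emerges in Theorem \ref{thm:asymp-lim-scheme} because $\bu_{0,\veps_k}\to\bv_0$ in $L^2$) matches, in the limit $h_\T\to 0$, the initial datum $\bv_0$ required by the weak-strong uniqueness theorem; this follows from the standard interpolation estimate in \eqref{eqn:op-est}. Everything else is an application of already established results: both iterated limits produce the same object $\bv$, thereby establishing the commutativity of the diagram in Figure \ref{fig:AP-prop} and hence the AP property of the scheme $\pepsh$.
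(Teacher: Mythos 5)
Your proposal is correct and follows essentially the same route as the paper: both paths of the diagram are assembled from Theorems \ref{thm:scheme-wk-con}, \ref{thm:asymp-lim-dmv}/\ref{thm:eps-h-lim}, \ref{thm:asymp-lim-scheme} and \ref{thm:scheme-wk-con-incomp}, and the identification of the incompressible DMV limit with the classical solution is made via the weak-strong uniqueness principle of Theorem \ref{thm:wk-str-incomp}. Your additional remark on the convergence of the projected initial datum $\Pi_\T\bv_0$ to $\bv_0$ is a sensible (if minor) point that the paper leaves implicit.
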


\begin{proof}
    The proof follows from the analysis presented in Section \ref{sec:lim-analys-scheme}. Performing the limit $h_\T\to 0$ first results in a family of DMV solutions $\lbrace\mcv^\veps\rbrace_{\veps>0}$ to the barotropic Euler system $\peps$ in accordance with Theorem \ref{thm:scheme-wk-con}. Taking the zero Mach limit $\veps\to 0$ subsequently results in the convergence of the DMV solutions $\lbrace\mcv^\veps\rbrace_{\veps>0}$ to a classical solution $\bv$ of the incompressible Euler system $\pepso$ with initial data $\bv_0$ as a consequence of Theorem \ref{thm:asymp-lim-dmv}, cf.\ Theorem \ref{thm:eps-h-lim}.

    On the other hand, first performing the zero Mach limit $\veps\to 0$ yields a velocity stabilized scheme $\pepsho$, approximating the incompressible Euler equations $\pepso$, as given in Theorem \ref{thm:asymp-lim-scheme}. Letting $h_\T\to 0$ thereafter generates a DMV solution $\mcu$ of the incompressible Euler equations, emanating from the initial data $\bv_0$ as given in Theorem \ref{thm:scheme-wk-con-incomp}. 

    In summary, we have a DMV solution and a classical solution of the incompressible Euler system $\pepso$ emanating from the same initial data $\bv_0$. The weak-strong uniqueness principle, cf.\ Theorem \ref{thm:wk-str-incomp}, then asserts that the two solutions necessarily coincide. Therefore, the order in which the limits are performed is irrelevant and hence, the limits commute.
\end{proof}

\section{$\mathcal{K}$-convergence}
\label{sec:K-con}
The convergence results of the numerical solutions obtained in Theorem \ref{thm:scheme-wk-con} and Theorem \ref{thm:scheme-wk-con-incomp} are only in the weak sense. In order to visualize this limit, we apply the concept of $\K$-convergence that turns weakly convergent sequences to strongly convergenct sequences of their C\'{e}saro averages; see \cite{FLM+21a, FLM+21b, FLM20a, Luk20}. Using these techniques, we arrive at the following results in an analogous manner as given in \cite{FLM+21a}. 

\begin{theorem}[$\K$-convergence (compressible case)]
\label{thm:kcon-comp}
    Let the assumptions of Theorem \ref{thm:scheme-wk-con} hold. Then, we obtain the following convergences (passing to another subsequence if needed). 
     \begin{enumerate}[(i)]

        \item Strong convergence of the Ces\`{a}ro averages
        \[
        \begin{split}
        &\frac{1}{N}\sum_{m=1}^N\vrho^\m\to\langle\V;\tvrho\rangle\text{ in }L^q(\odom), \\
        &\frac{1}{N}\sum_{m=1}^N\bm^\m\to\langle\V;\tbm\rangle\text{ in }L^s(\odom;\R^d)
        \end{split}
        \]
        as $N\to\infty$, for any $1\leq q<\infty$ and $1\leq s\leq 2$;
        
        \item $L^q$-convergence of the s-Wasserstein distance  
        \[
        \norm{W_s\Biggl\lbrack\frac{1}{N}\sum_{m=1}^N\delta_{(\vrho^\m(t,x),\textbf{m}^\m(t,x))};\V\Biggr\rbrack}_{L^q(\odom)}\longrightarrow 0
        \]
        as $N\to\infty$, for any $1\leq q\leq s<2$;

        \item $L^1$-convergence of the deviations or the first variance
        \[
        \begin{split}
             &\frac{1}{N}\sum_{m=1}^N\abs{\vrho^\m-\frac{1}{N}\sum_{m=1}^N\vrho^\m}\longrightarrow \langle\V; \abs{\tvrho - \langle\V;\tvrho\rangle}\rangle \text{ in }L^1(\odom), \\
             &\frac{1}{N}\sum_{m=1}^N\abs{\bm^\m-\frac{1}{N}\sum_{m=1}^N\bm^\m}\longrightarrow \langle\V; \abs{\tbm - \langle\V;\tbm\rangle}\rangle\text{ in }L^1(\odom;\R^d)
        \end{split}
        \]
        as $N\to\infty$.
    \end{enumerate}
\end{theorem}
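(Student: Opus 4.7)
The plan is to deduce all three statements from the abstract $\mathcal{K}$-convergence framework developed in \cite{FLM+21a, FLM+21b, FLM20a, Luk20}, which lifts the classical Koml\'{o}s theorem to sequences generating Young measures. The essential inputs are already at hand: Theorem \ref{thm:scheme-wk-con} furnishes the weak-$*$ convergences of $\vrho^\m$ and $\bm^\m$ to the barycenters $\langle\V;\tvrho\rangle$ and $\langle\V;\tbm\rangle$, while Hypothesis \ref{hyp:den-bound} and the global energy estimate of Theorem \ref{thm:disc-glob-ent-est} yield the uniform bounds $(\vrho^\m)_{m\in\N}\subset L^\infty(\odom)$ and $(\bm^\m)_{m\in\N}\subset L^\infty(0,T;L^2(\Omega;\R^d))$.

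For part (i), I would first apply the Koml\'{o}s-type theorem for Young measures to extract, upon passing to a further subsequence (not relabelled), sequences of Ces\'{a}ro averages of $\vrho^\m$ and $\bm^\m$ converging a.e.\ in $\odom$ to the barycenters $\langle\V;\tvrho\rangle$ and $\langle\V;\tbm\rangle$ respectively. The uniform $L^\infty$-bound on $\vrho^\m$ makes its Ces\'{a}ro averages equi-integrable in every $L^q(\odom)$ with $1\leq q<\infty$, so Vitali's convergence theorem upgrades the a.e.\ convergence to strong $L^q$-convergence. For the momentum, the bound $(\bm^\m)\subset L^\infty(0,T;L^2(\Omega;\R^d))$ provides equi-integrability in $L^s(\odom;\R^d)$ for every $1\leq s<2$; the endpoint $s=2$ is recovered via the Banach--Saks property of $L^2(\odom;\R^d)$, which guarantees the existence of a subsequence whose Ces\'{a}ro averages converge strongly in $L^2$.

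For part (ii), I would invoke the general estimate from \cite[Chapter 10]{FLM+21a} relating strong convergence of empirical barycenters together with uniform higher-moment bounds to convergence of the empirical measures in the Wasserstein metric: since the $s$-th moments of $\vrho^\m$ and $\bm^\m$ are uniformly controlled and the Ces\'{a}ro averages of their moments converge strongly by part (i), the $s$-Wasserstein distance between $\frac{1}{N}\sum_{m=1}^N\delta_{(\vrho^\m,\bm^\m)}$ and $\V$ converges to $0$ in $L^q(\odom)$ for every $1\leq q\leq s<2$. Statement (iii) is then obtained by reapplying the Koml\'{o}s-type argument to the auxiliary sequences $\lvert\vrho^\m-\overline{\vrho}^N\rvert$ and $\lvert\bm^\m-\overline{\bm}^N\rvert$, with $\overline{\vrho}^N,\overline{\bm}^N$ the corresponding Ces\'{a}ro averages; these sequences inherit the equi-integrability from the original ones, and the identification of the limits with $\langle\V;\lvert\tvrho-\langle\V;\tvrho\rangle\rvert\rangle$ and $\langle\V;\lvert\tbm-\langle\V;\tbm\rangle\rvert\rangle$ follows from the variance formula for Young measures. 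The main technical obstacle throughout is the treatment of possible concentrations in $(\bm^\m)$ at the endpoint $s=2$, since uniform $L^\infty(0,T;L^2)$-bounds do not a priori yield equi-integrability in $L^2(\odom)$; this is overcome precisely through the Banach--Saks argument indicated above, exactly as in \cite[Chapter 10]{FLM+21a}, after which the entire abstract machinery applies without further essential modification.
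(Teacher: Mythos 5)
Your proposal is correct and follows essentially the same route as the paper, which gives no independent proof of Theorem \ref{thm:kcon-comp} but simply invokes the abstract $\K$-convergence machinery of \cite{FLM+21a, FLM+21b, FLM20a, Luk20} applied to the uniform bounds from Hypothesis \ref{hyp:den-bound} and Theorem \ref{thm:disc-glob-ent-est} together with the Young-measure convergence of Theorem \ref{thm:scheme-wk-con}. Your unpacking of that machinery (Koml\'os/Balder plus Vitali for (i), the Banach--Saks argument for the endpoint $s=2$, the moment-control argument for the Wasserstein convergence in (ii), and the variance identification in (iii)) is exactly the content of the cited results and fills in details the paper leaves implicit.
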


\begin{theorem}[$\K$-convergence (incompressible case)]
\label{thm:kcon-incomp}
    Let the assumptions of Theorem \ref{thm:scheme-wk-con-incomp} hold. Then, we obtain the following convergences (passing to another subsequence if needed).
     \begin{enumerate}[(i)]
        \item Strong convergence of the Ces\`{a}ro average
        \[
        \frac{1}{N}\sum_{m=1}^N\bv^\m\to\langle\U;\tbv\rangle\text{ in }L^s(\odom;\R^d)
        \]
        as $N\to\infty$, for any $1\leq q<\infty$ and $1\leq s\leq 2$;
        
        \item $L^q$-convergence of the s-Wasserstein distance  
        \[
        \norm{W_s\Biggl\lbrack\frac{1}{N}\sum_{m=1}^N\delta_{\textbf{v}^\m(t,x)};\U\Biggr\rbrack}_{L^q(\odom)}\longrightarrow 0
        \]
        as $N\to\infty$, for any $1\leq q\leq s<2$;

        \item $L^1$-convergence of the deviations or the first variance
        \[
        \frac{1}{N}\sum_{m=1}^N\abs{\bv^\m-\frac{1}{N}\sum_{m=1}^N\bv^\m}\longrightarrow \langle\U; \abs{\tbv - \langle\U;\tbv\rangle}\rangle\text{ in }L^1(\odom;\R^d)
        \]
        as $N\to\infty$.
    \end{enumerate}
\end{theorem}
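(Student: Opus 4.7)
The proof runs entirely parallel to the compressible case (Theorem \ref{thm:kcon-comp}) and is an application of the $\mathcal{K}$-convergence machinery developed by Balder, Feireisl and collaborators. The only input needed from Section \ref{sec:lim-analys-scheme} is that, thanks to Theorem \ref{thm:scheme-wk-con-incomp} and the global energy estimate \eqref{eqn:disc-glob-ent-incomp}, the sequence $(\bv^\m)_{m\in\N}$ is uniformly bounded in $L^\infty(0,T;L^2(\Omega;\R^d))$ and generates the Young measure $\mcu=\lbrace\U\rbrace_{(t,x)\in\odom}$, i.e.\ $\bv^\m\weakstar\langle\U;\tbv\rangle$ in $L^\infty(0,T;L^2(\Omega;\R^d))$. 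Note also that $|\bv^\m|^s$ is equi-integrable on $\odom$ for every $1\le s<2$ by the de la Vall\'ee Poussin criterion (using the uniform $L^\infty(0,T;L^2)$ bound).

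For item (i), I would invoke a Koml\'os-type theorem for sequences generating a Young measure (cf.\ \cite{Bal00, Kom67} and the version stated in \cite{FLM+21a}): after passing to a subsequence, the Ces\`{a}ro averages $N^{-1}\sum_{m=1}^N\bv^\m$ converge pointwise a.e.\ in $\odom$ to the barycenter $\langle\U;\tbv\rangle$. Combining the pointwise convergence with the equi-integrability of $|\bv^\m|^s$ upgrades this to strong $L^s(\odom;\R^d)$ convergence for every $1\le s<2$, and the extension to $s=2$ follows because the Ces\`{a}ro averages inherit the uniform $L^2$ bound and $\langle\U;\tbv\rangle\in L^\infty(0,T;L^2(\Omega;\R^d))$ by Jensen's inequality applied to $\U$.

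For item (ii), once it is known that $(\bv^\m)$ generates $\U$, the Koml\'os step above implies that the empirical averages $N^{-1}\sum_{m=1}^N\delta_{\bv^\m(t,x)}$ converge narrowly to $\U$ for a.e.\ $(t,x)\in\odom$; together with the uniform moment bound, this gives convergence in the $s$-Wasserstein distance pointwise for any $1\le s<2$, and Vitali's convergence theorem (applied with the $L^q$-dominating envelope coming from the uniform moment estimate) lifts this to $L^q(\odom)$ convergence for any $1\le q\le s<2$. For item (iii), I would decompose
\[
\bv^\m - \frac{1}{N}\sum_{k=1}^N \bv^{(k)} = \bigl(\bv^\m - \langle\U;\tbv\rangle\bigr) + \bigl(\langle\U;\tbv\rangle - \frac{1}{N}\sum_{k=1}^N \bv^{(k)}\bigr),
\]
apply (i) to control the second term in $L^1$, and identify the $L^1$-limit of $N^{-1}\sum_{m=1}^N |\bv^\m - \langle\U;\tbv\rangle|$ as $\langle\U;|\tbv-\langle\U;\tbv\rangle|\rangle$ via the Young measure representation of weak $L^1$ limits of continuous compositions, again using equi-integrability of $|\bv^\m|$.

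The main obstacle is the subsequence extraction in the Koml\'os step, which is why the statement says ``passing to another subsequence if needed''; once that selection is performed, the rest is a routine application of Vitali and of the representation formula for Young-measure-generated weak limits. A subsidiary technical point is that the $L^q(\odom)$ statement in (ii) requires equi-integrability in $(t,x)$ of $W_s[N^{-1}\sum_m\delta_{\bv^\m};\U]^q$, which we obtain from the uniform moment bound together with the elementary estimate $W_s(\mu,\nu)^s\le C\bigl(\int|\xi|^s\,\mathrm{d}\mu + \int|\xi|^s\,\mathrm{d}\nu\bigr)$.
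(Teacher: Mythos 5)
Your proposal is correct and follows essentially the same route as the paper, which offers no proof beyond deferring to the $\K$-convergence machinery of \cite{FLM+21a} (Koml\'os-type a.e.\ convergence of Ces\`{a}ro averages for Young-measure-generating sequences, upgraded by equi-integrability and Vitali, plus the moment-controlled Wasserstein estimate) --- exactly the steps you spell out, with the needed inputs being the uniform bound \eqref{eqn:disc-glob-ent-incomp} and the generation of $\mcu$ from Theorem \ref{thm:scheme-wk-con-incomp}. The only soft spot is the endpoint $s=2$ in (i): a.e.\ convergence of the averages together with a uniform $L^2$ bound yields strong convergence only for $s<2$ unless $\abs{\bv^\m}^2$ is equi-integrable, but the paper asserts the same endpoint without further justification, so this is not a gap relative to the paper's own argument.
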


\section{Numerical Results}
\label{sec:num-exp}

In this section, we report the results of the numerical case study conducted. For the scheme $\pepsh$, in accordance with Theorem \ref{eqn:disc-loc-ent-ineq}, note that the timestep $\delt$ needs to satisfy 
\begin{equation}
\label{eqn:time-step-comp}
\frac{\delt}{\absk}\sum_{\substack{\sink \\ \sigma =  K\vert L}}\abssig\frac{\vrho^{n+1}_L(-(\wsk^n)^{-})}{\vrho^{n+1}_K}\leq\half
\end{equation}
and $\eta>(\rk^{n+1})^{-1}$. Both conditions are implicit and the former involves the flux as well. Consequently, they are difficult to implement in practice. However, we note that if the following implicit restriction
\[
\frac{\delt}{\absk}\sum_{\substack{\sink \\ \sigma =  K\vert L}}\abssig\frac{\abs{\flx(\vrho^{n+1}, \bw^n)}}{\vrho^{n+1}_K}\leq\half,
\]
holds, then
\[
\frac{3}{2}\rk^{n+1}-\rk^n \geq \rk^{n+1}-\rk^n+\frac{\delt}{\absk}\sum_{\substack{\sink \\ \sigma = K\vert L}}\abssig\abs{\flx(\vrho^{n+1}, \bw^n)}\geq 0.
\]
Therefore, we get that 
\[
\frac{3}{2\rk^n} \geq \frac{1}{\rk^{n+1}}
\]
and as a consequence, choosing $\eta>\dfrac{3}{2\rk^n}$ satisfies the necessary condition.

We now derive a sufficient timestep restriction which is easy to implement in practice for the scheme $\pepsh$. 

\begin{proposition}
\label{prop:suff-time-step-comp}
    For each $\sigma = K\vert L\in\E(K)$, suppose the following holds:
    \begin{equation}
    \label{eqn:suff-time-step-comp}
        \delt\max\biggl\lbrace\frac{\abs{\partial K}}{\abs{K}},\frac{\abs{\partial L}}{\abs{L}}\biggr\rbrace\biggl\lbrace\abs{\ldblbrace\bu^n\rdblbrace_\sigma} +\sqrt{\frac{\eta}{\veps^2}\abs{\ldblbrace\gradt p^{n+1}\rdblbrace_{\sigma}}}\biggr\rbrace\leq\min\biggl\lbrace 1, \frac{1}{3}\frac{\min\lbrace\rk^n, \vrho^n_L\rbrace}{\max\lbrace\vrho^{n+1}_K,\vrho^{n+1}_L\rbrace} \biggr\rbrace,
    \end{equation}
    where $\abs{\partial K} = \sum_{\sink}\abssig$. Then, $\delt$ satisfies \eqref{eqn:time-step-comp}.
\end{proposition}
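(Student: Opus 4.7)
The plan is to reduce the implicit inequality \eqref{eqn:time-step-comp} to face-by-face explicit estimates via (i) a pointwise bound on the upwind-biased parts of the stabilized velocity and (ii) a squaring identity tailored to the unusual square-root structure of \eqref{eqn:suff-time-step-comp}. Using the explicit form $\delta\bu^{n+1}_K=(\eta\delt/\veps^2)(\gradt p^{n+1})_K$ together with \eqref{eqn:pos-neg-stab-velo}, I would first establish the pointwise bounds
\[
(w^n_{\sk})^+,\;-(w^n_{\sk})^-\leq\abs{\ldblbrace\bu^n\rdblbrace_\sigma} + \frac{\eta\delt}{\veps^2}\abs{\ldblbrace\gradt p^{n+1}\rdblbrace_\sigma}.
\]
Setting $U_1:=\abs{\ldblbrace\bu^n\rdblbrace_\sigma}$, $U_2:=\sqrt{(\eta/\veps^2)\abs{\ldblbrace\gradt p^{n+1}\rdblbrace_\sigma}}$, $C_\sigma:=\max\{\abs{\partial K}/\absk,\abs{\partial L}/\abs{L}\}$, $m^C_\sigma:=\min\{\rk^n,\vrho^n_L\}$ and $M^C_\sigma:=\max\{\vrho^{n+1}_K,\vrho^{n+1}_L\}$, the sufficient condition \eqref{eqn:suff-time-step-comp} reads $\delt C_\sigma(U_1+U_2)\leq\min\{1,m^C_\sigma/(3M^C_\sigma)\}$.

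The crucial algebraic identity $\delt\cdot(\eta\delt/\veps^2)\abs{\ldblbrace\gradt p^{n+1}\rdblbrace_\sigma}=(\delt U_2)^2$ converts the stiff, $\delt$-nonlinear pressure contribution into the square of a quantity directly controlled by the hypothesis. Since $\delt C_\sigma U_2\leq 1$, one power can be absorbed to yield $(\delt U_2)^2\leq\delt U_2/C_\sigma$; combined with $\delt U_1\leq\min\{1,m^C_\sigma/(3M^C_\sigma)\}/C_\sigma$, this produces
\[
\delt\cdot(-(w^n_{\sk})^-)\leq\delt U_1+(\delt U_2)^2\leq\frac{\min\{1,m^C_\sigma/(3M^C_\sigma)\}}{C_\sigma}\Bigl(1+\frac{1}{C_\sigma}\Bigr),
\]
with the same estimate valid for $\delt(w^n_{\sk})^+$.

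To conclude, I would invoke the discrete mass balance \eqref{eqn:disc-mss-bal} to recast \eqref{eqn:time-step-comp} in the equivalent form
\[
\frac{\delt}{\absk}\sum_{\sink}\abssig\bigl[\vrho^{n+1}_L(-(w^n_{\sk})^-)+\vrho^{n+1}_K(w^n_{\sk})^+\bigr]\leq\rk^n,
\]
which bypasses the awkward quotient $\vrho^{n+1}_L/\vrho^{n+1}_K$ that would otherwise require an $\veps$-uniform lower bound on $\vrho^{n+1}_K$. Bounding $\vrho^{n+1}_K,\vrho^{n+1}_L\leq M^C_\sigma$, using $M^C_\sigma\cdot\min\{1,m^C_\sigma/(3M^C_\sigma)\}\leq m^C_\sigma/3\leq\rk^n/3$, and the summation identity $\sum_{\sink}\abssig/\absk=\abs{\partial K}/\absk\leq C_\sigma$, one obtains the desired inequality; the constant $1/3$ in \eqref{eqn:suff-time-step-comp} is tuned precisely to absorb both the face-summation factor and the residual $(1+1/C_\sigma)$ loss inherited from the squaring step.

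The principal obstacle is the $\delt^2$-dependence of the downwind flux produced by the stabilization $\delta\bu^{n+1}\propto\delt\,\gradt p^{n+1}$: this quadratic-in-$\delt$ contribution is exactly the reason \eqref{eqn:suff-time-step-comp} features the square root $\sqrt{(\eta/\veps^2)\abs{\ldblbrace\gradt p^{n+1}\rdblbrace_\sigma}}$ rather than the full pressure-gradient term, and the squaring step is the only place where the chosen form of the hypothesis can be inverted. A secondary technical point is the treatment of $\vrho^{n+1}_L/\vrho^{n+1}_K$, sidestepped by passing through the mass balance instead of invoking Hypothesis \ref{hyp:den-bound}, which would introduce $\veps$-dependent constants incompatible with the uniform-in-$\veps$ analysis of Section \ref{sec:lim-analys-scheme}.
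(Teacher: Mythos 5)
The paper does not actually prove this proposition: its ``proof'' is a one-line deferral to \cite[Proposition 3.2]{CDV17}, so there is no in-paper argument to compare against. Your reconstruction follows what is essentially the strategy of that reference (and of the paper's own proof of the incompressible analogue, Proposition \ref{prop:suff-time-step-incomp}), and the three key moves are all sound: the pointwise bound $(w^n_{\sk})^{+},\,-(w^n_{\sk})^{-}\leq\abs{\ldblbrace\bu^n\rdblbrace_\sigma}+\tfrac{\eta\delt}{\veps^2}\abs{\ldblbrace\gradt p^{n+1}\rdblbrace_\sigma}$; the use of the ``$\leq 1$'' branch of the hypothesis to linearize the $\delt^2$ pressure contribution via $(\delt U_2)^2\leq\delt U_2/C_\sigma$; and, most importantly, the substitution of the discrete mass balance \eqref{eqn:disc-mss-bal} to recast \eqref{eqn:time-step-comp} as $\tfrac{\delt}{\absk}\sum_{\sink}\abssig\bigl[\vrho^{n+1}_L(-(w^n_{\sk})^{-})+\vrho^{n+1}_K(w^n_{\sk})^{+}\bigr]\leq\rk^n$, which I verified is an exact equivalence and which is precisely what produces the ratio $\min\lbrace\rk^n,\vrho^n_L\rbrace/\max\lbrace\vrho^{n+1}_K,\vrho^{n+1}_L\rbrace$ on the right of \eqref{eqn:suff-time-step-comp}.

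The one place where your bookkeeping does not quite close as written is the final absorption step. Bounding $-(w^n_{\sk})^{-}$ and $(w^n_{\sk})^{+}$ separately costs a factor $2$, and the squaring step leaves a residual $(1+1/C_\sigma)$, so the chain ends at $\tfrac{2}{3}\rk^n\bigl(1+\max_\sigma 1/C_\sigma\bigr)$ rather than $\rk^n$; the constant $1/3$ is therefore \emph{not} tuned to absorb both losses unless $C_\sigma=\max\lbrace\abs{\partial K}/\absk,\abs{\partial L}/\abs{L}\rbrace\geq 2$. That inequality is automatic for rectangular cells of the unit torus (where $\abs{\partial K}/\absk\geq 2d/h_K$), and the paper's own Proposition \ref{prop:suff-time-step-incomp} makes the same tacit geometric assumption when it replaces $\sqrt{\abs{\partial K}/\absk}$ by $\abs{\partial K}/\absk$ inside the square root, so this is a mild, standard omission rather than a fatal one — but it should be stated. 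The factor $2$ can be removed outright by observing that \eqref{eqn:pos-neg-stab-velo} gives the exact identity $(w^n_{\sk})^{+}-(w^n_{\sk})^{-}=\abs{u^n_{\sk}}+\abs{\delta u^{n+1}_{\sk}}$, so the two upwind contributions together are bounded by a single copy of $U_1+\delt U_2^2$; this relaxes the needed mesh condition to $C_\sigma\geq 1/2$.
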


\begin{proof}
    The proof follows exactly as given in \cite[Proposition 3.2]{CDV17}.
\end{proof}

For the limit scheme $\pepsho$, the timestep has to be chosen according to 
\begin{equation}
\label{eqn:time-step-incomp}
    \frac{\delt}{\absk}\sum_{\substack{\sink \\ \sigma = K\vert L}}\abssig(-(\wsk^n)^{-}) \leq \half,
\end{equation}
as required in Theorem \ref{thm:disc-loc-ent-incomp}. Analogously, we can derive a sufficient condition as follows.

\begin{proposition}
\label{prop:suff-time-step-incomp}
    For each $\sigma = K\vert L\in\E(K)$, suppose the following holds:
    \begin{equation}
    \label{eqn:suff-time-step-incomp}
        \delt\max\biggl\lbrace\frac{\abs{\partial K}}{\abs{K}},\frac{\abs{\partial L}}{\abs{L}}\biggr\rbrace\biggl\lbrace\abs{\ldblbrace\bv^n\rdblbrace_\sigma} +\sqrt{\eta\abs{\ldblbrace\gradt\pi^{n+1}\rdblbrace_{\sigma}}}\biggr\rbrace\leq \beta_d,
    \end{equation}
    where $\beta_d = 1/8$ if $d=2$ and $\beta_d = 1/12$ if $d=3$. Then, $\delt$ satisfies $\eqref{eqn:time-step-incomp}$.
\end{proposition}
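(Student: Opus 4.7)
The plan is to verify that \eqref{eqn:suff-time-step-incomp} implies \eqref{eqn:time-step-incomp} face-by-face, following the strategy used in \cite[Proposition 3.2]{CDV17} for the compressible analogue and adapting it to the incompressible setting by dropping the density-dependent factors. First I would unfold the stabilized velocity $\bw^n = \bv^n - \delta\bv^{n+1}$ with the correction $\delta\bv^{n+1}_K = \eta\delt(\gradt\pi^{n+1})_K$, so that on each internal face $\sigma = K\vert L$,
\[
w^n_{\sk} = \ldblbrace\bv^n\rdblbrace_\sigma\cdot\nuk - \eta\delt\,\ldblbrace\gradt\pi^{n+1}\rdblbrace_\sigma\cdot\nuk,
\]
and then use $-(w^n_{\sk})^{-}\leq\abs{w^n_{\sk}}$ together with the triangle inequality to obtain $\abs{w^n_{\sk}}\leq\abs{\ldblbrace\bv^n\rdblbrace_\sigma} + \eta\delt\,\abs{\ldblbrace\gradt\pi^{n+1}\rdblbrace_\sigma}$.

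Next, I would exploit the hypothesis \eqref{eqn:suff-time-step-incomp}: writing $M_\sigma = \max\bigl\{\abs{\partial K}/\absk,\abs{\partial L}/\abs{L}\bigr\}$, it separately yields
\[
\delt\,M_\sigma\,\abs{\ldblbrace\bv^n\rdblbrace_\sigma}\leq\beta_d \quad\text{and}\quad \delt\,M_\sigma\,\sqrt{\eta\,\abs{\ldblbrace\gradt\pi^{n+1}\rdblbrace_\sigma}}\leq\beta_d.
\]
Squaring the second estimate produces $\delt^{2}M_\sigma^{2}\eta\,\abs{\ldblbrace\gradt\pi^{n+1}\rdblbrace_\sigma}\leq\beta_d^{2}$, which after dividing by $M_\sigma$ gives the compatibility $\delt\,M_\sigma\cdot\eta\delt\,\abs{\ldblbrace\gradt\pi^{n+1}\rdblbrace_\sigma}\leq\beta_d^{2}/M_\sigma$. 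Combining with the first estimate yields the face-by-face control $\delt\,M_\sigma\,\abs{w^n_{\sk}}\leq\beta_d\bigl(1+\beta_d/M_\sigma\bigr)$.

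Finally, since $\abssig/\absk\leq M_K \leq M_\sigma$ and a structured hypercube control volume has exactly $2d$ faces with $\sum_{\sink}\abssig/\absk=M_K$, summing the previous bound over $\sigma\in\E(K)$ and multiplying by $\delt/\absk$ produces an estimate of order $2d\beta_d$ on the left-hand side of \eqref{eqn:time-step-incomp}. The precise calibration $\beta_d=1/(4d)$ is chosen so that the leading constant $2d\beta_d$ equals $1/2$, which is the desired bound, with the lower-order remainder absorbed via the mesh regularity \eqref{eqn:mesh-regul}--\eqref{eqn:non-flat-cond}.

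The principal obstacle is the mismatched scaling between the sufficient and the necessary conditions: the hypothesis controls $\sqrt{\eta\,\abs{\gradt\pi^{n+1}}}$ linearly in $\delt$, whereas $\abs{w^n_{\sk}}$ involves $\eta\delt\,\abs{\gradt\pi^{n+1}}$. The square root in \eqref{eqn:suff-time-step-incomp} is calibrated precisely so that a single squaring step transfers the control to the quantity appearing in $\abs{w^n_{\sk}}$; beyond this algebraic manipulation, the remainder of the proof is bookkeeping together with counting the $2d$ faces of a structured-mesh control volume, in parallel with the argument of Proposition \ref{prop:suff-time-step-comp}.
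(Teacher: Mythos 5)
Your overall strategy is the same as the paper's: bound $-(w^n_{\sk})^{-}\leq\abs{\ldblbrace\bv^n\rdblbrace_\sigma}+\eta\delt\abs{\ldblbrace\gradt\pi^{n+1}\rdblbrace_\sigma}$, reduce \eqref{eqn:time-step-incomp} to a per-face condition with $\beta_d=1/(4d)$ to account for the $2d$ faces of a structured control volume, and use the square root in \eqref{eqn:suff-time-step-incomp} to tame the term that is quadratic in $\delt$. However, your final combination step does not close. Writing $A=\delt M_\sigma\abs{\ldblbrace\bv^n\rdblbrace_\sigma}$ and $B=\delt M_\sigma\sqrt{\eta\abs{\ldblbrace\gradt\pi^{n+1}\rdblbrace_\sigma}}$, you split the hypothesis $A+B\leq\beta_d$ into $A\leq\beta_d$ and $B\leq\beta_d$, square the second and divide by $M_\sigma$, and add, obtaining $\delt M_\sigma\bigl(\abs{\ldblbrace\bv^n\rdblbrace_\sigma}+\eta\delt\abs{\ldblbrace\gradt\pi^{n+1}\rdblbrace_\sigma}\bigr)\leq\beta_d+\beta_d^2/M_\sigma$. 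This is \emph{strictly larger} than $\beta_d$, so summing over the $2d$ faces yields $\tfrac12\bigl(1+\beta_d/M_\sigma\bigr)>\tfrac12$, and \eqref{eqn:time-step-incomp} is not established. Mesh regularity cannot ``absorb'' a strictly positive excess into a bound that must be exactly $\leq\tfrac12$; as written the argument proves a weaker inequality than the one required.

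The repair is short and is essentially what the paper's algebraic observation ($0\leq ax^2+bx\leq1$ implies $ax^2+bx\leq x(\sqrt a+b)$) accomplishes: do not discard the hypothesis that the \emph{sum} $A+B\leq\beta_d$. Since $B\leq\beta_d<1\leq M_\sigma$ (note $M_\sigma\geq\abs{\partial K}/\absk\geq 2d/h_K\geq1$ for any reasonable mesh --- an assumption the paper also uses implicitly), one has $B^2/M_\sigma\leq B$, and therefore
\begin{equation*}
\delt M_\sigma\bigl(\abs{\ldblbrace\bv^n\rdblbrace_\sigma}+\eta\delt\abs{\ldblbrace\gradt\pi^{n+1}\rdblbrace_\sigma}\bigr)=A+\frac{B^2}{M_\sigma}\leq A+B\leq\beta_d,
\end{equation*}
after which summing over the $2d$ faces gives exactly $\tfrac12$ with no remainder. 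With this one-line correction your proof coincides in substance with the paper's.
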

\begin{proof}
    Note that 
    \[
    \frac{\delt}{\absk}\sum_{\substack{\sink \\ \sigma = K\vert L}}\abssig(-(\wsk^n)^{-})  \leq \frac{\delt}{\absk}\sum_{\substack{\sink \\ \sigma = K\vert L}}\abssig \bigl(\abs{\ldblbrace\bv^n\rdblbrace_\sigma} + \eta\delt\abs{\ldblbrace\gradt\pi^{n+1}\rdblbrace_{\sigma}}\bigr)
    \]
    Therefore, the required condition will hold true if,
    \[
    \frac{\delt}{\absk}\sum_{\substack{\sink \\ \sigma = K\vert L}}\abssig \bigl(\abs{\ldblbrace\bv^n\rdblbrace_\sigma} + \eta\delt\abs{\ldblbrace\gradt\pi^{n+1}\rdblbrace_{\sigma}}\bigr) \leq \half
    \]
    Depending on whether $d=2$ or $3$, the number of edges $\sigma$ for a control volume $K$ will be either $4$ or $6$. Therefore, the above condition will hold true when for each $\sigma = K\vert L\in\E(K)$,
    \[
    \frac{\abs{\partial K}}{\absk}\biggl\lbrace\delt\abs{\ldblbrace\bv^n\rdblbrace_\sigma} + \eta\delt^2\abs{\ldblbrace\gradt\pi^{n+1}\rdblbrace_{\sigma}}\biggr\rbrace\leq\beta_d,
    \]
    where $\beta_d = 1/8$ if $d=2$ and $\beta_d = 1/12$ if $d=3$.

    Now, if $0\leq ax^2+bx\leq 1$ for $a,b,x\geq 0$, then $ax^2+bx\leq x(\sqrt{a}+b)$. Given that $\beta_d < 1$ when $d = 2$ or 3, we can deduce that for each $\sigma = K\vert L$, assuming a condition of the form
    \[ 
    \delt\max\biggl\lbrace\frac{\abs{\partial K}}{\abs{K}},\frac{\abs{\partial L}}{\abs{L}}\biggr\rbrace\biggl\lbrace\abs{\ldblbrace\bv^n\rdblbrace_\sigma} +\sqrt{\eta\abs{\ldblbrace\gradt\pi^{n+1}\rdblbrace_{\sigma}}}\biggr\rbrace\leq \beta_d,
    \]
    guarantees $\delt$ will satisfy \eqref{eqn:time-step-incomp}.
\end{proof}

\begin{remark}
    The sufficient conditions \eqref{eqn:suff-time-step-comp} and \eqref{eqn:suff-time-step-incomp} are still implicit in nature. Thus, in our computations, we have implemented them in an explicit manner b.y imposing them at time $t^n$.
\end{remark}

\subsection*{Convergence Test}
\label{subsec:exp}
We consider the domain $\Omega = \lbrack 0, 1\rbrack\times\lbrack 0,1\rbrack$ with periodic boundary conditions, which is divided into a uniform grid of $k\times k$ cells with mesh size $h_k = 1/k$. In what follows, we consider $k = 32\cdot 2^j$ with $j = 0,\dots, 5$. 

Let $U_k = U_{h_k}$ denote the numerical solution computed on the mesh with size $h_k$. Hence, $U_k = \lbrack\vrho_k,m_{1,k},m_{2,k}\rbrack$ in the compressible case or $U_k = \lbrack v_{1,k}, v_{2,k}\rbrack$ in the incompressible case. Let $\overline{U}_k$ and $\Tilde{U}_k$ respectively denote the Ces\`{a}ro average of the numerical solutions and their first variance, i.e. 
\[
\overline{U}_k = \frac{1}{k}\sum_{j=1}^k U_j,\,\,\,\,\,\Tilde{U}_k = \frac{1}{k}\sum_{j=1}^k\abs{U_j-\overline{U}_k}.
\]

Let $U_{ref}$ denote the reference solution computed on the finest possible grid. In our case, $U_{ref} = U_{1024}$ which is computed on a grid with $1024\times 1024$ cells. Analogous to \cite{FLM+21a, FLM+21b}, we compute the four errors 
\begin{equation}
\label{eqn:dmv-errors}
E_1 = \|U_k-U_{ref}\|,\, E_2 = \|\overline{U}_k-\overline{U}_{ref}\|,\, E_3 = \|\Tilde{U}_k-\Tilde{U}_{ref}\|,\, E_4 = \|W_1(\overline{\mcv}_{t,x}^k,\overline{\mcv}_{t,x}^{ref})\|.
\end{equation}
Here, $\norm{\cdot}$ denotes the $L^1$-norm and $\overline{\mcv}^k_{t,x} = \frac{1}{k}\sum_{j=1}^k\delta_{U_j(t,x)}$,  where $\delta_{U_j(t,x)}$ denotes the Dirac measure centered at $U_j(t,x)$ for $(t,x)\in\odom$. $W_1$ is the 1-Wasserstein distance and we compute it using the {\texttt{wasserstein\textunderscore distance}} function available in the {\texttt{scipy.stats}} package of the open source library SciPy, cf.\ \cite{JOP01}.

We consider the following initial data from \cite{DT11}.
\begin{align}
    \vrho_\veps(0,x_1,x_2) &= 1 + \veps^2\sin^2(2\pi(x_1+x_2)), \label{eqn:deg-tg-den}\\
    m_{1,\veps}(0,x_1,x_2) &= \sin(2\pi(x_1-x_2)) + \veps^2\sin(2\pi(x_1+x_2)), \label{eqn:deg-tg-momx}\\
    m_{2,\veps}(0,x_1,x_2) &= \sin(2\pi(x_1-x_2)) + \veps^2\cos(2\pi(x_1+x_2)),\label{eqn:deg-tg-momy}
\end{align}
where $\bm_\veps = (m_{1,\veps}, m_{2,\veps})$.

The final time is set as $T = 0.02$ with pressure law $p(\vrho) = \vrho^2$ and we consider $\veps = 10^{-i}$, $i=0,\cdots,4$.
\begin{figure}[htpb]
    \centering
    \includegraphics[height = 0.18\textheight]{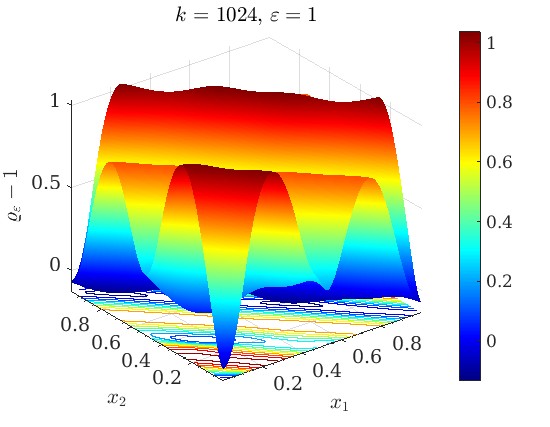}
    \includegraphics[height = 0.18\textheight]{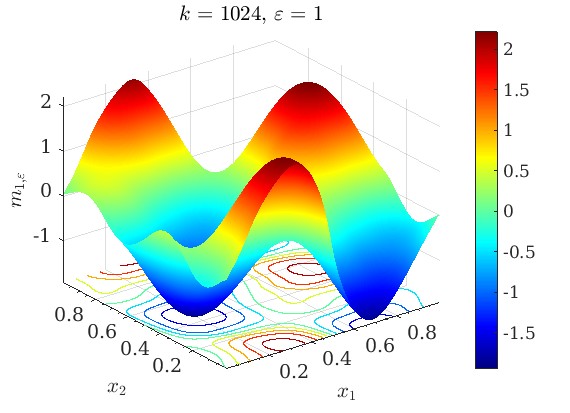}
    \includegraphics[height = 0.18\textheight]{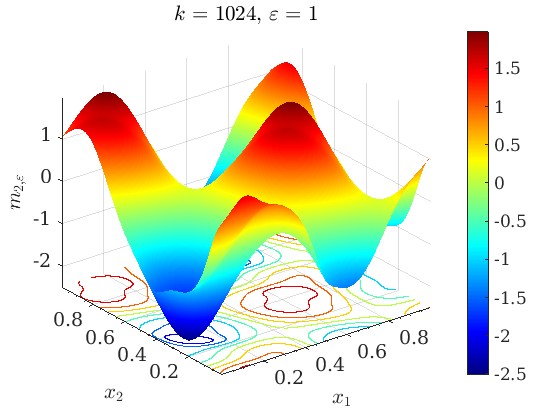}
    \includegraphics[height = 0.18\textheight]{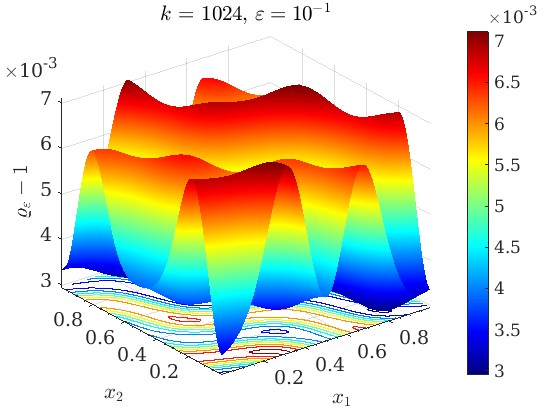}
    \includegraphics[height = 0.18\textheight]{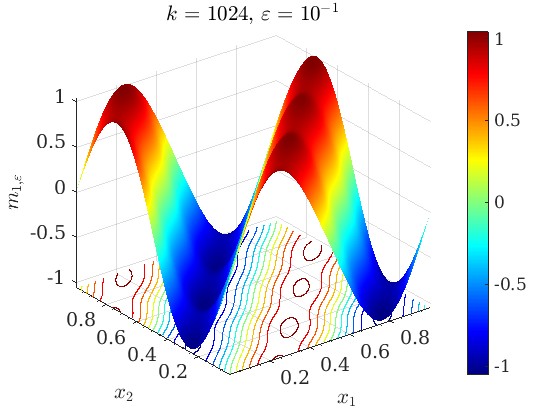}
    \includegraphics[height = 0.18\textheight]{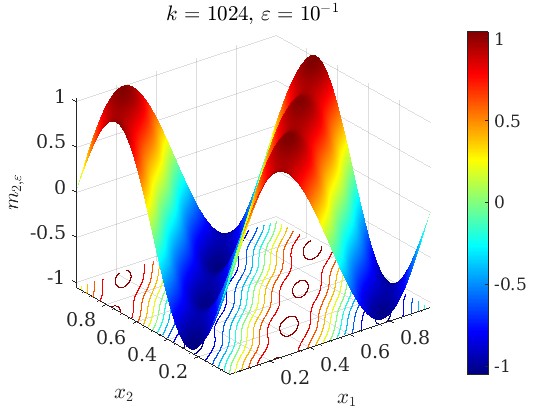}
    \includegraphics[height = 0.18\textheight]{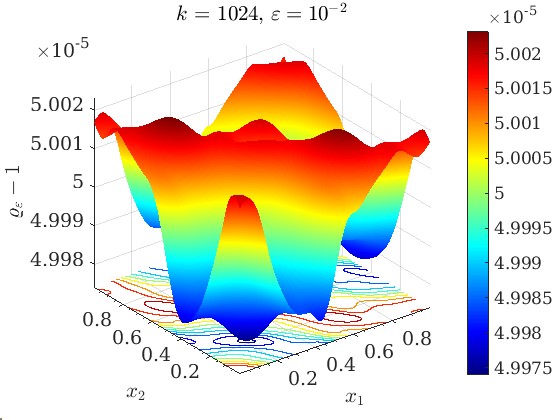}
    \includegraphics[height = 0.18\textheight]{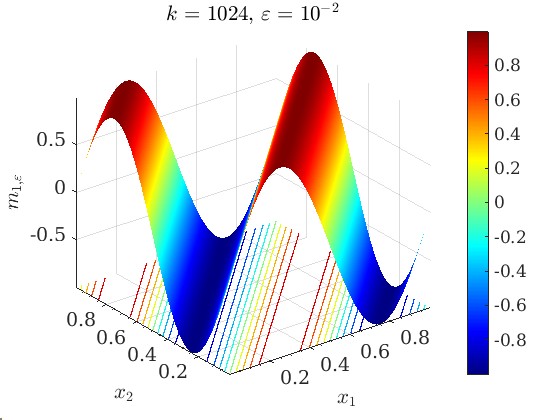}
    \includegraphics[height = 0.18\textheight]{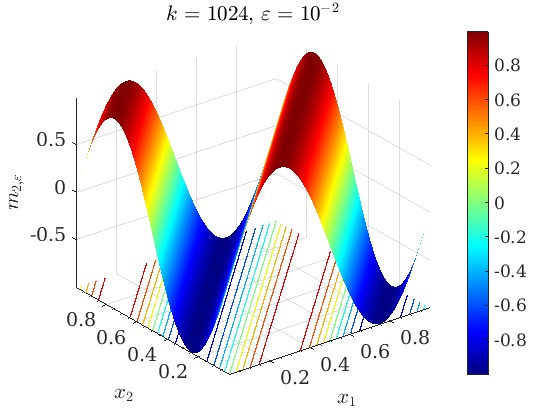}
    \includegraphics[height = 0.18\textheight]{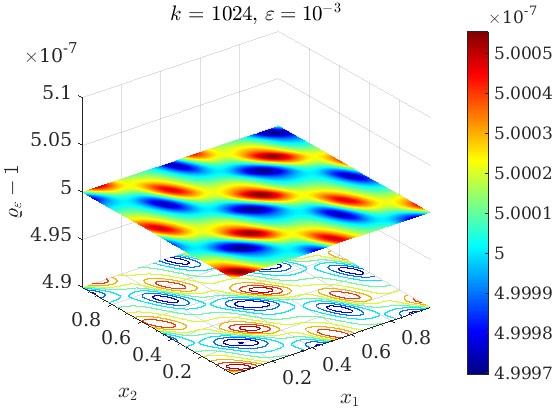}
    \includegraphics[height = 0.18\textheight]{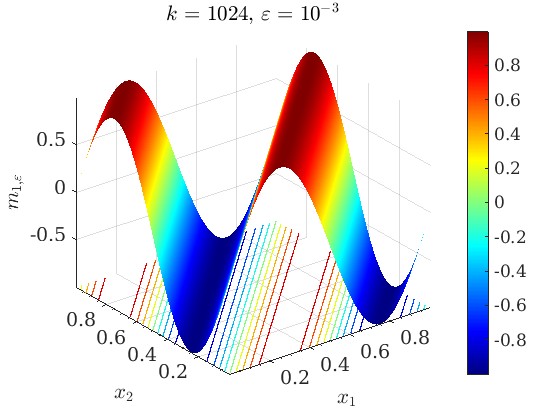}
    \includegraphics[height = 0.18\textheight]{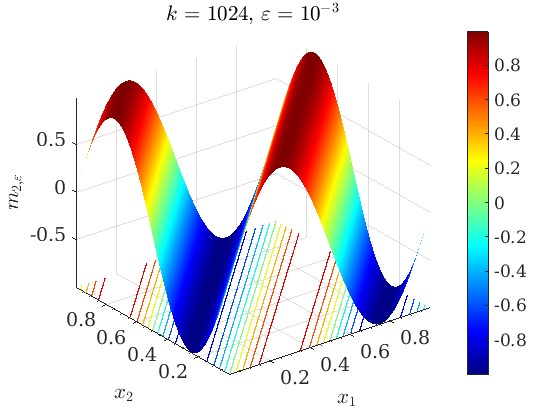}
    \includegraphics[height = 0.18\textheight]{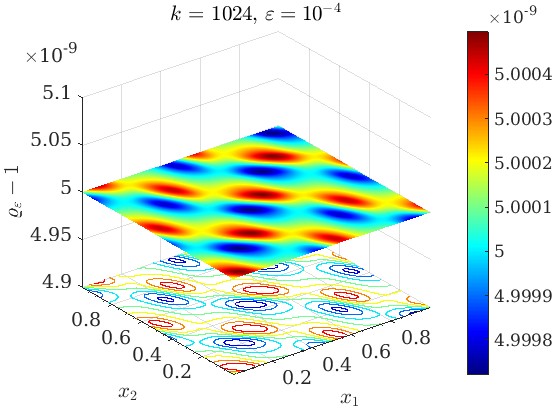}
    \includegraphics[height = 0.18\textheight]{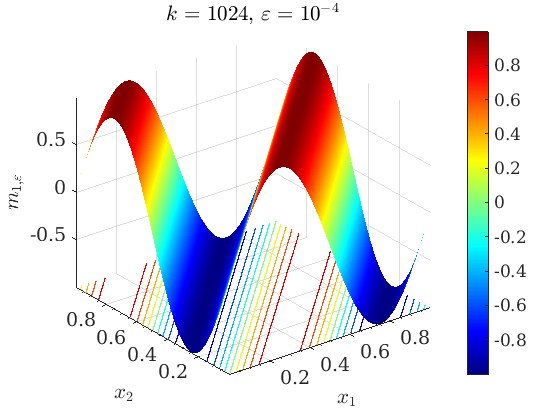}
    \includegraphics[height = 0.18\textheight]{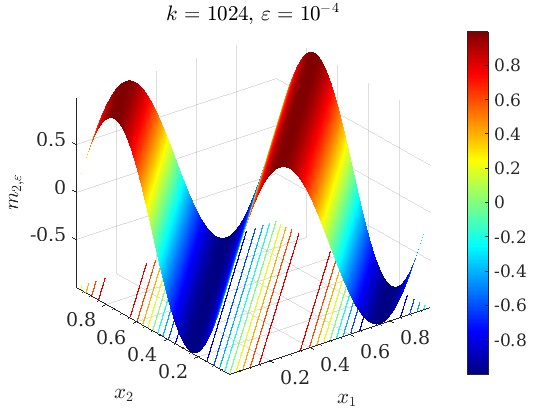}
    \caption{Deviations of density from 1 and momentum profiles when $k=1024$ for $\veps = 1, 10^{-1},\dots, 10^{-4}$.}
    \label{fig:profiles1}
\end{figure}

In Figure \ref{fig:profiles1}, the profiles of the deviation of the density $\vrho_\veps$ from the constant value $1$ and the momentum $\bm_\veps = (m_{1,\veps}, m_{2,\veps})$ at the final time $T=0.02$, computed using the scheme \eqref{eqn:disc-mss-bal}-\eqref{eqn:disc-mom-bal}, are presented for $\veps = 1,10^{-1},\dots,10^{-4}$. One can observe that the magnitude of the deviation is proportional to $\veps^2$ and in the cases of $\veps = 10^{-3}$ and $\veps = 10^{-4}$, the density is near constant. Also, one can clearly observe the symmetric nature of the momentum components, which is along the same lines as reported in \cite{DT11}. In Figure \ref{fig:div-prof}, we present the discrete divergence of the velocity field $\bu_\veps$ at final time $T=0.02$ when $k = 1024$ and $\veps = 10^{-4}$, and we note that the divergence is of the order $10^{-5}$. In what follows, we divide the case study into two parts, Part A and Part B. In Part A, we present the various results obtained using the solutions computed by the velocity stabilized semi-implicit scheme \eqref{eqn:disc-mss-bal}-\eqref{eqn:disc-mom-bal}. On the other hand, we present the results obtained using the incompressible limit s\label{eqn:deg-tg-ini-den}cheme \eqref{eqn:disc-incomp-div-free}-\eqref{eqn:disc-incomp-mom} in Part B.

\begin{figure}[htpb]
    \centering
    \includegraphics[height = 0.25\textheight]{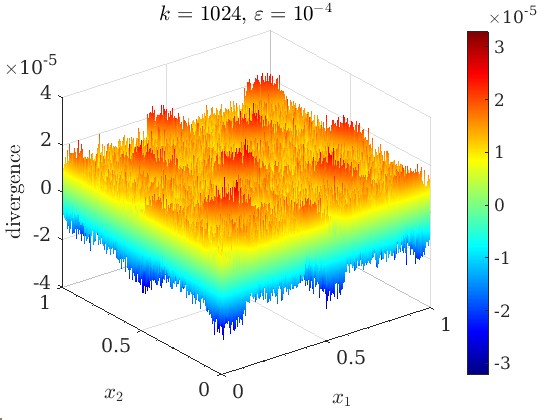}
    \caption{Divergence of the velocity field when $k = 1024$ and $\veps = 10^{-4}$ at final time $T = 0.02$.}
    \label{fig:div-prof}
\end{figure}

\textbf{Part A :\ }
First, we present the asymptotic convergence results pertaining to the limit $\veps\to 0$ on a fixed grid of size 128 $\times 128$. As observed in Figure \ref{fig:profiles1}, $\vrho_\veps$ is nearly 1 for small values of $\veps$. We now wish to determine the rate of convergence of $\vrho_\veps$ to 1 as $\veps\to 0$. We recall from Lemma \ref{lem:den-conv} that $\norm{\vrho_\veps - 1}_{L^\infty(0,T; L^\gamma(\Omega))} = O(\veps)$, as $\gamma = 2$ in this case. In order to verify this claim, we first present the plots of $\norm{\vrho_\veps(t,\cdot) - 1}_{L^\gamma(\Omega)}$ over time for different values of $\veps$ and then, we present the plot of $\norm{\vrho_\veps - 1}_{L^\infty(0,T;L^\gamma(\Omega))}$ versus $\veps$ in Figure \ref{fig:lgamma-norm-den}. 
\begin{figure}[htpb]
    \centering
    \includegraphics[height = 0.25\textheight]{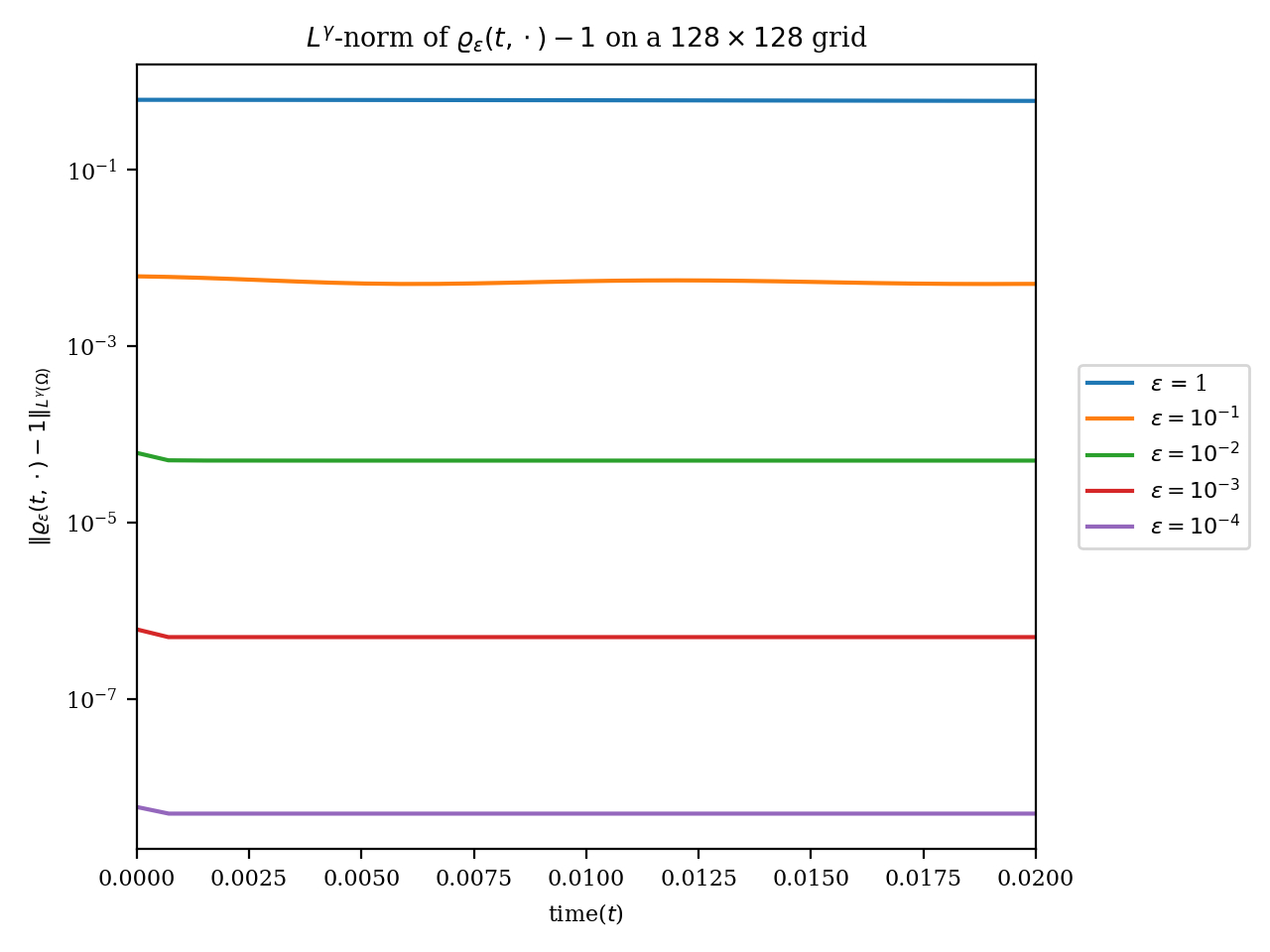}
    \includegraphics[height = 0.25\textheight]{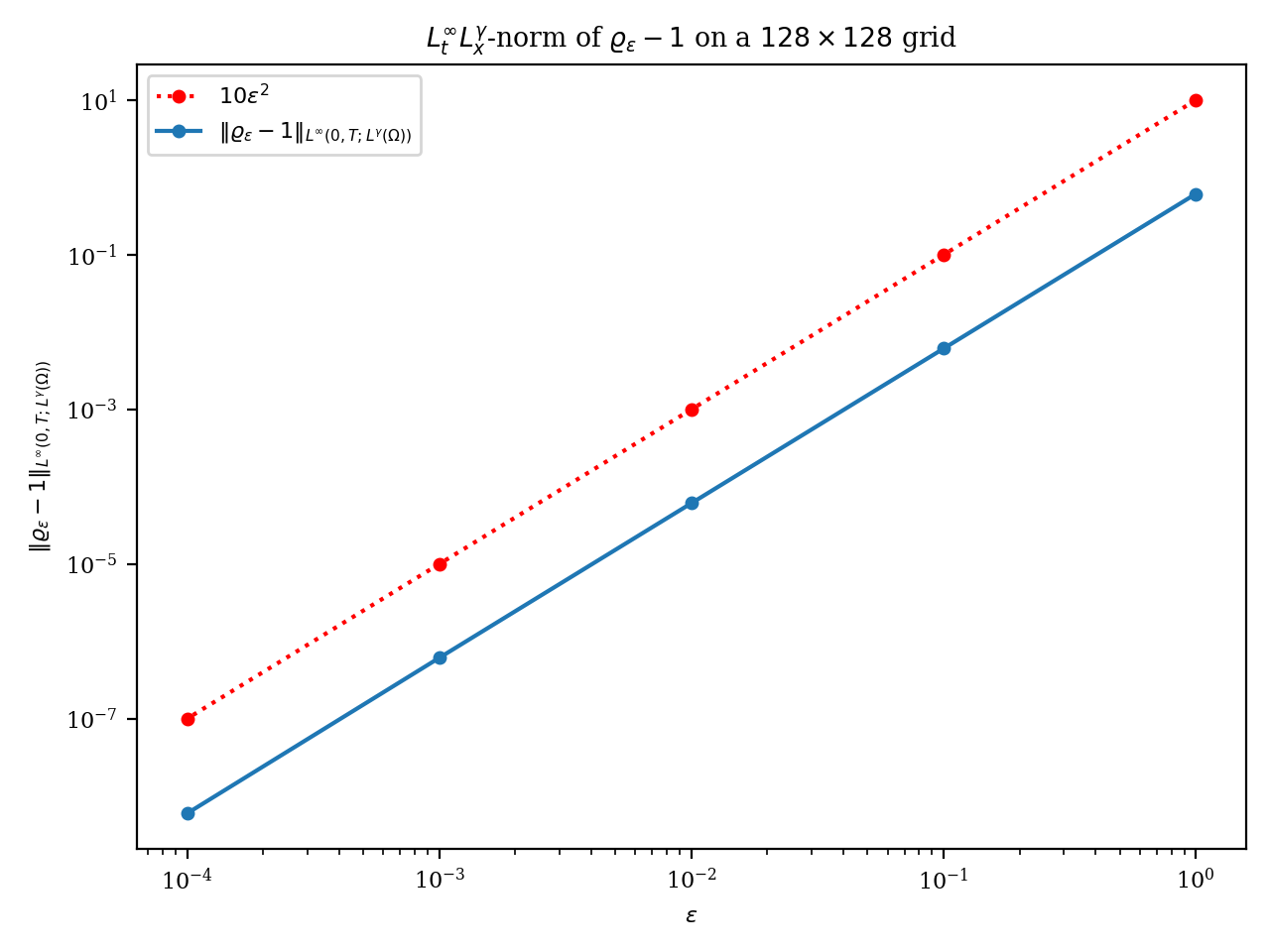}
    \caption{$\norm{\vrho_\veps(t,\cdot) - 1}_{L^\gamma(\Omega)}$ over time for different values of $\veps$ (left). \\
    $\norm{\vrho_\veps - 1}_{L^\infty(0,T;L^\gamma(\Omega))}$ versus $\veps$ (right). Results are computed on a $128\times 128$ grid.}
    \label{fig:lgamma-norm-den}
\end{figure}

For any $t\in (0,T)$, one can clearly observe from the figure on the left in Figure \ref{fig:lgamma-norm-den} that $\norm{\vrho_\veps(t,\cdot) - 1}_{L^\gamma(\Omega)}$ is decreasing as $\veps\to 0$. Further, one can also observe that the rate of decrease is proportional to $\veps^2$. In order to further support this claim, we also plot the reference quantity $10\veps^2$ in the plot of $\norm{\vrho_\veps - 1}_{L^\infty(0,T;L^\gamma(\Omega))}$. This leads us to conclude a better convergence rate than the one claimed in Lemma \ref{lem:den-conv}, namely,  $\norm{\vrho_\veps - 1}_{L^\infty(0,T;L^\gamma(\Omega))} = O(\veps^2)$.

Next, according to Theorem \ref{thm:asymp-lim-scheme}, $\bu_{\T,\delt,\veps}\to\bv_{\T,\delt}$ as $\veps\to 0$ in any discrete norm on a fixed grid, where $\bv_{\T,\delt}$ is computed using the limit scheme \eqref{eqn:disc-incomp-div-free}-\eqref{eqn:disc-incomp-mom}. In order to substantiate this claim, we compute the $L^1$-norm of $\bu_{\T,\delt,\veps}(T,\cdot) - \bv_{\T,\delt}(T,\cdot)$ at the final time $T = 0.02$ and present the values in Table \ref{tab:vel-err-norm}, where we can clearly see the decrease in the norm as $\veps\to 0$.
\begin{center}
\begin{table}[htpb]
\begin{tabular}{|c|c|c|c|c|c|}
    \hline
    $\veps$ & 1 & $10^{-1}$ & $10^{-2}$ & $10^{-3}$ & $10^{-4}$ \\
    \hline
    $\norm{\bu_{\T,\delt,\veps}(T,\cdot) - \bv_{\T,\delt}(T,\cdot)}_{L^1(\Omega)}$ & 9.49e-1 & 3.15e-2 & 8.72e-4 & 8.13e-4 & 7.96e-4 \\
    \hline
\end{tabular}
\caption{$L^1$-norm of $\bu_{\T,\delt,\veps}(T,\cdot) - \bv_{\T,\delt}(T,\cdot)$ on a 128 $\times$ 128 grid.}
\label{tab:vel-err-norm}
\end{table}
\end{center}
Now, we present the mesh convergence results pertaining to the limit $h_\T\to 0$ while $\veps$ is fixed. In Figure \ref{fig:dmv_err_comp}, we present the profiles of the errors $E_1,\dots,E_4$, that are defined in \eqref{eqn:dmv-errors}, for $\veps = 10^{-i},\,i=0,\dots,4$. As expected, in each case, the errors decrease upon mesh refinement which leads us to conclude that we indeed have convergence towards a DMV solution of the compressible Euler equations for each fixed $\veps$. Further, as the error $E_1$ is also decreasing for each $\veps$, we can conclude that we obtain convergence to a weak solution in every case, meaning the DMV solution $\mcv^\veps$ is simply the Dirac mass centered at $\lbrack\vrho_\veps, \bm_\veps\rbrack$, which is the weak solution to the compressible Euler system that the numerical solutions converge to.

\begin{figure}[htpb]
    \centering
    \begin{subfigure}{0.35\textheight}
        \includegraphics[width=\textwidth ]{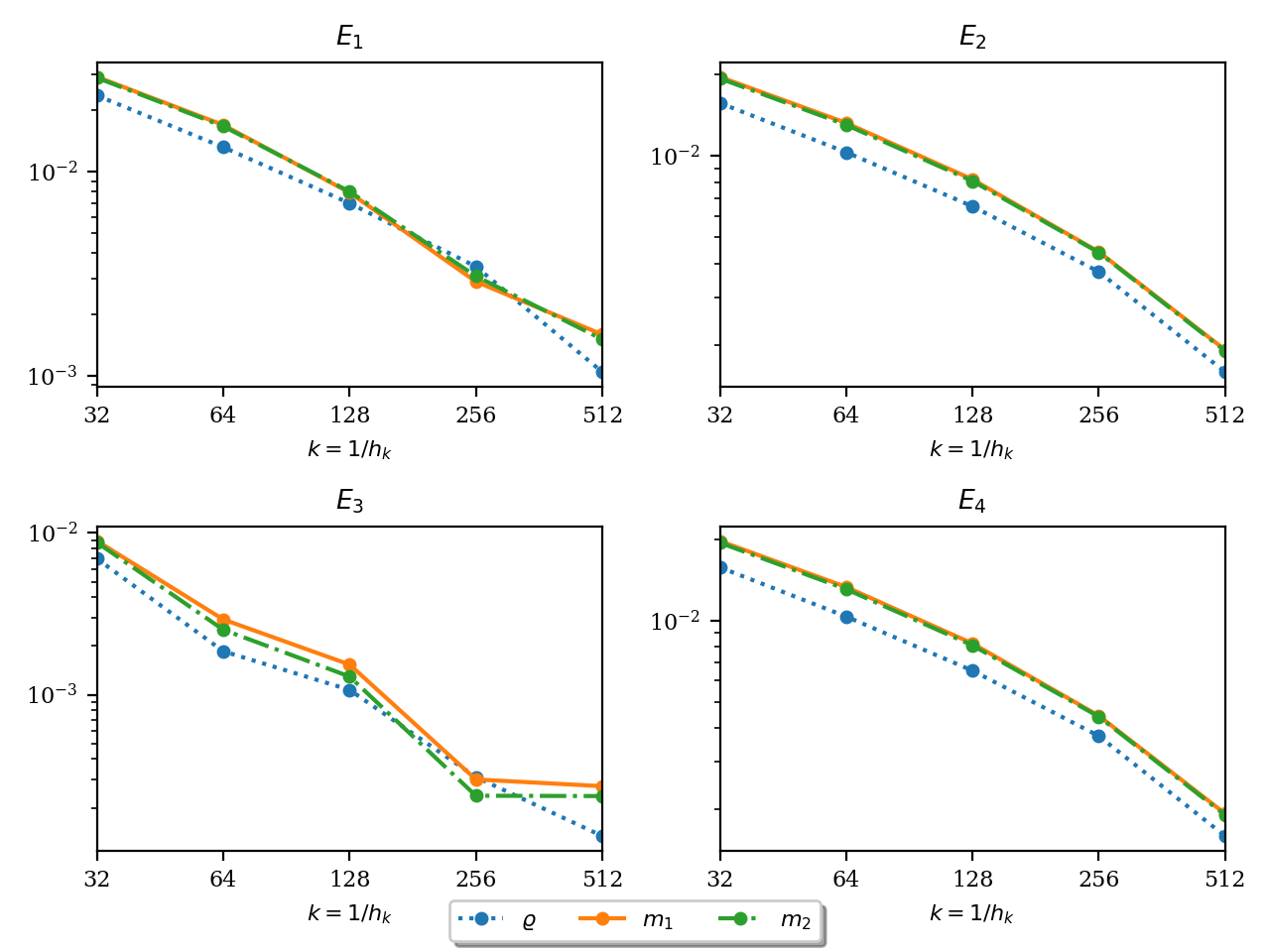}
        \caption{Error Profiles when $\veps = 1$.}
        \label{fig:dmv_err_eps_1}
    \end{subfigure}
    \hfill
    \begin{subfigure}{0.35\textheight}
        \includegraphics[width=\textwidth]{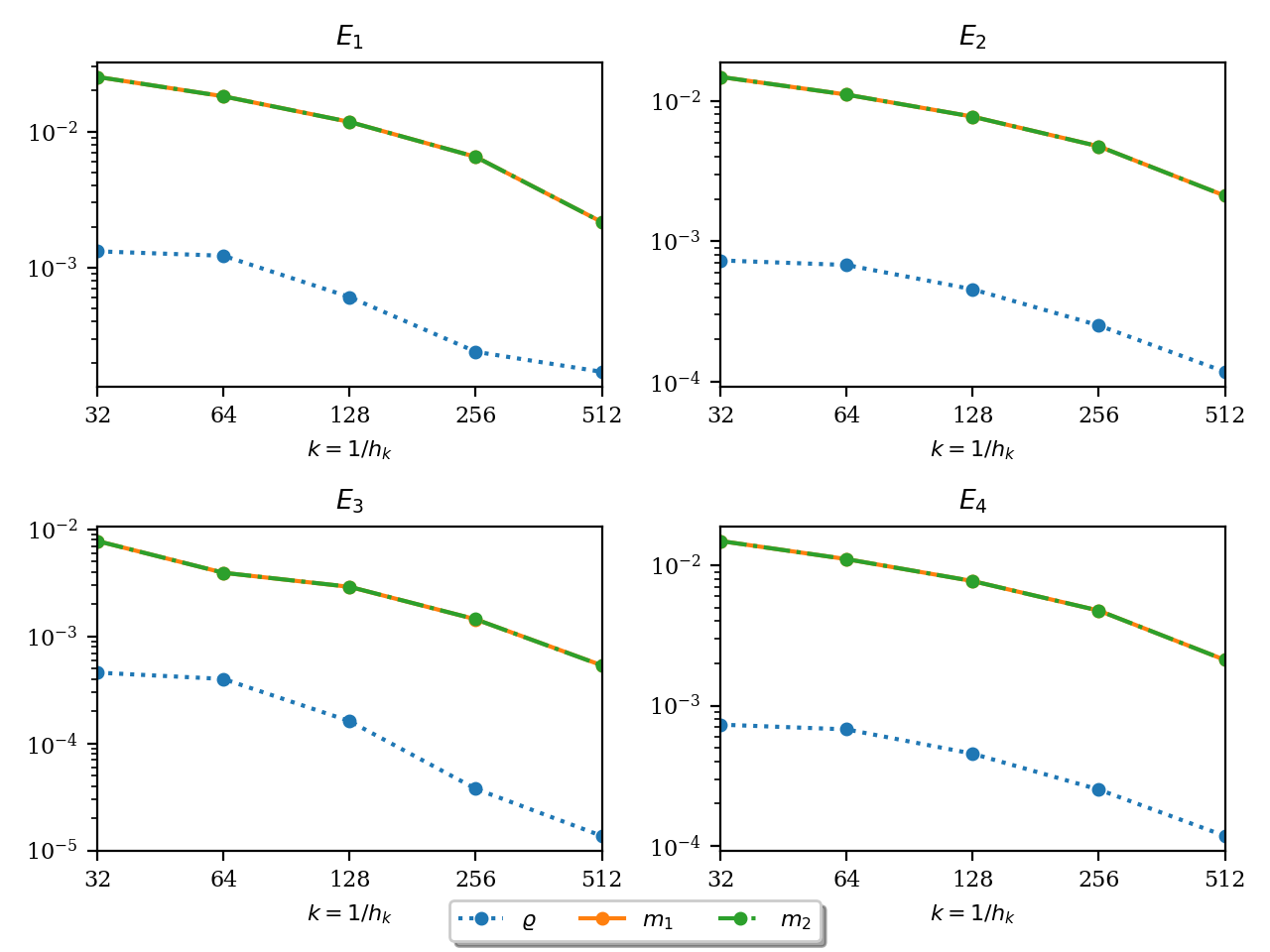}
        \caption{Error Profiles when $\veps = 10^{-1}$.}
    \label{fig:dmv_err_eps_10e-1}
    \end{subfigure}
    \hfill
    \begin{subfigure}{0.35\textheight}
        \includegraphics[width=\textwidth]{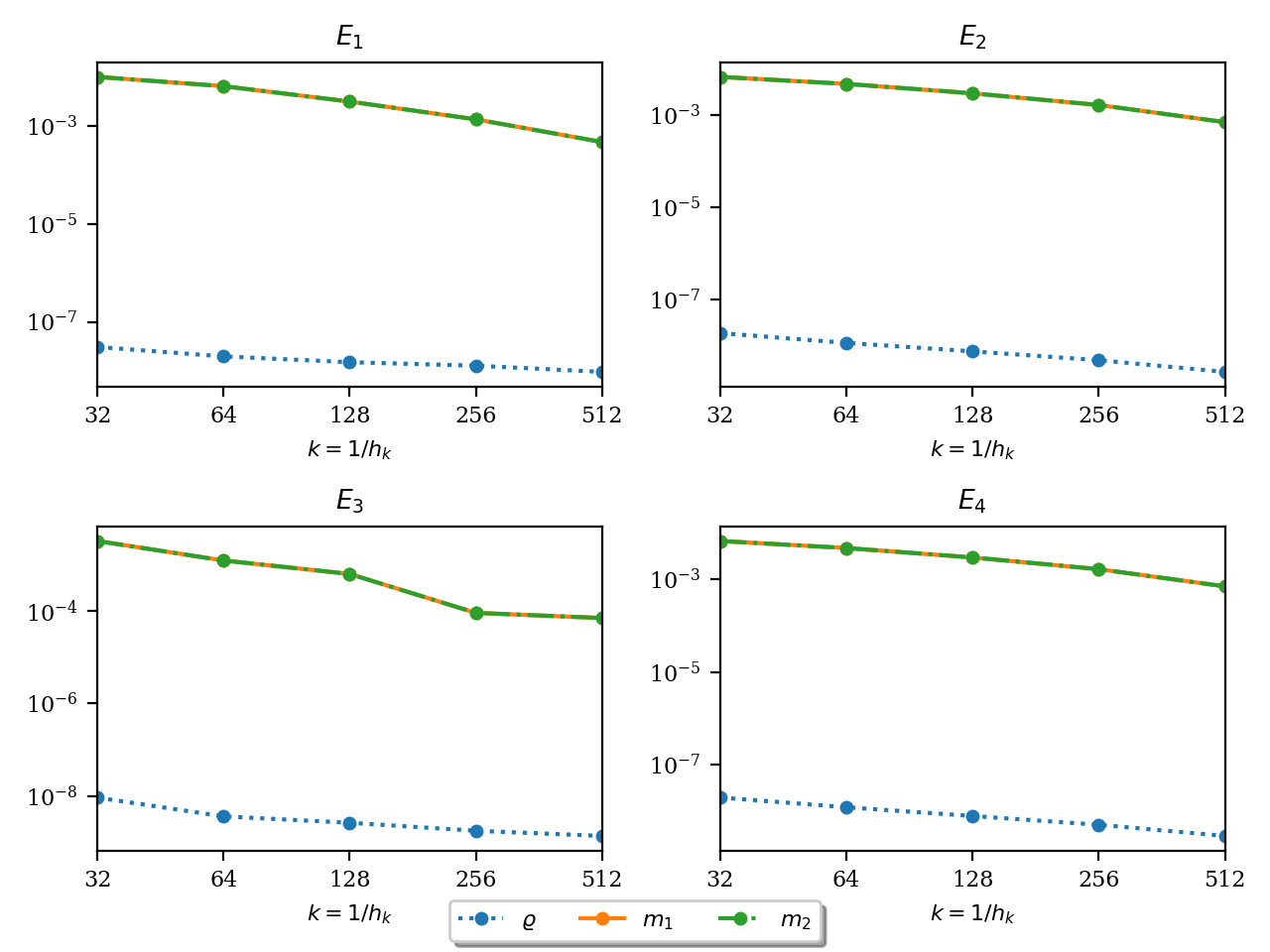}
        \caption{Error Profiles when $\veps = 10^{-2}$.}
    \label{fig:dmv_err_eps_10e-2}
    \end{subfigure}
    \hfill
    \begin{subfigure}{0.35\textheight}
        \includegraphics[width=\textwidth]{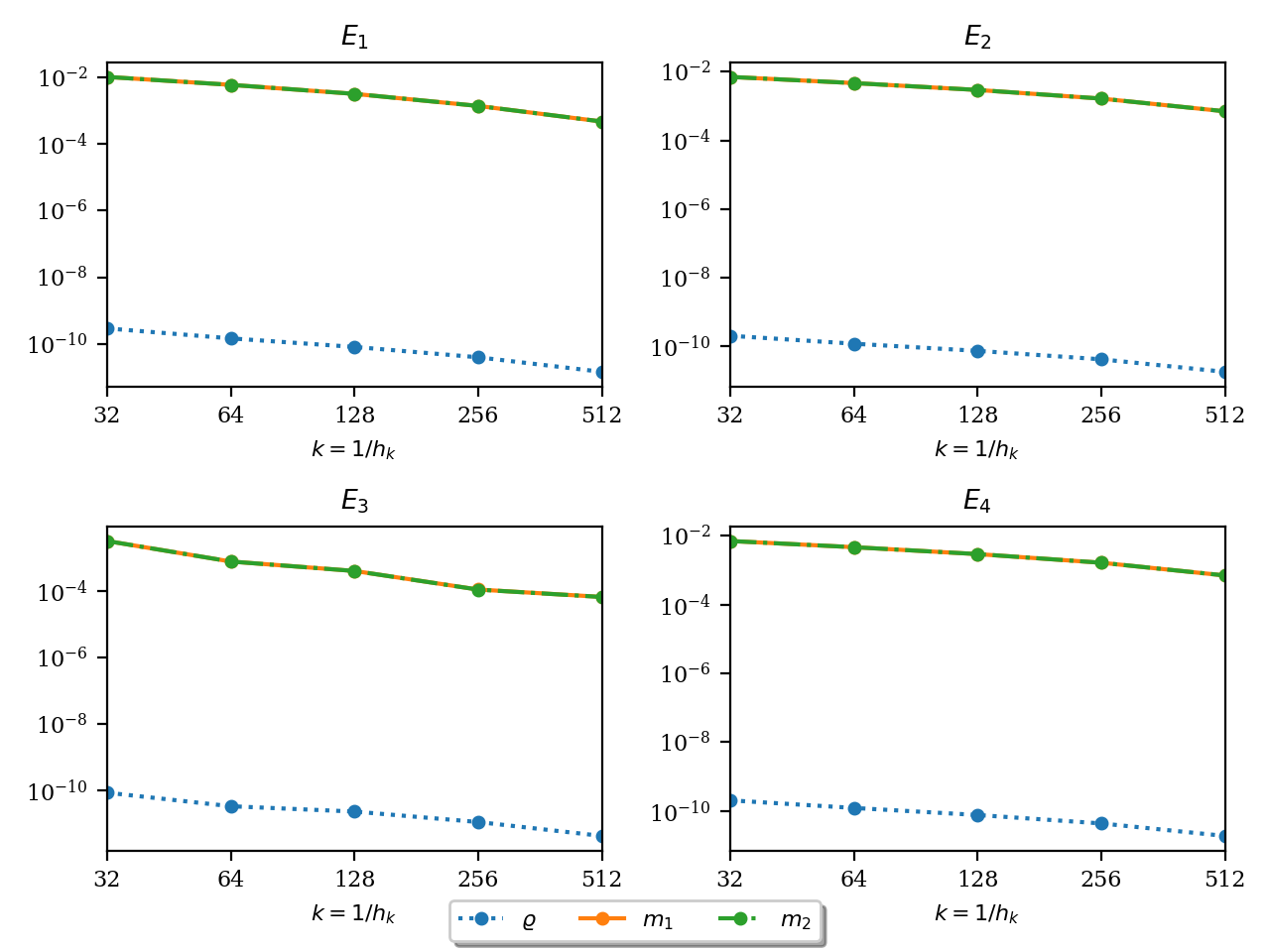}
        \caption{Error Profiles when $\veps = 10^{-3}$.}
    \label{fig:dmv_err_eps_10e-3}
    \end{subfigure}
    \hfill
    \begin{subfigure}{0.35\textheight}
        \includegraphics[width=\textwidth]{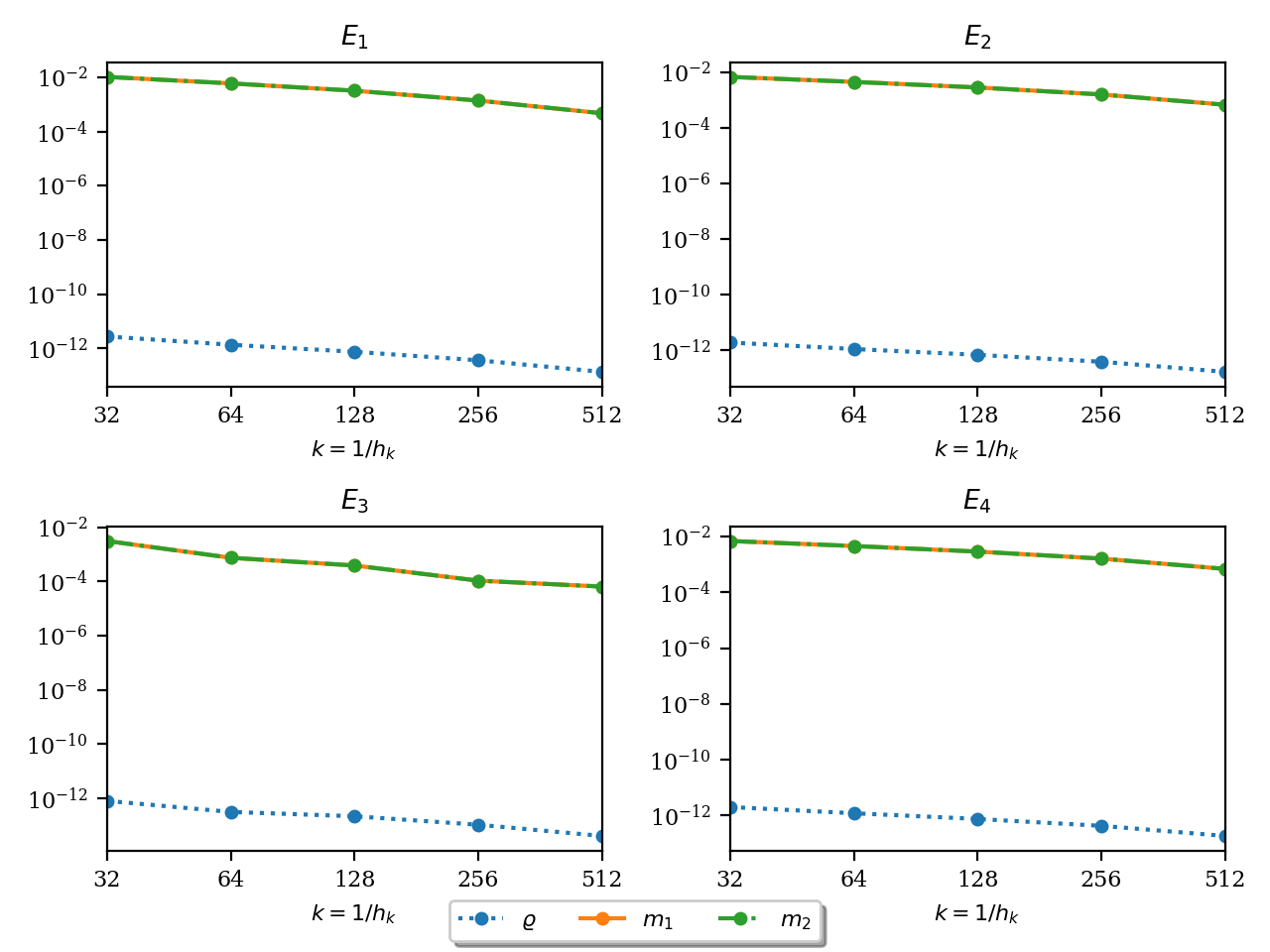}
        \caption{Error Profiles when $\veps = 10^{-4}$.}
    \label{fig:dmv_err_eps_10e-4}
    \end{subfigure}
    \caption{Error profiles for different values of $\veps$.}
    \label{fig:dmv_err_comp}
\end{figure}

\textbf{Part B :\ }Firstly, we observe from $\eqref{eqn:deg-tg-momx}-\eqref{eqn:deg-tg-momy}$ that the initial data for the incompressible Euler system will be given by 
\[
 v_1(0,x_1,x_2) = v_2(0,x_1,x_2) = \sin(2\pi(x_1 - x_2)).
\]
The setup is exactly the same as in the compressible case, the domain is $\Omega = \lbrack 0,1\rbrack\times\lbrack 0,1\rbrack$ with periodic boundaries and the final time is $T = 0.02$. First, we present the errors $E_1,\dots, E_4$ in Figure \ref{fig:dmv-incomp-err}. As before, we take the reference solution to be the solution computed on the $1024\times 1024$ mesh.
\begin{figure}
    \centering
    \includegraphics[height=0.3\textheight]{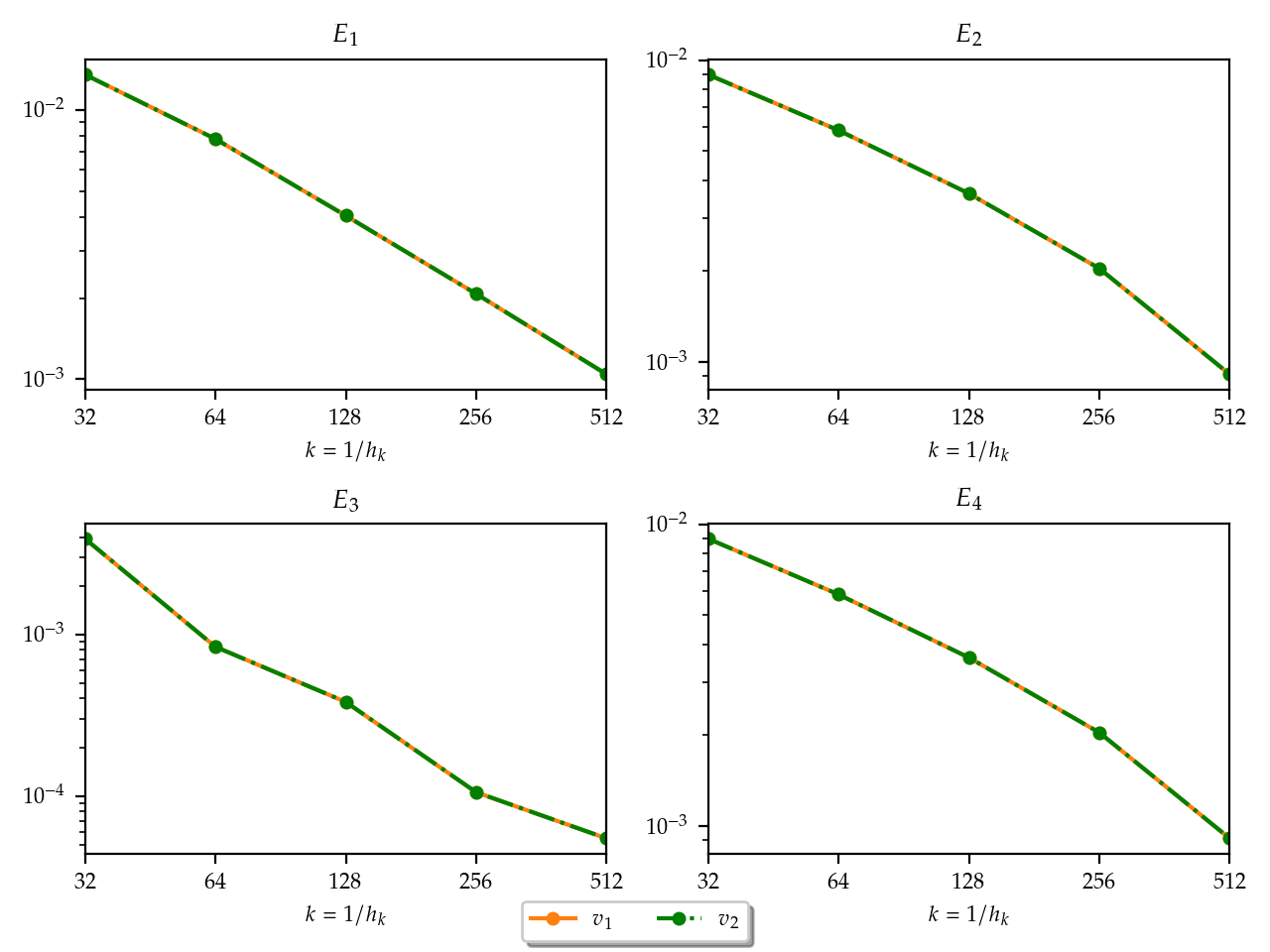}
    \caption{Error profiles in the incompressible case.}
    \label{fig:dmv-incomp-err}
\end{figure}
Clearly, we note from Figure \ref{fig:dmv-incomp-err} that we obtain convergence to a DMV solution as expected. Further, as the initial data is sufficiently regular, it satisfies the hypothesis of Theorem \ref{thm:incomp-soln-exis} and the existence of a classical solution globally in time is guaranteed. Hence, the expected classical convergence is also seen here, which is illustrated by the decreasing behaviour of the error $E_1$ upon mesh refinement. In order to further strengthen this claim, we compute the relative energy, which in the incompressible case is given by 
\[
E_{rel}(\bv\lvert\uu{V})(t) = \half\int_{\Omega}\abs{\bv - \uu{V}}^2\,\dx,
\]
where $\bv,\uu{V}$ are two solutions to the incompressible Euler system. We set $\uu{V}$ to be the reference solution i.e.\ the solution computed on the grid of size $1024 \times 1024$ and we take $\bv$ to be the solution computed on a grid of size $k\times k$, $k = 32\cdot 2^j$, $j = 0,\dots,4$. We compute the relative energy at the final time and present the results in Table \ref{tab:rel-ent-incomp}. Upon refinement, the relative energy is decreasing, which firmly points towards the convergence of the numerical solutions towards the classical solution of the incompressible Euler system, which is in line with the weak-strong uniqueness principle, cf.\ Theorem \ref{thm:wk-str-incomp}. Also, in order to determine the experimental order of convergence (EOC), we compute the $L^2$-norms of $\bv - \uu{V}$ and present them in Table \ref{tab:cg-rate}. As expected, the rate of convergence is around 1.
\begin{center}
\begin{table}[htpb]
\begin{tabular}{|c|c|c|c|c|c|}
    \hline
    $k$ & 32 & 64 & 128 & 256 & 512 \\
    \hline
    $E_{rel}(\bv\vert\uu{V})(T)$ & 2.46e-4 & 7.86e-5 & 2.07e-5 & 5.39e-6 & 1.35e-6 \\
    \hline
\end{tabular}
\caption{Relative energy values in the incompressible case.}
\label{tab:rel-ent-incomp}
\end{table}
\begin{table}[htpb]
\begin{tabular}{|c|c|c|c|c|c|}
    \hline
    $k$ & 32 & 64 & 128 & 256 & 512 \\
    \hline
    $\norm{\bv - \uu{V}}_{L^2}$ & 2.21e-2 & 1.25e-2 & 6.44e-3 & 3.28e-3 & 1.64e-3 \\
    \hline
    EOC & - & 0.8236 & 0.9611 & 0.9721 & 0.9955 \\
    \hline
\end{tabular}
\caption{Experimental order of convergence.}
\label{tab:cg-rate}
\end{table}
\end{center}
Now, as we have ascertained the convergence towards a classical solution in the incompressible case, we can finally verify the convergence claimed in Theorem \ref{thm:eps-h-lim}, which is given along the lines of Theorem \ref{thm:asymp-lim-dmv}. We compute the relative energy $E_{rel}(\vrho_\veps, \bm_\veps\vert r,\bU)$, cf.\ \eqref{eqn:rel-ent}, by setting $r = 1$ and $\bU$ as the solution of the incompressible system computed on the $1024\times 1024$ grid using the limit scheme \eqref{eqn:disc-incomp-div-free}-\eqref{eqn:disc-incomp-mom}. $\vrho_\veps$ and $\bm_\veps$ respectively are taken to be the numerical solutions to the compressible system computed using the scheme \eqref{eqn:disc-mss-bal}-\eqref{eqn:disc-mom-bal} on a grid of size $1024\times 1024$, when $\veps = 10^{-i},\,i=0,\dots,4$. We compute the relative energy at the final time and plot it against $\veps$ in Figure \ref{fig:rel-ent-dmv-lim}. As $\veps$ is decreasing, the relative energy values are also decreasing, and this verifies the claim. 

\begin{figure}
    \centering
    \includegraphics[height=0.3\textheight]{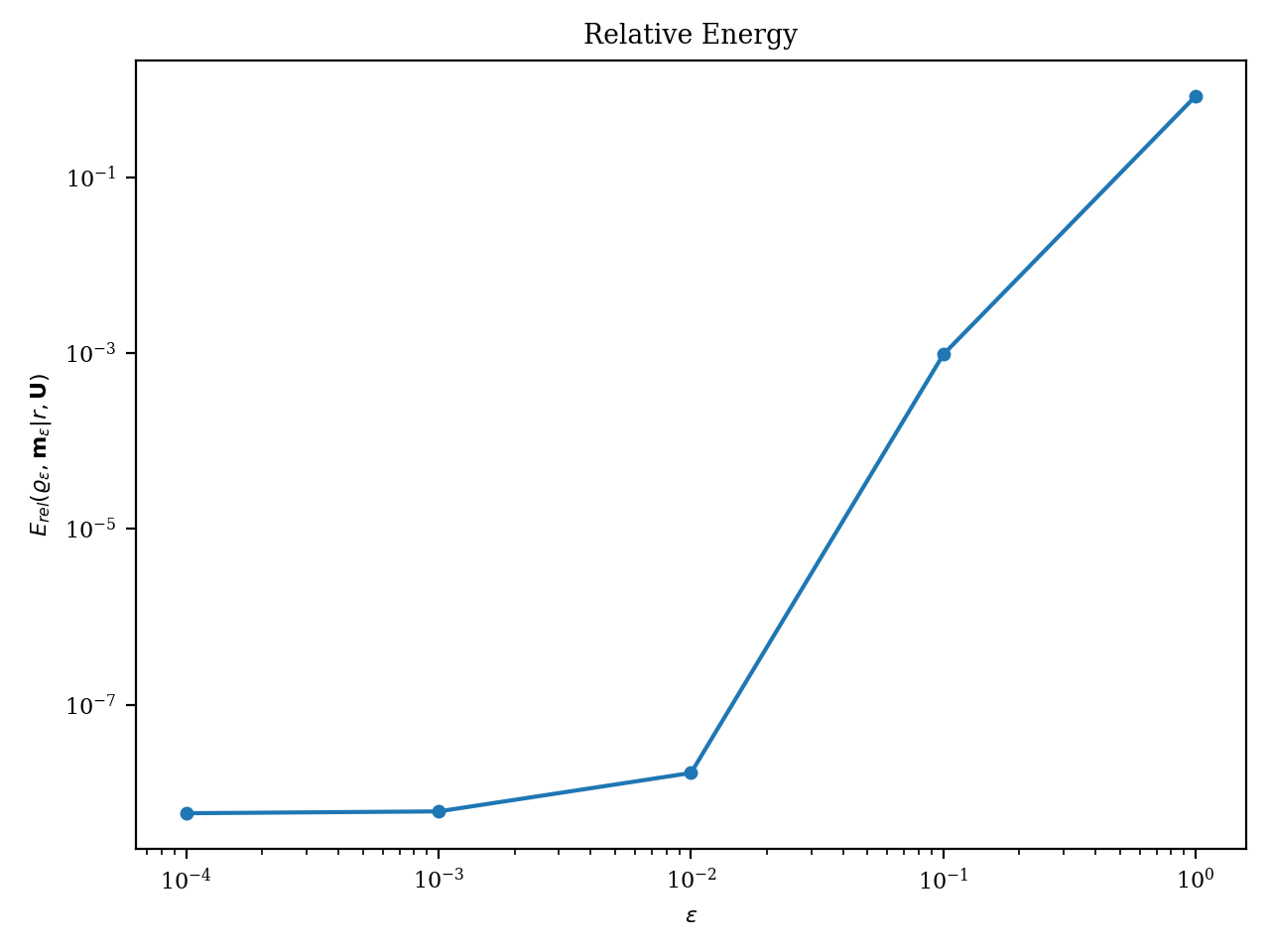}
    \caption{Relative Energy}
    \label{fig:rel-ent-dmv-lim}
\end{figure}

\section{Conclusion}
\label{sec:conc}

In the present work, we have proposed and analyzed a new AP scheme for simulating low Mach number inviscid barotropic flows, where the latter are governed by the compressible Euler equations. 

The proposed numerical method belongs to the class of collocation finite volume methods. It works with piecewise constant numerical solutions, uses a stabilized upwind numerical flux where the stabilization term is proportional to the stiff pressure gradient, and employs a semi-implicit time discretization of the convective terms. The scheme is proven to be conditionally energy stable and consistent; see Theorems \ref{thm:disc-loc-ent-ineq} - \ref{thm:cons-scheme}. Using the concept of DMV solutions, we rigorously illustrate the asymptotic preserving properties of the scheme for well-prepared initial data.

In particular, we have proved the following results. We have shown that the numerical solutions of the proposed numerical scheme will converge to a DMV of the compressible Euler system as the mesh size vanishes while the Mach number remains fixed; see Theorem \ref{thm:scheme-wk-con}. In addition, assuming that the initial data is well prepared, the relative energy between the limiting measure-valued solution and the classical solution of the incompressible Euler system vanishes as long as the Mach number vanishes; see Theorem \ref{thm:eps-h-lim}. On the other hand, performing first the low Mach number limit of the numerical solutions generated by the scheme on a fixed mesh yields a semi-implicit scheme, which is a consistent and energy stable approximation of the incompressible Euler equations; see Theorems \ref{thm:asymp-lim-scheme}, \ref{thm:disc-loc-ent-incomp}, \ref{thm:disc-glob-ent-incomp} and \ref{thm:cons-incomp-scheme}. Finally, we study the convergence of the numerical solutions of the limiting semi-implicit scheme and show that they converge weakly to a DMV of the incompressible Euler equations; see Theorem \ref{thm:scheme-wk-con-incomp}. These results are summarized in Theorem \ref{thm:comm-lim} that formulates the AP properties of our numerical scheme.

The theoretical results are illustrated by a series of numerical experiments in Section \ref{sec:num-exp}, which confirm the asymptotic preserving properties of the proposed scheme. Here, we have studied the convergence of individual numerical solutions as well as the convergence of their empirical means, deviations and empirical measures applying the Wasserstein distance.

\appendix
\renewcommand{\thesection}{\Alph{section}}
\renewcommand{\thesubsection}{A\arabic{subsection}}
\section{Energy Stability of the Scheme}

In this section, we give a sketch of the proofs of Theorem \ref{thm:disc-loc-ent-ineq} and Theorem \ref{thm:disc-glob-ent-est}. We first prove the discrete variants of the energy estimates given in Proposition \ref{prop:mod-enrg-est}. By $\llbracket a,b\rrbracket$, we denote the interval $\lbrack\min\lbrace a,b\rbrace, \max\lbrace a,b\rbrace\rbrack$.

\subsection{Discrete Positive Renormalization Identity}
\label{subsec:app-renorm-idt}
Multiply the discrete mass balance \eqref{eqn:disc-mss-bal} with $\psig^\prime(\vrho^{n+1}_K)$ to yield $\mathcal{A}_1 + \mathcal{A}_2 = 0$, where 
\begin{align*}
    &\mathcal{A}_1 = \frac{\psig^\prime(\rk^{n+1})}{\delt}(\rk^{n+1} - \rk^n), \\
    &\mathcal{A}_2 = \frac{1}{\absk}\sum_{\substack{\sink \\ \sigma = K\vert L}} \psig^\prime(\rk^{n+1})\flx(\vrho^{n+1}, \bw^n).
\end{align*}

To simplify $\mathcal{A}_1$, we employ a Taylor expansion in time. Next, using the definition of the mass flux \eqref{eqn:mss-flx}, the relation $r\psig^\prime(r) - \psig(r) = p(r)$ for $r = \rk^{n+1}$ and also employing a Taylor expansion in space yields the following simplified form for $\mathcal{A}_2$.

\begin{equation}
\label{eqn:app_3}
    \mathcal{A}_2 = (\divup(\psig(\vrho^{n+1}), \bw^n))_K + p^{n+1}_K(\divt\bw^n)_K + \frac{1}{2\absk}\sum_{\substack{\sink \\ \sigma = K\vert L}}\abssig(-w^n_{\sk})^{-}(\vrho^{n+1}_L -\vrho^{n+1}_K)^2\psig^{\prime\prime}(\tilde{\vrho}^{n+1}_\sigma),
\end{equation}
where $\tilde{\vrho}^{n+1}_\sigma \in \llbracket\rk^{n+1}, \vrho^{n+1}_L\rrbracket$. Finally, summing up the resulting equations for $\mathcal{A}_1$ and $\mathcal{A}_2$ gives us 
\begin{equation}
\label{eqn:disc-renorm-idt}
    \frac{1}{\delt}(\psig(\rk^{n+1})-\psig(\rk^n))+(\divup(\psig(\vrho^{n+1}),\bw^n))_K+p^{n+1}_K(\divt\bw^n)_K+R^{n+1}_{K,\delt}=0,
\end{equation}
where the remainder term $R^{n+1}_{K,\delt}$ is given by
\begin{equation}
\label{eqn:renorm-rem}
     R^{n+1}_{K,\delt}=\frac{1}{2\delt}(\rk^{n+1}-\rk^n)^2\psig^{\prime\prime}(\overline{\vrho}_{K}^{n+1/2})+\frac{1}{2\absk}\sum_{\substack{\sink \\ \sigma=K\vert L}}\abssig(-(\wsk^n)^{-})(\rk^{n+1}-\vrho_L^{n+1})^2\psig^{\prime\prime}(\Tilde{\vrho}^{n+1}_{\sigma}),
\end{equation}
with $\overline{\vrho}_K^{n+1/2}\in\llbracket \rk^n,\rk^{n+1}\rrbracket$ and $\Tilde{\vrho}^{n+1}_{\sigma}\in\llbracket\rk^{n+1},\vrho_L^{n+1}\rrbracket$. The convexity of $\psig$ allows us to conclude that $R^{n+1}_{K,\delt}\geq 0$ for each $K\in\T$. 

The defintion of $\pig$ along with the discrete mass balance \eqref{eqn:disc-mss-bal} yields the following discrete positive renormalization identity after some straightforward calculations.
\begin{equation}
\label{eqn:disc-pos-renorm-idt}
    \frac{1}{\delt}(\pig(\rk^{n+1})-\pig(\rk^n)) + (\divup(\psig(\vrho^{n+1}) - \psig^\prime(1)\vrho^{n+1},\bw^n))_K + p^{n+1}_K(\divt\bw^n)_K+R^{n+1}_{K,\delt}=0.
\end{equation}

\subsection{Discrete Kinetic Energy Identity}
\label{subsec:app-kin-idt}

Writing $\rk^{n+1}\uk^{n+1} - \rk^n\uk^n = \rk^{n+1}(\uk^{n+1} - \uk^n) + \uk^n(\rk^{n+1} - \rk^n)$ in the discrete momentum balance \eqref{eqn:disc-mom-bal} and using the discrete mass balance \eqref{eqn:disc-mss-bal} to simplify yields the following update for the velocity:
\begin{equation}
\label{eqn:disc-vel-upd}
    \frac{\rk^{n+1}}{\delt}(\uk^{n+1}-\uk^n)+\frac{1}{\absk}\sum_{\substack{\sink \\ \sigma=K\vert L}}\abssig(\bu^n_L-\uk^n)\flx^{n+1,-}+\frac{1}{\veps^2}(\gradt p^{n+1})_K=0,
\end{equation}
where $F^{n+1,-}_{\sk} = \vrho^{n+1}_L(\wsk^n)^{-}$ for $\sigma = K\vert L$.

We take the dot product of the above equation with $\uk^n$ and use the algebraic identity $(a-b)\cdot b = (\abs{a}^2 - \abs{b}^2 - \abs{a-b}^2)/2$ to simplify the first and second terms. Next, multiplying the discrete mass balance \eqref{eqn:disc-mss-bal} by $\abs{\uk^n}^2/2$, and summing it with the previously obtained expression and simplifying gives us the following kinetic energy identity:
\begin{equation}
\label{eqn:disc-kin}
    \begin{split}
        \frac{1}{\delt}\biggl(\half\rk^{n+1}\abs{\uk^{n+1}}^2-\half\rk^n\abs{\uk^n}^2\biggr)+\biggl(\divup\biggl(\half\vrho^{n+1}\abs{\bu^n}^2,\bw^n\biggr)\biggr)_K+&\frac{1}{\veps^2}(\gradt p^{n+1})_K\cdot\bv^n_K+S^{n+1}_{K,\delt} \\
        &=-\frac{\eta\delt}{\veps^4}\abs{(\gradt p^{n+1})_K}^2,
    \end{split}
\end{equation}
where, we have used $\uk^n = \bv^n_K + \eta\delt(\gradt p^{n+1})_K$ to rewrite the pressure gradient term. The remainder term $S^{n+1}_{K,\delt}$ is given by 
\begin{equation}
\label{eqn:ke-rem}
    S^{n+1}_{K,\delt}=-\frac{1}{2\delt}\rk^{n+1}\abs{\uk^{n+1}-\uk^{n}}^2+\frac{1}{2\absk}\sum_{\substack{\sink \\ \sigma=K\vert L}}\abssig\abs{\bu^n_L-\uk^n}^2(-\flx^{n+1,-}).
\end{equation}

\subsection{Proof of Theorem \ref{thm:disc-loc-ent-ineq}}
\label{subsec:app-proof-loc-ent}

Multiplying \eqref{eqn:disc-renorm-idt} by $\absk/\veps^2$ and \eqref{eqn:disc-kin} by $\absk$, adding the resulting expressions and then summing over all $K\in\T$ gives us
\begin{equation}
\label{eqn:a3-1}
    \begin{split}
        \sum_{K\in\T}\frac{\absk}{\delt}\biggl(\half\rk^{n+1}\abs{\uk^{n+1}}^2 + &\frac{1}{\veps^2}\psig(\rk^{n+1})\biggr) - \sum_{K\in\T}\frac{\absk}{\delt}\biggl(\half\rk^{n}\abs{\uk^{n}}^2 + \frac{1}{\veps^2}\psig(\rk^{n})\biggr) \\ 
        &+ \sum_{K\in\T}\absk\bigl(R^{n+1}_{K,\delt}+ S^{n+1}_{K,\delt}\bigr) = -\frac{\eta\delt}{\veps^4}\sum_{K\in\T}\absk\abs{(\gradt p^{n+1})_K}^2.
    \end{split}
\end{equation}
Here, we have used the conservativity of the flux and the discrete grad-div duality \eqref{eqn:disc-grad-div-primal}. Note that $\sum_{K\in\T}R^{n+1}_{K,\delt}\geq 0$. 

Using the identity $\abs{a-b}^2\leq 2(\abs{a}^2 + \abs{b}^2)$ and using the velocity update \eqref{eqn:disc-vel-upd} to estimate the first term of $S^{n+1}_{K,\delt}$ allows us to obtain the following estimate, see \cite[Theorem 1.14]{AA24} for further details.
\begin{equation}
\label{eqn:a3-2}
    \begin{split}
        -\sum_{K\in\T}\absk S^{n+1}_{K,\delt}&\leq\sum_{K\in\T}\absk\Biggl(\sum_{\substack{\sink \\ \sigma=K\vert L}}\frac{\abssig}{\absk}\abs{\bu^n_L-\uk^n}^2\flx^{n+1,-}\Biggr)\times\Biggl(\half +\frac{\delt}{\absk\rk^{n+1}}\sum_{\substack{\sink \\ \sigma=K\vert L}} \abssig\flx^{n+1,-}\Biggr) \\
        & + \frac{\delt}{\veps^4}\sum_{K\in\T}\frac{\absk}{\rk^{n+1}}\abs{(\gradt p^{n+1})_K}^2.
    \end{split}
\end{equation}

Combining \eqref{eqn:a3-1} and \eqref{eqn:a3-2} yields
\begin{equation}
\label{eqn:a3-3}
    \begin{split}
        &\sum_{K\in\T}\frac{\absk}{\delt}\biggl(\half\rk^{n+1}\abs{\uk^{n+1}}^2 + \frac{1}{\veps^2}\psig(\rk^{n+1})\biggr) - \sum_{K\in\T}\frac{\absk}{\delt}\biggl(\half\rk^{n}\abs{\uk^{n}}^2 + \frac{1}{\veps^2}\psig(\rk^{n})\biggr) \\ 
        &\leq\sum_{K\in\T}\absk\Biggl(\sum_{\substack{\sink \\ \sigma=K\vert L}}\frac{\abssig}{\absk}\abs{\bu^n_L-\uk^n}^2\flx^{n+1,-}\Biggr)\times\Biggl(\half +\frac{\delt}{\absk\rk^{n+1}}\sum_{\substack{\sink \\ \sigma=K\vert L}} \abssig\flx^{n+1,-}\Biggr) \\
        & + \frac{\delt}{\veps^4}\sum_{K\in\T}\absk\biggl(\frac{1}{\rk^{n+1}} - \eta\biggr)\abs{(\gradt p^{n+1})_K}^2.
    \end{split}
\end{equation}

Under the conditions 
\begin{equation}
\label{eqn:app-cfl-cond}
    \delt\leq\dfrac{\absk\rk^{n+1}}{2\displaystyle\sum_{\substack{\sink \\ \sigma=K\vert L}}\abssig\vrho^{n+1}_L (-(w^n_{\sk})^{-})}
\end{equation}
and $\eta>\frac{1}{\rk^{n+1}}$, we observe that the right-hand side of \eqref{eqn:a3-3} remains non-positive, which yields the desired energy condition \eqref{eqn:disc-loc-ent-ineq-1}. Analogously, if we multiply \eqref{eqn:disc-pos-renorm-idt} by $\absk/\veps^2$ and \eqref{eqn:disc-kin} by $\absk$, add the resulting expressions and sum over all $K\in\T$, we will obtain \eqref{eqn:disc-loc-ent-ineq} by carrying out the exact same analysis.

\subsection{Proof of Theorem \ref{thm:disc-glob-ent-est}}
\label{subsec:app-proof-glob-ent}

For each $r = 0,\dots, N-1$, \eqref{eqn:a3-3} under the timestep restriction \eqref{eqn:app-cfl-cond} guarantees that
\begin{equation}
\label{eqn:a4-1}
    \begin{split}
        \sum_{K\in\T}\frac{\absk}{\delt}\biggl(\half\rk^{r+1}\abs{\uk^{r+1}}^2 + \frac{1}{\veps^2}\psig(\rk^{r+1})\biggr) - &\sum_{K\in\T}\frac{\absk}{\delt}\biggl(\half\rk^{r}\abs{\uk^{r}}^2 + \frac{1}{\veps^2}\psig(\rk^{r})\biggr) \\ 
        &+ \frac{\delt}{\veps^4}\sum_{K\in\T}\absk\biggl(\eta - \frac{1}{\rk^{r+1}}\biggr)\abs{(\gradt p^{r+1})_K}^2 \leq 0.
    \end{split}
\end{equation}

Multiplying the above equation by $\delt$ and for $1\leq n\leq N$, summing over $r$ from $0$ to $n-1$ yields
\begin{equation}
\label{eqn:a4-2}
    \begin{split}
        \sum_{K\in\T}\absk\biggl(\half\rk^n\abs{\uk^n}^2+\frac{1}{\veps^2}\psig(\rk^n)\biggr) &+ \frac{1}{\veps^4}\sum_{r=0}^{n-1}\delt^2\sum_{K\in\T}\abs{K}\biggl(\eta - \frac{1}{\rk^{r+1}}\biggr)\abs{(\gradt p^{r+1})_K}^2 \\
        &\leq \sum_{K\in\T}\absk\biggl(\half\rk^0\abs{\uk^0}^2+\frac{1}{\veps^2}\psig(\rk^0)\biggr)\leq C,
    \end{split}
\end{equation}
where we have used \eqref{eqn:ini-enrg-bound}, which yields \eqref{eqn:disc-glob-ent-est-1}. Analogous calculations will yield \eqref{eqn:disc-glob-ent-est}.

\bibliographystyle{abbrv}
\bibliography{ref}

\begin{thebibliography}{10}

\bibitem{AB97}
J.~J. Alibert and G.~Bouchitt\'{e}.
\newblock Non-uniform integrability and generalized {Y}oung measures.
\newblock {\em J. Convex Anal.}, 4(1):129--147, 1997.

\bibitem{ADS21}
K.~R. Arun, A.~J. Das~Gupta, and S.~Samantaray.
\newblock Analysis of an asymptotic preserving low {M}ach number accurate
  {IMEX}-{RK} scheme for the wave equation system.
\newblock {\em Appl. Math. Comput.}, 411:Paper No. 126469, 20, 2021.

\bibitem{AGK23}
K.~R. Arun, R.~Ghorai, and M.~Kar.
\newblock An asymptotic preserving and energy stable scheme for the barotropic
  {E}uler system in the incompressible limit.
\newblock {\em J. Sci. Comput.}, 97(3):73, Nov 2023.

\bibitem{AA24}
K.~R. Arun and A.~Krishnamurthy.
\newblock A semi-implicit finite volume scheme for dissipative measure-valued
  solutions to the barotropic {E}uler system.
\newblock {\em ESAIM Math. Model. Numer. Anal.}, 58(1):47--77, 2024.

\bibitem{AKS22}
K.~R. Arun, M.~Krishnan, and S.~Samantaray.
\newblock A unified asymptotic preserving and well-balanced scheme for the
  {E}uler system with multiscale relaxation.
\newblock {\em Comput. \& Fluids}, 233:Paper No. 105248, 13, 2022.

\bibitem{AS20}
K.~R. Arun and S.~Samantaray.
\newblock Asymptotic preserving low {M}ach number accurate {IMEX} finite volume
  schemes for the isentropic {E}uler equations.
\newblock {\em J. Sci. Comput.}, 82(2):Art. 35, 32, 2020.

\bibitem{Bal00}
E.~J. Balder.
\newblock Lectures on {Y}oung measure theory and its applications in economics.
\newblock volume~31, pages 1--69. 2000.
\newblock Workshop on Measure Theory and Real Analysis (Italian) (Grado, 1997).

\bibitem{JBal89}
J.~M. Ball.
\newblock A version of the fundamental theorem for {Y}oung measures.
\newblock In {\em P{DE}s and continuum models of phase transitions ({N}ice,
  1988)}, volume 344 of {\em Lecture Notes in Phys.}, pages 207--215. Springer,
  Berlin, 1989.

\bibitem{BLM+23}
D.~Basari\'{c}, M.~Luk\'{a}\v{c}ov\'{a}-Medvid'ov\'{a}, H.~Mizerov\'{a},
  B.~She, and Y.~Yuan.
\newblock Error estimates of a finite volume method for the compressible
  {N}avier-{S}tokes-{F}ourier system.
\newblock {\em Math. Comp.}, 92(344):2543--2574, 2023.

\bibitem{BAL+14}
G.~Bispen, K.~R. Arun, M.~Luk\'{a}\v{c}ov\'{a}-Medvid'ov\'{a}, and S.~Noelle.
\newblock I{MEX} large time step finite volume methods for low {F}roude number
  shallow water flows.
\newblock {\em Commun. Comput. Phys.}, 16(2):307--347, 2014.

\bibitem{BRS18}
S.~Boscarino, G.~Russo, and L.~Scandurra.
\newblock All {M}ach number second order semi-implicit scheme for the {E}uler
  equations of gas dynamics.
\newblock {\em J. Sci. Comput.}, 77(2):850--884, 2018.

\bibitem{BLS11}
Y.~Brenier, C.~De~Lellis, and L.~Sz\'{e}kelyhidi, Jr.
\newblock Weak-strong uniqueness for measure-valued solutions.
\newblock {\em Comm. Math. Phys.}, 305(2):351--361, 2011.

\bibitem{BF18b}
G.~Bruell and E.~Feireisl.
\newblock On a singular limit for stratified compressible fluids.
\newblock {\em Nonlinear Anal. Real World Appl.}, 44:334--346, 2018.

\bibitem{BF18a}
J.~B\v{r}ezina and E.~Feireisl.
\newblock Measure-valued solutions to the complete {E}uler system.
\newblock {\em J. Math. Soc. Japan}, 70(4):1227--1245, 2018.

\bibitem{BM21}
J.~B\v{r}ezina and V.~M\'{a}cha.
\newblock Low stratification of the complete {E}uler system.
\newblock {\em J. Evol. Equ.}, 21(1):735--761, 2021.

\bibitem{Cha22}
N.~Chaudhuri.
\newblock Multiple scales and singular limits of perfect fluids.
\newblock {\em J. Evol. Equ.}, 22(1):Paper No. 5, 32, 2022.

\bibitem{CLK15}
E.~Chiodaroli, C.~De~Lellis, and O.~Kreml.
\newblock Global ill-posedness of the isentropic system of gas dynamics.
\newblock {\em Comm. Pure Appl. Math.}, 68(7):1157--1190, 2015.

\bibitem{CFK+17}
E.~Chiodaroli, E.~Feireisl, O.~Kreml, and E.~Wiedemann.
\newblock {$\mathcal{A}$}-free rigidity and applications to the compressible
  {E}uler system.
\newblock {\em Ann. Mat. Pura Appl. (4)}, 196(4):1557--1572, 2017.

\bibitem{Cia13}
P.~G. Ciarlet.
\newblock {\em Linear and nonlinear functional analysis with applications}.
\newblock Society for Industrial and Applied Mathematics, Philadelphia, PA,
  2013.

\bibitem{CDV17}
F.~Couderc, A.~Duran, and J.-P. Vila.
\newblock An explicit asymptotic preserving low {F}roude scheme for the
  multilayer shallow water model with density stratification.
\newblock {\em J. Comput. Phys.}, 343:235--270, 2017.

\bibitem{LS10}
C.~De~Lellis and L.~Sz\'{e}kelyhidi, Jr.
\newblock On admissibility criteria for weak solutions of the {E}uler
  equations.
\newblock {\em Arch. Ration. Mech. Anal.}, 195(1):225--260, 2010.

\bibitem{DT11}
P.~Degond and M.~Tang.
\newblock All speed scheme for the low {M}ach number limit of the isentropic
  {E}uler equations.
\newblock {\em Commun. Comput. Phys.}, 10(1):1--31, 2011.

\bibitem{Dei85}
K.~Deimling.
\newblock {\em Nonlinear functional analysis}.
\newblock Springer-Verlag, Berlin, 1985.

\bibitem{DiP85}
R.~J. DiPerna.
\newblock Measure-valued solutions to conservation laws.
\newblock {\em Arch. Rational Mech. Anal.}, 88(3):223--270, 1985.

\bibitem{DVB17}
A.~Duran, J.-P. Vila, and R.~Baraille.
\newblock Semi-implicit staggered mesh scheme for the multi-layer shallow water
  system.
\newblock {\em C. R. Math. Acad. Sci. Paris}, 355(12):1298--1306, 2017.

\bibitem{DVB20}
A.~Duran, J.-P. Vila, and R.~Baraille.
\newblock Energy-stable staggered schemes for the {S}hallow {W}ater equations.
\newblock {\em J. Comput. Phys.}, 401:109051, 24, 2020.

\bibitem{EGH00}
R.~Eymard, T.~Gallou\"{e}t, and R.~Herbin.
\newblock Finite volume methods.
\newblock In {\em Handbook of numerical analysis, {V}ol. {VII}}, Handb. Numer.
  Anal., VII, pages 713--1020. North-Holland, Amsterdam, 2000.

\bibitem{FGJ19}
E.~Feireisl, S.~S. Ghoshal, and A.~Jana.
\newblock On uniqueness of dissipative solutions to the isentropic {E}uler
  system.
\newblock {\em Comm. Partial Differential Equations}, 44(12):1285--1298, 2019.

\bibitem{FGS+16}
E.~Feireisl, P.~Gwiazda, A.~\'{S}wierczewska Gwiazda, and E.~Wiedemann.
\newblock Dissipative measure-valued solutions to the compressible
  {N}avier-{S}tokes system.
\newblock {\em Calc. Var. Partial Differential Equations}, 55(6):Art. 141, 20,
  2016.

\bibitem{FKK+20}
E.~Feireisl, C.~Klingenberg, O.~Kreml, and S.~Markfelder.
\newblock On oscillatory solutions to the complete {E}uler system.
\newblock {\em J. Differential Equations}, 269(2):1521--1543, 2020.

\bibitem{FKM19}
E.~Feireisl, C.~Klingenberg, and S.~Markfelder.
\newblock On the low {M}ach number limit for the compressible {E}uler system.
\newblock {\em SIAM J. Math. Anal.}, 51(2):1496--1513, 2019.

\bibitem{FLM20b}
E.~Feireisl, M.~Luk\'{a}\v{c}ov\'{a}-Medvid'ov\'{a}, and H.~Mizerov\'{a}.
\newblock Convergence of finite volume schemes for the {E}uler equations via
  dissipative measure-valued solutions.
\newblock {\em Found. Comput. Math.}, 20(4):923--966, 2020.

\bibitem{FLM+19}
E.~Feireisl, M.~Luk\'{a}\v{c}ov\'{a}-Medvid'ov\'{a}, H.~Mizerov\'{a}, and
  B.~She.
\newblock Convergence of a finite volume scheme for the compressible
  {N}avier-{S}tokes system.
\newblock {\em ESAIM Math. Model. Numer. Anal.}, 53(6):1957--1979, 2019.

\bibitem{FLM+21a}
E.~Feireisl, M.~Luk\'{a}\v{c}ov\'{a}-Medvid'ov\'{a}, H.~Mizerov\'{a}, and
  B.~She.
\newblock {\em Numerical analysis of compressible fluid flows}, volume~20 of
  {\em MS\&A. Modeling, Simulation and Applications}.
\newblock Springer, Cham, [2021] \copyright 2021.

\bibitem{FLM+21b}
E.~Feireisl, M.~Luk\'{a}\v{c}ov\'{a}-Medvid'ov\'{a}, B.~She, and Y.~Wang.
\newblock Computing oscillatory solutions of the {E}uler system via
  {$\K$}-convergence.
\newblock {\em Math. Models Methods Appl. Sci.}, 31(3):537--576, 2021.

\bibitem{FLM20a}
E.~Feireisl, M.~Luk\'{a}\v{c}ov\'{a}-Medvi\v{d}ov\'{a}, and H.~Mizerov\'{a}.
\newblock {$\mathcal{K}$}-convergence as a new tool in numerical analysis.
\newblock {\em IMA J. Numer. Anal.}, 40(4):2227--2255, 2020.

\bibitem{FT19}
E.~Feireisl and T.~Tang.
\newblock On a singular limit for the stratified compressible {E}uler system.
\newblock {\em Asymptot. Anal.}, 114(1-2):59--72, 2019.

\bibitem{GWW23}
D.~Gallenm\"{u}ller, R.~Wagner, and E.~Wiedemann.
\newblock Probabilistic descriptions of fluid flow: a survey.
\newblock {\em J. Math. Fluid Mech.}, 25(3):Paper No. 52, 43, 2023.

\bibitem{GW21}
D.~Gallenm\"{u}ller and E.~Wiedemann.
\newblock On the selection of measure-valued solutions for the isentropic
  {E}uler system.
\newblock {\em J. Differential Equations}, 271:979--1006, 2021.

\bibitem{GMN19}
T.~Gallou\"{e}t, D.~Maltese, and A.~Novotny.
\newblock Error estimates for the implicit {MAC} scheme for the compressible
  {N}avier-{S}tokes equations.
\newblock {\em Numer. Math.}, 141(2):495--567, 2019.

\bibitem{GVV13}
N.~Grenier, J.-P. Vila, and P.~Villedieu.
\newblock An accurate low-{M}ach scheme for a compressible two-fluid model
  applied to free-surface flows.
\newblock {\em J. Comput. Phys.}, 252:1--19, 2013.

\bibitem{GV99}
H.~Guillard and C.~Viozat.
\newblock On the behaviour of upwind schemes in the low {M}ach number limit.
\newblock {\em Comput. \& Fluids}, 28(1):63--86, 1999.

\bibitem{GSW15}
P.~Gwiazda, A.~\'{S}wierczewska Gwiazda, and E.~Wiedemann.
\newblock Weak-strong uniqueness for measure-valued solutions of some
  compressible fluid models.
\newblock {\em Nonlinearity}, 28(11):3873--3890, 2015.

\bibitem{HJL12}
J.~Haack, S.~Jin, and J.-G. Liu.
\newblock An all-speed asymptotic-preserving method for the isentropic {E}uler
  and {N}avier-{S}tokes equations.
\newblock {\em Commun. Comput. Phys.}, 12(4):955--980, 2012.

\bibitem{HLS21}
R.~Herbin, J.-C. Latch\'{e}, and K.~Saleh.
\newblock Low {M}ach number limit of some staggered schemes for compressible
  barotropic flows.
\newblock {\em Math. Comp.}, 90(329):1039--1087, 2021.

\bibitem{HLC20}
R.~Herbin, J.-C. Latch\'{e}, and C.~Zaza.
\newblock A cell-centred pressure-correction scheme for the compressible
  {E}uler equations.
\newblock {\em IMA J. Numer. Anal.}, 40(3):1792--1837, 2020.

\bibitem{Jin99}
S.~Jin.
\newblock Efficient asymptotic-preserving ({AP}) schemes for some multiscale
  kinetic equations.
\newblock {\em SIAM J. Sci. Comput.}, 21(2):441--454, 1999.

\bibitem{JOP01}
E.~Jones et~al.
\newblock Sci{P}y: Open {S}ource {S}cientific {T}ools for {P}ython (2001),
  https://www.scipy.org/index.html.

\bibitem{Kat72}
T.~Kato.
\newblock Nonstationary flows of viscous and ideal fluids in {${\bf R}\sp{3}$}.
\newblock {\em J. Functional Analysis}, pages 296--305, 1972.

\bibitem{KL84}
T.~Kato and C.~Y. Lai.
\newblock Nonlinear evolution equations and the {E}uler flow.
\newblock {\em J. Funct. Anal.}, 56(1):15--28, 1984.

\bibitem{KM82}
S.~Klainerman and A.~Majda.
\newblock Compressible and incompressible fluids.
\newblock {\em Comm. Pure Appl. Math.}, 35(5):629--651, 1982.

\bibitem{Kle95}
R.~Klein.
\newblock Semi-implicit extension of a {G}odunov-type scheme based on low
  {M}ach number asymptotics. {I}. {O}ne-dimensional flow.
\newblock {\em J. Comput. Phys.}, 121(2):213--237, 1995.

\bibitem{Kom67}
J.~Koml\'{o}s.
\newblock A generalization of a problem of {S}teinhaus.
\newblock {\em Acta Math. Acad. Sci. Hungar.}, 18:217--229, 1967.

\bibitem{Lio96}
P.-L. Lions.
\newblock {\em Mathematical topics in fluid mechanics. {V}ol. 1}, volume~3 of
  {\em Oxford Lecture Series in Mathematics and its Applications}.
\newblock The Clarendon Press, Oxford University Press, New York, 1996.
\newblock Incompressible models, Oxford Science Publications.

\bibitem{LM98}
P.-L. Lions and N.~Masmoudi.
\newblock Incompressible limit for a viscous compressible fluid.
\newblock {\em J. Math. Pures Appl. (9)}, 77(6):585--627, 1998.

\bibitem{Luk20}
M.~Luk\'{a}\v{c}ov\'{a}-Medvid'ov\'{a}.
\newblock {$\mathcal{K}$}-convergence of finite volume solutions of the {E}uler
  equations.
\newblock In {\em Finite volumes for complex applications {IX}---methods,
  theoretical aspects, examples---{FVCA} 9, {B}ergen, {N}orway, {J}une 2020},
  volume 323 of {\em Springer Proc. Math. Stat.}, pages 25--37. Springer, Cham,
  [2020] \copyright 2020.

\bibitem{Maj84}
A.~Majda.
\newblock {\em Compressible fluid flow and systems of conservation laws in
  several space variables}, volume~53 of {\em Applied Mathematical Sciences}.
\newblock Springer-Verlag, New York, 1984.

\bibitem{MNR+96}
J.~M\'{a}lek, J.~Ne\v{c}as, M.~Rokyta, and M.~R\r{u}\v{z}i\v{c}ka.
\newblock {\em Weak and measure-valued solutions to evolutionary {PDE}s},
  volume~13 of {\em Applied Mathematics and Mathematical Computation}.
\newblock Chapman \& Hall, London, 1996.

\bibitem{NT20}
{\v{S}}.~Ne\v{c}asov\'{a} and T.~Tang.
\newblock On a singular limit for the compressible rotating {E}uler system.
\newblock {\em J. Math. Fluid Mech.}, 22(3):Paper No. 43, 14, 2020.

\bibitem{Nir01}
L.~Nirenberg.
\newblock {\em Topics in nonlinear functional analysis}, volume~6 of {\em
  Courant Lecture Notes in Mathematics}.
\newblock New York University, Courant Institute of Mathematical Sciences, New
  York; American Mathematical Society, Providence, RI, 2001.
\newblock Chapter 6 by E. Zehnder, Notes by R. A. Artino, Revised reprint of
  the 1974 original.

\bibitem{NBA+14}
S.~Noelle, G.~Bispen, K.~R. Arun, M.~Luk\'{a}\v{c}ov\'{a}-Medvi\v{d}ov\'{a},
  and C.-D. Munz.
\newblock A weakly asymptotic preserving low {M}ach number scheme for the
  {E}uler equations of gas dynamics.
\newblock {\em SIAM J. Sci. Comput.}, 36(6):B989--B1024, 2014.

\bibitem{OCC06}
D.~O'Regan, Y.~J. Cho, and Y.-Q. Chen.
\newblock {\em Topological degree theory and applications}, volume~10 of {\em
  Series in Mathematical Analysis and Applications}.
\newblock Chapman \& Hall/CRC, Boca Raton, FL, 2006.

\bibitem{PV16}
M.~Parisot and J.-P. Vila.
\newblock Centered-potential regularization for the advection upstream
  splitting method.
\newblock {\em SIAM J. Numer. Anal.}, 54(5):3083--3104, 2016.

\bibitem{PM05}
J.~H. Park and C.-D. Munz.
\newblock Multiple pressure variables methods for fluid flow at all {M}ach
  numbers.
\newblock {\em Internat. J. Numer. Methods Fluids}, 49(8):905--931, 2005.

\bibitem{Ped97}
P.~Pedregal.
\newblock {\em Parametrized measures and variational principles}, volume~30 of
  {\em Progress in Nonlinear Differential Equations and their Applications}.
\newblock Birkh\"{a}user Verlag, Basel, 1997.

\bibitem{Sco94}
S.~Schochet.
\newblock Fast singular limits of hyperbolic {PDE}s.
\newblock {\em J. Differential Equations}, 114(2):476--512, 1994.

\bibitem{SW12}
L.~Sz\'{e}kelyhidi and E.~Wiedemann.
\newblock Young measures generated by ideal incompressible fluid flows.
\newblock {\em Arch. Ration. Mech. Anal.}, 206(1):333--366, 2012.

\bibitem{Wei18}
E.~Wiedemann.
\newblock Weak-strong uniqueness in fluid dynamics.
\newblock In {\em Partial differential equations in fluid mechanics}, volume
  452 of {\em London Math. Soc. Lecture Note Ser.}, pages 289--326. Cambridge
  Univ. Press, Cambridge, 2018.

\end{thebibliography}
\end{document}